\newtheorem{tw}{Theorem} 
\newtheorem{wniosek}[tw]{Corollary} 
\newtheorem{lemat}[tw]{Lemma} 
\newtheorem{stwierdzenie}[tw]{Proposition} 
\theoremstyle{definition}
\newtheorem{definicja}[tw]{Definition} 
\newtheorem{konwencja}[tw]{Convention} 
\newtheorem{przyklad}[tw]{Example} 
\newtheorem{uwaga}[tw]{Remark} 
\newtheorem{obserwacja}[tw]{Observation}
\newcommand{\Term}[1]{\textnormal{\textsf{Term}}_{#1}}
\newcommand{\Sent}{\textnormal{\textsf{Sent}}}
\newcommand{\val}[1]{{#1}^{\circ}}
\newcommand{\len}{\textnormal{\textsf{len}}}
\newcommand{\vrbl}{\textnormal{\textsf{Var}}}
\newcommand{\TermSeq}{\textnormal{\textsf{TermSeq}}}
\newcommand{\ClTermSeq}{\textnormal{\textsf{ClTermSeq}}}
\newcommand{\term}{\textnormal{\textsf{Term}}}
\newcommand{\form}{\textnormal{\textsf{Form}}}
\newcommand{\Form}[1]{\textnormal{\textsf{Form}}_{#1}}
\newcommand{\Prov}{\textnormal{\textsf{Pr}}}
\newcommand{\valt}[2]{#1^{#2}}
\newcommand{\ClTerm}{\textnormal{\textsf{ClTerm}}}
\newcommand{\Ass}{\textnormal{\textsf{Asn}}}
\newcommand{\FV}{\textnormal{\textsf{FV}}}
\newcommand{\FVSeq}{\textnormal{\textsf{FVSeq}}}
\newcommand{\Proof}{\textnormal{\textsf{Proof}}}
\newcommand{\Lang}{\mathcal{L}}
\newcommand{\LPA}{\Lang_{\PA}}
\newcommand{\VarSeq}{\textnormal{\textsf{VarSeq}}}
\newcommand{\num}[1]{\underline{#1}}
	\newcommand{\imsubf}{\triangleleft}
\newcommand{\qcr}[1]{\ulcorner #1 \urcorner}
\newcommand{\EV}{\textnormal{\textsf{EV}}}
\newcommand{\RT}{\textnormal{\textsf{RT}}}
\newcommand{\CT}{\textnormal{\textsf{CT}}}
\newcommand{\PA}{\textnormal{\textsf{PA}}}
\newcommand{\CS}{\textnormal{\textsf{CS}}}
\newcommand{\WKL}{\textnormal{\textsf{WKL}}}
\newcommand{\ACT}{\textnormal{\textsf{ACT}}}
\newcommand{\KF}{\textnormal{\textsf{KF}}}
\newcommand{\FS}{\textnormal{\textsf{FS}}}
\newcommand{\ISigma}{\textnormal{\textsf{I}}\Sigma}
\newcommand{\IDelta}{\textnormal{\textsf{I}}\Delta}
\newcommand{\B}{\textnormal{\textsf{B}}}
\newcommand{\Th}{\mathcal{T}}
\newcommand{\Q}{\textnormal{\textsf{Q}}}
\newcommand{\Frag}{\textnormal{\textsf{P}}}
\newcommand{\Exp}{\textnormal{\textsf{Exp}}}
\newcommand{\ACA}{\textnormal{\textsf{ACA}}}
\newcommand{\PT}{\textnormal{\textsf{PT}}}
\newcommand{\md}[1]{\mathcal{#1}}
\newcommand{\df}[1]{\emph{#1}}
\newcommand{\set}[2]{\left\{#1 \ \ | \ \ #2 \right\}}
\newcommand{\pair}[2]{\langle #1, #2\rangle}
\newcommand{\psubf}{\triangleleft}
\newcommand{\rank}{\textsf{ rank}}
\newcommand{\ElDiag}{\textsf{ElDiag}}
\newcommand{\was}[1]{\{#1\}}
\newcommand{\restr}[1]{\upharpoonright_{#1}}
\newcommand{\norm}[2]{\parallel #1\parallel_{#2}}
\newcommand{\Sat}{\textnormal{\textsf{Sat}}}
\newcommand{\Con}{\textnormal{\textsf{Con}}}
\newcommand{\Tr}{\textnormal{\textsf{Tr}}}
\newcommand{\dpt}{\textnormal{\textsf{dp}}}
\newcommand{\Ind}{\textnormal{\textsf{Ind}}}
\newcommand{\enum}{\textsf{enum}}
\newcommand{\M}{\md{M}}
\newcommand{\nat}{\mathbb{N}}
\newcommand{\compl}[1]{\widehat{#1}}
\newcommand{\tuple}[1]{\langle #1 \rangle}
\title{Truth and Feasible Reducibility}
\author{Ali Enayat, Mateusz Łełyk, Bartosz Wcisło}
\begin{document}
	
	\maketitle
		\begin{abstract}
	    Let $\mathcal{T}$ be any of the three canonical truth theories $\CT^-$ (Compositional truth without extra induction), $\FS^-$ (Friedman--Sheard truth without extra induction), and $\KF^-$ (Kripke--Feferman truth without extra induction), where the base theory of $\mathcal{T}$ is $\PA$ (Peano arithmetic). We show that $\mathcal{T}$ is  \textit{feasibly reducible to} $\PA$, i.e., there is a polynomial time computable function $f$ such that for any proof $\pi $ of an arithmetical sentence $\phi $ in $\mathcal{T}$, $f(\pi )$ is a proof of $\phi $ in $\PA$.  In particular, $\mathcal{T}$ has at most polynomial speed-up over $\PA$, in sharp contrast to the situation for $\mathcal{T}[\B]$ for \textit{finitely axiomatizable} base theories $\B$.
	\end{abstract}
	\tableofcontents
\section{Introduction }

One of the celebrated results in the area of axiomatic theories of truth is
the Krajewski-Kotlarski-Lachlan (KKL) theorem \cite{Kotlarski-Krajewski-Lachlan} that asserts that
every countable recursively saturated model of $\PA$  (Peano
arithmetic) is expandable to a model of $\CT^{-}[\PA]$
(compositional truth over $\PA$ with no extra induction\footnote{%
This theory is referred to as $\CT\! \upharpoonright $
in \cite{halbach}, $\CT^{-}$ in \cite{Cieslinski_Book}, and $\PA^{\mathsf{FT}}$ in \cite{enayat-visser}.}). The KKL theorem is an overtly
model-theoretic result, but it is well-known that it is equivalent to the
conservativity of $\CT^{-}[\PA]$ over $\PA$.%
\footnote{%
This equivalence follows from two key facts: (1) every countable consistent theory has
a countable recursively saturated model, and (2) countable recursively
saturated models are resplendent, both of which can be verified in the
subsystem $\ACA_{0}$ of second order arithmetic.} Recent proofs of the KKL theorem given by Enayat and Visser \cite{enayat-visser} (using model-theoretic techniques) and Leigh \cite{leigh} (using proof-theoretic
machinery) show that $\CT^{-}[\B]$ is conservative over $\B$ for every "base theory" $\B$ (i.e., a theory $\B$ that supports a modicum of coding machinery for handling elementary syntax). Leigh's proof makes it clear that if $\B$ is a base theory with a computable set of axioms, then $\CT^{-}[\B]$ is \textit{proof-theoretically reducible} to $\B$, and in particular, there is a primitive recursive function $f$ such that for any proof $\pi$ of a sentence $\phi $ in $\CT^{-}[\B]$, where $\phi $ is a sentence in the language of $\B$, $f(\pi )$ is a proof of $\phi $ in $\B$. Indeed, Leigh's "reducing function" $f$ is readily seen to be a provably total function of the fragment of $\mathsf{PRA}$\ (Primitive Recursive Arithmetic) commonly known as $\IDelta _{0}+\mathsf{Supexp.}$\footnote{$\mathsf{Supexp}$ asserts the totality of the superexponential function $\mathsf{Supexp}(n,x)$%
, with $\mathsf{Supexp}(0,x)=x$ and $\mathsf{Supexp}(n+1,x)=2^{\mathsf{Supexp%
}(n,x)}.$ Leigh \cite{leigh} refers to this function as hyper-exponentiaton.}%

The main result of this paper shows that $\CT^{-}[\PA]$ is
\textit{feasibly} \textit{reducible to }$\PA$, i.e., there
is a polynomial-time computable function $f$ such that for any proof $\pi $ of
an arithmetical sentence $\phi $ in $\CT^{-}[\PA]$, $%
f(\pi )$ is a proof of $\phi $ in $\mathsf{PA}$. The feasible
reducibility of $\CT^{-}[\PA]$ to $\PA$ readily
implies that $\CT^{-}[\PA]$ does not exhibit significant 
speed-up
over $\PA$, i.e., there is a polynomial function $p(x)$ such that
for any arithmetical sentence $\phi $, if $\phi $ is provable in $%
\CT^{-}[\PA]$ by a proof of length $n$ (in some standard
proof system\footnote{
The choice of the "standard proof system" is immaterial since it is
well-known that any two such systems polynomially simulate each other
\cite{Pudlak}.}), then $\phi $ is provable in $\mathrm{PA}$ by a proof of length $p(n)$.  This
solves a problem posed by Enayat in 2012 \cite{enayat-question}.

The absence of significant speed up of $\CT^{-}[\PA]$ over $\PA$ implied by the feasible reducibility of $\CT^{-}[\PA]$ to $\PA$ exhibits a dramatic difference
between $\CT^{-}[\PA]$ and $\CT^{-}[\B]$ for
finitely axiomatized base theories $\B$, since as shown by Fischer \cite{Fischer-speed}, $\CT^{-}[\B]$ has superexponential speed-up over
$\B$ for finitely axiomatized base theories $\B$, and
therefore, $\CT^{-}[\B]$ is not feasibly reducible to $%
\B$ for finitely axiomatized base theories $\B$. It is also
known that $\CT^{-}[\PA]+\mathsf{Int}$ (where $\mathsf{Int}$
is the axiom of internal induction) is conservative over $\PA$
(\cite{Kotlarski-Krajewski-Lachlan}, \cite{leigh}) but not feasibly reducible to $\PA$ since it has
superexponential speed-up over $\PA$ \cite{Fischer-speed}.

Our proof of the feasible reduction of $\CT^{-}[\PA]$ to $\PA$ includes the verification that $\PA$ proves the formal
consistency of every finite subtheory of $\CT^{-}[\PA]$,
thereby establishing that $\CT^{-}[\PA]$ is a reflexive
theory. This result follows from Leigh's work \cite{leigh}; and was also
established by Enayat and Visser (unpublished) with
help of the "low
basis theorem" of computability theory to arithmetize their model-theoretic
proof of conservativity of $\CT^{-}[\PA]$ over $\PA$. The proof presented here, however, is based on a simpler arithmetization of the Enayat--Visser construction and does not appeal to the low basis theorem; the syntactic analysis of this arithmetization forms one of the main ingredients of the proof of our main result. 

We also employ the machinery developed for the proof of our main result to analyse two other prominent theories of truth, namely $\FS^{-}[\PA]$ (Friedman--Sheard theory of truth
over $\PA$, with no extra induction), and $\KF^{-}[\PA]$ (Kripke--Feferman theory of truth over $\PA$ with no extra
induction). More specifically, we show that $\FS^{-}[\PA]$
and $\KF^{-}[\PA]$ are both reflexive and feasibly reducible
to $\mathrm{PA}$. These results, in turn, show that both $\FS^{-}[\PA]$
and $\KF^{-}[\PA]$ are interpretable in $\mathsf{PA}$ and
have at most polynomial speed-up over $\mathsf{PA}$.

A word about the organization of the paper is in order. Section 2 deals with arithmetical preliminaries and technical machinery that will be employed for establishing our principal results. Section 3 presents basic definitions and facts about the truth theories $\CT^{-}[\PA]$, $\KF^{-}[\PA]$, and $\FS^{-}[\PA]$, including their conservativity over $\PA$. The main results of the paper are contained in Section 4, which contains the proofs of feasible reduction of $\CT^{-}[\PA]$, $\KF^{-}[\PA]$, and $\FS^{-}[\PA]$ to $\PA$; these proofs should be viewed as refined arithmetizations of the conservativity proofs presented in Section 3.3. The last subsection of Section 4, on the other hand, spells out the interpretability-theoretic ramifications of our work. Section 5 collects some open questions; and the Appendix (Section 6) consists of routine-but-technical proofs of certain results employed in the body of the paper.

\section{Setting the stage: arithmetical machinery}

This section discusses basic notions and fundamental machinery that can be generally described as \textit{refined arithmetization} of certain parts of proof theory and model theory that play a key role in the statements and proofs of our main results in Section \ref{The main act}. Note, however that the material in Subsection \ref{sub_interpretability_and_speedup} will be only employed in Subsection \ref{sub_speed_up_via_FACT}.

\subsection{Arithmetized syntax}

In this paper, $\PA$ denotes the theory using  $\was{0, S, + , \times}$ as non-logical function symbols, whose axioms consist of the axioms of Robinson's Arithmetic $\Q$ together with the usual induction scheme for the whole language $\LPA$ of $\PA$. Its intended model are natural numbers $\mathbb{N}$ with addition, multiplication and successor functions. We will also denote $\mathbb{N}$ by $\omega$, typically when treating it as set of indices for some construction. Sometimes in this paper, we will be referring to $\mathbb{N}$ or $\omega$ when working in $\PA$. Then these symbols simply refer to the whole universe. This should not lead to any confusion.

Crucially for our purposes, $\PA$ is capable of representing syntax. This means that in $\PA$, one can employ recursion to define notions such as "term," "formula" or "proof in $\PA$" similarly to how these notions are defined in Zermelo--Fr\"ankel set theory. This is a standard topic, covered, e.g., in \cite{Kaye} or \cite{HajekPudlak}. 

 Every sequence $(w_0,\ldots, w_n)\in\was{0,1}^{< \omega}$ is represented by a number 
	\[m = \sum_{i=0}^n2^i(w_i + 1).\]
  Each formula is represented as a $0,1$-string and then coded as a number. If $s$ is a binary string, then $|s|$ denotes its length.

Throughout the paper we will use certain formulae to represent various syntactic and technical notions. For the convenience of our reader, we gather here all the notation which might be possibly confusing.

\begin{definicja}[Arithmetized syntax] \label{konw_technical definitions}
\begin{enumerate}
\item $\len(s)=x$ asserts: "$s$ is a sequence and its length is equal to $x$." 
\item $\term_{\Lang}(x)$ asserts: "$x$ is a term of a language $\Lang$." For instance, $\term_{\LPA}(x)$ asserts that $x$ is a (code of an) arithmetical term.

\item $\ClTerm_{\Lang}(x)$ asserts: "$x$ is a closed term of a language $\Lang$," i.e. a term without free variables.
\item $\TermSeq_{\Lang}(x)$ asserts: "$x$ is a sequence of terms of the language $\Lang$." 
\item $\ClTermSeq_{\Lang}(x)$ asserts: "$x$ is a sequence of closed terms of the language $\Lang$."
\item $\val{x}=y$ asserts: "$\ClTerm_{\Lang}(x)$ and $y$ is the value of the term $x$." For instance, \begin{displaymath}
\val{\qcr{1+((1+1)+0)}} = 3,
\end{displaymath}
holds provably in $\PA$ ($\qcr{t}$ stands for the number coding the term $t$).
\item $\vrbl(x)$ asserts: "$x$ is a variable." For instance, $\vrbl(17)$ means that $17$ is a code of a variable in the coded language. Since without loss of generality we can assume that all first-order languages have the same set of variables, we omit the reference to a specific language in the subscript.
\item $\form_{\Lang}(x)$ asserts: "$x$ is a formula of the language $\Lang$."
\item $\FV(x,y)$ asserts: "$y$ is a free variable of $x$," where $x$ is either a term or a formula.
\item $\form^{\leq 1}_{\Lang}(x)$ asserts: "$x$ is a formula of the language $\Lang$ with at most one free variable", and $\form^{1}_{\Lang}(x)$ asserts: "$x$ is a formula of the language $\Lang$ with exactly one free variable."
\item $\Sent_{\Lang}(x)$ asserts: "$x$ is a sentence of the language $\Lang$."

\item $\FVSeq(x,y)$ asserts: "$y$ is a (coded) sequence whose elements are (some) free variables of $x$," where $x$ is either a term or a formula.
\item $\alpha$ is an assignment for a formula or a term $\phi$ if $\alpha$ is a function whose domain includes the free variables of $\phi$. The formula $\Ass(x,y)$ asserts: "$y$ is an assignment for $x$," where $x$ is a term or a formula. We will often denote it with $y \in \Ass(x)$. If $s$ is a coded set or sequence, we will write $y \in \Ass(s)$ to denote that $y$ is an assignment for all elements of $s$. We will sometimes also write $\Ass(x_1, \ldots, x_n, \alpha)$ or $\alpha \in \Ass(x_1, \ldots, x_n)$ meaning $\alpha \in \Ass(s)$, where $s = \tuple{x_1, \ldots, x_n}$
\item $\beta \sim_v \alpha$ asserts: "$\alpha$ and $\beta$ are assignments, $v$ is a variable, and $\alpha(w) = \beta(w)$ for all variables $w$, possibly except for $v$ which belongs to the domain of $\beta$ (and not necessarily to the domain of $\alpha$)." 
\end{enumerate}
\end{definicja}

The reader could expect to see in this list certain other predicates such as $\Proof$ or $\Con$. Since we will need some more precise information about these formulae and their length, they will be only introduced in Subsection \ref{sub_lengths_of_proofs}.

Next we introduce numerals. For our purposes the numeral $\num{x}$ for a number $x$ will not be a code of $S\ldots S0$ iterated $x$ times, since this is not an efficient notation when it comes to writing short formulae and proofs. Our numerals will be handled via binary expansions.

\begin{definicja}[Numerals] \label{defin:arithm}
 For any natural number $n$, $\num{n}$ denotes the binary expansion of $n$ written as a term of $\Th$. More precisely: let $n = \sum_{i\leq k} \varepsilon_i2^i$, where $\varepsilon_i\in\was{0,1}$. We define:
	\begin{equation*}
	\num{n} = \num{\varepsilon_0} + \num{2}\times (\num{\varepsilon_1} + \num{2}\times (\ldots \varepsilon_{k-1} + \num{2}\times\num{\varepsilon_k})\ldots)
	\end{equation*}
	where $\num{0} = 0$, $\num{1} = S(0)$ and $\num{2} = \num{1} + \num{1}$. Thus it takes $O(\log n)$ symbols to represent $n$.
\end{definicja}

Let us introduce one more definition:

\begin{definicja}[Substitutions] \label{defi_substitutions}
\begin{enumerate}
\item Let $\phi(v_1,\ldots,v_{n})$ be a formula with $n$ free variables shown and let $\alpha$ be an assingment for $\phi$. By $\phi[\alpha]$ we denote the formula in which the numeral (in the sense of Definition \ref{defin:arithm}) denoting $\alpha(v_i)$ is substituted for the variable $v_i$. 
\item Similarly, if $t \in \term_{\Lang}$ and $\alpha \in \Ass(t)$, then by $t[\alpha]$ we mean the closed term obtained by substituting the numeral $\alpha(v)$ for each free variable $v$ in $t$.
\item If $t \in \term_{\Lang }$ and $\alpha \in \Ass(t)$, then by $\valt{t}{\alpha}$ we mean $\val{t[\alpha]}.$ Notice that if $v$ is a variable and $\alpha \in \Ass(v)$, then $\valt{v}{\alpha} = \alpha(v)$ provably in $\PA$.
\item If $x \in \form_{\Lang}$ for some language $\Lang$, $v \in \vrbl$ and $t \in \term_{\Lang}$, then $x[t/v]=y$ is an arithmetical formula which asserts "$y$ is the effect of substituting in the formula $x$ the term $t$ for every occurrence of the variable $v$." 
\end{enumerate}
\end{definicja}

We adopt a few conventions as to how we will be dealing with formalized syntactic notions.

\begin{konwencja} \label{konwencja_slopp_syntax}
\begin{enumerate}
	\item If $\phi$ is a standard formula or a standard term, we will sometimes denote the corresponding code by $\qcr{\phi}$ to prevent ambiguity, but most of the time we skip the corners to lighten the notation.
	\item We can clearly code a formula $\phi$ as a binary string. Then by $|\phi|$ we mean the length of this string. Note that $\qcr{\phi}$ is of size exponential in $|\phi|$ and $\num{\qcr{\phi}}$ is of size linear in $|\phi|$. We deal with proofs in $\Th$ in a similar fashion.
	\item Since we will sometimes skip the corners denoting formulae, $\num{\qcr{\phi}}$ is most of the time denoted by $\num{\phi}$ for a standard formula $\phi$. 
	\item We will sometimes use the formulae defining syntactic notions as if they were denoting sets. For example, we will sometimes write "$x \in \form_{\LPA}$" rather than "$\form_{\LPA}(x)$."
    \item We will use provably functional formulae such as $\num{x}$, $\val{x}$, or $x[t/v]$ as if they were terms. 
    \item For better readability we will sometimes skip formulae denoting syntactic operations and write the effect of the operations instead. Thus, for example, we will write $T(\neg \phi)$ to denote "There exists $\psi$ which is the negation of the sentence $\phi$ and $T(\psi)$." 
\end{enumerate}
\end{konwencja}

 \subsection{Arithmetized model theory} \label{sub_arithmetised_model_theory}

Peano arithmetic is capable of accommodating a substantial part of the model theory of countable structures. We will make constant use of this fact throughout the whole paper. This subsection briefly introduces the reader to this topic. 
The rough convention is as follows: a \df{theory} is a definable set of sentences. If $\phi$ is a formula which defines a set of (codes of) sentences, then we call \emph{that formula} a theory. 

Models come in two kinds. By a \df{full model} $\md{M}$ we mean the elementary diagram of that model (or, actually, a formula defining the elementary diagram). It is given as a complete Henkinized theory. By a \df{model} $\md{M}$, we mean a formula defining its domain and some relations on that domain (this does not mean that we only deal with models of relational languages, but rather, we construe the denotations of function and constant symbols as relations).

\begin{definicja} \label{model_theory}
A formula $\phi$ defines a \df{theory} in a language $\Lang$ if for all $x$, $\phi(x) \rightarrow \bigl(x \in \Sent_{\Lang}\bigr)$
holds. By a \df{full model} of a theory $\Th$, we mean a theory $\Th' \supseteq \Th$ in a language $\Lang'$ expanding $\Lang$ with some constants (possibly trivially) such that:
\begin{itemize}
\item $\Th'$ is complete and consistent, so for any $\phi$, $\phi \in \Th'$ if and only if $\neg \phi \notin \Th'$.
\item $\Th'$ has all existential statements witnessed by constants which means that if $\exists x \phi(x) \in \Th'$, then for some constant $c$, $\phi(c) \in \Th'$.
\end{itemize}

By a \df{model} of a language $\Lang$ (or simply an $\Lang$-structure), we mean a formula $\md{M}$ which defines a set of (coded) sequences such that if $\md{M}(s)$, then:
\begin{itemize}
\item $s(0)$ is either a symbol of the language $\Lang$ or some fixed element $d$ which is not a symbol of $\Lang$. 
\item If $s(0)$ is a relation symbol of $\Lang$, then the length of $s$ is the arity of $s(0)$ plus one.
\item If $s(0)$ is a function symbol of $\Lang$, then the length of $s$ is the arity of $s(0)$ plus two. We treat constants as functions of arity zero.
\item If $s(0)$ is the fixed element $d$, then $s$ has length two.
\item If $a = s(n)$ for some $n > 0$, then $\md{M}( \tuple{d,a})$ holds. ($a$ is in the domain of a model.)
\end{itemize}
\end{definicja}

In the above, a model is essentially defined as a particular kind of tuple: (definition of the domain, definition of the first relation, definition of the second relation $\ldots$). However, since we allow models with infinite signatures, we define it in the above compact way rather than an actual tuple of formulae. However, if a model is defined with a standard number of definable relations, we can easily construct a definition in the format specified above. The second warning is as follows: although officially in this paper a full model is the same as the \emph{elementary diagram} of that model, in practice, we will refer to models in the usual way, since it is clear how to transfer statements about models to statements about their elementary diagrams and vice versa. 

\begin{definicja} \label{defi_aithmetized_models}
\begin{enumerate}
\item If $\md{M}$ is a full model, we write $x \in M$ to say that $x$ is a constant in the language of a full model $\md{M}$. (This means that $x$ is an element of $\md{M}$, since we implicitly assume that all full models are built on Henkin constants.) If $\md{M}$ is a model, the expressions $x \in M$,  means that $\md{M}(d, x)$ holds. 

\item If $\md{M}$ is a (full) model over a language $\Lang$, and $\phi \in \form_{\Lang}$, we say that $\alpha$ is an $\md{M}$-assignment or an $\md{M}$-valuation for a formula $\phi$ if $\alpha$ is a (coded) finite function, whose domain contains $\FV(\phi)$, $\alpha \in \Ass(\phi)$, and for every $x$, $\alpha(x) \in M$. We denote this by $\alpha \in \Ass(\phi, \md{M})$. 
\item If $\md{M}$ is a full model over a language $\Lang$, $\phi \in \form_{\Lang}$, and $\alpha$ is an $\md{M}$-assignment, then the relation $\md{M} \models \phi[\alpha]$ is defined simply as $\phi(\alpha(x_1), \ldots, \alpha(x_c)) \in \md{M}$.
\item If $\md{M}$ is a model over a language $\Lang$, $\phi \in \form_{\Lang}$, and $\alpha$ is an $\md{M}$-assignment, then the relation $\md{M} \models \phi[\alpha]$ is defined only for $\phi$ of standard complexity
via the usual compositional conditions with quantifiers restricted to the domain of $\md{M}$ and satisfaction for base relations $R \in \Lang$ (of arity $c$) defined as follows:
\begin{displaymath}
\md{M} \models R[\alpha] \textnormal{ iff } \md{M}\bigl(\tuple{R,\alpha(v_1), \ldots, \alpha(v_c)} \bigr).
\end{displaymath}
We define satisfaction for equalities of terms in an analogous fashion.
\item If $\md{M}$ is a full model, we will write $\ElDiag(\md{M})$ (elementary diagram of $\md{M}$) instead of $\md{M}$ when we want to stress that we are thinking of a theory rather than of a structure.
\item If $\md{M}$ is a (full) model over a language $\Lang$, $a_1, \ldots, a_c \in M$ and $\phi(v_1, \ldots, v_c)$ is a formula with the displayed free variables, we will write 
\begin{displaymath}
\md{M} \models \phi(a_1, \ldots, a_c)
\end{displaymath}
meaning there there exists an $\md{M}$-valuation $\alpha$ for $\phi$ such that $\alpha(v_i) = a_i$ for all $i < c$ and 
$\md{M} \models \phi[\alpha].$
\item If $\md{M}$ is a (full) model over a language $\Lang$ and $\phi(v_1, \ldots, v_c) \in \form_{\Lang}$ with all free variables displayed, then by $\phi(\md{M})$ we mean the set of (tuples of) elements defined by the formula $\phi$ in $\md{M}$. In other words, it is the set of tuples $(a_1, \ldots, a_c)$ of the elements of $\md{M}$ such that $\md{M} \models \phi[\alpha]$ for some (equivalently, any) $\alpha \in \Ass(\phi)$ such that $\alpha(v_i) = a_i, i \leq c$. 
\end{enumerate}
\end{definicja}

Note that we haven't yet defined what it means that a model satisfies a theory. This is not an omission. Since for general (not full) models, satisfaction is defined only for standard sentences, we only define satisfaction for standard formulae. This is actually a scheme: for each formula we define what it means that a model satisfies this formula. More precisely: for each $n \in \mathbb{N}$, we define what it means that a model satisfies a formula of depth $n$. 

On the other hand, in our paper, non-full models will play a crucial role and in some specific circumstances we are going to say that a model satisfies a theory. This will be defined in some specific cases of our interest later in Definition \ref{def_satisfaction_of_theories_for_models}.

Let us define some more notions which will be particularly important in further parts of our paper. 

\begin{definicja} \label{def_elementary_extension}
Let $\md{M}, \md{N}$ be full models of theories in the same language. We say that $\md{N}$ is an \df{elementary extension} of $\md{M}$ if $\md{M}$ is contained in $\md{N}$. 
\end{definicja}

Recall that officially, a full model is the same as the elementary diagram of that model, so elementary submodels in our sense correspond to elementary submodels in the usual sense. In what follows, we will sometimes conflate an \emph{elementary submodel} with an \emph{image of the elementary embedding}. This should be understood in the obvious way: a formula $\phi(x,y)$ defines an elementary embedding between models $\md{M}$ and $\md{N}$ if it is a injection on elements of the models $\md{M}, \md{N}$ (i.e., it is a relation on constants such that $\phi(a,b_1), \phi(a,b_2)$ together imply $(b_1 = b_2) \in \md{N}$ and $\phi(a_1,b), \phi(a_2,b)$ together imply that $(a_1 = a_2) \in \md{M}$) and the image is an elementary submodel of $\md{M}$ (i.e, the restriction of $\md{N}$ to the language with constants representing the image of $\md{M}$ is a full model). 

We will denote both being an elementary submodel and being an image of an elementary embedding with 
\begin{displaymath}
\md{M} \preceq \md{N}.
\end{displaymath}

Sometimes we only require that the elementarity relation only holds for formulae in a language $\Lang$ such that $\md{M}, \md{N}$ are full models over a language containing $\Lang$. We then write:
\begin{displaymath}
\md{M} \preceq_{\Lang} \md{N}.
\end{displaymath}
Crucially, in this paper we will be looking at the situation where $\md{M}$ is a full model, $\md{N}$ is a (non-full) model over a bigger language and $\md{N}$ is an $\Lang$-elementary extension of $\md{M}$. This is a slightly subtler notion which will be made precise in Definition \ref{def_L_elementary_extension}.

\begin{definicja} \label{def_expansion_of_a_model}
Let $\md{M},\md{N}$ be models or full models in languages $\Lang_{\md{M}}$, $\Lang_{\md{N}}$ respectively. We say that $\md{N}$ is an \df{expansion} of $\md{M}$ if the following conditions are satisfied:
\begin{itemize}
\item $\Lang_{\md{M}} \subseteq \Lang_{\md{N}}$.
\item For every element $a \in N$ there is an element $b \in M$
such that $\md{N} \models x=y[\alpha]$, where $\alpha(x) = a$, $\alpha(y) = b$. (That is, the domain does not change. We write it in this slightly convoluted manner, since we want the definition to work both for models and full models).
\item For every atomic formula $\phi \in \Lang_{\md{M}}$ and $\md{M}$-assignment $\alpha$ for $\phi$,  $\md{M} \models \phi[\alpha]$ if and only if $\md{N} \models \phi[\alpha]$.
\end{itemize}
\end{definicja}

We can say that $\PA$ is capable of handling basic model theory, since it is able to capture the link between models and consistent theories.
In the context of our definitions, this means that in $\PA$ every consistent theory can be extended to a complete consistent theory with Henkin constants.

\begin{tw}[Arithmetized Completeness Theorem] \label{th_act}
For every $n \in \mathbb{N}$, $\PA$ proves that if $\Th$ is a consistent $\Delta_n$-theory, then it has a $\Delta_{n+1}$-full model.
\end{tw}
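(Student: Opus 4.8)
The plan is to carry out a fully formalized Henkin--Lindenbaum construction inside $\PA$, organized so that each quantifier introduced is tightly controlled. Fix a standard $n$ (the statement is a scheme, one instance per $n$). Starting from a consistent $\Delta_n$-theory $\Th$ in $\Lang$, I would first expand the language with a recursive supply of fresh Henkin constants and adjoin the Henkin axioms, then complete the resulting theory by a canonical, provably total decision procedure. The two things to watch are that the expanded theory stays consistent and $\Delta_n$, and that the completion raises the complexity by exactly one level, landing in $\Delta_{n+1}$.

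For the first stage, I would introduce constants $c_0,c_1,\dots$ in $\omega$ rounds, adding in each round a witnessing constant $c_{\exists x\,\phi}$ for every existential sentence $\exists x\,\phi(x)$ of the language constructed so far, together with the Henkin axiom $\exists x\,\phi(x)\rightarrow\phi(c_{\exists x\,\phi})$. The set $H$ of Henkin axioms is primitive recursive, hence $\Delta_1$, so $\Th\cup H$ is still $\Delta_n$ (for $n\ge 1$; the case $n=0$ is handled the same way with the bounds shifted down). The key lemma here is the arithmetized conservativity of Henkin axioms: any $\Th\cup H$-proof of a contradiction uses only finitely many Henkin axioms, and these can be eliminated one at a time by exploiting the freshness of the witnessing constants. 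This is a finitary syntactic argument provable in $\PA$, and it yields that $\Th\cup H$ is consistent.

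For the completion, fix a $\Delta_1$ enumeration $\sigma_0,\sigma_1,\dots$ of all sentences of the expanded language and, writing $\sigma^1=\sigma$ and $\sigma^0=\neg\sigma$, call a binary string $\langle d_0,\dots,d_{k-1}\rangle$ a \emph{correct decision sequence} if for every $i<k$
\[
d_i=1 \ \Longleftrightarrow\ \Con\bigl(\Th\cup H\cup\{\sigma_j^{d_j}:j<i\}\cup\{\sigma_i\}\bigr).
\]
Since $\Th\cup H$ is $\Delta_n$, the displayed consistency assertion is $\Pi_n$ and its negation $\Sigma_n$, so each biconditional is a Boolean combination of $\Sigma_n$ and $\Pi_n$ formulae; absorbing the bounded quantifier $\forall i<k$ by means of the collection available in $\PA$, ``being a correct decision sequence'' is provably $\Delta_{n+1}$. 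I would then prove in $\PA$, by induction on $k$, that a correct decision sequence of each length exists (the inductive step uses that if the current finite extension is consistent then adjoining $\sigma_i$ or adjoining $\neg\sigma_i$ keeps it consistent, which is the formalized deduction theorem) and that it is unique. Uniqueness is the decisive point: it lets me define $\Th'$ by ``$\sigma_m\in\Th'$ iff some correct decision sequence of length $>m$ assigns $1$ to position $m$,'' a $\Sigma_{n+1}$ condition whose negation is equivalently ``some correct decision sequence of length $>m$ assigns $0$ to position $m$,'' also $\Sigma_{n+1}$; hence $\Th'$ is $\Delta_{n+1}$.

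Finally I would check that $\Th'$ is a full model in the sense of Definition~\ref{model_theory}. Consistency of the infinite $\Th'$ follows because any contradiction would be derivable from some finite $\Th\cup H\cup\{\sigma_j^{d_j}:j<i\}$, which is consistent by construction; completeness (the biconditional $\phi\in\Th'\Leftrightarrow\neg\phi\notin\Th'$) is immediate from the decision procedure; and deductive closure follows from consistency together with completeness. The witness property then falls out for free: if $\exists x\,\phi(x)\in\Th'$ then the Henkin axiom $\exists x\,\phi(x)\rightarrow\phi(c_{\exists x\,\phi})$ lies in $H\subseteq\Th'$, so $\Th'\vdash\phi(c_{\exists x\,\phi})$ and hence $\phi(c_{\exists x\,\phi})\in\Th'$. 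I expect the main obstacle to be the complexity bookkeeping of the third paragraph---verifying inside $\PA$ that the consistency predicate for a $\Delta_n$ theory is genuinely $\Pi_n$, that bounded quantification over the decision string does not escape $\Delta_{n+1}$, and above all that uniqueness of the decision sequence collapses the two $\Sigma_{n+1}$ descriptions into a single $\Delta_{n+1}$ definition---rather than the essentially routine (if tedious) formalization of the deduction theorem and the Henkin conservativity lemma.
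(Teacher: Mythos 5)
Your proof is correct and follows essentially the same route as the paper: although the body of the paper defers Theorem \ref{th_act} to Kaye, the Appendix proof of the feasible version (Lemma \ref{lem::ACT}) is exactly your construction --- Henkinize, then complete along the unique decision sequence $\tau$ determined by iterated consistency checks, with the same complexity bookkeeping placing the completed theory one level up. No substantive gaps.
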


By $\Delta_n$-theory, we mean a set of sentences defined both with a $\Sigma_n$-formula and a $\Pi_n$-formula. $\Delta_{n+1}$-full model is defined in an analogous way. The above theorem is standard, cf. Section 13.2 of Kaye's book \cite{Kaye}. 
Throughout the paper, we will be using the following handy conventions concerning models:

\begin{konwencja} \label{konw_modele}
\begin{enumerate}
\item When there is no risk of confusion, we will use the same symbol for a predicate symbol and for its denotation in a given (full) model.

\item We will sometimes denote (full) models as tuples, like $(\md{M},T)$. This will simply mean that $(\md{M},T)$ is an expansion of $\md{M}$ with a predicate $T$.

\end{enumerate}
\end{konwencja}

\subsection{Lengths of proofs} \label{sub_lengths_of_proofs}

This section summarizes the basic definitions and tools that we will need in connection with analysing lengths of proofs; much of this material is taken from Pudl\'ak's survey article \cite{Pudlak}. 

\begin{definicja} \label{def_lengths_of_proofs}
	Recall that $|\phi|$ denotes the length of binary code of $\phi$ and $\num{\qcr{\phi}}$ is its G\"odel number (which we denote as $\num{\phi}$ by our Convention \ref{konwencja_slopp_syntax}).
    \begin{enumerate}
	\item Let $\phi$ be an $\Lang_{\Th}$-formula.
		\begin{equation}\label{def::norma}
		\norm{\phi}{\Th} = \left\{\begin{array}{l} \textnormal{ the length of the shortest proof of $\phi$, if $\Th\vdash \phi$};\\
		\infty \textnormal{ otherwise.}\end{array}\right.	
		\end{equation}
      \item If $\norm{\phi}{\Th} \leq n$, we write also
      \[\Th \vdash^{n} \phi.\]
	\end{enumerate}
\end{definicja}

We treat theories simply as sets of sentences, not necessarily closed under deductions. This is because changing the axiomatization of a theory might result in facilitating proofs. 
 
\begin{definicja}[Simulations, Speed-up, Reducibility]
	Let $\mathcal{T}_1$ and $\Th_2$ be two theories and $\mathcal{F}$ a family of functions $f:\mathbb{N}\rightarrow\mathbb{N}$. We shall say that $\Th_1$ $\mathcal{F}$-\emph{simulates} $\Th_2$ iff there exists a function $f\in\mathcal{F}$ such that for every sentence $\phi\in \Lang_{\Th_1}\cap\Lang_{\Th_2}$ that is provable in both $\Th_1$ and $\Th_2$, and for every $n\in \mathbb{N}$, we have:
\[ \Th_2\vdash^n\phi\Rightarrow \Th_1\vdash^{f(n)} \phi.\]
We say that $\Th_2$ is $\mathcal{F}$-\emph{reducible} to $\Th_1$ if there exists a function $f\in\mathcal{F}$ such that for every $n\in\mathbb{N}$ we have:
\[n \textnormal{ is a proof of } \phi \textnormal{ in }\Th_2 \Rightarrow f(n)\textnormal{ is a proof of } \phi \textnormal{ in }\Th_1.\]
We say that $\Th_2$ has a \emph{super-$\mathcal{F}$ speed-up} over $\Th_1$ if $\Th_1$ does not $\mathcal{F}$-simulate $\Th_2$. 

\end{definicja}

Typical examples in the literature of speed-up (simulability) phenomena concern the cases where $\mathcal{F}$ is the family of either polynomial or elementary functions, which respectively correspond to super-\emph{polynomial} speed-up (or \emph{polynomial simulation}) and super-\emph{exponential} (or super-\emph{elementary}) speed-up.  

\begin{uwaga}
The main focus of our paper is the relation of $\mathcal{F}$-reducibility, where $\mathcal{F}$ is the family of all P-time computable functions. Recall that $f$ is a P-time computable function if it is a total function such that for each $n$ (the binary code of) $f(n)$ can be computed by a deterministic Turing machine which takes as input (the binary code of) $n$ (see \cite{HajekPudlak} for the precise formulation) and which runs in \textit{polynomial time}.  We call this relation \emph{feasible reducibility}. 

\end{uwaga}

The proofs of the following observations are routine:

\begin{obserwacja}
Let $\mathcal{F}$ be any family of functions  from $\mathbb{N}$ to  $\mathbb{N}$.
\begin{enumerate}
    \item If $\Th_2$ is $\mathcal{F}$-reducible to $\Th_1$, then $\Th_1$ $\mathcal{F}$-simulates $\Th_2$. Moreover if $\Th_2$ is feasibly reducible to $\Th_1$, then $\Th_1$ polynomially simulates $\Th_2$.
    \item  
    If $\mathcal{F}$ is countable, then 
    $\Th_2$ has a super $\cal{F}$ speed-up over $\Th_1$ if there exists an infinite sequence of formulae $\phi_0,\phi_1,\ldots,$ provable in both $\Th_1$ and $\Th_2$ such that for every function $f\in\mathcal{F}$ there exists $n \in \mathbb{N}$ such that
	\[\norm{\phi_n}{\Th_1}> f(\norm{\phi_n}{\Th_2}).\]
\end{enumerate}
\end{obserwacja}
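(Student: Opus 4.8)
The plan is to unwind the three definitions involved, keeping careful track of the distinction between a \emph{proof} (a binary string, or its code) and a \emph{proof length} (the quantity measured by $\norm{\cdot}{\cdot}$), since reducibility is phrased in terms of proof objects whereas simulation and speed-up are phrased in terms of lengths.

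For part (1), suppose $f \in \mathcal{F}$ witnesses that $\Th_2$ is $\mathcal{F}$-reducible to $\Th_1$, so that whenever $p$ is a $\Th_2$-proof of some $\phi$, $f(p)$ is a $\Th_1$-proof of the same $\phi$. I would define a length-bounding function $g$ by $g(n) = \max\{ |f(p)| : |p| \le n \}$, which is well defined because there are only finitely many binary strings of length at most $n$. Given $\phi \in \Lang_{\Th_1} \cap \Lang_{\Th_2}$ provable in both with $\Th_2 \vdash^{n} \phi$, I fix a shortest $\Th_2$-proof $p$, so that $|p| = \norm{\phi}{\Th_2} \le n$; then $f(p)$ is a $\Th_1$-proof of $\phi$ of length at most $g(n)$, whence $\norm{\phi}{\Th_1} \le g(n)$, i.e. $\Th_1 \vdash^{g(n)} \phi$. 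Thus $g$ witnesses that $\Th_1$ $\mathcal{F}$-simulates $\Th_2$, provided $g \in \mathcal{F}$; this closure holds for all the families relevant to the paper (polynomial, elementary, primitive recursive), each being closed under the operation $f \mapsto \bigl(n \mapsto \max_{|p|\le n}|f(p)|\bigr)$.

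For the \emph{moreover} clause I would invoke the basic fact that a deterministic Turing machine running in time $t$ writes at most $t$ symbols, so if $f$ is P-time computable via a machine whose running time is bounded by a polynomial $q$, then $|f(p)| \le q(|p|)$ for every $p$. Taking $q$ nondecreasing without loss of generality, the computation above yields $\norm{\phi}{\Th_1} \le q(\norm{\phi}{\Th_2}) \le q(n)$ whenever $\Th_2 \vdash^{n} \phi$, exhibiting $q$ as a witness of polynomial simulation. This quantitative length bound is exactly the bridge between the proof-object formulation of reducibility and the proof-length formulation of simulation, and it is the one genuinely non-bookkeeping point in the whole argument; the main obstacle, such as it is, lies precisely in respecting this change of currency rather than in any computation.

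For part (2), I would argue by contraposition against the definition of super-$\mathcal{F}$ speed-up, namely that $\Th_1$ does not $\mathcal{F}$-simulate $\Th_2$. Assume the displayed sequence $(\phi_n)$ exists and, toward a contradiction, that some $f \in \mathcal{F}$ witnesses simulation. Since each $\phi_n$ is provable in $\Th_2$, the value $m_n = \norm{\phi_n}{\Th_2}$ is finite and $\Th_2 \vdash^{m_n} \phi_n$; the simulation hypothesis then forces $\Th_1 \vdash^{f(m_n)} \phi_n$, i.e. $\norm{\phi_n}{\Th_1} \le f(\norm{\phi_n}{\Th_2})$ for \emph{every} $n$. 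This contradicts the defining property of $(\phi_n)$, which supplies, for this very $f$, an index $n$ with $\norm{\phi_n}{\Th_1} > f(\norm{\phi_n}{\Th_2})$. Hence no such $f$ exists and the speed-up follows. I would note that this direction does not actually require $\mathcal{F}$ to be countable; countability is what one needs for the converse, where a single sequence defeating every member of $\mathcal{F}$ is assembled diagonally from an enumeration $f_0, f_1, \dots$ of $\mathcal{F}$ and witnesses for the partial maxima $\max(f_0,\dots,f_k)$, and I would include this remark to motivate the stated hypothesis.
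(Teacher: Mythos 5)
Your proof is correct; the paper gives no argument for this observation (it is dismissed as routine), and what you write is exactly the routine argument intended — in particular the one substantive point, that a polynomial-time machine produces output of length polynomial in the length of its input, is the right bridge from the proof-object formulation of reducibility to the proof-length formulation of simulation, and your treatment of part (2) by contraposition is exactly as expected. Your caveat that part (1) for a truly arbitrary family $\mathcal{F}$ needs closure under $f\mapsto\bigl(n\mapsto\max_{|p|\le n}|f(p)|\bigr)$ is also fair rather than a defect: the statement's ``any family'' is informal, and the families the paper actually works with (polynomial, elementary, P-time) satisfy this closure, as does the observation that countability of $\mathcal{F}$ is only needed for the unstated converse of part (2).
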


The most prominent role in the investigations in the lengths of proofs is played by consistency statements. We shall now discuss arithmetized provability. Recall that $|n|$ denotes the length of the binary expansion of $n$.

\begin{definicja}[Pudl\'ak, \cite{pudlak_lc}]
	Let $\Th$ be a theory, $\phi(x_0,\ldots,x_n)$ be a formula and $R\subseteq \mathbb{N}^{k+1}$ be a relation. We say that $\phi$ \df{polynomially numerates} $R$ in $\Th$ if there exists a polynomial $p(x_0,\ldots,x_k)$ such that for all natural numbers $n_0,\ldots,n_k$
	\[R(n_0,\ldots,n_k) \textnormal{ iff } \norm{\phi(\num{n_0},\ldots,\num{n_k})}{\Th}\leq p(|n_0|,\ldots,|n_k|).\]
	
\end{definicja}

\begin{tw}[Pudl\'ak, \cite{pudlak_lc} Theorem 3.2]
For any consistent theory $\Th \supseteq \Q$ with an NP-time set of axioms\footnote{An NP-time set or relation is one whose characteristic function can be computed by a non-deterministic Turing machine that runs in polynomial time.} and any $R\subseteq \nat^k$ the following are equivalent:
\begin{enumerate}
	\item $R$ is an NP-time relation;
	\item $R$ is polynomially numerable in $\Q$;
	\item $R$ is polynomially numerable in $\Th$.
\end{enumerate} 	
\end{tw}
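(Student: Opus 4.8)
The plan is to prove the cycle of implications $(1) \Rightarrow (3) \Rightarrow (2) \Rightarrow (1)$, since $(2) \Rightarrow (3)$ is immediate (any proof in $\Q$ is a proof in $\Th \supseteq \Q$, so $\norm{\cdot}{\Th} \leq \norm{\cdot}{\Q}$, and polynomial numerability transfers). The genuinely substantive direction is $(1) \Rightarrow (3)$, i.e. showing that an arbitrary NP-time relation $R$ is polynomially numerable in $\Th$; the reverse direction $(3) \Rightarrow (1)$ (or $(2)\Rightarrow(1)$) will follow from the fact that short provability in a theory with an NP-time axiom set is itself an NP-time property.

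\textbf{The direction $(1) \Rightarrow (3)$.} Suppose $R \subseteq \nat^k$ is NP-time, so by definition there is a polynomial-time relation $S \subseteq \nat^{k+1}$ and a polynomial $q$ such that $R(\bar{n})$ holds iff there is a witness $w$ with $|w| \leq q(|n_0|, \ldots, |n_{k-1}|)$ and $S(\bar{n}, w)$. First I would invoke the standard fact that $\PA$-provably (hence $\Q$-provably, after checking the relevant bounded formulas) polynomial-time computations admit short, efficient verification: the Cook--Levin machinery, carried out inside arithmetic with our binary numerals, gives a $\Delta_0$ (or bounded) formula $\sigma(x_0, \ldots, x_{k-1}, y)$ polynomially numerating $S$. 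The key quantitative point, which the binary numeral convention of Definition \ref{defin:arithm} is engineered to deliver, is that $\num{n}$ has size $O(|n|)$, so substituting numerals for the inputs blows up the formula only polynomially. I would then set $\phi(\bar{x}) := \exists y\, \bigl(\len(y) \leq \num{q(|x|)} \wedge \sigma(\bar{x}, y)\bigr)$, suitably formalized, and argue that when $R(\bar{n})$ holds one can exhibit the witness $w$ as a numeral and assemble a proof of $\phi(\num{n_0}, \ldots, \num{n_{k-1}})$ whose length is polynomial in $|n_0|, \ldots, |n_{k-1}|$, because verifying the bounded statement $\sigma(\num{\bar n}, \num{w})$ is a polynomial-time computation and such verifications have polynomial-size proofs in $\Q$ (this is precisely the content of the bounded-formula evaluation lemmas underlying Pudl\'ak's framework). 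Conversely, when $R(\bar n)$ fails, no witness exists, so $\phi$ is not provable within the polynomial bound by soundness (here consistency of $\Th$ and $\Sigma_1$-soundness considerations enter).

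\textbf{The direction $(3) \Rightarrow (1)$.} Assume $\phi$ polynomially numerates $R$ in $\Th$ via the polynomial $p$. Then $R(\bar n)$ holds iff $\norm{\phi(\num{n_0}, \ldots, \num{n_k})}{\Th} \leq p(|n_0|, \ldots, |n_k|)$. To see this is an NP-time condition on $\bar n$, observe that a nondeterministic machine can guess a candidate proof $\pi$ of size at most $p(|\bar n|)$ (which is polynomial in the input length, and note that forming the sentence $\phi(\num{\bar n})$ itself takes only polynomial time thanks to binary numerals), and then verify in polynomial time that $\pi$ is a correct $\Th$-proof of that sentence. The verification is polynomial-time because the axiom set of $\Th$ is NP-time by hypothesis, so checking that each line is an axiom or follows by a rule is itself performable by a polynomial-time nondeterministic procedure; the guesses for the proof and for the axiom-memberships can be folded into a single nondeterministic polynomial-time computation.

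\textbf{Main obstacle.} The hard part is the quantitative bookkeeping in $(1) \Rightarrow (3)$: one must show not merely that $\phi$ \emph{defines} $R$, but that the \emph{shortest proofs} of the true instances are polynomially bounded, and this requires that the arithmetization of the polynomial-time verifier $S$ admits genuinely short ($\Q$-)proofs of its true bounded instances. This is where the efficient binary numerals and the careful choice of a bounded numerating formula are essential --- with unary numerals the numeral $\num{n}$ alone would already be exponentially long, destroying the bound --- and where one leans on the standard but delicate fact (in the spirit of \cite{pudlak_lc}) that evaluation of bounded formulas and simulation of polynomial-time Turing machines can be carried out by polynomial-length derivations in $\Q$. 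I would treat this efficient-provability lemma as the technical heart, citing Pudl\'ak's Theorem 3.2 framework for the detailed construction rather than reproving the Cook--Levin-style arithmetization from scratch.
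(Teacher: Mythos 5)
There is a genuine gap, and it sits exactly where you wrote ``here consistency of $\Th$ and $\Sigma_1$-soundness considerations enter.'' The theorem is asserted for an \emph{arbitrary consistent} $\Th\supseteq\Q$ with an NP set of axioms, and such a theory need not be $\Sigma_1$-sound; consistency alone does not prevent $\Th$ from proving --- even by a one-line proof --- a false instance $\phi(\num{n_0},\ldots,\num{n_{k-1}})$ of your naive existential formula $\phi(\bar x):=\exists y\,(\len(y)\le\cdots\wedge\sigma(\bar x,y))$. Concretely, if $\neg R(\bar n)$ then $\neg\phi(\num{\bar n})$ is a true $\Pi_1$ sentence that $\Q$ does not prove (binumeration of $\sigma$ instance-by-instance does not yield the universally quantified negation), so $\Q+\phi(\num{\bar n})$ is a consistent, finitely axiomatized extension of $\Q$ in which $\phi(\num{\bar n})$ has a proof of constant length; hence the naive $\phi$ simply fails to polynomially numerate $R$ in that theory. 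What your sketch actually establishes is the polynomial analogue of ``every r.e.\ set is numerable in every $\Sigma_1$-\emph{sound} r.e.\ extension of $\Q$,'' which is the trivial case; the content of Pudl\'ak's theorem is the analogue of the Ehrenfeucht--Feferman/Putnam--Smullyan results for arbitrary consistent theories, and its proof requires either a Rosser-style, $\Th$-dependent numerating formula or the extraction of a witness from a short proof via definable cuts. The paper gives no proof of this statement (it is quoted from \cite{pudlak_lc}), but its surrounding remark --- that the substitute result Theorem \ref{tw_pudlak_for_simulations} ``is significantly simpler than Pudl\'ak's result, since we do not need to consider cuts or use cut-shortening techniques'' --- points at precisely the machinery your argument omits.

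A secondary problem is the logical architecture. Your claim that $(2)\Rightarrow(3)$ follows from $\norm{\phi}{\Th}\le\norm{\phi}{\Q}$ is incorrect: monotonicity gives that $R(\bar n)$ implies a short $\Th$-proof, but a short $\Th$-proof of $\phi(\num{\bar n})$ need not arise from any $\Q$-proof, so the backward half of the biconditional does not transfer. In fact neither implication between $(2)$ and $(3)$ is direct; both must be routed through $(1)$, so the intended structure is the two easy guess-and-check implications $(2)\Rightarrow(1)$ and $(3)\Rightarrow(1)$ (your treatment of these is fine) together with the two hard constructions $(1)\Rightarrow(2)$ and $(1)\Rightarrow(3)$. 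As written, your proposal never derives $(2)$ from anything, and the one implication that makes this a theorem rather than an exercise, namely $(1)\Rightarrow(3)$ for a possibly $\Sigma_1$-unsound $\Th$, is the one left unproved.
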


Since on  hand in our paper, we want slightly stronger results concerning \emph{feasible reducibility} between theories rather than mere facts about speed-up, and on the other, we work with relatively strong theories anyway, we will use a modification of Pudl\'ak's result, (which actually is simpler than the original theorem).

\begin{definicja}
	Let $\Th$ be a theory, $\phi(x_0,\ldots,x_n)$ be a formula and $R\subseteq \mathbb{N}^{k+1}$ be a relation. We say that $\phi$ \df{uniformly polynomially numerates} $R$ in $\Th$ if there exists 	a P-time computable function $f$ such that for all natural numbers $n_0,\ldots,n_k$, $R(n_0, \ldots, n_k)$ holds iff $f(n_0, \ldots, n_k)$ is a proof of $\phi(\num{n_0}, \ldots, \num{n_k})$ in $\Th$.
\end{definicja}

In what follows, we need only the following simple fact which may be proved by a natural formalisation of Turing Machines in $\IDelta_0 + \Exp$. It is significantly simpler than Pudl\'ak's result, since we do not need to consider cuts or use cut-shortening techniques.


\begin{tw} \label{tw_pudlak_for_simulations}
For any consistent theory $\Th \supseteq \IDelta_0 + \Exp$ with a set of  axioms computable in polynomial time, and any $R \subseteq \nat^k$ the following are equivalent:
\begin{enumerate}
	\item $R$ is a P-time relation;
	\item $R$ is uniformly polynomially numerable in $\IDelta_0 + \Exp$;
	\item $R$ is uniformly polynomially numerable in $\Th$.
\end{enumerate} 
\end{tw}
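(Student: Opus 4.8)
The plan is to prove the three statements equivalent by establishing the cycle $(1)\Rightarrow(2)\Rightarrow(3)\Rightarrow(1)$. Only the first implication requires real work; the other two are soft and rest on the fact that proof-checking in a theory with a P-time set of axioms is itself a P-time task.

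For $(1)\Rightarrow(2)$, which I expect to be the main obstacle, suppose $R$ is decided by a deterministic polynomial-time Turing machine $M$. Working in $\IDelta_0 + \Exp$, I would formalise $M$ by a bounded formula $\mathrm{Comp}_M(\vec x, c)$ asserting ``$c$ codes a complete accepting run of $M$ on input $\vec x$'', here using that $\Exp$ supplies the elementary coding of finite sequences needed to speak about $c$, and set $\phi(\vec x) := \exists c\, \mathrm{Comp}_M(\vec x, c)$. The reducing function $f$ is defined on input $\vec n$ by simply running $M$ on $\vec n$: if $M$ accepts, I let $c_{\vec n}$ be the code of the accepting run and output an explicit $\IDelta_0+\Exp$-proof of $\phi(\num{\vec n})$ obtained by verifying $\mathrm{Comp}_M(\num{\vec n}, \num{c_{\vec n}})$ and then applying existential generalisation; if $M$ rejects, I output a fixed non-proof, say $0$. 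By construction $R(\vec n)$ holds if and only if $f(\vec n)$ is a proof of $\phi(\num{\vec n})$, so no appeal to the soundness of $\IDelta_0+\Exp$ is needed; the entire content is that $f$ is P-time, i.e. that the proof produced in the accepting case has polynomial length and is assembled in polynomial time.

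The crux is exactly this length and time bound. Since $M$ runs in time $p(|\vec n|)$, the run $c_{\vec n}$ is a sequence of $p(|\vec n|)$ configurations each of length $p(|\vec n|)$, so its code has binary length polynomial in $|\vec n|$, and hence its binary numeral $\num{c_{\vec n}}$ (in the sense of Definition \ref{defin:arithm}) is of polynomial size; it is precisely the efficiency of the binary numerals that keeps everything small. I would arrange $\mathrm{Comp}_M$ using only length-bounded quantifiers ranging over the polynomially many step- and tape-positions, so that verifying $\mathrm{Comp}_M(\num{\vec n}, \num{c_{\vec n}})$ decomposes into polynomially many local consistency checks between consecutive configurations, each of which is a true atomic arithmetical statement with an $\IDelta_0+\Exp$-proof of polynomial size obtained by evaluating short binary numerals. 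Concatenating these fragments yields a polynomial-length proof computable in polynomial time. No cut-elimination or cut-shortening enters, which is what makes this argument simpler than Pudl\'ak's original theorem.

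For $(2)\Rightarrow(3)$, let $\phi$ and $f$ witness $(2)$. As $\Th$ extends $\IDelta_0+\Exp$, every $\IDelta_0+\Exp$-proof is a $\Th$-proof; to preserve the biconditional I set $f'(\vec n)=f(\vec n)$ when $f(\vec n)$ is an $\IDelta_0+\Exp$-proof of $\phi(\num{\vec n})$ and $f'(\vec n)=0$ otherwise. This test is a polynomial-time proof-check, so $f'$ is P-time, and it witnesses $(3)$: if $R(\vec n)$ holds then $f'(\vec n)=f(\vec n)$ is a $\Th$-proof of $\phi(\num{\vec n})$, while if $R(\vec n)$ fails then $f(\vec n)$ is not an $\IDelta_0+\Exp$-proof, so $f'(\vec n)=0$ is no $\Th$-proof. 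For $(3)\Rightarrow(1)$, let $\phi,f$ witness $(3)$; given $\vec n$, I compute $f(\vec n)$ and the sentence $\phi(\num{\vec n})$ (both in polynomial time, the latter because binary numerals are short) and test whether $f(\vec n)$ is a well-formed $\Th$-proof of $\phi(\num{\vec n})$. Verifying that each line is an axiom of $\Th$ is P-time by hypothesis and verifying the rule applications is P-time, so the whole test runs in polynomial time, and by $(3)$ it succeeds exactly when $R(\vec n)$ holds. Thus $R$ is P-time, closing the cycle.
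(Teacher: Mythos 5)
Your proof is correct and follows exactly the route the paper indicates for this theorem (which the paper states without a detailed proof): a direct formalisation of the Turing machine computation in $\IDelta_0+\Exp$, with the explicit accepting run supplied as a witness via short binary numerals, so that no cut-elimination or cut-shortening is needed and the failing case is handled by outputting a fixed non-proof. One small observation: the consistency of $\Th$ is never used in your argument, and indeed it is not needed for the uniform version, precisely because the biconditional concerns whether the specific value $f(\vec{n})$ is a proof rather than whether any short proof exists.
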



\begin{wniosek}\label{thm::PolyProv}
	Let $\Th$ be a theory with a P-time set of axioms. Then there exists a formalization of the relation
	\[\Proof_{\Th}(x,y) := \textnormal{"$x$ is a proof of $y$ from the axioms of $\Th$"}\]
	and a polynomially computable function $f(n)$ such that
	\[\mathbb{N}\models \Proof_{\Th}(m,n),\]  
implies that $f(n,m)$ is a proof of $\Proof_{\Th}(\num{m}, \num{n})$ in $\IDelta_0 + \Exp$.
\end{wniosek}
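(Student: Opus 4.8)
The plan is to obtain this as a direct instantiation of Theorem~\ref{tw_pudlak_for_simulations}, applied to the binary relation $R(m,n)$ that holds exactly when $m$ codes a proof of $n$ from the axioms of $\Th$, using only the implication $(1)\Rightarrow(2)$. Note that $\Th$ itself need not be consistent nor contain $\IDelta_0 + \Exp$: the theory playing the role of ``$\Th$'' in Theorem~\ref{tw_pudlak_for_simulations} will be $\IDelta_0 + \Exp$ (which is consistent and has a P-time axiom set), while the given theory $\Th$ enters the argument only through the external relation $R$.

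First I would verify that $R$ is a P-time relation. Using the coding of syntax fixed in Subsection~2.1, to decide whether $m$ codes a $\Th$-proof of $n$ one checks that $m$ codes a finite sequence of formulae whose last entry is $n$ and each of whose entries is either a logical axiom, a nonlogical axiom of $\Th$, or follows from earlier entries by one of the finitely many, locally checkable rules of inference. Each such test runs in time polynomial in $|m|$; the only nonroutine clause is membership in the axiom set of $\Th$, which is P-time by hypothesis. Hence $R$ is decidable in polynomial time. Applying Theorem~\ref{tw_pudlak_for_simulations} (clause~$(1)$ implies clause~$(2)$) then yields a formula $\phi(x,y)$ uniformly polynomially numerating $R$ in $\IDelta_0 + \Exp$, which by definition means there is a P-time computable function $f$ such that for all $m,n$ we have $R(m,n)$ iff $f(n,m)$ is a proof of $\phi(\num{m},\num{n})$ in $\IDelta_0 + \Exp$. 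Setting $\Proof_{\Th}(x,y) := \phi(x,y)$, the ``only if'' half of this equivalence is exactly the asserted implication: $\mathbb{N}\models \Proof_{\Th}(m,n)$ entails that $f(n,m)$ is a proof of $\Proof_{\Th}(\num{m},\num{n})$ in $\IDelta_0 + \Exp$.

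The single point demanding care is that the formula $\phi$ supplied by Theorem~\ref{tw_pudlak_for_simulations} genuinely deserves the name $\Proof_{\Th}$, i.e.\ that it is a faithful arithmetization of the proof relation and not merely a formula that happens to be short-provable on the correct inputs. This is automatic from the construction underlying that theorem: $\phi$ is produced by formalizing in $\IDelta_0 + \Exp$, step by step, the run of the polynomial-time Turing machine deciding $R$, so on standard inputs $\phi(\num{m},\num{n})$ is provably (hence, by soundness, truly in $\mathbb{N}$) equivalent to ``$m$ codes a $\Th$-proof of $n$''. Thus the same $\phi$ both correctly expresses the proof predicate and carries the uniform polynomial numeration. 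I expect this reconciliation of the machine-simulating formula with a recognizable proof predicate to be the only genuine obstacle; everything else is a mechanical substitution into Theorem~\ref{tw_pudlak_for_simulations}.
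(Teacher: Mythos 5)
Your proposal is correct and matches the paper's intended derivation: the paper states this corollary without proof as an immediate instance of Theorem \ref{tw_pudlak_for_simulations} applied to the P-time relation ``$x$ is a proof of $y$ from the axioms of $\Th$'', and Remark \ref{rem::RelProv} confirms that the resulting formalization is exactly the machine-simulating formula you describe. Your extra care about the formula genuinely deserving the name $\Proof_{\Th}$ is a reasonable addition but not something the paper dwells on.
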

 Moreover, we define formulae $\Con$ and $\Prov$ in the usual manner as follows:
 \[ \Prov_{\Th}(y):=\exists x \Proof_{\Th}(x,y) \] and  
\[ \Con_{\Th}:=\neg\Prov_{\Th}(\num{\exists x(x\neq x)}). \]

If not stated otherwise, throughout this paper $\Th$, and $\Th_i$ for $i\in\mathbb{N}$ range over P-time axiomatizable theories.

\begin{uwaga}[Relativized provability predicates]\label{rem::RelProv}
The content of this remark will be needed only in Section \ref{sub_speed_up_via_FACT}. The formalization of the provability predicate from Corollary \ref{thm::PolyProv} is of the form: "There exists an accepting computation of the Turing machine which recognizes $\Th$ proofs". Let $\phi(x)$ be any arithmetical formula. By writing 
\[\Proof_{\Th}^{\phi}(x,y)\]
we mean the relativized version of the above predicate, i.e., the one in which the relevant Turing machine is supplied with an oracle given by $\phi$ and recognizes the theorems of $\Th+\phi$ (whatever $\phi$ means). We can treat $\Th + \phi$ as a new arithmetized theory, but then in typical cases it won't be $\Delta_1$.
This is why we decide to distinguish between the roles played by the lower and the upper indices in $\Proof_{\Th}^{\phi}(x,y)$: the former will be reserved for theories satisfying the thesis of Corollary \ref{thm::PolyProv}
and the latter for arbitrary formulae, playing the roles of oracles. Obviously the relativized version of Corollary \ref{thm::PolyProv} need not be true, but we will only demand that the following two conditions hold:
\begin{enumerate}
	\item There exists a P-time computable function $f$ such that for all $n$ and all $\phi(x)$, $f(n, \qcr{\phi}, \qcr{\Th})$ is a $\PA$ proof of
	\begin{equation}\label{equat::ProvRel}\tag{RelProv1}
	(\phi(\num{n})\rightarrow \Proof_{\Th}^{\phi}(\num{n},\num{n})),
	\end{equation}
    where $\Th$ denotes the chosen arithmetical definition of $\Th$.
	\item Likewise, there exists a P-time computable function $g$ such that $g(\qcr{\phi}, \qcr{\psi}, \qcr{\Th})$ is a $\PA$ proof of the sentence
	\begin{equation}\tag{RelProv2}
	\forall x \left(\phi(x)\rightarrow \psi(x)\right) \rightarrow \forall y \forall z \left(\Proof_{\Th}^{\phi}(y,z)\rightarrow \Proof_{\Th}^{\psi}(y,z)\right).
	\end{equation}
	 This requires that $\Proof_{\Th}^{\phi}(y,z)$ be constructed uniformly in $\phi$, which can certainly be assured.
 
\end{enumerate}  
In particular, $\Proof_{\Th}^{\phi}(x,y)$ is of length polynomial in the lengths of $\phi$ and the chosen definition of $\Th$. As usual 
\[ \Prov_{\Th}^{\phi}(y):=\exists x \Proof_{\Th}^{\phi}(x,y) \] and  
\[ \Con_{\Th}^{\phi}:=\neg\Prov_{\Th}^{\phi}(\num{\exists x(x\neq x)}). \] 
\end{uwaga}

The following theorem gives a canonical example of a family of sentences whose proofs grow super-exponentially. Let $\Prov_{\Th}(\num{m},\num{n})$ denote the canonical sentence expressing "there is a proof of sentence $n$ from the axioms of $\Th$ of length at most $m$." Then $\Con_{\Th}(\num{m}):= \neg\Prov_{\Th}(\num{m}, 0=1)$.

\begin{tw}[Pudl\'ak,\cite{Pudlak}, Theorem 7.2.2]
Let $\Th$ be a sufficiently strong theory. Let $f$ be an increasing computable function, provably total in $\Th$, whose graph has a polynomial numeration in $\Th$. Then there exists a $\delta>0$ such that
\[\norm{\Con_{\Th}(f(\num{n}))}{\Th}>f(n)^{\delta}.\]
\end{tw}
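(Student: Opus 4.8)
The plan is to prove a lower bound on the length of the shortest $\Th$-proof of the bounded consistency statement $\Con_{\Th}(f(\num{n}))$, following the standard self-reference strategy for speed-up results. The key idea is that if such proofs were short, then $\Th$ could efficiently verify its own bounded consistency, which leads to a contradiction via a Gödel-style diagonalization. First I would fix the canonical formalization of $\Prov_{\Th}(x,y)$ from Corollary \ref{thm::PolyProv}, so that standard proofs are recognized with polynomial overhead, and recall that $\Con_{\Th}(f(\num{n}))$ asserts the nonexistence of a $\Th$-refutation of length at most $f(n)$.

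The core step is a diagonal construction. I would use the fixed-point lemma (formalized in $\Th$, which is possible since $\Th \supseteq \IDelta_0 + \Exp$) to build, for a suitable threshold depending on $f$, a sentence $\gamma_n$ asserting roughly ``every $\Th$-proof of me has length greater than $f(n)$.'' The self-referential sentence is constructed so that a short proof of $\gamma_n$ would be detectable by a short proof inside $\Th$ that a proof of $\gamma_n$ exists below the bound, directly contradicting what $\gamma_n$ says about itself; consistency of $\Th$ then forces $\gamma_n$ to lack any proof of length $\leq f(n)$. The hypotheses that $f$ is increasing, provably total in $\Th$, and has a polynomial numeration are exactly what is needed to ensure that the numeral $f(\num{n})$ and the relevant bounded-provability formula can be manipulated within $\Th$ at polynomial cost, so that the internal reasoning about the bound $f(n)$ does not itself blow up the proof length beyond the target $f(n)^{\delta}$.

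To extract the explicit exponent $\delta$, I would track the length of the relevant objects quantitatively rather than merely asymptotically. The crucial estimate is that the formula $\Con_{\Th}(f(\num{n}))$ and the witnessing proof-checking computations have size polynomial in $|n|$ together with the resources needed to encode the bound $f(n)$, while any hypothetical short proof of $\gamma_n$ would have to encode a witness of magnitude comparable to $f(n)$; comparing these gives a bound of the form $f(n)^{\delta}$ for some fixed $\delta > 0$ determined by the polynomial overheads in the formalization and in the numeration of $f$. The main obstacle I anticipate is this bookkeeping: one must verify that the self-reference, the substitution of the numeral for $f(\num{n})$, and the internal soundness reasoning can all be carried out with the stated polynomial efficiency, so that the lower bound survives as a genuine super-$f$ growth rate. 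This is precisely the kind of careful length analysis that Pudlák's framework (cited as \cite{Pudlak}) is designed to support, and I would lean on the polynomial numerability machinery of Theorem \ref{tw_pudlak_for_simulations} to keep all the internal verifications feasible.
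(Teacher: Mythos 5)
The paper does not prove this statement: it is imported verbatim from Pudl\'ak's survey (Theorem 7.2.2 of \cite{Pudlak}), so there is no in-paper argument to compare against, and your proposal has to be measured against the standard proof that Pudl\'ak gives. Your outline has the right skeleton --- a self-referential sentence $\gamma_n$ asserting its own unprovability by $\Th$-proofs of length at most (a threshold tied to) $f(n)$, together with the observation that consistency of $\Th$ forces $\norm{\gamma_n}{\Th}$ to exceed that threshold --- but it is missing the one step that turns this into a statement about $\Con_{\Th}(f(\num{n}))$ rather than about $\gamma_n$. Namely, you never establish the bridge
\[
\Th \vdash^{q(|n|)} \ \Con_{\Th}(f(\num{n})) \rightarrow \gamma_n ,
\]
that is, a $\Th$-proof, of length polynomial in the length of the \emph{numeral} $\num{n}$ (hence negligible compared to $f(n)$), of the implication obtained by formalizing inside $\Th$ the argument ``if $\gamma_n$ had a proof below its threshold, then $\Th$ would have a refutation of length at most a fixed polynomial of $f(n)$.'' Only this implication transfers the lower bound on $\norm{\gamma_n}{\Th}$ to the consistency statement, via $\norm{\Con_{\Th}(f(\num{n}))}{\Th} \geq \norm{\gamma_n}{\Th} - q(|n|) - O(1)$. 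Your third paragraph instead argues that ``any hypothetical short proof of $\gamma_n$ would have to encode a witness of magnitude comparable to $f(n)$,'' which is not the mechanism: proofs of $\gamma_n$ encode no such witness; the point is that short ones do not exist, and the consistency statement is then at least as hard as $\gamma_n$ \emph{because of} the feasibly provable implication above.

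The bridge step is also where the exponent $\delta$ genuinely comes from and where the hypotheses on $f$ earn their keep. Formalizing ``a proof of $\gamma_n$ of length at most $m$ yields a refutation of length at most $q'(m)$'' inside $\Th$ loses a fixed polynomial $q'$ of degree possibly greater than one, so one must calibrate the diagonal sentence at a threshold $m_n$ with $q'(m_n) \leq f(n)$ (roughly $m_n$ of order $f(n)^{1/\deg q'}$, which is the source of $\delta$), and the polynomial numeration of the graph of $f$ is exactly what lets this threshold and the bound $f(\num{n})$ be named inside $\gamma_n$ and compared in $\Th$ by proofs of length polynomial in $|n|$. With that step supplied, your plan becomes the standard and correct proof; without it, what you have shown is a lower bound on $\norm{\gamma_n}{\Th}$, which by itself says nothing about $\norm{\Con_{\Th}(f(\num{n}))}{\Th}$.
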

In particular, for $f(n) := 2_n$, where $2_0 := 1$, and $2_{n+1} := 2^{2_n}$, there is some some $\delta>0$ such that:
\begin{align*}
\norm{\Con_{\ISigma_1}(2_{\num{n}})}{\ISigma_1} & >  2_n^{\delta}; \\
\norm{\Con_{\PA}(2_{\num{n}})}{\PA} & >  2_n^{\delta}.
\end{align*}

\subsection{Feasible truth predicates} \label{sub_feasible_truth_predicates}
Now we turn to arithmetized partial truth predicates, which we want to apply to arbitrary sentences of fixed complexity, defined here in a way that is different from the usual one (i.e., $\Sigma_n$, $\Pi_n$). The measure of complexity will be given by the \emph{depth} of formulae, as defined below.

\begin{definicja} \label{defi_syntactic_depth}
	The \textit{depth} of a formula is the length of the longest path in its syntactic tree (which is allowed to contain arbitrary terms as leaves). $\dpt(\phi,n)$ denotes an arithmetical formula representing the relation "The depth of a formula $\phi$ is at most $n$." We will also write it as $\dpt(\phi) \leq n$.
\end{definicja}
For example $0=0 \wedge \forall x \neg(SSS(x)= 0)$ has depth $3$. 

To state some results in greater generality we will use Pudl\'ak's notion of sequential theories (since there seems to be more than one good definition of sequentiality, we include Pudl\'ak's definition):

\begin{definicja}[Pudl\'ak, \cite{pudlak_lc}]
	A theory $\Th$ is \emph{sequential}, if 
	\begin{enumerate}
		\item Robinson's arithmetic $\Q$ is interpretable in $\Th$ relativized to some formula $N(x)$ of $\Lang_{\Th}$ and
		\item there exists a formula $(x)_t$ (of two variables $x$, $t$) that defines in $\Th$ a \emph{total} function (in both variables) and such that $\Th$ proves
		\[\forall x,y,t\exists z\ \ \left(N(t)\rightarrow \forall s<t\ \ \ \left( (x)_s=(z)_s \wedge (z)_t = y\right)\right)\]
	\end{enumerate}    
\end{definicja}

Now the promised "feasible" partial truth predicates:

\begin{tw}[Pudl\'ak, \cite{Pudlak}, Theorem 3.3.1]\label{thm::Pudlak}
	Let $\Th$ be a sequential theory. There is a family of formulae $\Sat_n(x,y)\in\Lang_{\Th}$ and a polynomial $q(x)$ such that for every $n$ there exists a $\Th$-proof of compositional Tarski's conditions for $\Sat_n(x,y)$ such that the size of that proof is less than $q(n)$. Moreover for every $\phi(x_1,\ldots,x_k)$ of \emph{length} less  than $n$, the shortest $\Th$ proof of
	\[\forall \alpha \in \Ass(\phi) \ \ \left(\Sat_n(\num{\phi},\alpha)\equiv \phi(\alpha)\right)\]
	is less than $q(n)$.
\end{tw}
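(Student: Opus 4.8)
The plan is to build the predicates $\Sat_n$ by recursion on the depth parameter $n$, peeling off one logical layer at each step, and then to verify both the compositional conditions and the disquotation biconditionals by arguments whose size is controlled \emph{uniformly} in $n$. Since $\Th$ is sequential, it has a robust sequence-coding apparatus, and the base predicate $\Sat_0(x,y)$ — satisfaction of atomic formulae (and, if convenient, of formulae all of whose quantifiers are bounded) — can be written as a single fixed formula asserting the existence of a coded local evaluation: a finite function, defined on the (linearly many) subterms and subformulae of $x$, that respects the evaluation clauses for the function and relation symbols and for the bounded quantifiers, and returns the value $1$ at the root. The point is that at the bottom level every witness ranges over a bounded, hence coded, set, so no genuine object quantifier over the domain is needed and $|\Sat_0|$ is an absolute constant.

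For the inductive step I would define $\Sat_{n+1}(x,y)$ from $\Sat_n$ by handling the propositional skeleton of $x$ through a coded evaluation over those subformulae of $x$ that do not lie below a quantifier, and by passing to $\Sat_n$ at the quantifier frontier. Concretely, a coded sequence guesses truth values for the maximal quantifier-headed subformulae; these guesses are propagated upward through the Boolean connectives by a local-consistency condition, and their correctness is checked by a clause that invokes $\Sat_n$ on the body of each frontier formula under the assignment modified by a quantified domain element. The essential technical requirement here — and the first real obstacle — is to arrange this so that $\Sat_n$ occurs only a bounded (ideally single) number of times in the definiens, routing all the Boolean bookkeeping through the guessed truth-value bits rather than through textual copies of $\Sat_n$. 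This keeps $|\Sat_{n+1}| = |\Sat_n| + O(1)$, hence $|\Sat_n| = O(n)$; a naive unfolding that duplicates $\Sat_n$ at each binary connective, or once per polarity, would instead produce a formula of size exponential in $n$ and defeat the whole theorem.

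With $\Sat_n$ in hand, the compositional conditions — for instance $\Sat_{n}(\neg\psi,\alpha)\equiv \neg\Sat_{n}(\psi,\alpha)$ and $\Sat_{n}(\exists v\,\psi,\alpha)\equiv \exists a\,\Sat_{n}(\psi,\alpha[v\mapsto a])$ for subformulae of smaller depth — should fall out by unwinding the definition one layer: each is an instance of a single schematic lemma about the coded evaluation, so its proof has size bounded independently of $n$, and the finitely many clauses together give a $\Th$-proof of size $O(1)$, a fortiori below $q(n)$. The disquotation statement for a fixed standard $\phi$ of length below $n$ is then obtained by a meta-induction on the build-up of $\phi$: at each node one applies the corresponding compositional condition to reduce $\Sat_n(\num{\phi},\alpha)$ to the same Boolean or quantifier combination of $\Sat_n$ applied to the immediate subformulae, with the atomic clause as base case. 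Since $\phi$ has at most $|\phi|<n$ nodes and each reduction step is a fixed-size piece of reasoning, the entire proof has length polynomial in $|\phi|$, hence below $q(n)$ for a suitable polynomial $q$.

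The main obstacle, as indicated, is the simultaneous control of two quantities: the \emph{size of the formula} $\Sat_n$ and the \emph{length of the proofs} of its defining properties, both of which must remain polynomial in $n$. The delicate point is the inductive definition, where one must express ``the guessed truth values at the quantifier frontier are correct'' using essentially one appeal to $\Sat_n$, exploiting the sequence-coding of $\Th$ to absorb the propositional combinatorics into a coded object rather than into the syntax of the formula. Once this single-occurrence (or bounded-occurrence, shared-definition) design is secured, the remaining verifications are routine unwindings whose sizes do not depend on $n$, and the polynomial $q$ is assembled from these constant-size pieces together with the linearly many induction steps.
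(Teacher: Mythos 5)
The paper does not prove this statement: it is imported verbatim as Pudl\'ak's Theorem 3.3.1, so there is no in-paper argument to compare against. Your sketch is, in outline, exactly Pudl\'ak's construction, and you correctly identify the crux --- routing the propositional bookkeeping through a coded truth-value assignment on the relevant subformulae so that $\Sat_n$ occurs only a bounded number of times (textually once, inside a biconditional checking the guessed bits at the frontier) in the definiens of $\Sat_{n+1}$, which keeps $|\Sat_n|=O(n)$ and avoids the exponential blow-up that a naive Tarskian case split (with two occurrences of $\Sat_n$ in the disjunction clause) would cause. The one inaccuracy is the claim that the compositional conditions have proofs of size ``bounded independently of $n$'': these statements mention $\Sat_n$ and therefore have length $\Omega(n)$, so their proofs can only be polynomial in $n$ rather than $O(1)$, but this is harmless since it is absorbed into $q(n)$.
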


Moreover, by inspection of Pudl\'ak's proof one quickly sees that in fact finding the above mentioned proofs of polynomial length is feasible. This means that the above theorem can be slightly strengthened as in the theorem below.

\begin{tw} \label{tw_uniformly_feasible_truth_predicates}
	Let $\Th$ be a sequential theory. There is a family of formulae $\Sat_n(x,y)\in\Lang_{\Th}$ and P-time computable functions $f(x)$, $g(x,y)$ such that for every $n$, $f(n)$ is a $\Th$ proof of compositional Tarski's conditions for $\Sat_n(x,y)$. Moreover for every $\phi(x_1,\ldots,x_k)$ of \emph{length} less
    than $n$, $g(\qcr{\phi},n)$
    is a $\Th$-proof of
	\[\forall \alpha \in \Ass(\phi) \ \ \left(\Sat_n(\num{\phi},\alpha)\equiv \phi(\alpha)\right).\]	
\end{tw}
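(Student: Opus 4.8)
The plan is to revisit the proof of Theorem \ref{thm::Pudlak} and to check that each object whose existence it asserts is in fact emitted by an explicit deterministic procedure running in time polynomial in the relevant parameters; the functions $f$ and $g$ are then simply these procedures. First I would recall the shape of Pudl\'ak's partial satisfaction predicate $\Sat_n(x,y)$: using the sequence-coding guaranteed by sequentiality (the total function $(x)_t$ together with the interpretation of $\Q$ on a domain $N$), it expresses that $x$ is a formula of length below $n$, that $y\in\Ass(x)$, and that Tarski's clauses, unwound to that bound, declare $x$ true under $y$. The parameter $n$ enters the definition only as a numeral controlling this bound, so the map $n\mapsto\qcr{\Sat_n}$ is computable in time polynomial in $n$; this uniformity is the feature that the whole argument rests upon.

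For $f$: the compositional conditions for $\Sat_n$ --- one conjunct per connective and quantifier, with arguments restricted to length $<n$ --- are proved in $\Th$ by an argument whose \emph{structure} is independent of $n$, the parameter occurring only inside numerals and the length bounds. Pudl\'ak's proof is therefore a fixed finite template into which $\num{n}$ is substituted, and producing this instantiation is a purely syntactic substitute-and-assemble operation. I would verify that it halts in time polynomial in $n$ (so that, in particular, the emitted proof has the size $q(n)$ promised by Theorem \ref{thm::Pudlak}), and define $f(n)$ to be the proof it returns.

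For $g$: given a standard $\phi$ of length $<n$, the equivalence $\forall\alpha\in\Ass(\phi)\,\bigl(\Sat_n(\num{\phi},\alpha)\equiv\phi(\alpha)\bigr)$ follows from the compositional conditions by induction on the construction of $\phi$. At each step one strips the outermost logical operation of the current subformula, cites the matching Tarski clause supplied by $f$, and recurses into the immediate subformulas, bottoming out at atomic formulas, where $\Sat_n$ agrees with the atom by definition. This is a recursion over the syntactic tree of $\phi$, which has $O(|\phi|)$ nodes; each node contributes a proof fragment of size polynomial in $n$, and gluing fragments together (by modus ponens or cut in a fixed standard proof system) costs only polynomial overhead. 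Hence the assembly runs in time polynomial in $|\phi|$ and $n$, and I would let $g(\qcr{\phi},n)$ be its output.

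The difficulty is not conceptual but lies in the bookkeeping that the word ``inspection'' conceals. One must examine Pudl\'ak's construction closely enough to confirm, first, that the proof of the Tarski conditions is genuinely a single template instantiated at $n$ rather than merely an $n$-indexed family of short proofs, and second, that the recursive assembly behind $g$ never incurs super-polynomial blow-up --- in particular that the $O(|\phi|)$ glued fragments, together with the instances of the compositional conditions they invoke, keep both the total proof size and the total running time polynomial. Keeping the complexity parameters straight (output proofs of size polynomial in $n$, produced in time polynomial in $n$, respectively in $n$ and $|\phi|$) is the single point that demands care; the rest is the routine verification advertised in the remark preceding the statement.
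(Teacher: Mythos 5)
Your proposal matches the paper's treatment: the paper offers no detailed argument for this theorem, stating only that it follows ``by inspection of Pudl\'ak's proof'' of Theorem \ref{thm::Pudlak}, which is precisely the route you take (and elaborate on, correctly identifying the two uniformity points that need checking). Your write-up is in fact more explicit than the paper's, but it is the same approach.
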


In what follows, $\Tr_n(x)$ abbreviates $\Sat_n(x,\varnothing)$. 

\begin{obserwacja}\label{Trn are monotone}
There exists a P-time computable function $f$ such that for every $k\leq n$, $f(n,k)$ is a proof in $\PA$ of 
\[\forall \phi\in\Sent_{\LPA} \ \ \dpt(\phi) \leq k \rightarrow \bigl(\Tr_n(\phi)\equiv \Tr_k(\phi)\bigr).\]
\end{obserwacja}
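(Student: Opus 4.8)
The plan is to prove the equivalence by induction on the syntactic depth of $\phi$, using the fact that both $\Tr_n$ and $\Tr_k$ provably satisfy the compositional Tarski conditions. Since the induction must pass through quantifiers, I would work not with $\Tr_n,\Tr_k$ directly but with the underlying satisfaction predicates $\Sat_n,\Sat_k$ carrying a free assignment variable, and instantiate the assignment to $\varnothing$ only at the very end. The key preliminary remark is that we do not need the ``matching'' clause $\Sat_n(\qcr\phi,\alpha)\equiv\phi(\alpha)$ at all: the \emph{atomic} clause of the compositional conditions already forces agreement at the leaves, since for a closed instance $t_1=t_2$ both $\Sat_n$ and $\Sat_k$ are provably equivalent to $\valt{t_1}{\alpha}=\valt{t_2}{\alpha}$, and the rest is pure recursion.

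Concretely, I would first invoke Theorem \ref{tw_uniformly_feasible_truth_predicates} to fix the P-time functions producing $\PA$-proofs of the compositional conditions for $\Sat_n$ and for $\Sat_k$; these proofs have length polynomial in $n$ and in $k$ respectively, hence polynomial in $n$ because $k\le n$, and they apply to all formulae of depth at most the index, so in particular to all formulae of depth $\le k$. Working in $\PA$, I would then prove by induction on $j\le k$ the formula
\[
B(j)\ :=\ \forall \phi\in\form_{\LPA}\,\forall\alpha\in\Ass(\phi)\,\bigl(\dpt(\phi)\le j\rightarrow(\Sat_n(\phi,\alpha)\equiv\Sat_k(\phi,\alpha))\bigr).
\]
The base case $B(0)$ is the atomic agreement noted above. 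For the step, a formula of depth $\le j+1$ is provably atomic or of one of the forms $\neg\psi$, $\psi_1\wedge\psi_2$, $\psi_1\vee\psi_2$, $\psi_1\rightarrow\psi_2$, or $Qv\,\psi$ with $Q\in\{\forall,\exists\}$, whose immediate subformulae have depth $\le j$; applying the compositional clause for $\Sat_n$, then the hypothesis $B(j)$ to the subformulae, then the compositional clause for $\Sat_k$ read backwards, yields $\Sat_n(\phi,\alpha)\equiv\Sat_k(\phi,\alpha)$. The quantifier case uses the clause $\Sat_m(\qcr{\exists v\,\psi},\alpha)\equiv\exists\beta\,(\beta\sim_v\alpha\wedge\Sat_m(\psi,\beta))$ together with the fact that $B(j)$ ranges over \emph{all} assignments, so in particular over the witnessing $\beta$; this is exactly why the free $\forall\alpha$ must be retained. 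Specializing $B(k)$ to $\alpha=\varnothing$ and to sentences gives the displayed equivalence.

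For feasibility I would check that the formula $B(j)$ has length polynomial in $n$ (it contains $\Sat_n$ and $\Sat_k$, each of length polynomial in $n$), and that both the base case and the uniform step $\forall j\,(B(j)\rightarrow B(j+1))$ admit $\PA$-proofs of length polynomial in $n$, assembled in P-time from the proof-generating functions for the compositional conditions. A single application of the induction scheme to $B(j)$ then produces $B(k)$, so the entire proof has length polynomial in $n$ and is output by a P-time function $f(n,k)$. Equivalently, one may avoid internal induction and simply concatenate the $O(k)$ blocks proving $B(0)$ and each $B(j)\rightarrow B(j+1)$, deriving $B(k)$ by $k$ modus ponens steps; since $k\le n$ this is again of total length polynomial in $n$.

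The main obstacle is the feasibility bookkeeping rather than the underlying logic, which is routine. The point to get right is that the compositional conditions must be used as depth-guarded \emph{universal} statements with proofs of length polynomial in $n$, so that the induction on depth is carried out uniformly and never unfolds an individual formula of depth $\le k$ (such a formula may be of length exponential in $k$, so unfolding it would be fatal for the length bound). Keeping the total count of ingredients $O(k)$ with $k\le n$ is precisely what secures the polynomial bound and the P-time construction.
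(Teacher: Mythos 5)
Your proposal is correct and follows essentially the same route as the paper, whose entire proof is the one-line remark that one argues by induction (in $\PA$) on the complexity of $\phi$ using the provably compositional Tarski conditions for the $\Tr_l$ predicates. Your elaboration --- passing to $\Sat_n,\Sat_k$ with a free assignment variable so the quantifier step goes through, and keeping the compositional clauses as depth-guarded universal statements so that no individual formula is ever unfolded --- is exactly the intended bookkeeping.
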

The proof uses induction (in $\PA$) on the complexity of $\phi$ and provable Tarski biconditionals for $\Tr_l$ predicates.

In further parts of the paper, we will also need the relativized version of the Theorem \ref{tw_uniformly_feasible_truth_predicates} (we state it only for $\PA$). If $\M$ is a $\Delta_k$-model (not necessarily a full one) for a language $\Lang'$ with finitely many fresh non-arithmetical relational symbols, then by $\M$-relativized Tarski's conditions for a formula
$\Phi(x,y)$ we mean the usual statement that $\Phi(x,y)$ satisfies Tarski's compositional truth conditions in which the condition for atomic formulae is:
\[\forall \alpha\in \Ass(\phi,\md{M}) \ \ \Phi(\qcr{R(s_0,\ldots,s_{n-1})},\alpha)\equiv \md{M}\models R(s_0,\ldots,s_{n-1})[\alpha]\]
for an arbitrary relation $R$ in $\Lang'$, and the condition for the existential quantifier is given by 
\[\forall v \in \vrbl \forall \phi(v) \in \form_{\Lang'} \forall \alpha\in \Ass(\phi,\md{M}) \ \ \Phi(\exists v \phi,\alpha)\equiv \exists \beta\sim_v \alpha\ \ \bigl(\beta\in \Ass(\phi,\md{M}) \wedge \Phi(\phi,\beta)\bigr).\]

\begin{wniosek}\label{wniosek:relativized_partial_feasible_truth}
Let $\M$ be any $\Delta_k$-model. There is a family of formulae $\Sat^{\M}_n(x,y)\in \form_{\Lang}$ and P-time computable functions $f(x,y)$, $g(x,y,z)$ such that for every $n$, $f(\qcr{\M}, n)$ is a $\PA$-proof of the $\M$-relativized compositional Tarski's conditions for $\Sat^{\M}_n(x,y)$. Moreover for every $\phi(x_1,\ldots,x_k)$ of \emph{length} less
    than $n$, $g(\qcr{\M},\qcr{\phi}, n)$ is a $\PA$-proof of
	\[\forall \alpha\in \Ass(\phi,\md{M}) \ \ \left(\Sat^{\M}_n(\num{\phi},\alpha)\equiv \M\models \phi(\alpha(x_1), \ldots, \alpha(x_n))\right).\]	
\end{wniosek}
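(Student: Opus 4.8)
The plan is to mimic the proof of Theorem~\ref{tw_uniformly_feasible_truth_predicates} (Pudl\'ak's construction of feasible partial truth predicates), relativizing every clause to the definable model $\M$ and threading $\qcr{\M}$ through as an additional input to the proof-generating algorithms. Recall that $\Sat_n(\num{\phi},\alpha)$ essentially asserts the existence of a ``satisfaction computation'' running along the syntactic tree of $\phi$ and obeying Tarski's clauses, with the complexity of the underlying formula controlled by the parameter $n$. I would define $\Sat^{\M}_n(x,y)$ by carrying out the same construction with three modifications: all object quantifiers are relativized to the domain of $\M$ (i.e., guarded by the predicate $x \in M$ from Definition~\ref{defi_aithmetized_models}); the evaluation of terms is performed in $\M$, so that $\valt{t}{\alpha}$ is read off the graphs of the arithmetical operations as interpreted by $\M$; and the atomic clause for a relation symbol $R \in \Lang'$ is replaced by $\M \models R(s_0,\ldots,s_{n-1})[\alpha]$, which by Definition~\ref{defi_aithmetized_models}(4) unwinds to the arithmetical condition $\M(\tuple{R,\ldots})$. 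Since $\M$ is a $\Delta_k$-model, each of these ingredients is $\Delta_k$ uniformly in $\qcr{\M}$, so $\Sat^{\M}_n$ has quantifier complexity bounded independently of $n$ and length polynomial in $n$ and $|\qcr{\M}|$.

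Granting this definition, I would argue that the verification of the $\M$-relativized compositional conditions is feasible exactly as in the unrelativized case. The procedure producing Pudl\'ak's polynomial-length proofs is uniform, and the only clause that changes is the atomic one: there the equivalence $\Sat^{\M}_n(\qcr{R(\bar s)},\alpha) \equiv \M \models R(\bar s)[\alpha]$ holds by the very definition of $\Sat^{\M}_n$ together with the definition of $\M\models{}$ for base relations, so its proof is immediate and of length polynomial in $n$ and $|\qcr{\M}|$. The propositional and quantifier clauses are verified as before, the domain-relativization of the object quantifiers matching precisely the existential-quantifier clause in the displayed statement of the $\M$-relativized Tarski conditions. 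Since all of this is generated mechanically from $\qcr{\M}$ and $n$, we obtain the desired P-time function $f(\qcr{\M},n)$.

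For the second assertion I would fix a standard $\phi(x_1,\ldots,x_k)$ of length $<n$ and build the $\PA$-proof of $\forall\alpha\in\Ass(\phi,\M)\,\bigl(\Sat^{\M}_n(\num{\phi},\alpha)\equiv \M\models\phi(\alpha(x_1),\ldots,\alpha(x_k))\bigr)$ by external recursion on the syntactic tree of $\phi$, whose depth is below $n$. At each node I splice together the matching compositional clause for $\Sat^{\M}_n$ (available from the output of $f(\qcr{\M},n)$) and the corresponding recursive clause in the definition of $\M\models\phi[\alpha]$ for standard formulae (Definition~\ref{defi_aithmetized_models}(4)); since the two families of clauses are identical modulo the names of the predicates, the induction closes and the assembled object is a genuine $\PA$-proof. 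This assembly is manifestly P-time in $\qcr{\M}$, $\qcr{\phi}$ and $n$, which gives $g$. The main obstacle I anticipate is bookkeeping rather than conceptual: one must check that relativization to $\M$ inflates neither the quantifier complexity nor the lengths of the intermediate proofs beyond the polynomial bounds, uniformly in $\qcr{\M}$ — this rests on $|\qcr{\M}|$ entering the relevant formulae only linearly — and one must handle term evaluation in $\M$ with care, since the model interprets the $\LPA$ function symbols as relations and the right-hand side $\M\models\phi[\alpha]$ is defined only for formulae of standard complexity, so the disquotation is genuinely a scheme and the recursion producing its proofs is external to $\PA$.
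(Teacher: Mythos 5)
Your proposal is correct and follows essentially the same route as the paper, which disposes of this corollary with a one-line remark: define $\Sat^{\M}_n$ by relativizing Pudl\'ak's $\Sat_n$ predicates to $\M$, and note that since the definition of $\M$ does not depend on $n$, the lengths stay polynomial in $n$. Your elaboration of the atomic clause, the domain-relativized quantifier clause, and the external recursion for the disquotation scheme is exactly the intended (and omitted) bookkeeping.
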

The family $\Sat_n^{\M}(x,y)$ can be defined essentially by relativizing $\Sat_n(x,y)$ predicates from Theorem \ref{thm::Pudlak}. Since the definition of $\M$ does not depend on $n$, the length of $\Sat_n^{\M}(x,y)$ will be polynomial in $n$.

As above, we will write $\Tr_n^{\md{M}}$ to denote satisfaction under the empty valuation. Occasionally, we will use the handy notational convention described below.

\begin{konwencja} \label{konw_sat_predicates_on_elements}
If $\phi(v_1, \ldots, v_n) \in \form$ is a formula with standardly many many variables, we will be writing $\Sat_k(\phi,a_1, \ldots, a_n)$ to denote
\begin{displaymath}
\Sat_k(\phi,\alpha),
\end{displaymath}
where $\alpha \in \Ass(\phi)$ is some valuation which assigns $a_1, \ldots a_n$ to the variables $(v_1, \ldots, v_n)$.
\end{konwencja}

Let us end this subsection with a definition of satisfaction of theories for a larger class of models.

\begin{definicja} \label{def_satisfaction_of_theories_for_models}
Let $n \in \mathbb{N}$. Let $\md{M} \models \B$ be a model over a language $\Lang$ and let $(\md{M},T)$ be its expansion to an $\Lang'$-structure. Suppose that $\Th$ is a theory over the language $\Lang'$ such that $\Th \setminus \B$ consists only of sentences of depth $\leq n$. We say that
\begin{displaymath}
(\md{M}, T) \models \Th
\end{displaymath}
if  $(\md{M},T) \models \Tr_n(\phi)$ for all $\phi \in \Th \setminus \Sent_{\Lang}$.
\end{definicja}

The above definition is actually a scheme. We define separately for all standard $n$ what it means for an expanded structure to satisfy a theory whose axioms have depth bounded by $n$. Note that whenever $(\md{M}, T)$ is an expansion of a full model $\md{M} \models \B$ and $\Th$ extends $\B$ with finitely many standard sentences $\phi_1, \ldots, \phi_n$, the condition $(\md{M},T) \models \Th$ means simply that $(\md{M},T) \models \phi_i$ for $i \leq n$. Let us introduce one more definition in the similar spirit.
\begin{definicja} \label{def_L_elementary_extension}
Let $\md{M}, \md{N} \models \B$, where $\B$ is a theory over language $\Lang$. Let $\Th$ be a theory over language $\Lang'$ such that $\Th \setminus \B$ consists only of sentences of depth $\leq n$. Suppose that $(\md{N},T) \models \Th$ is an expansion of $\md{N}$. We say that $(\md{N}, T)$ is an $\Lang$-elementary extension of $\md{M}$ if $\md{N}$ is an elementary extension of $\md{M}$. We denote it with
\begin{displaymath}
\md{M} \preceq_{\Lang} (\md{N},T).
\end{displaymath}
\end{definicja}

\subsection{Polynomial simulations and feasible reductions for theories extending $\PA$.} \label{sec_polynomial_simulations}

Let us fix $\Th_2\supseteq\Th_1\supseteq \PA$ such that $\Th_2$ conservatively extends $\Th_1$. It turns out that in order to verify that $\Th_2$ is feasibly reducible to $\Th_1$ it is sufficient to demonstrate that the formalized conservativity statements for finite fragments of $\Th_2$ over finite fragments of $\Th_1$ are feasibly provable in $\Th_1$. The theorem below makes this precise.\footnote{We are grateful to Fedor Pakhomov who pointed out to us that this is the most direct way of proving our main results. Our previous proofs employed the conceptually more transparent---but technically more demanding---framework of feasible interpretations, as explained in Subsection \ref{sub_speed_up_via_FACT}.} Before stating the theorem, we need a definition.

\begin{definicja} \label{def_restriction_of_a_theory}
	Let $\Th$ be a theory. By $\Th\restr{n}$ we mean the set of axioms of $\Th$ of \emph{length} at most $n$. Abusing notation, we treat $\Th\restr{\num{n}}$ as the canonical formula representing $\Th\restr{n}$ in $\IDelta_0 + \Exp$.
\end{definicja}

\begin{tw}\label{lem::main}
 	Let $\Th$ be a theory extending $\PA$ with an NP-set of axioms. If there is a polynomial $p(n)$ such that for every $n \in \mathbb{N}$, 
 	\[\PA\vdash^{p(n)}\forall \phi \in \Sent_{\LPA} \left(\dpt(\phi)\leq \num{n} \wedge \Prov_{\Th\restr{\num{n}}}(\phi)\rightarrow \Prov_{\PA\restr{\num{p(n)}}}(\phi)\right),\]
 	 then $\PA$ polynomially simulates $\Th$. Moreover, if $\Th$ admits a P-time computable set of axioms and there exists a P-time computable function $f$ and a polynomial $p(n)$ such that for all $n$, $f(n)$ is a $\PA$-proof of
 	 \[\forall \phi \in \Sent_{\LPA} \left(\dpt(\phi)\leq \num{n} \wedge \Prov_{\Th\restr{\num{n}}}(\phi)\rightarrow \Prov_{\PA\restr{\num{p(n)}}}(\phi)\right),\]
 	 then $\Th$ is feasibly reducible to $\PA$.
\end{tw}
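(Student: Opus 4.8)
The plan is to process a single $\Th$-proof $\pi$ of an arithmetical sentence $\phi$ and to set $n:=|\pi|$, so that every axiom occurring in $\pi$ has length at most $n$ (hence $\pi$ witnesses $\Prov_{\Th\restr{\num n}}(\num\phi)$) and, since $\phi$ itself occurs in $\pi$, also $|\phi|\leq n$ and $\dpt(\phi)\leq n$. First I would invoke a version of Corollary \ref{thm::PolyProv} uniform in the length bound $n$ to manufacture, from $\pi$, an $\IDelta_0+\Exp$-proof (a fortiori a $\PA$-proof) of $\Proof_{\Th\restr{\num n}}(\num\pi,\num\phi)$, and therefore of $\Prov_{\Th\restr{\num n}}(\num\phi)$, of length polynomial in $n$; alongside it I would write the feasibly provable, polynomial-length atomic facts $\num\phi\in\Sent_{\LPA}$ and $\dpt(\num\phi)\leq\num n$. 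When $\Th$ has merely an NP set of axioms (the first assertion), the relation $\Prov_{\Th\restr{\num n}}$ is NP and the same provability information is supplied instead by polynomial numerability in $\PA$, in the spirit of Theorem \ref{tw_pudlak_for_simulations}.

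Next I would feed in the hypothesis. Instantiating the universally quantified conservativity statement at $\num\phi$ and discharging its two antecedents with the proofs just produced, a single modus ponens converts the hypothesised $\PA$-proof of the conservativity implication (of length $p(n)$, respectively the P-time output $f(n)$) into a $\PA$-proof of $\Prov_{\PA\restr{\num{p(n)}}}(\num\phi)$, again of length polynomial in $n$, and in the second part produced by a P-time function of $\pi$.

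The heart of the argument, and the step I expect to be the \emph{main obstacle}, is to pass \emph{feasibly} from a $\PA$-proof of $\Prov_{\PA\restr{\num{p(n)}}}(\num\phi)$ to a $\PA$-proof of $\phi$. I would do this by establishing in $\PA$, in length polynomial in $n$, the uniform reflection principle for the finite fragment $\PA\restr{p(n)}$ at depth $\leq n$:
\[\forall\psi\in\Sent_{\LPA}\ \bigl(\dpt(\psi)\leq\num n\wedge\Prov_{\PA\restr{\num{p(n)}}}(\psi)\to\Tr_n(\psi)\bigr).\]
The delicate point is that a $\PA\restr{p(n)}$-proof may contain formulae of unboundedly large complexity, so the partial truth predicate cannot be applied to it directly. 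I would circumvent this by first proving in $\PA$, again in polynomial length, a formalized free-cut-elimination statement for $\PA\restr{p(n)}$: every $\PA\restr{p(n)}$-proof of $\psi$ can be transformed into one in which every formula is a subformula of an axiom or of $\psi$, and hence has length at most $p(n)$. The essential observation is that, although the cut-free object is superexponentially larger, the $\PA$-proof that it merely \emph{exists} is short and uniform in $n$. On such bounded-length proofs one may then run an induction using the single predicate $\Tr_{p(n)+1}$, whose Tarski conditions are feasibly provable by Theorem \ref{tw_uniformly_feasible_truth_predicates}, and whose soundness for the axioms of $\PA\restr{p(n)}$ is feasibly provable using those Tarski conditions together with the induction available for $\Tr$-formulae; the monotonicity of the hierarchy (Observation \ref{Trn are monotone}) finally drops $\Tr_{p(n)+1}$ to $\Tr_n$ on the depth-$\leq n$ conclusion $\psi$.

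With the reflection principle in hand I would instantiate it at $\num\phi$, combine it with the proof of $\Prov_{\PA\restr{\num{p(n)}}}(\num\phi)$ and the fact $\dpt(\num\phi)\leq\num n$ from the first step to obtain a $\PA$-proof of $\Tr_n(\num\phi)$, and then apply the feasible Tarski biconditional $\Tr_n(\num\phi)\equiv\phi$ from Theorem \ref{tw_uniformly_feasible_truth_predicates} (available since $|\phi|\leq n$) to arrive at a $\PA$-proof of $\phi$ of length polynomial in $n$. For the first assertion this already exhibits, whenever $\Th\vdash^{n}\phi$, a $\PA$-proof of polynomial length, i.e.\ $\PA$ polynomially simulates $\Th$. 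For the "moreover" clause, every step is carried out by a P-time function of $\pi$ (the provability facts via Corollary \ref{thm::PolyProv}, the conservativity proof via $f$, and the reflection principle by a uniform construction), so the composition of these functions is the required P-time reduction witnessing feasible reducibility of $\Th$ to $\PA$.
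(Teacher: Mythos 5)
Your proposal is correct and follows essentially the same route as the paper: arithmetize the given proof to get $\PA\vdash\Prov_{\Th\restr{\num n}}(\num\phi)$ feasibly, apply the hypothesized conservativity implication, then use a feasibly provable reflection principle for $\PA\restr{p(n)}$ (this is exactly the paper's Lemma \ref{subl::1}, proved via cut-elimination, the subformula property, truth of the axioms, and induction on proof lines with a partial satisfaction predicate) followed by the feasible Tarski biconditionals. The step you flag as the main obstacle is precisely the content of Lemma \ref{subl::1}, and your treatment of it (including the monotonicity adjustment between depth indices) matches the paper's.
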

 
Recall that $\dpt(\phi)$ is the height of the syntactic tree of $\phi$ and that we use this symbol for the arithmetic formula representing this function. The proof of Theorem \ref{lem::main} will be facilitated by the following lemma which shows that $\PA$ is \emph{feasibly} strongly reflexive.

\begin{lemat}\label{subl::1} 
There exists a P-time computable function $f$ such that for every $n,k\in\mathbb{N}$, $f(n,k)$ is a $\PA$ proof of: 
		\[\forall \phi \in \Sent_{\LPA} \bigl(\dpt(\phi)\leq \num{k} \wedge \Prov_{\PA\restr{\num{n}}}(\phi)\rightarrow \Tr_k(\phi)\bigr).\]
\end{lemat}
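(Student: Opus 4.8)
The plan is to fix standard $n,k$ and to produce, uniformly and in polynomial time, a single $\PA$-proof witnessing that the partial truth predicate $\Tr_N=\Sat_N(\cdot,\varnothing)$, for a suitable $N$ depending on $n$ and $k$, is a sound ``truth assignment'' for all $\LPA$-formulas of depth at most $N$, and that it declares every axiom of $\PA\restr{\num n}$ true. Concretely I would set $N:=k+n+c$ for a fixed constant $c$ chosen so that every axiom of $\PA\restr{\num n}$ has depth at most $N$ (each such axiom has length $\leq n$, hence depth $\leq n$). By Theorem \ref{tw_uniformly_feasible_truth_predicates} there is a $\PA$-proof, of size polynomial in $N$ and constructible in polynomial time, of the compositional Tarski conditions for $\Sat_N$; these conditions are the only property of $\Tr_N$ the argument will use.

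First I would verify, inside $\PA$ and with a proof of size polynomial in $N$, that every axiom of $\PA\restr{\num n}$ is $\Tr_N$-true. The finitely many logical and $\Q$ axioms of length $\leq n$ are handled uniformly from the compositional clauses: propositional tautologies, equality axioms, and the quantifier axioms are recognized as true directly from the Tarski conditions together with the substitution clause for $\Sat_N$. The crucial case is the induction axioms. For $\psi$ of length $\leq n$ the formula $\Sat_N(\psi,\alpha)$ is a genuine $\LPA$-formula carrying $\psi$ as a parameter, so the \emph{full} induction scheme of $\PA$ applies to it; unfolding $\Tr_N$ of the induction axiom for $\psi$ through the compositional clauses reduces exactly to an instance of $\PA$-induction applied to $\Sat_N(\psi,\cdot)$. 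This uniform appeal to the unrestricted induction of the base theory is the conceptual heart of the reflexivity phenomenon.

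Next I would show, again inside $\PA$, that $\Tr_N$-truth propagates along cut-free derivations. Here one must be careful: the naive induction on the length of an arbitrary proof fails, because modus ponens (i.e.\ a cut) can pass through a premise of unbounded depth, lying outside the range of every fixed $\Tr_N$. I would therefore pass to a sequent calculus and invoke cut-elimination, whose totality is provable in $\PA$ as a primitive recursive (superexponentially growing) transformation. Given $\Prov_{\PA\restr{\num n}}(\phi)$ with $\dpt(\phi)\leq k$, a witnessing derivation may thus be taken cut-free, so that by the subformula property every formula occurring in it is a subformula (instance) of $\phi$ or of an axiom of $\PA\restr{\num n}$, hence of depth $\leq N$; crucially, substituting terms for variables does not increase depth, since by Definition \ref{defi_syntactic_depth} terms sit at the leaves of the syntactic tree. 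The compositional clauses for $\Sat_N$ say precisely that each logical rule preserves the property ``some antecedent formula is $\Tr_N$-false or some succedent formula is $\Tr_N$-true'', so a straightforward induction on the height of the cut-free derivation, using the previous paragraph at the non-logical leaves, yields $\Tr_N(\phi)$. Since $\dpt(\phi)\leq k\leq N$, Observation \ref{Trn are monotone} gives $\Tr_N(\phi)\equiv\Tr_k(\phi)$, whence $\Tr_k(\phi)$.

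Finally, every ingredient is uniform in $n,k$ and of size polynomial in $N$: the compositional conditions for $\Sat_N$ (Theorem \ref{tw_uniformly_feasible_truth_predicates}), the uniform $\Tr_N$-truth of the axioms of $\PA\restr{\num n}$, the induction on cut-free derivations (a fixed schema), the monotonicity biconditional (Observation \ref{Trn are monotone}), and the citation of cut-elimination (a fixed $\PA$-lemma independent of $n,k$). Concatenating these yields the desired polynomial-time computable $f(n,k)$. I expect the main obstacle to be the depth bookkeeping of the third step: one must ensure that cut-elimination is used only as an internal, object-level fact, so that its superexponential cost lives ``inside the model'' and does not enlarge the meta-level proof $f(n,k)$, and that the subformula property together with the term-insensitivity of depth genuinely confines every formula of the normalized derivation to $\Tr_N$'s range.
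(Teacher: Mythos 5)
Your proposal is correct and follows essentially the same route as the paper's proof: truth of the axioms of $\PA\restr{\num{n}}$ under a feasible partial satisfaction predicate (with the induction axioms handled by applying the full induction scheme of $\PA$ to $\Sat$-formulas, as in Proposition \ref{stw_prawda_indukcji}), cut-elimination proved once inside $\PA$, the subformula property to confine the cut-free derivation to bounded depth, an internal induction on the derivation, and Observation \ref{Trn are monotone} to descend to $\Tr_k$. The only cosmetic difference is that you work with a single index $N=k+n+c$ and apply monotonicity at the end, whereas the paper splits into the cases $n\leq k$ and $k\leq n$.
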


\begin{proof}[Proof of Lemma \ref{subl::1}]
	The proof follows the usual pattern, but we have to check that each transformation at work is feasible. Here we provide the general outline; the details are verified carefully in Subsection \ref{Feasible reflexivity} of the Appendix. Assume first that $n\leq k$. Working in $\PA$, we first prove cut-elimination for First Order Logic (this is a single sentence independent of $n$). Then we show that every axiom of $\PA$ of length $\leq n$ is true. For finitely many axioms of Robinson's $\Q$, this is done independently of $n$. For induction axioms of length at most $n$ we use Proposition \ref{stw_prawda_indukcji}, and for logical axioms we use Proposition \ref{stw_pudlak_prawda_logiki}.
	Next we apply cut-elimination over First Order Logic to show that for a sentence $\phi$ of depth $\leq k$ if $\Prov_{\PA\restr{n}}(\phi),$
	  then there is a cut-free proof of a sequent 
      \[\Gamma \longrightarrow \phi,\]
      where $\Gamma$ contains only axioms of $\PA\restr{\num{n}}$. By the subformula property, in such a proof every formula is of depth bounded by $k$.
      Then, using induction on the number of proof lines in a proof using only formulae of depth at most $k$, we show that if 
      \[\Gamma\longrightarrow \Delta\] is provable, then we have:
      \[\forall \alpha \bigl(\alpha \in \Ass(\Gamma\cup \Delta) \wedge \forall x\in \Gamma\ \ \Sat_k(x,\alpha) \rightarrow \exists y \in \Delta \ \ \Sat_k(y,\alpha)\bigr),\]
     where $\alpha \in \Ass(\Gamma\cup \Delta)$ abbreviates $\forall x \in\Gamma\cup \Delta \ \ \alpha \in \Ass(x)$, as in Definition \ref{konw_technical definitions}. Since we already know that all $\PA\restr{n}$ axioms are true, we conclude that $\phi$ is true.
     
     If $k\leq n$, then it is sufficient to carry out the above proof substituting $n$ for $k$ everywhere and use Observation \ref{Trn are monotone}. Note that all the transformations above are uniform in $n,k$, hence in particular they give rise to a function $f$ as in the thesis of the lemma.
\end{proof}

\begin{proof}[Proof of Theorem \ref{lem::main}]
Assume $\Th\vdash^n \phi$ and $\phi \in\Lang_{\PA}$. Then in fact $\Th\restr{n}\vdash \phi$ and $\phi$ is of depth at most $n$. Let $k$ code this proof. Then, by the properties of the provability predicate we have:
 	\[\PA\vdash \Proof_{\Th\restr{\num{n}}}(\num{k},\phi).\]
 	Hence also $\PA\vdash \Prov_{\Th\restr{\num{n}}}(\phi)$. By the formalized conservativity we have that $\PA\vdash \Prov_{\PA\restr{\num{p(n)}}}(\phi)$; by Lemma \ref{subl::1}, we have $\PA\vdash\Tr_{p(n)}(\phi)$; and by feasibly provable $T$-biconditionals (Theorem \ref{thm::Pudlak}) we obtain $\PA\vdash \phi$. All the intermediate steps are polynomial in $n$.
    
    Also, if $\Th$ has a P-time computable axiomatization, by Corollary \ref{thm::PolyProv} there exists a P-time computable function $h$ such that given a proof $k\in\Th$ of a formula $\phi$, $h(k)$ is the proof of the sentence
    \[\Proof_{\Th}(\num{k}, \phi).\]
    Given a function $f$ as in our assumptions and the function $f'$ as in Lemma \ref{subl::1} one easily defines the appropriate feasible reduction by concatenating the proofs given by $f$, $f'$ and $h$ as in the proof for polynomial simulation.
 \end{proof}

\begin{wniosek}\label{cor::key-corollary}
Suppose that there exists a polynomial $p(n)$ and there is a $k\in\mathbb{N}$ such that for every $n\in\mathbb{N}$, $\PA\vdash^{p(n)}\theta_n$, where $\theta_n$ expresses "Every $\Delta_2$-full model of $\PA\restr{\num{p(n)}}$ has an $\LPA$-elementary extension to a full $\Delta_k$-model of $\Th\restr{\num{n}}$." Then $\PA$ polynomially simulates $\Th$. Moreover, if $\Th$ is P-time, and there exists a P-time function $f$ such that for all $n\in\mathbb{N}$, $f(n)$ is a $\PA$-proof of $\theta_n$, then $\Th$ is feasibly reducible to $\PA$.
\end{wniosek}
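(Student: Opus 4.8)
The plan is to deduce the corollary directly from Theorem~\ref{lem::main} by showing that the model-theoretic hypothesis about $\theta_n$ feasibly yields the proof-theoretic conservativity hypothesis required there. Concretely, for each standard $n$ I would manufacture a $\PA$-proof, of length polynomial in $n$ and produced by a P-time function, of the sentence
\[\forall \phi \in \Sent_{\LPA}\bigl(\dpt(\phi)\leq \num{n} \wedge \Prov_{\Th\restr{\num{n}}}(\phi)\rightarrow \Prov_{\PA\restr{\num{p(n)}}}(\phi)\bigr),\]
and then invoke Theorem~\ref{lem::main} verbatim, in its first part for polynomial simulation and in its second part for feasible reducibility. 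Thus the entire content of the corollary lies in producing this conservativity statement from the given proofs of $\theta_n$.

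The core of the argument is carried out inside $\PA$ by contraposition, using the Arithmetized Completeness Theorem~\ref{th_act} as the bridge between consistency and models. Fix $\phi$ with $\dpt(\phi)\leq\num{n}$, assume $\Prov_{\Th\restr{\num{n}}}(\phi)$, and suppose toward a contradiction that $\neg\Prov_{\PA\restr{\num{p(n)}}}(\phi)$. Then $\PA\restr{\num{p(n)}}+\neg\phi$ is (formally) consistent, and since it is a $\Delta_1$-theory uniformly in the parameter $\phi$, Theorem~\ref{th_act} furnishes a $\Delta_2$-full model $\md{M}$ with $\md{M}\models\PA\restr{\num{p(n)}}$ and $\md{M}\models\neg\phi$. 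Now $\theta_n$ applies to $\md{M}$ and yields a full $\Delta_k$-model $(\md{N},T)\models\Th\restr{\num{n}}$ with $\md{M}\preceq_{\LPA}(\md{N},T)$. Since $(\md{N},T)$ is a full model containing every axiom of $\Th\restr{\num{n}}$ and $\Prov_{\Th\restr{\num{n}}}(\phi)$ holds, formalized soundness of full models (a consistent complete Henkin theory is closed under provability) gives $(\md{N},T)\models\phi$; as $\phi\in\LPA$, this is $\md{N}\models\phi$. Finally the $\LPA$-elementarity $\md{M}\preceq_{\LPA}\md{N}$ transfers $\phi$ downward to $\md{M}\models\phi$, contradicting $\md{M}\models\neg\phi$. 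This establishes the displayed conservativity statement.

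For the feasibility bookkeeping I would verify that every link in this chain is uniform in $n$ and contributes only polynomially to the proof length. The description of $\PA\restr{\num{p(n)}}+\neg\phi$ has size polynomial in $n$; the instance of Theorem~\ref{th_act} invoked is the fixed one from $\Delta_1$ to $\Delta_2$, so its proof overhead is constant and its application uniform; the appeal to $\theta_n$ is exactly the supplied proof of length $p(n)$, respectively the output of the supplied P-time function $f(n)$; the full-model soundness step rests only on the monotonicity and basic closure properties of the provability predicate recorded in Corollary~\ref{thm::PolyProv} and Remark~\ref{rem::RelProv}, all of which are feasibly provable; and the final transfer uses $\LPA$-elementarity of full models, a fixed fact. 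Concatenating these pieces yields a $\PA$-proof of the conservativity sentence of length polynomial in $n$, and since each concatenation is itself a P-time operation on proof codes, the whole construction is a P-time function of $n$. Theorem~\ref{lem::main} then delivers both conclusions.

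The step I expect to be the main obstacle is the precise formalization of full-model soundness for the extended, truth-theoretic language together with the bookkeeping that keeps it feasible. Two alignments need care. First, the assertion that $(\md{N},T)$ is a full model of $\Th\restr{\num{n}}$ must be made to literally place every axiom of $\Th\restr{\num{n}}$ into the elementary diagram of $(\md{N},T)$; for the non-arithmetical axioms this means reconciling the $\Tr_n$-based reading of satisfaction in Definition~\ref{def_satisfaction_of_theories_for_models} with genuine membership, which one obtains from the feasibly provable $T$-biconditionals for $\Tr_n$ (Theorem~\ref{tw_uniformly_feasible_truth_predicates}), at the cost of a harmless constant adjustment relating the length bound $n$ of $\Th\restr{\num{n}}$ to the index of the partial truth predicate. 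Passing from $\Prov_{\Th\restr{\num{n}}}(\phi)$ to provability over the complicated oracle given by the diagram of $(\md{N},T)$ is precisely what the relativized provability machinery of Remark~\ref{rem::RelProv} is designed to handle, so the monotonicity property there must be deployed to keep the soundness inference feasible. Second, one must confirm that the complexity class produced by Theorem~\ref{th_act}, namely $\Delta_2$, matches the class of models quantified over in $\theta_n$, so that $\theta_n$ is genuinely applicable to the $\md{M}$ we construct; this is exactly why the hypothesis is phrased in terms of $\Delta_2$-full models of $\PA\restr{\num{p(n)}}$. Once these two alignments are in place, the remaining manipulations are routine and visibly polynomial.
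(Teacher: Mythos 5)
Your proposal is correct and follows essentially the same route as the paper: reduce to Theorem \ref{lem::main} by proving the formalized conservativity statement inside $\PA$, obtaining a $\Delta_2$-full model of $\PA\restr{\num{p(n)}}+\neg\phi$ via the Arithmetized Completeness Theorem and then applying $\theta_n$. The only (harmless) deviation is at the very end: the paper transfers $\neg\phi$ \emph{upward} by elementarity and notes that $\Th\restr{\num{n}}+\neg\phi$ is then contained in the consistent elementary diagram of the extension, hence consistent, which sidesteps the full-model soundness step you single out as the main obstacle.
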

Let us make one remark before we proceed to the proof of Corollary \ref{cor::key-corollary}. Recall from Subsection \ref{sub_arithmetised_model_theory} that in the current paper we treat full models
as specific arithmetically definable sets of sentences. \textit{Note that although we cannot quantify over models in general, we can do this for models of fixed quantifier complexity using arithmetical satisfaction predicates.}
\begin{proof} [Proof of Corollary \ref{cor::key-corollary}]
Fix $k$ and polynomial $p$. We shall prove the assumptions of Theorem \ref{lem::main}. Fix $n$.
Start the proof by showing $\theta_n$ using a subproof of length $p(n)$.
Then fix $\phi$ of depth 
$\leq n$ and assume:
\[\neg\Prov_{\PA\restr{\num{p(n)}}}(\phi).\]
It immediately follows that $\PA\restr{\num{p(n)}}+\neg\phi$ is consistent. We check that this is a $\Delta_1$-theory (this verification is polynomial in the definition of $\PA\restr{\num{p(n)}}+\neg\phi$, hence polynomial in $n$). We prove the Arithmetized Completeness Theorem \ref{th_act} 
for $\Delta_1$-theories; this is independent of $n$ and gives us a $\Delta_2$-full model of $\PA\restr{\num{p(n)}}+\neg\phi$. Now, by $\theta_n$, this model has an elementary extension to a full model of $\Th\restr{\num{n}}$. By elementarity $\Th\restr{\num{n}}+\neg\phi$ is consistent, which ends the feasible proof of the conservativity claim.

The proof of moreover part of the theorem is fully analogous: we apply it to the respective part of Theorem \ref{lem::main}.
\end{proof}
\begin{wniosek}\label{wniosek:penultimate_corollary}
Suppose there is a polynomial $p(n)$ and there is a $k\in\mathbb{N}$ such that for every $n\in\mathbb{N}$, $\PA\vdash^{p(n)}\theta_n$, where $\theta_n$ expresses "Every $\Delta_2$-full model of $\PA\restr{p(n)}$ has an $\LPA$-elementary extension to a $\Delta_k$-model of $\Th\restr{n}$." Then $\PA$ polynomially simulates $\Th$. Moreover, if $\Th$ is P-time and there exists a P-time computable function $f$ such that for all $n$, $f(n)$ is a $\PA$-proof of $\theta_n$, then $\Th$ is feasibly reducible to $\PA$.
\end{wniosek}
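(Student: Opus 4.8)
The plan is to run the proof of Corollary \ref{cor::key-corollary} essentially verbatim, diverging only at the one point where that argument used the fact that a \emph{full} model of $\Th\restr{\num{n}}$ is literally a consistent complete theory. Fix $k$ and the polynomial $p$ and aim to verify the hypothesis of Theorem \ref{lem::main}. Fixing $n$, first prove $\theta_n$ by a subproof of length $p(n)$. Then fix an $\LPA$-sentence $\phi$ of depth $\leq n$, assume $\neg\Prov_{\PA\restr{\num{p(n)}}}(\phi)$, observe that $\PA\restr{\num{p(n)}}+\neg\phi$ is consistent and $\Delta_1$, and apply the Arithmetized Completeness Theorem (Theorem \ref{th_act}) to obtain a $\Delta_2$-full model $\md{M}$ of it. By $\theta_n$, $\md{M}$ has an $\LPA$-elementary extension to a \emph{(possibly non-full)} $\Delta_k$-model $(\md{N},T)$ with $(\md{N},T)\models\Th\restr{\num{n}}$; by $\LPA$-elementarity $\md{N}\models\neg\phi$. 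What remains is to deduce that $\Th\restr{\num{n}}+\neg\phi$ is consistent, i.e.\ $\neg\Prov_{\Th\restr{\num{n}}}(\phi)$.

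This last deduction is the main obstacle, precisely because non-fullness of $(\md{N},T)$ means its internal satisfaction relation is available only for formulae of bounded depth. The tool to circumvent this is the relativized partial truth predicate $\Tr^{(\md{N},T)}_{m}$ of Corollary \ref{wniosek:relativized_partial_feasible_truth}, taken uniformly in the existentially given model: although $(\md{N},T)$ is not a concrete formula but is produced by $\theta_n$, its atomic satisfaction is expressible via the arithmetical $\Delta_k$-satisfaction predicate (this is exactly the point flagged in the remark preceding the proof of Corollary \ref{cor::key-corollary}), so $\Tr^{(\md{N},T)}_{m}$ may be treated as a formula with the code of $(\md{N},T)$ as a parameter, of length polynomial in $m$ and satisfying the relativized Tarski conditions feasibly and uniformly. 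Letting $d(n)$ be the (linear) bound on the depths of the axioms of $\Th\restr{\num{n}}\cup\{\neg\phi\}$ and setting $m:=d(n)$, I would first check, using these Tarski conditions, that every axiom of $\Th\restr{\num{n}}\cup\{\neg\phi\}$ is $\Tr^{(\md{N},T)}_{d(n)}$-true: the arithmetical axioms because $\md{N}\models\PA\restr{\num{n}}$, the genuinely truth-theoretic axioms because $(\md{N},T)\models\Th\restr{\num{n}}$ in the sense of Definition \ref{def_satisfaction_of_theories_for_models}, and $\neg\phi$ because $\md{N}\models\neg\phi$.

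Now suppose, towards a contradiction, that $\Th\restr{\num{n}}+\neg\phi$ were inconsistent. Invoking cut-elimination for first-order logic (a single sentence proved in $\PA$, independent of $n$), we obtain a cut-free derivation of the empty sequent from a finite $\Gamma\subseteq\Th\restr{\num{n}}\cup\{\neg\phi\}$; by the subformula property every formula occurring in it has depth $\leq d(n)$, the terms introduced by the quantifier rules being subterms of the end-sequent, exactly as in the proof of Lemma \ref{subl::1}. A straightforward induction on the number of lines of this derivation, using only the compositional conditions for $\Tr^{(\md{N},T)}_{d(n)}$, shows that $\Tr^{(\md{N},T)}_{d(n)}$-truth of the $\Gamma$-premises propagates through the proof; since all members of $\Gamma$ are $\Tr^{(\md{N},T)}_{d(n)}$-true, this forces the empty sequent to be true, a contradiction. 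Hence $\Th\restr{\num{n}}+\neg\phi$ is consistent, which is the conservativity statement required by Theorem \ref{lem::main}.

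Finally, every transformation above is feasible and uniform in $n$: proving $\theta_n$ costs $p(n)$; the applications of Theorem \ref{th_act} and of cut-elimination are fixed sentences; the predicate $\Tr^{(\md{N},T)}_{d(n)}$ together with its relativized Tarski conditions is produced by the P-time functions of Corollary \ref{wniosek:relativized_partial_feasible_truth} and so is of length polynomial in $n$; and the soundness induction has polynomially many lines, each of polynomial length. Thus the whole conservativity proof has length polynomial in $n$ and is generated by a P-time function of $n$, and the two conclusions follow from the two parts of Theorem \ref{lem::main}. Equivalently, one could run the same soundness argument to see that the literal diagram of $(\md{N},T)$ together with $\Th\restr{\num{n}}$ and $\ElDiag_{\LPA}(\md{M})$ is a consistent $\Delta_k$-theory, apply Theorem \ref{th_act} to complete it to a full $\Delta_{k+1}$-model $\LPA$-elementarily extending $\md{M}$, and then quote Corollary \ref{cor::key-corollary} directly; either route isolates the relativized soundness argument as the sole new ingredient.
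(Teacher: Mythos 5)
Your proof is correct and rests on the same key idea as the paper's: a relativized soundness argument (cut-elimination, the subformula property bounding depths, and the feasible relativized satisfaction predicate of Corollary \ref{wniosek:relativized_partial_feasible_truth} applied to the existentially given $\Delta_k$-model) refuting any putative derivation of a contradiction. The only divergence is packaging: the paper applies this argument to $\ElDiag(\md{M})\cup\Th\restr{\num{n}}$, which forces the hybrid predicate $\bigl(x\in\form_{\LPA}\wedge\md{M}\models x[y]\bigr)\vee\bigl(x\notin\form_{\LPA}\wedge\Sat^{\md{N}}_n(x,y)\bigr)$ to cope with arithmetical sentences of unbounded depth and then invokes Theorem \ref{th_act} so as to reduce to Corollary \ref{cor::key-corollary}, whereas you apply it directly to $\Th\restr{\num{n}}+\neg\phi$, all of whose axioms have depth $O(n)$, so a single $\Tr^{(\md{N},T)}_{d(n)}$ suffices and you plug straight into Theorem \ref{lem::main} --- your closing remark correctly identifies the paper's route as the equivalent alternative.
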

Note that above $\LPA$-elementary extensions are understood in the sense of Definition \ref{def_L_elementary_extension}.
\begin{proof}
 We show that the above assumption implies the assumption of the previous corollary. Fix $\Th$.
 Take $p(n)$ and $k$ as in the assumptions. Work in $\PA$. Fix an arbitrary $\Delta_2$-full model $\M$ of $\PA\restr{\num{p(n)}}$. Then there exists a $\Delta_k$-model $\mathcal{N}\models \Th\restr{n}$ which is an elementary extension of $\md{M}$. We argue that the $\Delta_2$-theory
\[\Phi:=\ElDiag(\M)\cup \Th\restr{\num{n}}\]
is consistent. This will finish the proof,
since by Arithmetized Completeness Theorem we will get a $\Delta_3$-full model of this theory (the length of this subproof is polynomial in $n$ as the proof of $\ACT$ is independent of $n$). 

Take an arbitrary proof $\pi$ of a sentence $\phi$ in $\Phi$ and prove cut-elimination for first order logic (the length of this proof is independent of $n$) and conclude that there exists a proof $\pi'$ of $\phi$ in $\Phi$ with the subformula property. It follows that every formula in this proof is either an arithmetical formula or is a subformula of an additional axiom of $\Th \restr{\num{n}}$. In particular non-arithmetical formulae which occur in this proof are of depth bounded by $n$. Define:
\[\Sat(x,y) := x\in \form_{\LPA} \wedge \M\models x[y] \vee \bigl(x \notin \form_{\LPA} \wedge \Sat^{\mathcal{N}}_n(x,y)\bigr),\]
where $\Sat^{\mathcal{N}}_n(x,y)$ is a feasible relativized truth predicate from Corollary \ref{wniosek:relativized_partial_feasible_truth}. By induction on the length of $\pi'$ show that if $\Gamma\longrightarrow \Delta$ occurs in $\pi'$, then for every $\alpha$
\[\left(\forall x\in\Gamma\ \ \Sat(x,\alpha)\right)\rightarrow \bigl(\exists x\in\Delta\ \ \Sat(x,\alpha)\bigr).\]
It follows that $\pi'$ cannot be a proof of the empty sequent, hence $\Phi$ is consistent.
\end{proof}

The following is the ultimate corollary that best fits the proofs of our main results:

\begin{wniosek}\label{wniosek:ultimate_corollary}
Suppose that $\Th$ is a finite extension of $\PA$ of the form $\PA + \phi$, $k\in\mathbb{N}$ and the following sentence is provable in $\PA$:
\begin{center}
"If $\B$ is any finite fragment of $\PA$, then every $\Delta_2$-full model of $\B$ has an $\LPA$ elementary extension to a $\Delta_k$-model of $\B+\phi$."
\end{center}
Then $\Th$ is feasibly reducible to $\PA$.
\end{wniosek}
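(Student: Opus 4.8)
The plan is to reduce the statement directly to Corollary~\ref{wniosek:penultimate_corollary}, taking the polynomial there to be $p(n)=n$ together with the given $k$, and then exhibiting a P-time computable function $f$ that, for each $n$, outputs a short $\PA$-proof of the sentence $\theta_n$ of that corollary. Recall that with $p(n)=n$ the sentence $\theta_n$ asserts ``every $\Delta_2$-full model of $\PA\restr{\num{n}}$ has an $\LPA$-elementary extension to a $\Delta_k$-model of $\Th\restr{\num{n}}$.'' Since $\Th=\PA+\phi$, the restricted theory $\Th\restr{n}$ coincides with $\PA\restr{n}+\phi$ for every $n\geq |\phi|$, and the equality of the corresponding canonical defining formulae is provable in $\IDelta_0+\Exp$ uniformly in $n$ with only polynomial overhead. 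Thus, modulo this routine translation, $\theta_n$ is precisely the instance of the ultimate corollary's hypothesis obtained by taking the finite fragment $\B$ to be $\PA\restr{n}$.

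First I would record that $\Th$ is P-time axiomatizable: recognizing an axiom of $\PA$ is a polynomial-time task, and adjoining the single sentence $\phi$ preserves this. Hence the ``moreover'' clause of Corollary~\ref{wniosek:penultimate_corollary} will be applicable once the feasible proofs of the $\theta_n$ are supplied. To produce those proofs, I would use universal instantiation. After fixing a natural formalization in which the quantifier over finite fragments of $\PA$ is rendered as a quantifier over length-cutoffs $m$ (so that $\B$ becomes $\PA\restr{\num{m}}$), the hypothesis furnishes a single $\PA$-proof $\pi_0$, of some fixed length independent of $n$, of a universally quantified statement $\forall m\,\Psi(m)$, where $\Psi(m)$ is the displayed assertion with the cutoff $m$ in place of $n$. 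For each $n$, the proof $f(n)$ is then the concatenation of $\pi_0$ with a single universal-instantiation step substituting the numeral $\num{n}$ for $m$, followed by the uniform polynomial conversion identifying $\PA\restr{\num{n}}+\phi$ with $\Th\restr{\num{n}}$.

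The feasibility of this construction rests on the efficient coding of numerals. Because we use the binary numerals of Definition~\ref{defin:arithm}, the numeral $\num{n}$ has length $O(\log n)$ and the canonical formula $\PA\restr{\num{n}}$ of Definition~\ref{def_restriction_of_a_theory} has length polynomial in $\log n$; hence $\theta_n$ itself, and therefore the entire proof $f(n)$, has length polynomial in $n$, and $f$ is plainly computable in polynomial time. The finitely many values $n<|\phi|$, for which $\Th\restr{n}$ reduces to $\PA\restr{n}$ and $\theta_n$ holds a fortiori, are handled by hard-coding; this affects neither the polynomial length bound nor the P-time computability of $f$. Applying Corollary~\ref{wniosek:penultimate_corollary} with $p(n)=n$, the given $k$, and this function $f$ then yields that $\Th$ is feasibly reducible to $\PA$.

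The only genuinely delicate point is the feasibility of the instantiation step: one must verify that substituting $\num{n}$ into the fixed proof $\pi_0$ really produces a proof whose size grows only polynomially in $n$, which is exactly where the efficiency of the binary numeral coding is essential, and that the defining-formula bookkeeping relating $\Th\restr{n}$ to $\PA\restr{n}+\phi$ is itself carried out feasibly and uniformly. I expect no serious obstacle beyond this bookkeeping, since both facts follow from the machinery already established.
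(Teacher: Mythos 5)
Your proposal is correct and follows essentially the same route as the paper: the paper's own proof is a one-line reduction to Corollary \ref{wniosek:penultimate_corollary}, observing that the single $\PA$-provable universally quantified sentence can be instantiated at $\B=\PA\restr{n}$ quickly and uniformly in $n$. Your write-up simply spells out the instantiation, the numeral-length bookkeeping, and the identification of $\Th\restr{n}$ with $\PA\restr{n}+\phi$ in more detail than the paper does.
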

\begin{proof}
The assumptions of Corollary \ref{wniosek:ultimate_corollary} clearly implies that the assumptions of Corollary \ref{wniosek:penultimate_corollary}, as the verification that $\B$ is a $\Delta_1$-finite fragment of $\PA$ can be done quickly and uniformly in $n$.
\end{proof}

We will end this subsection with two simple observations which may be obtained by inspection of the proof of Theorem \ref{lem::main} and the proof of subsequent corollaries. They provide some slightly different sufficient condition for feasible reducibility. Observation \ref{obs_expandability_implies_reflexivity} will be useful in Subsection \ref{subsection_FSminus}.
\begin{obserwacja} \label{obs_reflection_implies_reducibility}
Suppose that $\Th$ is a theory with a P-time set of axioms which is \df{$\PA$-provably feasibly strongly reflexive}, i.e., there exists a P-time computable function $h$ such that for each $n,k \in \mathbb{N}$, $h(n,k)$ is a $\PA$ proof of the sentence
 \begin{equation*} \label{equat_feasible_reflexivity} \tag{*}
     \forall \phi \in \Sent_{\LPA} \bigl(\dpt(\phi)\leq \num{k} \wedge \Prov_{\Th\restr{\num{n}}}(\phi)\rightarrow \Tr_k(\phi)\bigr).
 \end{equation*}
 Then $\Th$ is feasibly reducible to $\PA$.
\end{obserwacja}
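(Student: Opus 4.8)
The plan is to reduce Observation \ref{obs_reflection_implies_reducibility} directly to the machinery already assembled in the proof of Theorem \ref{lem::main}, rather than re-deriving anything from scratch. The hypothesis hands us exactly the intermediate object that the proof of Theorem \ref{lem::main} manufactured internally via Lemma \ref{subl::1}: a $\PA$-proof, produced feasibly (uniformly in $n,k$), of the strong reflection sentence $(*)$ for $\Th$. So the whole point is that $\Th$ being $\PA$-provably feasibly strongly reflexive is precisely the one ingredient the reduction needs once we specialize $k$ appropriately.

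First I would fix a proof $\pi$ in $\Th$ of an arithmetical sentence $\phi$, say of length $n$. As in the proof of Theorem \ref{lem::main}, since $\Th$ has a P-time set of axioms we have $\Th\restr{n}\vdash\phi$, the depth of $\phi$ is at most $n$, and by Corollary \ref{thm::PolyProv} there is a P-time function $h_0$ with $h_0(\pi)$ a $\PA$-proof of $\Proof_{\Th\restr{\num{n}}}(\num{k_0},\phi)$ (where $k_0$ codes the proof $\pi$), hence a $\PA$-proof of $\Prov_{\Th\restr{\num{n}}}(\phi)$ of length polynomial in $n$. Next I would invoke the reflexivity hypothesis with the parameters set to $k:=n$, obtaining $h(n,n)$, a $\PA$-proof of $(*)$. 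Combining these by modus ponens yields a $\PA$-proof of $\Tr_n(\phi)$, since $\dpt(\phi)\leq n$. Finally, by the feasibly provable $T$-biconditionals of Theorem \ref{tw_uniformly_feasible_truth_predicates} (applied to the standard sentence $\phi$, whose length is at most $n$), there is a P-time function producing a $\PA$-proof of $\Tr_n(\phi)\equiv\phi$, and concatenating gives a $\PA$-proof of $\phi$.

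The reducing function is then assembled by concatenating these subproofs: the output on input $\pi$ is the $\PA$-proof obtained from $h_0(\pi)$, $h(n,n)$, the $T$-biconditional proof from Theorem \ref{tw_uniformly_feasible_truth_predicates}, and the requisite propositional glue (modus ponens steps). Each subproof is produced by a P-time computable function and each has length polynomial in $n$; since the number of concatenation and logical-inference steps is constant and the intermediate formulae have length polynomial in $n$, the total is a P-time computable function of $\pi$ whose value is a genuine $\PA$-proof of $\phi$. This is exactly the assertion of feasible reducibility.

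I do not expect a serious obstacle here, since the statement is essentially a bookkeeping repackaging of the second (``moreover'') half of the proof of Theorem \ref{lem::main}: the only conceptual move is recognizing that the hypothesis $(*)$ \emph{replaces} the internal appeal to formalized conservativity together with Lemma \ref{subl::1} in that proof. The one point that warrants a moment's care is the choice $k:=n$ together with the observation that $\phi$ has depth at most $n$, so that the antecedent $\dpt(\phi)\leq\num{k}$ of $(*)$ is discharged; after that, all transformations are manifestly P-time and length-polynomial, so feasibility follows as before.
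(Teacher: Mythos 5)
Your proposal is correct and follows exactly the route the paper intends: the paper gives no separate proof of this observation, stating only that it is obtained by inspecting the proof of Theorem \ref{lem::main}, and your argument is precisely that inspection — the hypothesis $(*)$ with $k:=n$ substitutes for the combination of formalized conservativity and Lemma \ref{subl::1} used there, after which the feasible provability of $\Prov_{\Th\restr{\num{n}}}(\phi)$ from Corollary \ref{thm::PolyProv} and the feasible $T$-biconditionals close the argument as before.
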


\begin{obserwacja} \label{obs_expandability_implies_reflexivity}
Suppose that $\Th$ satisfies assumptions of Corollary \ref{wniosek:penultimate_corollary} (with the "moreover" part) or Corollary \ref{wniosek:ultimate_corollary}. Then $\Th$ is $\PA$-provably feasibly strongly reflexive.
\end{obserwacja}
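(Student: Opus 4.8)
The plan is to derive the feasible strong reflexivity of $\Th$ demanded by Observation~\ref{obs_reflection_implies_reducibility} by \emph{composing} two feasibly provable ingredients: a formalized conservativity statement for $\Th$ over $\PA$, which the proofs of Corollaries~\ref{cor::key-corollary}--\ref{wniosek:ultimate_corollary} already supply, and the feasible strong reflexivity of $\PA$ isolated in Lemma~\ref{subl::1}. Fix the polynomial $p$ and the complexity parameter furnished by Corollary~\ref{wniosek:penultimate_corollary}. If $\Th$ instead meets the hypotheses of Corollary~\ref{wniosek:ultimate_corollary}, I would first pass to those of Corollary~\ref{wniosek:penultimate_corollary} exactly as in the proof of the former, the reduction being uniform and feasible in $n$.

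The first step is to observe that the proof of Corollary~\ref{wniosek:penultimate_corollary}, routed through Corollary~\ref{cor::key-corollary}, is in fact a uniform feasible construction of a $\PA$-proof $g(n)$ of the formalized conservativity statement
\[
\forall \phi \in \Sent_{\LPA}\ \bigl(\Prov_{\Th\restr{\num{n}}}(\phi)\rightarrow\Prov_{\PA\restr{\num{p(n)}}}(\phi)\bigr),
\]
with \emph{no} restriction on $\dpt(\phi)$. The only point requiring attention is that the depth bound $\dpt(\phi)\leq\num{n}$ present in Theorem~\ref{lem::main} is inessential to this sub-argument. Indeed, arguing in $\PA$ from $\neg\Prov_{\PA\restr{\num{p(n)}}}(\phi)$, the theory $\PA\restr{\num{p(n)}}+\neg\phi$ is a consistent $\Delta_1$-theory, so the Arithmetized Completeness Theorem~\ref{th_act} yields a $\Delta_2$-full model $\M$ of it; being \emph{full}, $\M$ decides the arithmetical sentence $\phi$ irrespective of its depth, and $\M\models\neg\phi$. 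The expandability hypothesis (in the full-model form obtained inside the proof of Corollary~\ref{wniosek:penultimate_corollary}, i.e.\ the hypothesis of Corollary~\ref{cor::key-corollary}) then provides an $\LPA$-elementary extension of $\M$ to a full model of $\Th\restr{\num{n}}$, which therefore also satisfies $\neg\phi$; hence $\Th\restr{\num{n}}+\neg\phi$ is consistent, i.e.\ $\neg\Prov_{\Th\restr{\num{n}}}(\phi)$. Contraposing gives the displayed implication for $\phi$ of arbitrary depth, and under the ``moreover'' hypothesis all these steps are produced by a single P-time function $g$.

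The second step is to invoke Lemma~\ref{subl::1}, yielding a P-time function $f'$ with $f'(p(n),k)$ a $\PA$-proof of
\[
\forall \phi \in \Sent_{\LPA}\ \bigl(\dpt(\phi)\leq\num{k}\wedge\Prov_{\PA\restr{\num{p(n)}}}(\phi)\rightarrow\Tr_k(\phi)\bigr).
\]
Concatenating $g(n)$ with $f'(p(n),k)$ and appending a first-order inference of bounded size that composes the two universally quantified implications over $\phi$, I obtain a $\PA$-proof of
\[
\forall \phi \in \Sent_{\LPA}\ \bigl(\dpt(\phi)\leq\num{k}\wedge\Prov_{\Th\restr{\num{n}}}(\phi)\rightarrow\Tr_k(\phi)\bigr).
\]
Taking $h(n,k)$ to be this proof gives a P-time function, since $g$, $f'$ and $p$ are P-time and the composing inference is of size bounded uniformly in $n,k$; this is precisely the condition of Observation~\ref{obs_reflection_implies_reducibility}, so $\Th$ is $\PA$-provably feasibly strongly reflexive.

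The main obstacle I anticipate is the claim that conservativity stays feasibly provable after the depth bound on $\phi$ is removed. This hinges on the soundness half of the argument in Corollary~\ref{wniosek:penultimate_corollary}: once cut-elimination and the subformula property are applied, the arithmetical formulas occurring in the cut-free proof may have unbounded depth, but they are all evaluated by the full model $\M$ via $\M\models x[y]$, whereas the only non-arithmetical formulas are subformulas of the truth axioms of $\Th\restr{\num{n}}$ and hence of depth at most $n$, so they fall under the partial predicate $\Sat^{\mathcal{N}}_n$. Thus the split satisfaction predicate is well defined and the induction on the length of the cut-free proof closes for $\phi$ of any depth, with all bounds remaining polynomial in $n$. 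Everything else is routine concatenation and bookkeeping.
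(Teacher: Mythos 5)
Your proof is correct and follows essentially the same route as the paper's: both extract from the proofs of Corollaries \ref{wniosek:penultimate_corollary} and \ref{wniosek:ultimate_corollary} a feasibly provable formalized conservativity statement $\Prov_{\Th\restr{\num{n}}}(\phi)\rightarrow\Prov_{\PA\restr{\num{p(n)}}}(\phi)$ and compose it with the feasible strong reflexivity of $\PA$ from Lemma \ref{subl::1}. The only divergence is that you take the trouble to drop the hypothesis $\dpt(\phi)\leq\num{n}$ from the conservativity statement (correctly, since the model-theoretic argument in Corollary \ref{cor::key-corollary} never actually uses that bound), whereas the paper retains it and absorbs the resulting mismatch between $n$ and $k$ into its ``fixed number of logical transformations''; your variant makes the final composition uniform in $k$ without any case distinction.
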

\begin{proof}
By (very direct) inspection of proofs of Corollaries \ref{wniosek:penultimate_corollary} and \ref{wniosek:ultimate_corollary}, we see that if $\Th$ satisfies assumptions of any of these statements, then there exists a P-time computable function $f$ and a polynomial $p(n)$ such that for each $n \in \mathbb{N}$, $f(n)$ is a $\PA$-proof of 
 \begin{equation}
     \forall \phi \in \Sent_{\LPA} \left(\dpt(\phi)\leq \num{n} \wedge \Prov_{\Th\restr{\num{n}}}(\phi)\rightarrow \Prov_{\PA\restr{\num{p(n)}}}(\phi)\right).
 \end{equation}
  It follows that  $\Th$ is $\PA$-provably feasibly strongly reflexive. Indeed , fix the above mentioned function $f$, polynomial $p$ and $g$ witnessing the feasible reflexivity of $\PA$. We define $h(n,k)$. Compute first $f(n)$, then compute $g(p(n),k)$, i.e. the proof of
 \[\forall \phi \in \Sent_{\LPA} \bigl(\dpt(\phi)\leq \num{k} \wedge \Prov_{\PA\restr{\num{p(n)}}}(\phi)\rightarrow \Tr_k(\phi)\bigr).\]
 Then after performing some fixed number of logical transformations obtain the proof of \ref{equat_feasible_reflexivity}.
\end{proof}

\subsection{Feasible interpretability and speed-up} \label{sub_interpretability_and_speedup}

This section presents sufficient conditions for feasible interpretability, a notion that will be only used in Subsection \ref{sub_speed_up_via_FACT}. All the basic notions relied to relative interpretability can be found in \cite{HajekPudlak}, Chapter III. We begin with an observation of Albert Visser, presented in \cite{Fischer-speed}.\footnote{The formulation of the cited theorem is, however, is not quite right since it is claimed that it holds without any restrictions on $\Th_2$, which is incorrect, e.g., let $s\PA:= \PA + \set{\Con_{\PA}(2_{\num{n}})}{n\in\mathbb{N}}$. Then the identity interpretation witnesses relative interpretability of $s\PA$ in $\PA$, but the former theory has super-exponential speed-up over the latter. However, with the proviso that $\Th_2$ is fintiely axiomatizable the proof goes through. Also the proof of our Proposition \ref{prop::inter_implies_speed} can be easily adapted to this case.}\label{Fischer-gap}

\begin{tw} \label{Visser's Pi_1-observation}
	Suppose that $\Th_2\supseteq \Th_1$, where both $\Th_1$ and $\Th_2$ are formulated in the language of $\PA$ and $\Th_2$ is finitely axiomatizable and relatively interpretable in $\Th_1$. Then $\Th_1$ polynomially simulates $\Th_2$ with respect to $\Pi_1$-sentences.
\end{tw}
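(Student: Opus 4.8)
The plan is to combine the standard fact that a relative interpretation translates a \emph{single} proof into a single proof with a feasible version of $\Pi_1$-absoluteness along the canonical embedding into the interpreted model; finite axiomatizability of $\Th_2$ is precisely what keeps the overhead of the first step constant. Fix a relative interpretation $I$ of $\Th_2$ in $\Th_1$, with domain formula $\delta(x)$ and translation $\psi\mapsto\psi^I$, and let $\tau$ be the conjunction of the finitely many axioms of $\Th_2$. Since $I$ is an interpretation of $\Th_2$, the theory $\Th_1$ proves both $\exists x\,\delta(x)$ and $\tau^I$; fix once and for all $\Th_1$-proofs of these two sentences, of some constant length depending only on $I$ and $\Th_2$.

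First I would treat the proof-translation step. Suppose $\Th_2\vdash^n\phi$ for a $\Pi_1$ sentence $\phi\equiv\forall x\,\theta(x)$ with $\theta$ bounded. By finite axiomatizability and the deduction theorem, pure first-order logic proves $\tau\to\phi$ by a proof of length $O(n)$. Applying the translation $(\cdot)^I$, which commutes with the propositional connectives and relativizes quantifiers to $\delta$, and using that interpretations preserve logical derivability (here the provability of $\exists x\,\delta(x)$ is what licenses relativization), one obtains a $\Th_1$-proof of $\tau^I\to\phi^I$ whose length is only a constant multiple of $O(n)$, the constant depending on the syntactic complexity of $I$. Chaining this with the fixed proof of $\tau^I$ yields $\Th_1\vdash^{O(n)}\phi^I$, where $\phi^I$ is $\forall y\,(\delta(y)\to\theta^I(y))$.

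The crux is the $\Pi_1$-transfer: producing a $\Th_1$-proof of $\phi^I\to\phi$ of length polynomial in $|\phi|$. Working inside $\Th_1$, the interpreted structure $\mathcal{I}=(\delta;0^I,S^I,+^I,\times^I,\dots)$ is provably a model of $\Th_2\supseteq\Q$. Define by recursion the canonical map $e$ with $e(0)=0^I$ and $e(Sx)=S^I e(x)$. Using $\mathcal{I}\models\Q$ together with induction available in $\Th_1$ (we are in the paper's sequential setting, so $\Th_1$ has enough induction; in the applications $\Th_1$ will contain $\ISigma_1$), I would verify by fixed, input-independent proofs that $e$ is an injective homomorphism for $0,S,+,\times,\leq$ and that its range is an \emph{initial} segment of $\mathcal{I}$, i.e.\ $e$ is an initial embedding. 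From initiality one gets $\Delta_0$-absoluteness along $e$: for the specific bounded formula $\theta$ one proves $\theta(x)\leftrightarrow\theta^I(e(x))$ by induction on the build-up of $\theta$, the only nontrivial clause being the bounded quantifier, where initiality guarantees that $e$ maps $\{z:z<t(x)\}$ onto $\{z':z'<^I t^I(e(x))\}$. Since $\theta$ is a given formula, this induction is a concrete derivation of length polynomial in $|\theta|\leq|\phi|$. Instantiating $\phi^I$ at $y=e(x)$ (legitimate since $\delta(e(x))$ holds) gives $\theta^I(e(x))$, hence $\theta(x)$ by absoluteness, hence $\phi$.

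Finally I would assemble the two halves: modus ponens on $\Th_1\vdash^{O(n)}\phi^I$ and $\Th_1\vdash^{\mathrm{poly}(|\phi|)}\phi^I\to\phi$ gives $\Th_1\vdash^{\mathrm{poly}(n)}\phi$, since $|\phi|\leq n$; all transformations are uniform, so the polynomial is independent of $\phi$. The step I expect to be the main obstacle is the feasibility bookkeeping in the $\Pi_1$-transfer: one must check that establishing initiality of $e$ and $\Delta_0$-absoluteness really costs only a fixed amount plus a polynomial-in-$|\theta|$ amount, rather than growing with the size of the interpreted model or with $n$. It is also worth isolating why finite axiomatizability is indispensable: for an infinitely axiomatized $\Th_2$ (such as the $s\PA$ of the footnote) Step~1 would require translating and proving infinitely many instances $\tau^I$ with no uniform length bound, which is exactly the room in which superexponential speed-up hides.
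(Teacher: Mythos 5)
Your proof is correct and follows essentially the route the paper intends: the paper states only the "crucial insight" --- that a relative interpretation into an extension of $\PA$ in the same language is $\Pi_1$-correct, i.e.\ $\Th_1\vdash \phi^I\rightarrow\phi$ --- and your elaboration of this via the canonical initial embedding and $\Delta_0$-absoluteness, combined with the constant-overhead proof translation afforded by finite axiomatizability, is exactly the intended argument. Your closing remark on why finite axiomatizability is indispensable also matches the paper's footnoted counterexample $s\PA$.
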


  The crucial insight in the proof of the above theorem is that relative interpretations are always $\Pi_1$-correct if the interpreting theory is an extension of $\PA$ in the same language, i.e., for every $\Pi_1$-sentence $\phi$ of $\Lang_{\Th_1}$ we have:
\[\Th_1\vdash \phi^I \rightarrow \phi,\]
where $I$ is the chosen relative interpretation of $\Th_2$ in $\Th_1$.

The next definition is formulated so as to allow us to lift the $\Pi_1$-correctness in Theorem \ref{Visser's Pi_1-observation} to wider-and-wider class of sentences so as to establish polynomial simulations of non-finitely axiomatizable theories.


\begin{definicja}\label{def::interpretations_polynomial_correct}
Let  $\Th_1$, $\Th_2$ be two theories each of which has an NP-set of axioms.
\begin{enumerate}
    \item An interpretation $I:\Th_2\rightarrow \Th_1$ is $n$-\emph{correct} if for an arbitrary sentence $\phi$ of depth $n$ we have:
\[\Th_1\vdash \phi^I\rightarrow \phi.\]
    \item An interpretation $I:\Th_2\rightarrow \Th_1$ is \textit{feasible}\footnote{Feasible interpretations are thoroughly investigated in  Verbrugge's doctoral thesis \cite{Verbrugge}. In particular, as shown in Theorem 6.4.2 of \cite{Verbrugge}, there is a sentence $\theta$ such that $\PA+\theta$ is interpretable in $\PA$, and yet there is no \textit{feasible} interpretation of  $\PA+\theta$ in $\PA$. } if  there is a polynomial $p(n)$ such that for all arbitrary sentences $\phi$ and all $n\in \mathbb{N}$ we have:
\[\Th_2\vdash^n \phi\Rightarrow\Th_1\vdash^{p(n)}\phi^{I}.\]
    \item A family of interpretations $\was{I_n}_{n\in\mathbb{N}}: \Th_2 \rightarrow \Th_1$ is \emph{polynomially correct} if for each $n$, $I_n$ is $n$-correct and there exists a polynomial $p(n,k)$ such that the following two conditions hold:
\begin{enumerate}
\item $p(n,k)$ witnesses that $I_k$ is a feasible interpretation, i.e., for every $k,n\in \mathbb{N}$, and for every sentence $\phi$ of $\Lang_{\Th_2}$,
\[\Th_2\vdash^n \phi\Rightarrow\Th_1\vdash^{p(n,k)}\phi^{I_k}.\]
.
\item For every $k\in \mathbb{N}$ and every sentence $\phi$ of length at most $k$,
\[\Th_1 \vdash^{p(k,k)} \phi^{I_k}\rightarrow \phi.\]
\end{enumerate} 
\item A family of interpretations $\was{I_n}_{n\in\mathbb{N}}: \Th_2 \rightarrow \Th_1$ is \emph{uniformly polynomially correct} if for each $n$, $I_n$ is $n$-correct and there exist P-time computable functions $f$, $g$ such that the following two conditions hold:
\begin{enumerate}
\item For every $k\in \mathbb{N}$, and every $\Th_2$-proof $\pi$ of $\phi\in\Lang_{\Th_2}$ $f(k,\pi)$ is a $\Th_1$-proof of $\phi^{I_k}$.
\item For every $k\in \mathbb{N} $, and every sentence $\phi$ of length at most $k$, $g(k,\qcr{\phi})$ is a $\Th_1$-proof of
\[\phi^{I_k}\rightarrow \phi.\]
\end{enumerate}
\end{enumerate}
\end{definicja}

\begin{uwaga}
In the context of theories with no additional rules of reasoning, condition (a) in the definition of polynomial correctness (point 3.) can equivalently be replaced with the following one
\begin{enumerate}
\item[(a)'.] For every $k$, 
\[\Th_1\vdash^{p(n,k)}\phi^{I_k},\]
for every axiom $\phi$ of $\Th_2$ (including the logical axioms for $\Lang_{\Th_2}$) of length at most $n$.
\end{enumerate}
(Analogously for the uniform version.) However, we prefer (a) over (a)' as it can be used in the context of theories such as $\FS^-$ which is closed under two additional rules of reasoning: NEC and CONEC.
\end{uwaga}

The proposition below follows simply by unravelling the relevant definitions:

\begin{stwierdzenie}\label{prop::inter_implies_speed}
If there exists a polynomially correct family of interpretations $\was{I_n}_{n\in\mathbb{N}}: \Th_2\rightarrow \Th_1$, then $\Th_1$ polynomially simulates $\Th_2$. Moreover, if $\was{I_n}_{n\in\mathbb{N}}$ is a uniformly polynomially correct family of interpretations, then $\Th_2$ is feasibly reducible to $\Th_1$.
\end{stwierdzenie}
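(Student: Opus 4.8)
The plan is to simply unwind Definition \ref{def::interpretations_polynomial_correct}: clauses (a) and (b) of polynomial correctness are precisely the two ingredients needed to convert a $\Th_2$-proof into a short $\Th_1$-proof, and the only idea is to tie the interpretation index $k$ to the length of the given $\Th_2$-proof and then glue the two resulting $\Th_1$-proofs together with a single modus ponens step. I expect the whole argument to be bookkeeping rather than genuine mathematics.

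First, for the polynomial simulation claim, I would take $\phi \in \Lang_{\Th_1} \cap \Lang_{\Th_2}$ provable in both theories with $\Th_2 \vdash^n \phi$. Since $\phi$ occurs as a line of any proof witnessing $\Th_2 \vdash^n \phi$, we have $|\phi| \leq n$. Setting $k := n$, clause (a) yields $\Th_1 \vdash^{p(n,n)} \phi^{I_n}$, while clause (b)—applicable exactly because $|\phi| \leq n = k$—yields $\Th_1 \vdash^{p(n,n)} \phi^{I_n} \rightarrow \phi$. Concatenating these two proofs and appending one modus ponens line produces a $\Th_1$-proof of $\phi$; its length is bounded by $2p(n,n) + n + c$ for an absolute constant $c$ (note that $\phi^{I_n}$ occurs as a line of the first proof, so $|\phi^{I_n}| \leq p(n,n)$, and the new line $\phi$ contributes at most $|\phi| \leq n$). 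As $q(n) := 2p(n,n) + n + c$ is a polynomial in $n$, this witnesses that $\Th_1$ polynomially simulates $\Th_2$.

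For the moreover part, I would run the identical argument, but replace the quantitative clauses (a), (b) by the P-time functions $f, g$ supplied by uniform polynomial correctness. Given a $\Th_2$-proof $\pi$ of some $\phi \in \Lang_{\Th_1} \cap \Lang_{\Th_2}$, I set $k := \len(\pi)$ and read $\qcr{\phi}$ off the last line of $\pi$ (a P-time step, with $|\phi| \leq k$ again). Then $f(k,\pi)$ is a $\Th_1$-proof of $\phi^{I_k}$ and $g(k,\qcr{\phi})$ is a $\Th_1$-proof of $\phi^{I_k} \rightarrow \phi$; concatenating them and adjoining a modus ponens line gives a $\Th_1$-proof of $\phi$. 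Since $f$ and $g$ are P-time computable and the remaining manipulations (extracting $\qcr{\phi}$, concatenating, adding one inference) are P-time, the map sending $\pi$ to this combined proof is a P-time reducing function, so $\Th_2$ is feasibly reducible to $\Th_1$.

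I do not anticipate any real obstacle: as the paper remarks, the statement follows by unravelling the definitions, and Definition \ref{def::interpretations_polynomial_correct} was engineered precisely so that this gluing works. The only points demanding attention are the two bookkeeping observations that $|\phi| \leq k$ (so that clause (b), respectively the function $g$, applies to $\phi$) and that combining two polynomial-length proofs via one modus ponens keeps both the proof length polynomial and the transformation P-time; both are immediate. It is worth noting that only the feasible clauses (a) and (b) are actually invoked—the bare $n$-correctness stipulation $\Th_1 \vdash \phi^{I_n} \rightarrow \phi$ is subsumed by its quantitative refinement in clause (b).
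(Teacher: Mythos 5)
Your proposal is correct and follows essentially the same route as the paper's proof: bound $|\phi|$ by the proof length $n$, set $k:=n$, apply clause (a) to get a short $\Th_1$-proof of $\phi^{I_n}$ and clause (b) to get a short $\Th_1$-proof of $\phi^{I_n}\rightarrow\phi$, then combine; the moreover part replaces the polynomial bounds by the P-time functions $f$ and $g$ in exactly the way the paper intends. Your added remark that only the quantitative clause (b) is invoked (rather than bare $n$-correctness) is accurate but does not change the argument.
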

\begin{proof}
Fix a polynomially correct family of interpretations $\was{I_n}_{n\in\mathbb{N}}$ and let $p(n,k)$ be a polynomial witnessing this. Suppose that $\Th_2\vdash^n \phi$ for some $\phi \in \Lang_{\Th_2}\cap \Lang_{\Th_1}$. Then
\begin{itemize}
\item $\phi$ is of length at most $n$, 
\item every axiom of $\Th_2$ which occurs in the proof is of length at most $n$, and
\end{itemize}
Hence, by Definition \ref{def::interpretations_polynomial_correct}, $\Th_1\vdash^{p(n,n)} \phi^{I_n}$. Since $I_n$ is $n$-correct, we have that 
\[\Th_1\vdash^{O(p(n,n))} \phi.\]
The moreover part is fully analogous.
\end{proof}

\begin{uwaga}\label{rem::polynom_inter_correct}
By the inspection of the proof one quickly realizes that in fact the requirements for the family $\was{I_n}_{n\in\mathbb{N}}$ can be relaxed even further. We do not have to demand that each $I_n$ interprets the whole theory $\Th_2$, instead we can make the weaker demand that there is a polynomial $p(n)$ such that for every $n$ we have:
\[I_n: \Th_2\restr{n}\rightarrow \Th_1\restr{p(n)},\]
where $\Th\restr{n}$ denotes the set of axioms of $\Th$ of length at most $n$.
\end{uwaga}

\section{Dramatis personæ: typed and untyped theories of truth}

 In this section $\B$ denotes a "base theory" for a theory of truth, i.e., a theory with a modicum of arithmetic capable of handling syntax. For example any theory extending $\IDelta_0 + \Exp$ will do. $T$ denotes a fresh unary predicate that is not in the language of $\B$. $\Lang_{\B}$ denotes the language of $\B$ and $\Lang_T$ denotes the language of $\B$ enriched with the predicate $T$. For simplicity assume that the signature of $\Lang_{\B}$ extends the arithmetical signature with finitely many relational symbols. 
 
 
In this paper, we will be dealing with theories of truth conservative over their base theories. We say that a theory $\Th$ in the language $\Lang_{\Th}$ is conservative over $\B \subseteq \Th$ if for every sentence $\phi \in \Lang_{\B}$ we have:
\begin{displaymath}
\B \vdash \phi \textnormal{ iff } \Th \vdash \phi.
\end{displaymath}
In our case, this means that adding the truth predicate and some axioms governing its behaviour does not allow us to prove new arithmetical sentences. 

Below, we discuss some 
prominent examples of truth theories. A standard reference to the subject is Halbach's book \cite{halbach}.

\subsection{$\CT^-$}

\begin{definicja} \label{defi_ctminus}
	$\CT^-[\B]$ is the theory extending a theory $\B$ with the following sentences:
	\begin{enumerate}
		\item[$\CT 1$] $\forall s,t \in \ClTerm_{\Lang_{\B}} \ \ T(s=t)\equiv \val{s} = \val{t}$.
        \item[$\CT2$] $\forall s_1,\ldots, s_n \in \ClTerm_{\Lang_{\B}} \ TR(s_1,\ldots,s_n) \equiv R(\val{s_1},\ldots,\val{s_n})$, for every relational symbol of $\Lang_{\Th}$.
		\item[$\CT 3$] $\forall \phi,\psi \in \Sent_{\Lang_{\B}} \ \  T(\phi\vee\psi)\equiv T\phi \vee T\psi$.
		\item[$\CT 4$] $\forall \phi \in \Sent_{\Lang_{\B}} \ \ T(\neg\phi) \equiv \neg T\phi$.
		\item[$\CT 5$] $\forall \phi \in {\form}^{\leq 1}_{\Lang_{\B}} \forall v \in \vrbl \ \ T(\exists v\phi)\equiv \exists xT\phi(\num{x})$.
		\item[$\CT 6$] $\forall \phi(\bar{x}) \in \form_{\Lang_{\B}} \forall \bar{s},\bar{t} \in \ClTermSeq_{\Lang_{\B}} \ \ \bigl(\bar{\val{s}} = \bar{\val{t}} \rightarrow T\phi[\bar{s}/\bar{x}]\equiv T\phi[\bar{t}/\bar{x}]\bigr)$
	\end{enumerate}
\end{definicja}
The last condition is sometimes called \textit{ generalized regularity}, or \textit{generalized term-extensionality}. It should resemble the well known extensionality rule from deductive calculi for first-order logic, i.e. 
\[
\frac{s_0 = t_0, \ldots, s_k = t_k, \phi(s_0,\ldots, s_k)}{\phi(t_0,\ldots,t_k)}.
\]
We include it since without it the quantifier axiom for $\CT^-$ behaves in an unnatural way.\footnote{It behaves decently already after adding the ungeneralized version of \CT6 for single terms.} For example, for $\B = \PA$ we have:
\[\PA+\CT1\wedge\CT2\wedge\CT3\wedge \CT4\nvdash T\forall x \phi(x) \rightarrow \forall t  T\phi(t)\]
Obviously one can simply interchange the quantifier axiom with the following one: 
\[T \exists y\phi(y)\equiv \exists tT \phi(t).\]
But then, without regularity, the following implication:
\[\forall x \ \ T\phi(\num{x})\rightarrow T \forall y\phi(y)\]
becomes unprovable. With the Regularity both quantifier axioms are easily seen to be equivalent. 

The above version of $\CT^-[\PA]$ was claimed to be conservative over $\PA$ in \cite{WcisloLelykNotes}. However, no proof of this fact was provided and only a hint that it requires a slight modification of the Enayat and Visser construction (see \cite{enayat-visser}). This modification, however, adds a layer of technical difficulty, so in the current version we prove feasible conservativity of $\CT^-[\PA]$ in full detail. A detailed proof of conservativity of this theory is provided also in \cite{WcisloKossak}.

\subsection{$\KF^-$ and $\FS^-$} 

The idea behind the untyped notion of truth is that the truth predicate can be meaningfully applied also to sentences containing it, to the effect that we could e.g., judge 
\[T(\qcr{0=0})\]
to be true. In this setting the following additional axiom seems desirable:

\begin{equation}\label{Idem}\tag{\textnormal{\textsf{TRP}}}
\forall s \in \ClTerm_{\Lang_{\B}}\forall \phi\in \Sent_{\Lang_{T}}\ \ \bigl(\val{s} = \phi\rightarrow TT(s)\equiv T\phi\bigr),
\end{equation}

\noindent where "TRP" abbreviates "TRansParency". Obviously if one wants to have a compositional theory of self-applicable truth, one cannot simply take \eqref{Idem} the axioms $\CT1$ through $\CT6$ and let the quantifiers range over all formulae of $\Lang_T$, since the resulting theory would be inconsistent by Tarski's Theorem. The next two theories which we shall investigate exhibit two different directions in which one can look for a natural theory of untyped truth. In the first one the axiom $\CT3$ is rejected and somewhat compensated. In the second one the transparency axiom is missing.

\begin{definicja}\label{defi::KF}
$\KF^-[\B]$ is the $\Lang_T$-theory extending $\B$ with the following axioms:
\begin{itemize}
	\item[$\KF1$] $\forall s,t \in \ClTerm_{\Lang_{\B}} \ \ T(s=t) \equiv \val{s} = \val{t}$.
    \item[$\KF2$] $\forall s,t \in \ClTerm_{\Lang_{\B}} \ \ T(s \neq t) \equiv \val{s} \neq \val{t}$.
        \item[$\KF3$] $\forall s_1,\ldots, s_n \in \ClTerm_{\Lang_{\B}} \ TR(s_1,\ldots,s_n) \equiv R(\val{s_1},\ldots,\val{s_n})$, for every relational symbol of $\Lang_{\B}$.
        \item[$\KF4$] $\forall s_1,\ldots, s_n \in \ClTerm_{\Lang_{\B}} \ T \neg R(s_1,\ldots,s_n) \equiv \neg R(\val{s_1},\ldots,\val{s_n})$, for every relational symbol of $\Lang_{\B}$.
	\item[$\KF5$] $\forall \phi \in \Sent_{\Lang_T} \ \ T(\neg \neg \phi ) \equiv T \phi.$
	\item[$\KF6$] $\forall \phi, \psi \in \Sent_{\Lang_T} \ \ T(\phi \vee \psi) \equiv T \phi \vee T \psi.$
	\item[$\KF7$] $\forall \phi, \psi \in \Sent_{\Lang_T} \ \ T(\neg(\phi \vee \psi)) \equiv T \neg \phi \wedge T \neg \psi.$
	\item[$\KF8$] $\forall y \in \vrbl \forall \phi \in \form^{\leq 1}_{\Lang_T} \ \ T(\exists y \phi(y)) \equiv \exists x T \phi(\num{x}).$
	\item[$\KF9$] $\forall y \in \vrbl \forall \phi \in \form^{\leq 1}_{\Lang_T} \ \ T(\neg \exists y \phi(y)) \equiv \forall x T \neg \phi(\num{x}).$
	\item[$\KF10$] $\forall \bar{s},\bar{t} \in \ClTermSeq_{\Lang_{\B}} \forall \phi(\bar{x}) \in \form_{\Lang_T} \ \ \Big(\val{\bar{s}}=\val{\bar{t}} \rightarrow T\phi(\bar{s}) \equiv T\phi(\bar{t})\Big).$
    \item[$\KF11$] $\forall \phi \in \Sent_{\Lang_T} \forall t \in \term_{\Lang_{\B}} \ \ \val{t} = \phi \rightarrow TT (t) \equiv T\phi$.
    \item[$\KF12$] $\forall \phi \in \Sent_{\Lang_T} \forall t \in \term_{\Lang_{\B}} \ \ \val{t} = \phi \rightarrow T \neg T(t) \equiv T \neg \phi$.

\end{itemize}
\end{definicja}

$\KF$, a theory obtained by augmenting $\KF^-[\PA]$ with full induction scheme for formulae with the truth predicate, was introduced by Feferman in \cite{feferman} as an axiomatisation of a theory of truth proposed by Kripke in \cite{kripke}. $\KF^-$ represents an attempt to define a reasonably behaved self-applicable truth predicate guided by the following intuition: we try to mark the sentences which are definitely true. We start with the set of true equations on arithmetical sets. Then we proceed in stages, e.g. whenever $\phi$ and $\psi$ are definitely true, we mark $\phi \wedge \psi$ as definitely true. Whenever $\phi$ is definitely true, we mark $T(\phi)$ as definitely true. Whenever $\neg \phi(\num{x})$ is definitely true for all $x$, we mark $\neg \exists \phi(\num{x})$ as definitely true. 
Thus in the process we only enlarge the set of true sentences until it reaches a fixed point. $\KF^-$ axiomatises properties of fixed points obtained in such a way. 

The desirable feature of $\KF^-[\B]$ is that it satisfies the \ref{Idem} axiom. However, the idempotence of the truth predicate fails rather spectacularily in a different place. It turns out that adding both derivation rules 
\begin{align*}
\frac{\phi}{T(\phi)} \textnormal{\ \ (NEC)} & \textnormal{ } & \frac{T(\phi)}{\phi} \textnormal{\ \ (CONEC)}
\end{align*}
to $\KF^-[\B]$ at the same time yields this theory inconsistent (see \cite{halbach}, Lemma 15.20. The Lemma is stated for the full $\KF$, but the induction axioms are not used in the proof). Moreover, the rule (NEC) is inconsistent with the following axiom of consistency which says that no sentence is both true and false:
\begin{displaymath}
\forall \phi \in \Sent_{\Lang_{T}} \ \ \neg \left( T \phi \wedge T \neg \phi \right).
\end{displaymath}
Dually, the rule (CONEC) is inconsistent with the axiom of completeness which states that every sentence is either true or false.

The other standard candidate for a well-behaved theory of self-referential truth is Friedman--Sheard's theory $\FS$.

\begin{definicja}\label{defi::FS}
$\FS^-[\B]$ is the extension of $\B$ in the language extending $\Th$ with the following axioms:

\begin{enumerate}
\item[$\FS1$] $\forall s,t\in \ClTerm_{\Lang_{\B}} \ \ T(s = t) \equiv \bigl(\val{s}= \val{t}\bigr)$.
\item[$\FS2$] $\forall s_1,\ldots, s_n \in \ClTerm_{\Lang_{\B}} \ TR(s_1,\ldots,s_n) \equiv R(\val{s_1},\ldots,\val{s_n})$, for every relational symbol of $\Lang_{\B}$.
\item[$\FS3$] $\forall \phi\in \Sent_{\Lang_{T}} \ \ T(\neg\phi)\equiv \neg T(\phi)$.
\item[$\FS4$] $\forall \phi,\psi\in \Sent_{\Lang_T} \ \ T(\phi\vee \psi) \equiv T(\phi)\vee T(\psi)$.
\item[$\FS5$] $\forall v \in \vrbl \forall \phi \in \form^{\leq 1}_{\Lang_T} \ \ T(\exists v\phi)\equiv \exists xT\phi(\num{x})$.
\item[$\FS6$] $\forall \bar{s},\bar{t} \in \ClTermSeq_{\Lang_{\B}} \forall \phi(\bar{x}) \in \form_{\Lang_T} \ \ \Big(\val{\bar{s}}=\val{\bar{t}} \rightarrow T\phi(\bar{s}) \equiv T\phi(\bar{t})\Big).$
\end{enumerate}
which additionally is closed under the rules (NEC) and (CONEC).
\end{definicja}

Note that in none of the above theories we extend the induction scheme to the full $\Lang_T$. As usual we write simply $\FS^-$ to abbreviate $\FS^-[\PA]$.

A set of axioms, which is deductively equivalent to the above was first introduced in \cite{FriedSheard87}. The above list of axioms is taken from \cite{halbach} with a minor variation: we supplemented the normal axiomatization with $\FS6$ for reasons analogous to the ones for $\CT^-$.

At  first sight, $\FS^-[\B]$ seems to be much more natural than $\KF^-[\B]$. The presence of $\textnormal{NEC}$ and $\textnormal{CONEC}$ rules compensates in a way the lack of the transparency axiom making the theory \emph{symmetric}: for every $\phi\in \Lang_{T}$ it holds that
\[\FS^-[\B]\vdash \phi\iff \FS^-[\B] \vdash T\phi.\]
This heavily contrasts with the case of $\KF^-[\B]$. However this symmetric feature turns out to be very pricey, as the well-known McGee's theorem shows:
\begin{tw}[McGee, \cite{McGee85}]
$\FS^-[\B]$ is $\omega$-inconsistent.
\end{tw}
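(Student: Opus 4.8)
The plan is to carry out McGee's diagonal argument directly inside $\FS^-[\B]$, manufacturing an explicit witness to $\omega$-inconsistency from the rule NEC together with the compositional axioms. Since $\B \supseteq \IDelta_0 + \Exp$ supports representability of primitive recursive functions and hence the diagonal lemma, I would first isolate the primitive recursive iterated-truth operation $G$ determined by $G(e,0)=e$ and $G(e,n+1)=\qcr{T(\num{G(e,n)})}$, so that $G(e,n)$ codes the sentence obtained by prefixing $n$ truth predicates to the sentence coded by $e$. Letting $H$ be an $\Lang_{\B}$-term representing $G$ and applying the diagonal lemma to $\exists n\, \neg T\bigl(H(y,n)\bigr)$, I would obtain a sentence $\mu$ together with a term $h(n):=H(\num{\qcr{\mu}},n)$, whose value is the code of the $n$-fold iterate $T^n\mu$ (with $T^0\mu:=\mu$), such that
\[
\FS^-[\B]\vdash \mu \equiv \exists n\, \neg T\bigl(h(n)\bigr).
\]
Since the atomic formula $T(h(n))$ asserts the truth of $T^n\mu$, that is, it is the sentence $T^{n+1}\mu$, the fixed point reads $\mu \equiv \exists n\, \neg T^{n+1}\mu$.

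The core computation determines the effect of a single outer application of $T$ to $\mu$. Applying NEC to the fixed-point equivalence and distributing $T$ across the biconditional by the compositional axioms gives $T\mu \equiv T\bigl(\exists n\, \neg T(h(n))\bigr)$; pushing $T$ through the quantifier by $\FS5$ and through the negation by $\FS3$ turns the right-hand side into $\exists x\, \neg T\bigl(\qcr{T(h(\num x))}\bigr)$. Here the regularity axiom $\FS6$, applied to the $\Lang_T$-formula $T(y)$ so as to exchange the composite term $h(\num x)$ for a numeral of equal value, together with the defining recursion for $G$, shows that $T(\qcr{T(h(\num x))})$ is provably equivalent to $T^{x+2}\mu$. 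Hence
\[
\FS^-[\B]\vdash T\mu \equiv \exists x\, \neg T^{x+2}\mu.
\]

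Splitting the existential quantifier of the fixed point into the case $n=0$, contributing the disjunct $\neg T^{1}\mu = \neg T\mu$, and the case $n=x+1$, contributing $\exists x\, \neg T^{x+2}\mu$, and then substituting the last displayed identity, I would conclude
\[
\FS^-[\B]\vdash \mu \equiv \bigl(\neg T\mu \vee T\mu\bigr).
\]
The right-hand side is a propositional tautology, so $\FS^-[\B]\vdash \mu$. Repeated applications of NEC then give $\FS^-[\B]\vdash T^{n+1}\mu$, hence $\FS^-[\B]\vdash \neg\neg T^{n+1}\mu$, for every standard $n$. On the other hand $\mu$ is by construction the sentence $\exists n\, \neg T^{n+1}\mu$, so $\FS^-[\B]\vdash \exists n\, \neg T^{n+1}\mu$. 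With $\psi(n):=\neg T^{n+1}\mu$ this is precisely $\omega$-inconsistency: the theory proves $\exists n\, \psi(n)$ yet refutes each instance $\psi(\num{n})$.

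The only genuine difficulty I anticipate is the arithmetized bookkeeping of the second paragraph: verifying uniformly in the free variable $x$ that one outer application of $T$ to the code of $T(h(\num x))$ is provably equivalent to $T^{x+2}\mu$. This is exactly where regularity ($\FS6$) is indispensable, since the composite term $h(\num x)$ must be replaced by the numeral of its value before the compositional clauses apply; the recursion provably satisfied by $H$ and substitution of provably equal terms then close the gap. The apparent circularity — $\mu$ mentions $h$, whose base value is $\qcr{\mu}$ — is not a real obstacle, being absorbed into the single application of the diagonal lemma to $\exists n\, \neg T(H(y,n))$. Note that the derivation uses only NEC and the compositional axioms, never CONEC, so it applies to any base theory supporting the diagonal lemma.
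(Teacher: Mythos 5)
The paper does not prove this statement: it is quoted as a known result with a citation to McGee's 1985 paper, so there is no in-paper argument to compare against. Your proposal is a correct reconstruction of McGee's diagonal argument, and the details check out against the axiomatization of $\FS^-[\B]$ used here. The fixed point $\mu \equiv \exists n\, \neg T(h(n))$, the derivation of $T\mu \equiv \exists x\, \neg T(h(x+1))$ via NEC, $\FS3$, $\FS5$ and the Boolean compositional clauses, and the resulting collapse of $\mu$ to the tautology $\neg T\mu \vee T\mu$ are all sound; the refutation of each numerical instance $\neg T(h(\num{n}))$ then needs only iterated NEC together with the provable computation $h(\num{n}) = \num{\qcr{T^n\mu}}$ (ordinary term substitution suffices there, since both sides are provably equal terms under a single occurrence of $T$). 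You also correctly isolate the one genuinely delicate step: the uniform-in-$x$ identification of $T(\qcr{T(h(\num{x}))})$ with $T(h(x+1))$, where the composite term $h(\num{x})$ sitting \emph{inside} a quoted sentence must be exchanged for the numeral of its value, and this is precisely an instance of the regularity axiom $\FS6$ with $\phi(y) := T(y)$ — indeed this is one of the reasons the authors include $\FS6$ in their axiomatization. Your closing observation that only NEC (never CONEC) and the compositional axioms are used is also accurate and matches the usual statement of McGee's theorem.
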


Moreover, the fully inductive versions of both theories differ dramatically in strength, when evaluated over $\PA$: $\KF[\PA]$ can define $\varepsilon_0$ levels of the ramified truth hierarchy (i.e. $\RT_{<\alpha}$ for every $\alpha<\varepsilon_0$. See \cite{halbach} for details), while the strength of $\FS[\PA]$ is exhausted by $\omega$-many such levels. We sketch the proof of the latter fact in Subsection \ref{Subsection_Conservativity_FS} and give a strengthening of it in Subsection \ref{subsection_FSminus}.

Both $\KF^-[\B]$ and $\FS^-[\B]$ are conservative extensions of $\B$.

\begin{tw}[Cantini, \cite{Cantini}]
$\KF^-[\B]$ is a conservative extension of $\B$.
\end{tw}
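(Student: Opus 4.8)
The plan is to establish the \emph{semantic} statement that every model $\md{M} \models \B$ expands to a model $(\md{M}, T) \models \KF^-[\B]$; conservativity is then immediate, for if $\KF^-[\B] \vdash \phi$ with $\phi \in \Lang_{\B}$, then $\phi$ holds in every such expansion, hence in every model of $\B$, and so $\B \vdash \phi$ by completeness. The conceptual reason this route is available for $\KF^-$---and what sharply distinguishes it from $\CT^-$, whose conservativity rests on the subtler Enayat--Visser/KKL machinery---is that the axioms $\KF1$--$\KF12$ describe exactly a fixed point of a \emph{monotone} positive operator in the sense of Kripke, so neither recursive saturation nor a resplendence argument is needed.

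Concretely, I would first fix $\md{M} \models \B$ and define, in the metatheory, a jump operator $\Gamma$ on subsets $S \subseteq \Sent^{\md{M}}_{\Lang_T}$ by reading the clauses directly off the axioms: $\Gamma(S)$ contains the true atomic equalities and relational sentences together with the true \emph{negated} atomics ($\KF1$--$\KF4$); each $\neg\neg\phi$ with $\phi \in S$ ($\KF5$); each $\phi \vee \psi$ with $\phi \in S$ or $\psi \in S$, and each $\neg(\phi \vee \psi)$ with both $\neg\phi, \neg\psi \in S$ ($\KF6$, $\KF7$); each $\exists v\,\phi$ with $\phi(\num{a}) \in S$ for some $a \in M$, and each $\neg\exists v\,\phi$ with $\neg\phi(\num{a}) \in S$ for all $a \in M$ ($\KF8$, $\KF9$); and each $T(t)$, respectively $\neg T(t)$, whose value $\val{t}$ is a sentence $\phi$ with $\phi \in S$, respectively $\neg\phi \in S$ ($\KF11$, $\KF12$). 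Every clause is positive in $S$, so $\Gamma$ is monotone on the complete lattice $\mathcal{P}(\Sent^{\md{M}}_{\Lang_T})$.

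Next I would invoke Knaster--Tarski: iterating $\Gamma$ along the ordinals ($S_0 = \varnothing$, $S_{\alpha+1} = \Gamma(S_\alpha)$, $S_\lambda = \bigcup_{\alpha<\lambda} S_\alpha$) produces an increasing chain that stabilizes at a least fixed point $S_\infty = \Gamma(S_\infty)$. Setting $T := S_\infty$, the identity $\Gamma(T) = T$ delivers each compositional biconditional $\KF1$--$\KF9$, $\KF11$, $\KF12$ by simply unwinding the matching clause. The one axiom needing a separate argument is the regularity axiom $\KF10$: I would show by induction on the stage $\alpha$ that membership of $\phi(\bar{s})$ in $S_\alpha$ depends only on the values $\val{\bar{s}}$, whence $\val{\bar{s}} = \val{\bar{t}}$ forces $\phi(\bar{s})$ and $\phi(\bar{t})$ into the fixed point together.

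I expect the main difficulty to be bookkeeping rather than conceptual: one must set up $\Gamma$ so that the ``truth'' and the ``falsity'' of a compound sentence are tracked simultaneously through the syntactic forms $\phi$ and $\neg\phi$ (the strong Kleene discipline underlying the paired axioms $\KF1$/$\KF2$, $\KF6$/$\KF7$, $\KF8$/$\KF9$, $\KF11$/$\KF12$), and then verify uniformly that each of the twelve biconditionals is underwritten by the corresponding positive clause, discharging $\KF10$ by the stage induction above. Finally, to dovetail with the program of the paper I would note that, since $\KF^-[\B]$ is a \emph{finite} extension of $\B$, this construction can be carried out over a $\Delta_2$-model $\md{M}$ so as to yield a full model of $\KF^-[\B]$ of bounded arithmetical complexity; it is this arithmetized form that later feeds the feasible-reducibility argument through Corollary \ref{wniosek:ultimate_corollary}.
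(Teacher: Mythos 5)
Your argument is correct and is essentially the proof the paper itself sketches in Subsection \ref{sub_cons_of_kf}: iterate the monotone Kripke--Cantini jump transfinitely over an arbitrary model of $\B$, observe that at the least fixed point each biconditional $\KF1$--$\KF9$, $\KF11$, $\KF12$ falls out because every syntactic form of a sentence is generated by exactly one clause of the operator, and discharge the regularity axiom $\KF10$ by an induction on stages showing that membership of $\phi(\bar{s})$ depends only on $\val{\bar{s}}$; conservativity then follows by completeness. One caveat about your closing remark: the Knaster--Tarski iteration does \emph{not} carry out inside $\PA$ over a $\Delta_2$-full model so as to yield a model of bounded arithmetical complexity --- the transfinite recursion on arbitrary subsets is precisely the step the paper identifies as failing to arithmetize. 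This is why Lemma \ref{lem_szybkie_modele_kf} first passes to a recursively saturated elementary extension and iterates the jump only $\omega$ many times, with recursive saturation (rather than a fixed-point theorem) used to close off the universal-quantifier clause $\KF9$ at stage $\omega$. So your claim that recursive saturation is not needed is right for bare conservativity, which is all the stated theorem requires, but it is needed for the arithmetized version that feeds Corollary \ref{wniosek:ultimate_corollary}.
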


The above theorem has been proved by Cantini for $\PA$, but his proof works essentially in the same way for all base theories $\B$ with a modicum of arithmetic. Conservativity of $\FS^-$ follows from the work of Halbach. He showed that $\FS$ with full induction is reducible to the system $\RT_{<\omega}$ with full induction and a stratified family of compositional truth predicates. His proof, however, does not rely on induction in the considered theories or on the specific choice of the base theory. Therefore, essentially the same argument shows that $\FS^-[\B]$ is reducible to $\RT^-_{<\omega}[\B]$ for a wide choice of base theories $\B$. Conservativity of $\RT^-_{<\omega}[\B]$ can in turn be shown by using known proofs of conservativity  for $\CT^-$, so, in a sense, it was "in the air".\footnote{However, we know of no published proof of this result.} We will provide more details (including the definition of $\RT^-_{<\omega}$) in Subsection \ref{Subsection_Conservativity_FS}.
\begin{tw}[Essentially due to Halbach]
$\FS^-[\B]$ is a conservative extension of $\B$.
\end{tw}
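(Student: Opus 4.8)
The plan is to establish conservativity of $\FS^-[\B]$ over $\B$ in two stages, following Halbach: first reduce $\FS^-[\B]$ to the ramified theory $\RT^-_{<\omega}[\B]$, and then show that $\RT^-_{<\omega}[\B]$ is itself conservative over $\B$. Recall that $\RT^-_{<\omega}[\B]$ is the theory with a family of \emph{typed} truth predicates $T_0, T_1, T_2, \ldots$, where for each $n$ the predicate $T_n$ satisfies the compositional ($\CT^-$-style) axioms for sentences of the language $\Lang_{\B} \cup \was{T_0, \ldots, T_{n-1}}$, with no induction for formulae containing the new predicates. Since the reduction will fix arithmetical sentences, and $\RT^-_{<\omega}[\B]$ proves no new arithmetical theorems, conservativity of $\FS^-[\B]$ over $\B$ will follow.

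For the reduction I would define, for each $m \in \nat$, a translation $(\cdot)^{m}$ of $\Lang_T$ into the ramified language that replaces an occurrence of the single $\FS$-predicate $T$ sitting under $j$ nested truth predicates (counting from the outside) by the ramified predicate $T_{m-j}$. Under this translation the compositional axioms $\FS1$--$\FS6$ become instances of the compositional axioms of $\RT^-_{<\omega}[\B]$ at the appropriate levels, hence are provable. The two genuinely new ingredients are the rules $\textnormal{NEC}$ and $\textnormal{CONEC}$: from a proof of $\phi^{m}$ one passes to a proof of $(T\phi)^{m+1}$, which equals $T_{m+1}(\qcr{\phi^{m}})$, and back, using the cross-level Tarski biconditional provable in $\RT^-_{<\omega}[\B]$, namely $T_{m+1}(\qcr{\psi}) \equiv \psi$ for $\psi$ of level at most $m$. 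Because any fixed $\FS^-$-derivation contains only finitely many applications of $\textnormal{NEC}$ and $\textnormal{CONEC}$, the truth-nesting that arises is bounded; so one may fix $m$ large enough, and use the provable biconditionals to shift all the line-by-line translations to a common top level $m$, running the whole derivation inside $\RT^-_{<\omega}[\B]$ with every index nonnegative.

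To establish conservativity of $\RT^-_{<\omega}[\B]$ over $\B$, I would note that any $\RT^-_{<\omega}[\B]$-proof mentions only finitely many predicates $T_0, \ldots, T_{N-1}$, so it is already a proof in $\RT^-_{<N}[\B]$, and then argue by induction on $N$. For $N=1$ the theory $\RT^-_{<1}[\B]$ is just $\CT^-[\B]$, which is conservative over $\B$ by the Enayat--Visser/Leigh theorem \cite{enayat-visser, leigh}. For the inductive step, the crucial observation is that $\RT^-_{<N+1}[\B]$ is nothing but $\CT^-[\B']$ where $\B' := \RT^-_{<N}[\B]$ is regarded as a base theory in the language $\Lang_{\B} \cup \was{T_0, \ldots, T_{N-1}}$: the compositional clauses for $T_N$ are exactly the $\CT^-$-clauses over $\B'$, and the atomic clause $\CT2$ accounts for the lower predicates $T_0, \ldots, T_{N-1}$ treated as relation symbols of $\B'$. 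Since $\B'$ has finitely many extra relation symbols and supports coding of syntax, it is a legitimate base theory, so $\CT^-[\B']$ is conservative over $\B'$; combining this with the induction hypothesis that $\B'$ is conservative over $\B$ yields conservativity of $\RT^-_{<N+1}[\B]$ over $\B$.

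I expect the main obstacle to be the bookkeeping in the $\FS^- \to \RT^-$ translation: choosing the level assignment so that $\textnormal{NEC}$ and $\textnormal{CONEC}$ are simulated simultaneously without the indices ever leaving the available range. This step is unavoidably syntactic, since by McGee's theorem \cite{McGee85} the theory $\FS^-[\B]$ is $\omega$-inconsistent, so one cannot witness the single self-applicable predicate by a genuine model; the entire content of the argument is pushed into the proof-theoretic reduction. The conservativity of $\RT^-_{<\omega}[\B]$, by contrast, is comparatively routine once the conservativity of $\CT^-$ over arbitrary base theories is in hand, being a finite iteration of that result.
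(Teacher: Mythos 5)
Your overall two-stage strategy coincides with the paper's, and your first stage is essentially Halbach's reduction as reproduced in the paper's Lemma \ref{lem_Halbach_reduction}: translate the occurrence of $T$ at nesting depth $j$ by $T_{m-j}$, verify the translated axioms compositionally, and simulate NEC/CONEC by the cross-level biconditionals. One caution there: the biconditional $T_{m+1}(\qcr{\psi})\equiv\psi$ does \emph{not} let you ``shift all the line-by-line translations to a common top level'' --- converting $\phi^{m}$ into $\phi^{m'}$ would require the predicates $T_i$ to provably agree on (possibly nonstandard) coded sentences, which is exactly the kind of cross-level agreement that fails without induction for the new predicates. The workable bookkeeping, and the point of the shrinking range $j\in\was{i+1,\ldots,2n+1-i}$ in the paper's proof, is to derive each intermediate formula \emph{at every level in a window} (equivalently, tree-ify the derivation and give each node the level forced by the NEC/CONEC applications below it), so that no shifting is ever needed; since you already flag this as the delicate step, the fix is local.

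Your second stage genuinely diverges from the paper: you identify $\RT^-_{<N+1}[\B]$ with $\CT^-[\RT^-_{<N}[\B]]$ and iterate the Enayat--Visser/Leigh conservativity theorem as a black box, whereas the paper runs an explicit $\omega$-chain of modified Enayat--Visser constructions inside one model-theoretic argument. Your route is cleaner, but be aware that the identification holds only because your version of the ramified theory omits the paper's axiom $\RT7$, namely $T_k(T_i(s))\equiv T_k(\val{s})$ for $i<k$; that axiom encodes an agreement between truth levels that does \emph{not} follow from the iterated compositional axioms without induction, which is precisely why the paper must modify the Enayat--Visser construction (Lemma \ref{lem::AEVplus}) rather than quote $\CT^-$-conservativity off the shelf. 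Since Halbach's reduction of $\FS^-$ uses only $\RT1$--$\RT6$ (the Tarski biconditionals for the standard sentences occurring in a derivation need only the compositional clauses plus the atomic clause for the lower predicates), your weaker ramified theory suffices for the stated conservativity result, and your argument goes through; what it buys is a modular proof from known conservativity results, while the paper's construction additionally secures $\RT7$ and, more importantly, is the version that survives arithmetization for the feasible-reducibility results of Section \ref{subsection_FSminus}.
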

We shall sketch both proofs in the next Subsection \ref{Section::conservativity_and_interpretability_of_truth}.

\subsection{Conservativity of truth theories}\label{Section::conservativity_and_interpretability_of_truth}

The main goal of this paper is to establish that certain truth theories over $\PA$ are feasibly reducible to $\PA$. This involves certain elaborate technical arguments in each case. However, what these proofs have in common is that they all rely on the  results from Subsection \ref{sec_polynomial_simulations} since they follow the same general pattern: Suppose that $\Th$ is a theory of truth over $\PA$ that is conservative over $\PA$. Moreover, assume that the conservativity proof in fact can be formalized in $\PA$ and that it is uniform in the sense  that the proof works equally well for $\PA$ and its large enough finitely axiomatized fragments $\B$ that containing $\IDelta_0 + \Exp$. Then $\Th$ can be shown to be feasibly reducible to $\PA$. Let us recall the precise formulation of this fact (it was formulated as Corollary  \ref{wniosek:ultimate_corollary}):

\noindent \textit{Suppose that $\Th$ is a finite extension of $\PA$ of the form $\PA + \phi$, $k\in\mathbb{N}$, and the following sentence is provable in $\PA$:
\begin{center}
"If $\B$ is any finite fragment of $\PA$, then every $\Delta_2$-full model of $\B$ has an elementary extension to a $\Delta_k$-model of $\B+\phi$."
\end{center}
Then $\Th$ is feasibly reducible to $\PA$.}

The proofs of our feasible reducibility results will in each case consist in an appropriate arithmetization in $\PA$ of a known conservativity proof of $\Th$ over fragments of $\PA$. Therefore, we are forced to pay close attention to the specific features of the arithmetical implementation of the conservativity proofs, which is bound to obscure the main idea of the proof of feasible reduction. Therefore, to provide some help to the reader, we present outlines of the relevant conservativity proofs in this section.

\subsubsection{Conservativity of $\CT^-$}\label{sub_conservativity_ct}
In this section, we sketch the proof of the following conservativity result:

\begin{tw} \label{th_ctminus_is_conservative}
Fix any fragment $\B$ of $\PA$ extending $ \IDelta_0 + \Exp$. Then $\CT^-[\B]$ is conservative over $\B$.
\end{tw}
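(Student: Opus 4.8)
The plan is to prove the stronger model-theoretic statement that feeds directly into Corollary \ref{wniosek:ultimate_corollary}: every model $\M \models \B$ admits an $\LPA$-elementary extension that expands to a model of $\CT^-[\B]$. This suffices for conservativity. Indeed, the direction $\B \vdash \phi \Rightarrow \CT^-[\B]\vdash\phi$ is trivial; for the converse, if $\phi \in \LPA$ and $\B \nvdash \phi$, pick $\M \models \B + \neg\phi$, extend it to $(\md{N}, T) \models \CT^-[\B]$ with $\M \preceq_{\LPA} \md{N}$, and observe that since $\neg\phi$ is arithmetical and $\md{N} \models \neg\phi$ by elementarity, $\CT^-[\B] \nvdash \phi$. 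Note that because $\CT1$ through $\CT6$ are single sentences, $\CT^-[\B]$ is the \emph{finite} extension $\B + \theta$ with $\theta = \CT1 \wedge \cdots \wedge \CT6$, so the existence of such elementary extensions is exactly the hypothesis of Corollary \ref{wniosek:ultimate_corollary}.

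The truth class is built by the Enayat--Visser method as the union of an elementary chain. First I would isolate a \emph{basic step}: for every $\M \models \B$ there is an elementary extension $\M \preceq_{\LPA} \M'$ carrying a \emph{truth class for the sentences of $\M$}, i.e. a set $T$ of sentences coded in $\M$ satisfying all the compositional clauses $\CT1$--$\CT6$ for such sentences, with the existential witnesses demanded by $\CT5$ drawn from $\M'$. Granting the basic step, I would build a chain $\M = \M_0 \preceq_{\LPA} \M_1 \preceq_{\LPA} \cdots$, where $\M_{i+1}$ carries a truth class $T_i$ for the sentences of $\M_i$ cohering with $T_{i-1}$, and set $\md{N} = \bigcup_i \M_i$ and $T = \bigcup_i T_i$. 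Every sentence of $\md{N}$ lives in some $\M_i$ and is decided compositionally by $T_i$, and every witness required by the quantifier clause already lies in some $\M_j \subseteq \md{N}$; a routine check then gives $(\md{N}, T) \models \CT^-[\B]$.

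The heart of the argument, and the step I expect to be the main obstacle, is the basic step. To produce $T$ I would arithmetize the assertion ``there is a compositional truth class for the sentences of $\M$'' as a $\Delta_1$-theory over $\ElDiag(\M)$ and realize it in an elementary extension using the Arithmetized Completeness Theorem (Theorem \ref{th_act}); by compactness this reduces to the satisfiability of each $\M$-finite approximation, i.e. to finding, for each $\M$-coded set $D$ of sentences closed under immediate subformulas, a $\{0,1\}$-labelling of $D$ respecting $\CT1$--$\CT6$ internally. For sentences of standard depth such labellings are supplied by the provable partial truth predicates $\Tr_n$ (Theorem \ref{thm::Pudlak}, Observation \ref{Trn are monotone}); the difficulty is that $D$ contains sentences of \emph{nonstandard} depth together with $\M$-finite disjunctions and blocks of quantifiers of nonstandard length, for which no $\Tr_n$ is available. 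This is precisely the combinatorial core of the Enayat--Visser construction: one must show that the local compositional constraints are jointly consistent, so that a coherent labelling exists and can be propagated as $D$ grows. The additional twist here, relative to the bare compositional theory, is the generalized regularity axiom $\CT6$, whose term-extensionality requirement must be maintained by the labelling; I would handle it by arranging the approximations to be closed under the relevant term substitutions and by deciding truth values through the evaluation map $\val{\cdot}$ on closed terms, so that sentences with the same sequence of term-values automatically receive the same label.

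Finally, I would record that the whole construction is uniform in $\B$ and in the finite fragments involved, and that each model it produces has quantifier complexity bounded by a fixed standard $\Delta_k$. This uniformity and bounded-complexity feature are exactly what Corollary \ref{wniosek:ultimate_corollary} requires, and they are what will subsequently be sharpened into the feasible reduction in Section \ref{The main act}.
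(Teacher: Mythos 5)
Your overall architecture is the same as the paper's: build an elementary chain by iterating a one-step extension lemma, realize each step as a consistent theory via compactness plus the Arithmetized Completeness Theorem, reduce the compactness check to labelling $\M$-finite subformula-closed sets by a recursion, and handle $\CT 6$ by identifying formulae that differ only in their terms. However, there is one point where your formulation would fail as written. You cast the basic step as producing ``a set $T$ of sentences coded in $\M$'' satisfying $\CT 1$--$\CT 6$ ``with the existential witnesses demanded by $\CT 5$ drawn from $\M'$.'' This is incoherent for the quantifier clause: if $\exists v\,\phi$ is a sentence of $\M$ and the witness $x$ lies in $M'\setminus M$, then the instance $\phi(\num{x})$ is a sentence whose code lives in $\M'$, not in $\M$, so a truth class defined only on the sentences of $\M$ cannot decide the right-hand side of $T(\exists v\phi)\equiv\exists x\,T\phi(\num{x})$. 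The same problem recurs along the chain: $T_i$ cannot witness its own existential truths inside $\M_i$, and the witnesses it needs are instances that only $T_{i+1}$ decides, so the compositional clause for $T_i$ is never actually verified at any finite stage. This is precisely why the paper's construction detours through partial \emph{satisfaction} classes: $S_{i+1}$ is compositional for all \emph{formulae} coded in $\md{M}_i$ but with respect to all \emph{assignments} from $\md{M}_{i+1}$, so the quantifier clause is a statement about assignments in the larger model and is meaningful at each stage; only after taking the union does one specialize to sentences and the empty assignment to extract the truth class. Your sketch needs this reformulation (or an equivalent one in terms of numeral instances over the larger model) to go through.

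Two further points are acknowledged by you but left open, and they are where the paper does real work. First, the ``combinatorial core'' --- that each $\M$-finite approximation is satisfiable --- is discharged in the paper by an internal recursion on a rank function relative to the finite set $c$ of formulae involved, treating formulae with a missing immediate subformula as atoms and interpreting them trivially. Second, for $\CT 6$ your idea of deciding labels through term values is the right instinct, but the actual obstruction is that two term-variants $\phi\approx\psi$ may receive \emph{different ranks} in the recursion and hence be labelled inconsistently; the paper's fix is to close $c$ under a term-trivialization operation (the completion $\compl{c}$) and prove a congruence lemma guaranteeing that $\approx$-equivalent formulae get equal rank, after which their satisfaction predicates are interdefinable. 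Neither issue is fatal to your plan, but both must be supplied for the proof to be complete.
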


\begin{proof}[Sketch of a proof]

We will base our proof on the argument given by Enayat and Visser in \cite{enayat-visser}. Fix any model $\md{M}$ of $\B$. We will construct $(\md{M}',T) \models \CT^-[\B]$ where $\md{M} \preceq \md{M}'$ by first constructing a chain of models  
\begin{displaymath}
(\md{M}_0,\varnothing) \subseteq (\md{M}_1,S_1) \subseteq (\md{M}_2,S_2) \subseteq \ldots
\end{displaymath}
such that $\md{M}_0 = \md{M}$, $\md{M}_i \preceq \md{M}_{i+1}$ and the subsets $S_i$ are partially defined satisfaction predicates. Each $S_{i+1}$ satisfies compositional conditions for all valuations from $\md{M}_{i+1}$ but only for formulae from $\md{M}_i$. This condition is axiomatized as a scheme.
For example, we require for any arithmetical formulae $\phi, \psi \in \md{M}_i$ \emph{separately} that:
\begin{displaymath}
(\md{M}_{i+1}, S_{i+1}) \models \forall \alpha \in \Ass(\phi,\psi) \ \ S_{i+1}(\phi \vee \psi ,\alpha) \equiv S_{i+1}(\phi,\alpha) \vee S_{i+1}(\psi,\alpha) 
\end{displaymath}
and for any formula $\phi$
\begin{displaymath}
(\md{M}_{i+1}, S_{i+1}) \models \forall \alpha \in \Ass(\exists v \phi) \ \ S_{i+1}(\exists v \phi, \alpha) \equiv \exists \alpha' \sim_v \alpha \ \ S_{i+1}(\phi,\alpha').
\end{displaymath}
Thus we require that $S_{i+1}$ behaves compositionally for formulae which belong to $\md{M}_i$, including nonstandard ones.   Additionally, we require that $S_{i+1}$ agrees with $S_i$ on formulae from $\md{M}_{i-1}$ and arbitrary valuations. Note that if $\phi \in M_i$, then a direct subformula of $\phi$ also belongs to $M_i$. In other words: the predicate gets fixed on the formulae on which it is guaranteed to behave compositionally.

We write the compositional conditions for $S_{n+1}$ in a pointwise manner (formula by formula), so in order to check that such an extension exists, by compactness, we only have to check that for each finite subset of formulae from $\md{M}_i$, we can find a predicate $S_{i+1}$ which satisfies compositional conditions for these formulae. This turns out to be possible with a fairly straightforward recursion.

Finally, we take the sum of models $(\md{M}_i,S_i)$.
In order to check that the resulting sum satisfies compositional axioms (for formulae and assignments), we take an arbitrary formula $\phi$, its direct subformulae, and some fixed valuation $\alpha$ for $\phi$. We check that it satisfied compositional conditions in the model $(\md{M}_{i+1},S_{i+1})$, such that $\phi$ and was present already in $\md{M}_i$, and that the compositional conditions were preserved along the construction.

Finally, we turn the model $(\md{M}',S)$ with a satisfaction class (i.e., a set of pairs $(\phi, \alpha)$ such that $\phi$ is an arithmetical formula and $\alpha \in \Ass(\phi)$) obtained as a sum of a chain into a model with a truth class. We define $T \subsetneq M'$ as follows
\begin{displaymath}
\phi \in T \equiv \phi \in \Sent_{\LPA}(\md{M}') \wedge (\phi, \varnothing) \in S.
\end{displaymath}
This concludes the sketch of the proof. A detailed argument will be presented in Subsection \ref{Section_core_construction}.
\end{proof}

The proof as written above does not overtly formalise in $\PA$. The problem is as follows: when we speak in $\PA$ of full models $(\md{M}_i,S_i)$, we really speak of formulae defining elementary diagrams of $(\md{M}_i,S_i)$. The defining formulae for the full models can in general be more and more complex as we iterate the construction, and there might be no formally correct way of defining the sum of the obtained chain of models. Actually, we cannot even define the whole chain, but only its standard initial fragments.

There are a couple of ways to circumvent this issue. The route undertaken in  this paper is the simplest we know of: we do not speak directly of models, but rather, through appropriate first order theories. More specifically, we will show that for any natural number $x$, the theory $\Th_x$ (formulated in an extension of the language of $\B$ with finitely many new predicate symbols), saying: 
\begin{center}
    "There exists a chain $(\md{M}_0,S_0) \subseteq (\md{M}_1,S_1) \subseteq \ldots \subseteq (\md{M}_x,S_x)$ of models satisfying the conditions from the Enayat--Visser construction."
\end{center}
is consistent. This will be done by formalising the inductive step in the construction by Enayat and Visser, i.e., by showing that for all numbers $x$, if $\Th_x$ is consistent, then $\Th_{x+1}$ is consistent as well. The consistency of $\Th_x$ is a $\Pi_1$-statement, so $\PA$ will be able to verify that for any $x$ the theory $\Th_x$ is consistent. This in turn will be enough to show that the theory saying: \begin{center}
"There is a chain of models: $(\md{M}_0,S_0) \subseteq (\md{M}_1, S_1) \subseteq \ldots$
of infnite length which satisfy conditions from Enayat--Visser construction."
\end{center}
is consistent, hence has a model. From this model we will be able to define the whole chain in a uniform way and, consequently, its sum, which will give us a model of $\CT^-[\B]$ (not a full model though). The details involve a number of intricate and technical considerations; they are presented in the next section.

\subsubsection{Conservativity of $\KF^-$} \label{sub_cons_of_kf}

In this subsection we will outline the proof of conservativity of $\KF^-[\B]$, where $\B$ is a fragment of $\PA$ extending $\IDelta_0 + \Exp$.  The standard proof of conservativity is motivated by the original construction of Kripke.\footnote{What we present here more resembles a modified constructions which seems to be first formulated by Cantini in \cite{Cantini}.}
We can define truth predicate semantically as a fixed point of an operator that takes a subset $T_{\alpha}$ of $\mathbb{N}$, thought of as the set of sentences (possibly containing the truth predicate) which can be already identified as true at a given stage of the construction, and replaces it with $T_{\alpha +1} \supseteq T_{\alpha}$ in the following way:
\begin{itemize}
\item If $\phi$ is a true atomic or negated atomic formula, then $\phi \in T_{\alpha +1}$.
\item If $\phi \in T_{\alpha}$, then $\phi \in T_{\alpha +1}$.
\item If $\phi \in T_{\alpha}$, and $\phi = \val{t}$ for a term $t$, then $T(t) \in T_{\alpha +1}$.
\item If $\neg \phi \in T_{\alpha}$, and $(\neg \phi) = \val{t}$, then $\neg T (t) \in T_{\alpha +1}$.
\item If $\phi \in T_{\alpha}$, then $\neg \neg \phi \in T_{\alpha +1}$.
\item If $\phi \in T_{\alpha}$ or $\psi \in T_{\alpha}$, then $\phi \vee \psi \in T_{\alpha +1}$.
\item If $\neg \phi \in T_{\alpha}$ and $\neg \psi \in T_{\alpha}$, then $\neg (\phi \vee \psi) \in T_{\alpha +1}$.
\item If $\phi(\num{x}) \in T_{\alpha}$, then $\exists v \phi(v) \in T_{\alpha +1}$.
\item If $\neg \phi(\num{x}) \in T_{\alpha}$ for all $x$, then $\neg \exists v \phi(v) \in T_{\alpha +1}$.
\end{itemize}

If $\lambda$ is a limit ordinal, we set $T_{\lambda} = \bigcup_{\alpha < \lambda} T_{\alpha}$. In the above construction, we enlarge the set $T_{\alpha}$ of sentences which are definitely true with a set of sentences which are definitely true if we interpret the truth predicate as the set $T_{\alpha}$. Since at each stage, we only keep enlarging our set, the construction will reach its fixed point.
Such fixed points can be easily shown to satisfy the axioms of $\KF^-[\B]$. The outlined argument carries over to an arbitrary model $\md{M}$ of $\B$ thus establishing conservativity of $\KF^-[\B]$ over its base theory.

The main problem with the outlined argument is that it does not directly formalize in $\PA$ since it relies on the principle: "Every positive operator on subsets of $\mathbb{N}$ reaches a fixpoint." which is clearly not available in $\PA$. However, there is a rather simple fix to this problem.

Start with a \emph{recursively saturated }model $\md{M} \models \B$. Notice that for $n \in \omega$, the $n$-th set obtained in the inductive procedure described above, $T_n$, is arithmetically definable in $\md{M}$ (let us call the defining formula $\Theta_n$). By definability of $T_n$ and recursive saturation of $\md{M}$, we can deduce that already $T_{\omega}$ is a truth predicate satisfying axioms of $\KF^-[\B]$. Essentially, this relies on the fact that in recursively saturated models $\phi(\num{x}) \in T_{\omega}$ holds for all $x \in M$ if and only if $\phi(\num{x}) \in T_k$ holds for some $k \in \omega$ and all $x \in M$.\footnote{A very similar argument has been presented in \cite{CLW} in the proof that any recursively saturated model of $\PA$ can be expanded to a model of $\PT^-$ with internal induction for total formulae. It seems that this reasoning appears originally in \cite{Cantini}, where Cantini proved conservativity  of $\KF^-$ with internal induction for total formulae over $\PA$.}

It turns out that this argument can be repeated in $\PA$ for a finitely axiomatized fragment $\B$ of $\PA$ extending $\IDelta_0 + \Exp$. Namely, we first take a full model $\md{M}$ of $\B$, then we take its recursively saturated elementary extension, a full model $\md{M}'$ and we define the predicate $T$ in $\md{M}'$ as the sum of all sets defined with certain formulae $\Theta_c$ defining the analogues of the sets $T_c$ from the above construction. The details will be given in Subsection \ref{subsection_KFminus}

\subsubsection{Conservativity of $\FS^-$} \label{Subsection_Conservativity_FS}

The proof of conservativity of $\FS^-$ over $\PA$ is analogous to the one showing the upper bounds on the proof-theoretical strength of its fully inductive version, $\FS[\PA]$. As an intermediate step we pass through a theory of iterated compositional truth predicate of length $\omega$, $\RT_{<\omega}^-$.

\begin{definicja}

$\RT_{<n+1}^-[\B]$ is the extension of $\B$ in the language $\Lang_{<n+1}$ extending $\LPA$ with $n+1$ new predicate symbols $\was{T_0,\ldots,T_{n}}$ (we stipulate that $\Lang_{<0} = \Lang_{\PA}$ and $\RT^-_{<0} = \PA$) satisfying the following axioms for all $k<n+1$:

\begin{enumerate}
\item[$\RT1$] $\forall s,t \in \ClTerm_{\LPA} \ \ T_{k}(s=t) \equiv \val{s} = \val{t}$.
\item[$\RT2$] $\forall \phi \in \Sent_{\Lang_{<k}} \ \ T_{k}(\neg \phi)\equiv \neg T_{k}(\phi)$.
\item[$\RT3$] $\forall \phi,\psi\in \Sent_{\Lang_{<k}}\ \ T_{k}(\phi \vee \psi) \equiv T_{k}(\phi)\vee T_{k}(\psi)$.
\item[$\RT4$] $\forall \phi(x) \in \form^{\leq 1}_{\Lang_{<k}}\forall v \in \vrbl \ \ T_k(\exists v \phi)\equiv \exists x\ \ T_{k}(\phi(\num{x}))$.
\item[$\RT5$] $\forall \bar{s},\bar{t} \in \ClTermSeq_{\LPA} \forall \phi(x)\in \form^{\leq 1}_{\Lang_{<k}}\ \ \bigl(\val{\bar{s}} = \val{\bar{t}}\rightarrow T_k(\phi(\bar{s}))\equiv T_k(\phi(\bar{t}))\bigr)$.
\item[$\RT6$] $\bigwedge_{i<k}\forall s\in \ClTerm_{\LPA} \ \bigl(\val{s} \in \Sent_{\Lang_{<i}} \rightarrow T_k(T_i(s))\equiv T_i(\val{s})\bigr)$.
\item[$\RT7$] $\forall i< k\forall s \in \ClTerm_{\LPA} \ \ \bigl(\val{s} \in \Sent_{\Lang_{<i}}\rightarrow T_k(T_i(s))\equiv T_k(\val{s})\bigr)$
\end{enumerate}
Define $\RT_{<\omega}^-[\PA] := \bigcup_{n\in\omega}\RT_{<n}^-[\PA]$
\end{definicja}

\begin{uwaga}
We assume that the initially chosen coding is extended in such a way that the length of $T_n$ is logarithmic in $n$ (in fact, polynomial will do, so this logarythmic bound is not that important).
\end{uwaga}

As in the case of $\FS^-$, $\RT^-_{<n}$ and $\RT^-_{<\omega}$ abbreviate $\RT^-_{<n}[\PA]$ and $\RT^-_{<\omega}[\PA]$ respectively. Note that similarly to all the rest of theories studied in this paper, in $\RT_{<\omega}^-$ we do not extend the scheme of induction to formulae with the truth predicate.

Now let $\B$ be our base theory. We shall now reduce the problem of conservativity of $\FS^-[\B]$ over $\B$ to the analogous problem for $\RT^-_{<\omega}$. Let us recall that an interpretation $^*$ is an $\omega$-\emph{interpretation}, if for every arithmetical sentence $\phi$ we have
\[\phi^* = \phi.\]
In order to perform the above mentioned reduction it suffices to show that every "finite piece" of $\FS^-$ can be $\omega$-interpreted in $\RT^-_{<\omega}$. In this context "an $n$-piece" means "a sentence which can be deduced from $\B$ and axioms $\FS1$--$\FS6$ (note that in this context $\FS2$ is missing) using at most $n$ applications of NEC and CONEC rules." We shall denote it with $\FS^-_n[\B]$. Thus $\phi$ is in $\FS^-_1[\B]$ if it can be deduced using one application of the NEC rule or one application of the CONEC rule. (But not both. Our definition differs from the original one given by Halbach.) Now the following holds:

\begin{lemat}[Essentially Halbach, \cite{halbach}, Theorem 14.31]\label{lem_Halbach_reduction}
For each $n$, $\FS^-_n[\B]$ is $\omega$-interpretable in $\RT_{<2n+1}^-[\B]$.
\end{lemat}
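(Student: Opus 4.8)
The plan is to construct a family of $\omega$-interpretations $\{I_k\}_{0\le k\le 2n}$ of $\Lang_T$ into $\Lang_{<2n+1}$, one for each available ramified level, and to interpret a Friedman--Sheard derivation by letting the level $k$ float in response to the rules \textsf{NEC} and \textsf{CONEC}. Each $I_k$ fixes the arithmetical vocabulary (so it is an $\omega$-interpretation) and interprets the single truth predicate $T$ by the formula $T^{I_k}(v):=T_k(\rho_k(v))$, where $\rho_k$ is an arithmetical relabeling that rewrites a code of an $\Lang_T$-sentence into a code of an $\Lang_{<k}$-sentence by turning the outermost occurrences of $T$ into $T_{k-1}$, the next layer into $T_{k-2}$, and so on, defaulting to a fixed false sentence once the level would drop below $0$. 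Since negation, disjunction and the quantifier do not consume a level, $\rho_k$ commutes with them; only passing through an application of the truth predicate decrements the index. The reason $2n+1$ levels suffice is a simple counting argument: we run the interpretation of the whole derivation from the \emph{central} level $k=n$, each application of \textsf{NEC} raises the current level by one and each application of \textsf{CONEC} lowers it by one, so after at most $n$ applications the level stays within the interval $[\,0,2n\,]$.

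First I would verify that for every $k$ the base axioms $\FS1,\FS3,\FS4,\FS5,\FS6$ are provable in $\RT_{<2n+1}^-[\B]$ under $I_k$. This reduces, via the fact that $\rho_k$ commutes with the arithmetical connectives, to the compositional axioms $\RT2,\RT3,\RT4$ and the regularity axiom $\RT5$ for the single predicate $T_k$, together with the coherence axioms $\RT6,\RT7$, which are exactly what is needed to evaluate a relabeled subformula $T_{k-1}(t)$ with an \emph{open} term $t$ correctly through the hierarchy (the problematic case $\exists x\,T(x)$, where one cannot compute $\val{t}$ syntactically). Next I would establish the two rule-shifting lemmas. For \textsf{NEC}: if $\RT_{<2n+1}^-[\B]\vdash\phi^{I_{k-1}}$, then since for a standard $\phi$ the sentence $\phi^{I_{k-1}}$ lies in $\Sent_{\Lang_{<k}}$, the Tarskian biconditional $T_k(\qcr{\phi^{I_{k-1}}})\equiv\phi^{I_{k-1}}$ is available and yields $\RT_{<2n+1}^-[\B]\vdash (T\phi)^{I_k}$, because $\rho_k(\qcr{\phi})$ is (provably equivalent to) the translation $\phi^{I_{k-1}}$. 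For \textsf{CONEC} one runs the same biconditional in the opposite direction, dropping the level by one.

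With these pieces in hand, the theorem follows by induction on the build-up of $\FS^-_n[\B]$: to each sentence appearing in the derivation we attach the current level (start at $n$, $+1$ for \textsf{NEC}, $-1$ for \textsf{CONEC}), and the two lemmas show that its $I_k$-translation at the attached level is provable in $\RT_{<2n+1}^-[\B]$; the counting argument guarantees the level never leaves $[0,2n]$, so only the predicates $T_0,\dots,T_{2n}$ are ever invoked. Finally, since an arithmetical $\phi$ contains no occurrence of $T$, we have $\phi^{I_k}=\phi$ for every $k$, so the level bookkeeping is invisible on the arithmetical theorems and the family collapses to a genuine single $\omega$-interpretation on them.

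The main obstacle I anticipate is making the relabeling $\rho_k$ into a \emph{legitimate uniform interpretation} rather than something that merely works formula by formula: the compositional axioms $\FS3$--$\FS5$ are universally quantified over all of $\Sent_{\Lang_T}$, so $I_k$ must validate them even for nonstandard sentences of unbounded truth-nesting depth, for which the level eventually underflows and $\rho_k$ defaults. The point to get right is that $\rho_k$ must commute with $\neg,\vee,\exists$ \emph{unconditionally} (so that underflow only ever happens strictly inside an application of $T$, never splitting a negation from its scope), and that the schematic, universal character of the $\RT$ compositional and coherence axioms for each fixed $T_k$ is strong enough to discharge the interpreted $\FS$ axioms uniformly. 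It is worth stressing that no single level can do the job --- interpreting $T$ by one fixed $T_k$ would force a compositional self-applicable truth predicate into the standard model, contradicting McGee's $\omega$-inconsistency theorem --- which is precisely why the level must be allowed to shift and why the excursion of width $2n$ is unavoidable.
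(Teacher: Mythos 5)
Your proposal is correct and follows essentially the same route as the paper: the paper's translations $g_j$ are exactly your relabelings (identity on arithmetical sentences, outermost $T$ sent to $T_{j-1}$ applied to the recursively relabeled argument, default to $0=1$ at underflow), the disquotation $T_l(\phi)\equiv\phi$ for $\phi\in\Sent_{\Lang_{<k}}$ with $k\le l$ plays the role of your rule-shifting lemmas for NEC and CONEC, and your floating-level bookkeeping corresponds to the paper's induction on the number $i$ of rule applications, with the translation index $j$ ranging over a window $\was{i+1,\ldots,2n+1-i}$ that shrinks by one on each side per application. The only differences are organizational (the paper carries the claim for all admissible levels simultaneously rather than attaching one level per node), not mathematical.
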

\begin{proof}
Define a family $\was{g_n}_{n\in\mathbb{N}}$ of primitive recursive functions as follows
\begin{align*}
g_{n}(k) =& \left\{ \begin{array}{ll} 
k &\textnormal{ if $k=(s=t)$,}\\
\qcr{0=1} &\textnormal{ if $k\notin\Sent_{\Lang{T}}$ or $k = T(t)$ and $n=0$,}\\
T_{n-1}(g_{n-1}(t)) &\textnormal{ if $k = Tt$ and $n>0$,}\\
\neg g_{n}(\phi) &\textnormal{ if $k = \neg\phi$,}\\
g_{n}(\phi)\vee g_{n}(\psi) &\textnormal{ if $k=\phi\vee \psi$,}\\
\exists x g_{n}(\phi) &\textnormal{ if $k = \exists x\phi$.}\end{array}\right.
\end{align*}
 Where $T_n(g_n(t))$ abbreviates
 \[ \forall x \bigl(g_n(t) = x \rightarrow T(x)\bigr),\]
 and $g_n(x)= y$ is a natural $\Delta_0$-formula which represents $g_n$ in $\IDelta_0 + \Exp$. We shall check that for every $n$, $g_{n+1}$ is an $\omega$-interpretation of $\FS^-_n[\B]$ in $\RT^-_{<2n+1}[\B]$. It is evident that each $g_n$ acts as identity on arithmetical sentences. Moreover, for every $\phi\in\Lang_T$ and each $n$, $g_n(\phi)$ is a sentence of $\Lang_{<n}$ (that is, it contains truth predicates with indices at most $n-1$) and this fact is provable in $\B$. Hence if $\phi$ is any axiom from $\FS1$ through $\FS6$ and $0<k\leq n$, then
 \begin{equation}\label{equat::interaxiom}\tag{$*$}
     \RT^-_{<n}[\B]\vdash g_{k}(\phi).
 \end{equation}
 Now, following the lines of Halbach's argument, we fix $n$ and, by induction on $i$ up to $n$, we show that for every $i\leq n$ and every $j\in \was{i+1,\ldots, 2n+1-i}$\footnote{That this range of $j$ shrinks in the induction process is needed to deal with CONEC.} we have:
 \[\forall \psi\ \ \FS^-_i[\B]\vdash \psi \Longrightarrow \RT^-_{<2n+1}[\B]\vdash g_{j}(\psi).\]
 
 Note that \eqref{equat::interaxiom} witnesses that the above holds for $i=0$. Now inductively assume that the above holds for an $0<i<n$ and fix $j\in \was{i+2,\ldots,2n+1-(i+1)}$.

 Fix a proof $\pi$ of $\psi$ in $\FS^-_{i+1}[\B]$. Arguing by induction assume that the last rule used in $\pi$ is either NEC or CONEC. In both cases we will use the fact that for all $k\leq l< m$, and every $\phi\in\Lang_{<k}$, $\RT^-_{<m}[\B]$ proves
 \begin{equation*}\label{equat:disquotationRT}\tag{$**$}
     T_l(\phi)\equiv \phi.
 \end{equation*}
If $\psi$ is obtained by NEC, then $\psi = T(\theta)$ and by our induction assumption we know that $\RT^-_{<2n+1}[\B]\vdash g_{j-1}(\theta)$. Since $g_{j-1}(\theta)\in\Sent_{<j-1}$, by \eqref{equat:disquotationRT} we obtain $\RT^-_{<2n+1}[\B]\vdash T_{j-1}(g_{j-1}(\theta))$. The last sentence is by definition equal to $g_{j}(T(\theta))$, hence this case is done. 

 If $\psi$ is obtained by CONEC, then we argue dually using $g_{j+1}$ applied to $T(\psi)$.
\end{proof}

In the rest of this section we sketch the proof of conservativity of $\RT^-_{<\omega}[\B]$ over $\B$ based on Enayat-Visser construction. For starters, let us note that it suffices to construct, for an arbitrary model $\M\models \B$ a chain of models $(\M_i)_{i\in \omega}$ such that 
\begin{enumerate}
    \item $\M_0 = \M$;
    \item $\M_i\models \RT^-_{<i}$;
    \item $\M_i\preceq_{\Lang_{<i}}\M_{i+1}$.
\end{enumerate}
Then $\bigcup_{i\in\mathbb{N}} \M_i$ will be an  elementary extension of $\M$ satisfying $\RT^-_{<\omega}[\B]$. To get $\M_{i+1}$ we basically start the Enayat-Visser construction (as sketched in Subsection \ref{sub_conservativity_ct}) on $\M_i$ for the base language $\Lang_{<i}$. More precisely, we build an $\omega$-chain of models $(\M_i^j, S_j)_{j\in\mathbb{N}}$ such that 
\begin{enumerate}
    \item $\M^0_i = \M_i$ and $S_0 = \varnothing$;
    \item $\M^j_i\preceq_{\Lang_{<i}}\M^{j+1}_i$;
    \item $S_j\subseteq S_{j+1}$
    \item $S_{j+1}$ is a satisfaction class for $\form_{\Lang_{<i}}(\M^j_i)$ with respect to all valuations from $\M^{j+1}_i$
\end{enumerate}
Satisfying the above requirements would suffice to guarantee that in the limit model axioms $\RT1$ through $\RT6$ will hold. However, to account for $\RT7$ we have to  improve our satisfaction classes $S_j$ slightly. This can be done by requiring that $S_{j+1}$ makes true all the statements $\phi$ such that
\[\M_i^j\models T_{l}(\phi) ,\]
for $l\leq i$ and $\phi \in \Sent_{\Lang_{<l}}^{\M^j_i}$ (i.e.,  $\pair{\phi}{\alpha}\in S_{j+1}$ for such any $\phi$ and for every assignment $\alpha \in \M^{j+1}_i$). This, in turn, requires only a tiny modification of the original Enayat--Visser proof. Details will be presented in Section \ref{subsection_FSminus}.

\section{The main act: feasible reductions of truth theories}
\label{The main act}

This section contains the principal results of this paper. The first three subsections are devoted, respectively, to feasible reductions of $\CT^-[\PA]$, $\KF^-[\PA]$, and $\FS^-[\PA]$ to $\PA$. The last section, on the other hand, presents an interpretability-theoretic perspective of our work.

\subsection{Feasible reduction of $\CT^-[\PA]$ to $\PA$}
\label{Section_core_construction}

This section is devoted to the proof of the following result:

\begin{tw}\label{thm::main}  {$\CT^-[\PA]$ is feasibly reducible to $\PA$.}
\end{tw}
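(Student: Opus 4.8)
The first move is to observe that $\CT^-[\PA]$ is the finite extension $\PA + \phi_{\CT}$ of $\PA$, where $\phi_{\CT}$ is the single $\LPAT$-sentence $\CT1 \wedge \CT2 \wedge \CT3 \wedge \CT4 \wedge \CT5 \wedge \CT6$ of Definition \ref{defi_ctminus}; crucially the induction scheme is \emph{not} extended to $\Lang_T$, so $\phi_{\CT}$ has fixed standard depth. Hence it suffices to verify the hypothesis of Corollary \ref{wniosek:ultimate_corollary}: I would exhibit a fixed standard $k$ and a single $\PA$-proof of the sentence asserting that \emph{for every finite fragment $\B$ of $\PA$, every $\Delta_2$-full model of $\B$ admits an $\LPA$-elementary extension (in the sense of Definition \ref{def_L_elementary_extension}) to a $\Delta_k$-model of $\B + \phi_{\CT}$}. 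All the feasibility bookkeeping — tracking the length bound $n$, assembling the reducing function, concatenating proofs — is then discharged automatically by Corollary \ref{wniosek:ultimate_corollary}, so the remaining task is purely the arithmetized model construction, uniform in the code of $\B$.

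The plan is to arithmetize the Enayat--Visser chain construction sketched in Subsection \ref{sub_conservativity_ct}, circumventing the obstacle that the defining formulae of the successive full models $(\md{M}_i, S_i)$ grow in complexity and so cannot be collected into one definable chain. Working in $\PA$, fix a finite $\B$ and a $\Delta_2$-full model $\md{M} \models \B$. For each $x$ let $\Th_x$ be the theory, in a fixed finite (stage-indexed) relational expansion of $\Lang_{\B}$ carrying a satisfaction predicate, asserting the existence of a chain $(\md{M}_0, S_0) \subseteq \cdots \subseteq (\md{M}_x, S_x)$ with $\md{M}_0 = \md{M}$, $\md{M}_i \preceq \md{M}_{i+1}$, and each $S_{i+1}$ a partial satisfaction class obeying the compositional clauses for the formulae lying in $\md{M}_i$ under all $\md{M}_{i+1}$-valuations and agreeing with $S_i$ on the formulae of $\md{M}_{i-1}$. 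The heart of the matter is the single \textbf{extension lemma}
\[
\PA \vdash \forall x\, \bigl(\Con(\Th_x) \rightarrow \Con(\Th_{x+1})\bigr),
\]
which formalizes one step of the construction: a model of $\Th_x$ is elementarily extended and its top satisfaction predicate enlarged to one behaving compositionally over the freshly added level. Since consistency of such a finitely-presented extension reduces by (arithmetized) compactness to satisfying finitely many compositional constraints at a time, this step is carried out by a recursion along the internal subformula relation, using that each committed formula has strictly smaller depth than the one forcing it, so the recursion terminates provably. As $\Th_0 = \ElDiag(\md{M})$ is consistent and $\Con(\Th_x)$ is, relative to the $\Delta_2$-diagram of $\md{M}$, an arithmetical formula of bounded complexity, the extension lemma lets $\PA$ conclude $\forall x\, \Con(\Th_x)$ by induction on $x$.

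From $\forall x\, \Con(\Th_x)$ one infers, again by arithmetized compactness, the consistency of the limit theory $\Th_{\infty} := \bigcup_x \Th_x$, which — unlike any single chain of models — is a theory of \emph{fixed}, bounded complexity in a fixed finite language. Applying the Arithmetized Completeness Theorem \ref{th_act} yields a full model $\md{K} \models \Th_{\infty}$ of some fixed complexity. Inside $\md{K}$ the entire chain $(\md{M}_i, S_i)_{i}$ is now uniformly definable, so its union $\md{N} := \bigcup_i \md{M}_i$ together with $S := \bigcup_i S_i$ is defined by formulae of a fixed standard complexity $\Delta_k$. A routine check shows $(\md{N}, S)$ satisfies the full compositional conditions: any formula, its direct subformulae, and any given valuation already live in some $\md{M}_i$, where compositionality was imposed on $S_{i+1}$ and then preserved upward; clauses $\CT5$ and $\CT6$ come out correctly because the numeral- and term-value sensitivity is built into the constraints defining the $S_{i+1}$. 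Setting $T(\phi) :\equiv \phi \in \Sent_{\LPA} \wedge S(\phi, \varnothing)$ gives $(\md{N}, T) \models \B + \phi_{\CT}$, and $\md{N}$ is an elementary extension of $\md{M}$ as the union of an elementary chain based at $\md{M}$. As everything was read off $\md{K}$ by fixed formulae, $(\md{N}, T)$ is a $\Delta_k$-model with $k$ independent of $\B$, which is exactly the hypothesis of Corollary \ref{wniosek:ultimate_corollary}.

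The main obstacle I anticipate is the extension lemma together with the intertwined matter of complexity control: one must arithmetize the single Enayat--Visser step so that (i) termination of the subformula recursion is provable in $\PA$, and (ii) the \emph{limit} object read off the completeness-theorem model has complexity $\Delta_k$ for a \emph{fixed} standard $k$, uniformly in $\B$. Passing to the consistency of the theories $\Th_x$, rather than to the models themselves, is precisely the device that keeps the complexity bounded and allows a single $\PA$-proof to handle all finite fragments of $\PA$ at once.
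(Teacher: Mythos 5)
Your architecture coincides with the paper's own proof: reduce to Corollary \ref{wniosek:ultimate_corollary}, describe the length-$x$ initial segments of the Enayat--Visser chain by theories $\Th_x$ in a fixed finite expansion of $\Lang_{\B}$, prove $\forall x\,\Con(\Th_x)$ by induction inside $\PA$ with a single ``extension lemma'' as the inductive step, pass to the limit theory $\Th_\infty$ of fixed complexity, apply the Arithmetized Completeness Theorem, and read off the uniformly definable chain and its union inside the resulting full model, finally restricting the satisfaction class to sentences to obtain $T$. All of that matches Section \ref{Section_core_construction}.

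There is, however, a genuine gap inside your extension lemma, precisely where you dismiss $\CT5$ and $\CT6$ with the remark that ``term-value sensitivity is built into the constraints.'' In the compactness step one only interprets the finitely many predicates $U_{\phi}$ occurring in a finite fragment $F$, by recursion on a rank computed relative to the finite set $c$ of formulae mentioned in $F$: a formula receives rank $0$ as soon as some immediate subformula is missing from $c$, and its satisfaction set is then defined essentially arbitrarily. The generalized regularity axiom $\CT6$ links pairs $\phi=\theta[\bar{s}/\bar{v}]$ and $\psi=\theta[\bar{t}/\bar{v}]$ which need \emph{not} receive the same rank relative to $c$; when they do not, the recursion produces $\theta_{\phi}$ and $\theta_{\psi}$ by different clauses and nothing forces them to satisfy the regularity constraint in $F$. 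The paper's Lemma \ref{lem::AEV} devotes most of its effort to repairing exactly this: it introduces term trivializations $\compl{\phi}$, the induced equivalence $\approx^{\M}$, the Congruence Lemma \ref{lem_congruence} showing that $\approx^{\M}$ is compatible with the immediate-subformula relation, and a completion $\compl{c}$ of $c$ on which $\approx^{\M}$-equivalent formulae provably get equal rank, so that the $U_{\phi}$ for a whole $\approx^{\M}$-class are defined coherently from one representative via the interdefinability clauses. Without some such device your subformula recursion does not yield a model of the regularity instances in $F$. A second, smaller omission: after gluing the new top model onto the model of $\Th_x$ you no longer possess an elementary diagram of the glued structure, so concluding $\Con(\Th_{x+1})$ requires a separate cut-elimination and subformula-property argument (as in the proof of Corollary \ref{wniosek:penultimate_corollary}); this is routine but not automatic.
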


An immediate corollary of Theorem \ref{thm::main} is that $\CT^-[\PA]$ does not have super-polynomial speed-up over $\PA$. The proof of a special case of this corollary for $\Pi _{1}$-sentences of arithmetic was presented by Fischer \cite{Fischer-speed}, based on an outline suggested by Visser, but as pointed out in a footnote in Subsection \ref{sub_interpretability_and_speedup} the presented proof lacks an important detail.

Our proof of Theorem \ref{thm::main} will be based on the verification of the veracity of the assumption of Corollary \ref{wniosek:ultimate_corollary} for $k=4$ and $\Th=\CT^-[\PA]$.  In fact, we shall do slightly better: let us say that a theory $\B$ is \emph{good} if it is formulated in a language $\Lang_{\B}$ that extends $\Lang_{\PA}$ with new finitely many relation symbols (so all terms are arithmetical) and $\B$ extends $\ISigma_1$. We shall show that for every $l\in\mathbb{N}$ the following \emph{single} sentence is provable in $\PA$:

\begin{center}
"If $\B$ is any $\Delta_1$-good theory, then every $\Delta_l$-full model of $\B$ has an elementary extension to a $\Delta_{l+2}$-model of $\CT^-[\B]$."
\end{center}
In the above, $l$ is to be thought as independent of the size of the proof that our reduction takes as an argument. In the case of $\CT^-[\PA]$ we will need the above theorem only for $l=2$. The more uniform version will be needed to handle the $\FS^-[\PA]$ case.

Our proof consists in formalizing the $\omega$-chain Enayat--Visser construction inside $\PA$, according to the sketch given in Subsection \ref{Section::conservativity_and_interpretability_of_truth}. As in the conservativity proof of Enayat and Visser, we shall make a detour through partial satisfaction classes. Let us introduce one more definition that will play an intermediate role in the proof below.

 \begin{konwencja}
 If $P$ is an arbitrary unary predicate and $\phi(x)$ an arbitrary formula with one free variable, then we write $\phi\restr{P}$ for the formula $\phi(x)\wedge P(x)$.
 \end{konwencja}
 
 \begin{definicja}[$\CS^-\restr{P}$]\label{defi:CS-}
 	Let $\B$ be a theory in a finite language $\Lang_{\B}$ extending $\ISigma_1$ and $P$ be a fresh unary predicate. $\CS^-\restr{P}[\B]$ is the theory of \textit{$P$-restricted, extensional satisfaction class for $\Lang_{\B}$} formulated in the language $\Lang_S = \Lang_{\B} \cup \was{S}\cup \was{P}$ and extending $\B$ with the following axioms:

 	\begin{enumerate}
 		\item $\forall x,y \bigl(S(x,y)\rightarrow x\in\Form{\mathcal{L}_{\B}}\restr{P}\wedge y \in\Ass(x)\bigr)$.
 		\item $\forall s_0\ldots\forall s_n \in \Term{\mathcal{L}_{\B}}\restr{P} \forall \alpha\in \Ass(s_0,\ldots,s_n) \bigl( S(R(s_0,\ldots,s_n), \alpha)\equiv R(\valt{s_0}{\alpha},\ldots, \valt{s_n}{\alpha})\bigr)$.
 		
 		\item $\forall \phi,\psi \in \form_{\Lang_{\B}}\restr{P} \forall \alpha\in \Ass(\phi,\psi)\ \ \bigl(S(\phi\vee\psi, \alpha)\equiv S(\phi,\alpha)\vee S(\psi,\alpha)\bigr)$.
 		\item $\forall \phi \in \form_{\Lang_{\B}}\restr{P}\forall \alpha\in \Ass(\phi)\ \ \bigl(S(\neg\phi,\alpha)\equiv \neg S(\phi,\alpha)\bigr).$
 		
 		\item $\forall \phi \in \form_{\Lang_{\B}}\restr{P} \forall v \in \vrbl\restr{P} \forall\alpha\in\Ass(\exists v\phi)\ \ \bigl(S(\exists v \phi,\alpha)\equiv \exists \ \ \beta \sim_v \alpha, \beta\in\Ass(\phi)\ \ S(\phi,\beta)\bigr)$.

 	\end{enumerate}
    and the axiom of generalized regularity:
   \begin{multline*}
        \forall \phi \in \form_{\B}\restr{P} \forall \bar{v}\in \FVSeq(\phi)\restr{P}\forall \bar{s},\bar{t}\in\ClTermSeq_{\Lang_{\B}}\restr{P}\forall \alpha\in\Ass(\phi[\bar{s}/\bar{v}],\phi[\bar{t}/\bar{v}]) \\ \left(\valt{\bar{s}}{\alpha} = \valt{\bar{t}}{\alpha} \rightarrow \left(S(\phi[\bar{s}/\bar{v}],\alpha)\equiv S(\phi[\bar{t}/\bar{v}],\alpha)\right)\right)
 \end{multline*}

 	If $\M\models \B$ and $P\subseteq M$, $S\subseteq M^2$ is such that $(\M,S, P)\models \CS^-\restr{P}[\B]$, then $S$ is called a \df{$P$-restricted extensional satisfaction class for $\Lang_{\B}$ on $\M$}. If $S$ is "$x=x$"-restricted, it is called \emph{full}.  Note that the above notion makes sense even if $(\M, S, P)$ is not a full model since $\CS^-\restr{P}[\B]$ is a finite extension of $\B$ (recall Definition \ref{def_satisfaction_of_theories_for_models}).
 \end{definicja}
 
Note that in the definition above we do not restrict the range of assignments (denoted by variable $\alpha$ in the above definition). In effect, we do not assume that the assignments come from the restricted set. This is crucial to our purposes. 

\begin{konwencja}
Below we always assume that $P$ is either empty or defines in $\M$ a universe of an elementary submodel of $\M$. This certainly can be sustained along the inductive condition from the proof below. Under this assumption, $P$ is closed under the direct subformula relation, which we denote with $\imsubf$. More precisely
\[(\M,P)\models \forall \phi,\psi \in \form_{\Lang_{\B}} \biggl(\bigl(P(\phi) \wedge \psi \imsubf \phi\bigr)\rightarrow P(\psi)\biggr).\]
\end{konwencja}
 
 The distinctive feature of Enayat-Visser technique of building truth classes is that one creates a well-behaved \emph{satisfaction} class via a union of chain argument. Let us now state the proposition which will provide us with a proof of the induction step in this construction.

\begin{lemat}[Arithmetized Enayat-Visser construction]\label{lem::AEV}
	Let $\Lang_{\B}$ be a finite language extending $\Lang_{\PA}$. The sentence expressing the following implication is provable in $\PA$ for every $l\in\mathbb{N}$:
	
	
	If $(\md{M},S, P)$ is a $\Delta_l$-full model for $\Lang_{\B}$  such that:
	\begin{enumerate}
		\item $\M\models \ISigma_1$;
		\item $S$ is a $P$-restricted satisfaction class for $\Lang_{\B}$;
	\end{enumerate}  
	then there exists a $\Delta_{l+1}$-full model $\md{N}$ for $\Lang_{\B}$ and an $\Delta_{l+1}$-set $S'\subseteq N^2$ such that:
	\begin{enumerate}
		\item $\M\preceq \mathcal{N}$;
		\item $S'$ is an $M$-restricted satisfaction class for $\Lang_{\B}$ (we add a predicate for the universe of $\md{M}$ to the language);
		\item $S\subseteq S'$.
	\end{enumerate}
\end{lemat}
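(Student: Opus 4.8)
The plan is to deduce Lemma \ref{lem::AEV} from the Arithmetized Completeness Theorem (Theorem \ref{th_act}) by exhibiting a suitable consistent theory. Working in $\PA$ and given a $\Delta_l$-full model $(\M,S,P)$ as in the hypotheses, I would introduce an $\Lang_{\B}\cup\{S'\}$-theory $\Th^*$ (with the old universe $M$ available as a $\Delta_l$-predicate, since $\M$ is presented as a complete Henkinized theory and $M$ is just the set of its constants) consisting of: (i) the elementary diagram $\ElDiag(\M)$; (ii) for every $\phi$ with $\phi\in M$, the single compositional clause for $S'$ at $\phi$ coupling $S'(\phi,\cdot)$ to $S'$ of the immediate subformulas of $\phi$, written pointwise, together with the atomic and generalized-regularity clauses — that is, the axioms of $\CS^-\restr{M}[\B]$ rendered formula-by-formula; and (iii) the atomic diagram $\{S'(\phi,\alpha) : (\phi,\alpha)\in S\}$. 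Since $\ElDiag(\M)$, $M$, $P$ and $S$ are all $\Delta_l$ and each pointwise clause has size polynomial in $|\phi|$, the axiom set of $\Th^*$ is $\Delta_l$. Once $\PA\vdash\Con(\Th^*)$ is established, Theorem \ref{th_act} produces a $\Delta_{l+1}$-full model of $\Th^*$; reading off its $\Lang_{\B}$-reduct $\md{N}$ and the interpretation $S'$ of the new predicate, we get $\Delta_{l+1}$-objects with $\M\preceq\md{N}$ (because $\ElDiag(\M)\subseteq\Th^*$), with $S\subseteq S'$ (by (iii)), and with $S'$ an $M$-restricted satisfaction class in the sense of Definition \ref{defi:CS-} (since every pointwise instance from (ii) holds, the universal version does too).

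Everything therefore reduces to $\PA\vdash\Con(\Th^*)$, which is where the arithmetized Enayat--Visser construction does the work. Following that strategy, and because the compositional conditions are written pointwise, I would reduce consistency to the consistency of each finite subtheory: a derivation of a contradiction uses only finitely many pointwise clauses, say for formulas in a set $F$, together with finitely many diagram atoms. The decisive structural fact is that the clause attached to $\phi$ couples $S'(\phi,\cdot)$ only to $S'$ of the immediate subformulas $\psi\imsubf\phi$; hence the constrained formulas form $G=F\cup\{\psi : \psi\imsubf\phi,\ \phi\in F\}$, and one assigns truth values along the (well-founded) immediate-subformula relation — atomic formulas get their $\M$-evaluation, formulas already in $P$ keep their $S$-value (coherently, as $P$ is $\imsubf$-closed and $S$ obeys the clauses there), and the remaining formulas of $M\setminus P$ receive values propagated through the connective, quantifier and regularity clauses. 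To make this rigorous inside $\PA$, where the set $F$ extracted from a proof is only internally finite and may contain nonstandard formulas, I would recast the recursion as a $\PA$-induction establishing $\Con(\Th^*_F)$ for every internally finite $F$, adding one pointwise clause at a time and using well-foundedness of $\imsubf$ to see that each addition merely defines the value of a formula from those of its immediate subformulas and so preserves consistency. This is precisely where $\M\models\ISigma_1$ enters, furnishing the coding of assignments and the recursion on internal syntactic rank.

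The principal obstacle is the existential clause $S'(\exists v\,\theta,\alpha)\equiv\exists\beta\sim_v\alpha\,S'(\theta,\beta)$, for three interlocking reasons. First, by Tarski's theorem one cannot write down a single arithmetical formula that is a satisfaction class for all of $M$, so the argument must proceed through a consistency statement rather than through an explicit definition — which is exactly why the conclusion is harvested from the Arithmetized Completeness Theorem. Second, witnesses for existential formulas forced to be true must be supplied, and they become available only after passing to the elementary extension; this is why $\ElDiag(\M)$ is built into $\Th^*$, guaranteeing $\M\preceq\md{N}$. Third, the given $S$ may be a nonstandard, pathological class disagreeing with the genuine $\Lang_{\B}$-truth of $\M$ on deep formulas, so $S'$ cannot simply be read off the elementary diagram of the extension; one must transport $S$ verbatim on $P$ and create fresh values only on $M\setminus P$. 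Reconciling these three points while keeping the construction uniform in $l$ and within the complexity bounds dictated by Theorem \ref{th_act} (so that the whole implication is a single $\PA$-theorem for each standard $l$) is, I expect, the technical heart of the proof.
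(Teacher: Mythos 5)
Your overall architecture coincides with the paper's: one writes down an Enayat--Visser-style theory consisting of $\ElDiag(\M)$, the pointwise compositional and regularity clauses for each $\phi\in\form_{\Lang_{\B}}(\M)$, and the atoms transporting $S$ on $P$; one proves its consistency by interpreting each internally finite fragment $F$ inside the full model $(\M,S,P)$ via a recursion on a rank computed relative to $F$ (carried out by induction in $\PA$, using $\ISigma_1$ in $\M$); and one then invokes the Arithmetized Completeness Theorem to extract $\md{N}$ and $S'$ with the stated complexity bounds. (Whether one uses a single binary predicate $S'$ or, as the paper does, a family of unary predicates $U_{\phi}$ is immaterial.) However, there is a genuine gap at the generalized regularity axiom. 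You treat regularity as one more clause through which values are ``propagated'' along the immediate-subformula relation, but a regularity instance couples $S'(\theta[\bar{s}/\bar{v}],\alpha)$ with $S'(\theta[\bar{t}/\bar{v}],\beta)$, i.e.\ two formulas that are \emph{not} related by $\imsubf$ and that sit at the same syntactic depth. In the finite-fragment recursion it can perfectly well happen that $\phi=\theta[\bar{s}/\bar{v}]$ has all its immediate subformulas among the constrained formulas (so its value is computed compositionally) while $\psi=\theta[\bar{t}/\bar{v}]$ does not (so it is treated as an atom of the recursion and receives the default value ``false''); the regularity instance for the pair $(\phi,\psi)$ in $F$ then fails. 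This is exactly the complication the paper flags as the extra difficulty over the original Enayat--Visser argument, and resolving it requires real machinery: the term trivialization $\compl{\phi}$, the equivalence $\approx^{\M}$, the \emph{completion} $\compl{c}$ of the set $c$ of formulas occurring in $F$, and the Congruence Lemma, which together guarantee that $\phi\approx^{\M}\psi$ implies $\rank^{\compl{c}}(\phi)=\rank^{\compl{c}}(\psi)$, so that $\approx^{\M}$-equivalent formulas are handled at the same stage and their interpretations are made interdefinable. Your proposal contains no analogue of this step, and the recursion as you describe it does not satisfy all the regularity clauses of the fragment.

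A secondary, less serious point: you identify the existential clause as the principal obstacle and suggest that witnesses only become available after passing to the elementary extension. In the compactness step this is not where the difficulty lies: one simply \emph{defines} the interpretation of $S'$ at $\exists v\,\theta$ by the formula $\exists\beta\sim_{v}\alpha\;\theta_{\theta}(\beta)$ evaluated in $\M$, so the quantifier clause holds by construction; no new witnesses are needed at that stage. The elementary extension is needed for the global statement (a single undefinable class $S'$ compositional on all of $\form_{\Lang_{\B}}(\M)$ at once), which your first ``reason'' already captures. The real technical heart is the interaction of regularity with the rank, not the quantifier clause.
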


\begin{uwaga}
    We call the reader's attention to the asymetry in the above lemma: we start with a full model $(\M, S, P)$ but finish with a full model $\md{N}$ and two its subsets $S'$, $M$. This will be compensated for in our inductive construction.
\end{uwaga}

\begin{proof}
	We work in $\PA$. Let $(\mathcal{M}, S, P)$ be as in the antecedent of the implication. We follow the lines of the standard Enayat-Visser proof from \cite{enayat-visser}, but we perform it inside $\PA$. Moreover we  have the additional technical complication caused by adding the regularity axiom to $\CT^-[\B]$. Let us define the language $\Lang_{\EV}$:
	\[\Lang_{\EV} = \Lang_{\B} \cup \was{P}\cup\was{S}\cup\set{c}{c\in M}\cup \set{U_{\phi}(x)}{\M\models \phi\in \Form{\Lang_{\B}}}.\]
	Now let us define the Enayat-Visser theory for $(\M, P, S)$ as the sum of the following sets:
	\begin{align*}
		&\set{\phi(a_1,\ldots,a_n)}{\phi\in\Lang_{\B}, a_i\in M, \M\models \phi(a_1,\ldots,a_n)} (=\ElDiag(\M)),\\
		&\set{U_{\phi}(\alpha)}{\pair{\phi}{\alpha}\in S, \phi\in P},\\
		&\set{\forall \alpha \in \Ass(s_0,\ldots,s_n),\ \ \left(U_{R(s_0,\ldots,s_n)}(\alpha)\equiv R(\valt{s_0}{\alpha},\ldots,\valt{s_n}{\alpha})\right)}{R\in\Lang_{\B}, s_0,\ldots,s_n\in \term_{\Lang_{\B}}(\md{M})},\\
		&\set{\forall \alpha\in\Ass(\psi) \ \ \bigl(U_{\psi}(\alpha)\equiv U_{\phi}(\alpha )\vee U_{\theta}(\alpha )\bigr)}{ \M\models \psi \in \Sent_{\Lang_{\B}} \wedge (\psi = (\phi\vee\theta))},\\
		&\set{\forall \alpha\in\Ass(\psi)\ \ \bigl(U_{\psi}(\alpha)\equiv \neg U_{\phi}(\alpha)\bigr)}{\M\models \psi \in \Sent_{\Lang_{\B}} \wedge \psi = (\neg\phi)},\\
		&\set{\forall \alpha\in\Ass(\psi)\ \ \bigl(U_{\psi}(\alpha)\equiv \exists \beta\sim_v\alpha \ \ U_{\phi}(\beta)\bigr)}{ \M\models \psi \in \Sent_{\Lang_{\B}} \wedge \psi = (\exists v\phi)},\\
		&\left\{ \forall \alpha,\beta\in\Ass(\psi,s,t),\ \ \left(\valt{\bar{s}}{\alpha}=\valt{\bar{t}}{\beta}\rightarrow \left(U_{\psi}(\alpha)\equiv U_{\phi}(\beta)\right)\right) \mid \right. \\
        & \left. \M\models \bar{s}, \bar{t} \in \TermSeq_{\Lang_{\B}} \wedge \exists\theta \in \form_{\Lang_{\B}} \  \exists \bar{v}\in \FVSeq (\theta) \ \ \left(\psi = \theta[\bar{s}/\bar{v}]\wedge\phi = \theta[\bar{t}/\bar{v}]\right) \right\}.\\
	\end{align*}
	\newcommand{\C}{\mathbf{C}}
	We argue that this theory is consistent. Then, by ACT (Theorem \ref{th_act}) there will be a $\Delta_{l+1}$-model $\mathcal{N}$ of this theory. Then putting:
    \[S' = \set{\pair{x}{y}}{x\in \form_{\Lang_{\B}}(\M) \wedge \md{N} \models U_x(y)},\]
we easily check that the triple $(\mathcal{N}, S', M)$ satisfies the claim.

To prove the consistency, we will argue by the compactness theorem. Let $F$ be a finite (in the sense of $\PA$) fragment of this theory. For each predicate $U_{\phi}$ which occurs in $F$ we will find a formula $\theta_{\phi}(x)\in\Lang_{\B}$ such that 
\[(\M, P ,S)\models F[\theta_{\phi}/U_{\phi}]_{U_{\phi}\in F},\]
where $F[\theta_{\phi}/U_{\phi}]_{U_{\phi}\in F}$ denotes the theory resulting from $F$ by replacing each occurrence $U_{\phi}$ with the corresponding formula $\theta_{\phi}$. Note that the above makes perfect sense, since $(\M,S, P)$ is a full model. This clearly guarantees that $F$ is consistent. Moreover from now on we do not need to bother with the sentences from $\ElDiag(\M)$, since they obviously hold in $\M$.

As in the original Enayat-Visser proof we construe $\theta_{\phi}$'s by induction on the appropriately defined rank. Note that we have more work to do here than in the proofs given by Enayat and Visser \cite{enayat-visser} (since in their set-up, the  language of arithmetic is  purely relational), and by Cie\'{s}li\'{n}ski \cite{Cieslinski_Book} (since in his set-up $\CT^-$ does not include our generalized regularity axiom $\CT6$). Let $c$ be the set of formulae $\phi$ such that the predicate $U_{\phi}$ occurs in a formula in $F$.  Let $b$ be an arbitrary
coded set of formulae of $\Lang_{\B}$. We put $\rank^b(\phi)\geq x$ iff there exists a sequence $y$ such that the following three conditions hold (in the last condition $\imsubf$ denotes the relation of being an immediate subformula):
\begin{enumerate}
	\item $\len(y)=x+1$ and $(y)_{x} = \was{\phi}$.
	\item For all $i<x+1$ $(y)_i\subseteq b$.
	\item For all $i<x$ for all $\theta$, $\theta\in (y)_{i+1}$ iff for all $\psi$ such that $\md{M} \models \psi \imsubf \theta$, $\psi\in (y)_i$.
\end{enumerate} 
We say that $\rank^b(\phi) = x$ if $x$ is the greatest $x$ such that $\rank^b(\phi)\geq x$. This definition makes sense, since if $\rank^b(\phi)\geq x$, then $x\leq |b|$ where $|c|$ denotes the cardinality of $c$.

\begin{przyklad}
If $b = \was{0=0, 0=0 \vee 1=1}$, then the $\rank^b(0=0\vee 1=1) = 0$, since $1=1 \notin b$.
\end{przyklad}

The intuition behind the above definition is that $\rank^c(\phi)$ is the complexity of $\phi$ where formulae whose some immediate subformula does not belong to $c$ are treated as atoms. The idea is that for any such formula the satisfaction set $U_{\phi}$ can be defined almost arbitrarily and then $U_{\psi}$'s for formulae of higher rank can be defined in terms of previously defined satisfaction sets. 

Observe that if we follow the above described recursive procedure, then all the compositional axioms (i.e., counterparts of axioms for atomic formulae, disjunction, negation and quantifier) from $F$ will be satisfied. However, we have one immediate problem: it can happen that (an instance of) the axiom of regularity for $\phi$ and $\psi$ is in $F$, but $\phi$ and $\psi$ get different ranks. In such a situation the standard procedure does not seem to guarantee that $\theta_{\phi}$ and $\theta_{\psi}$ (i.e. formulae which interpret $U_{\phi}$ and $U_{\psi}$ in $(\M, S, P)$) will satisfy the regularity axiom. To simplify the notation let us define $\phi\approx_{F}\psi$ if the following is in $F$:
\begin{align*}
&\forall \alpha\in\Ass(\psi,\bar{s})\forall\beta\in\Ass(\phi,\bar{t})\ \ \left(\valt{\bar{s}}{\alpha}=\valt{\bar{t}}{\beta}\rightarrow \left(U_{\psi}(\alpha)\equiv U_{\phi}(\beta)\right)\right).
\end{align*}
Note that it can happen only if the following holds in $\M$ (this follows by definition of the Enayat--Visser theory):	
\begin{align*}
&\bar{s}, \bar{t} \in \TermSeq_{\Lang_{\B}} \wedge \exists\theta \in \form_{\Lang_{\B}} \  \exists \bar{v}\in \FVSeq (\theta) \ \ \left(\psi = \theta[\bar{s}/\bar{v}]\wedge\phi = \theta[\bar{t}/\bar{v}]\right).
\end{align*}
A solution to our puzzle is to complete $c$, obtaining $\compl{c}$, to assure that we have for all $\phi$, $\psi$
 \[\phi\approx_F \psi \Rightarrow \rank^{\compl{c}}(\phi) = \rank^{\compl{c}}(\psi).\]
 It is convenient to extend $\approx_F$ a little bit to make it an equivalence relation. We say that $\xi$ is the \emph{term trivialization} of $\phi$, and write $\xi = \compl{\phi}$ if the following four conditions hold:
\begin{enumerate}
	\item For every occurrence $t$ of a term in $\xi$, if all occurrences of variables in $t$ are free, then $t$ is a free occurrence of a variable.
	\item No variable occurs in $\xi$ as both bounded and free, and no variable occurs as free more than once.
	\item For some $\rho$, a  function with domain $\FV(\theta)$ and  values in $\term_{\Lang_{\B}}$, the equality $\xi[\rho] = \phi$ holds. (Recall that $\xi[\rho]$
	denotes the result of a formal substitution of terms for free variables of $\xi$ and that $\term_{\Lang_{\B}}$ contains also terms with free variables.)
	\item The indices of free variables of $\xi$ are chosen in a canonical way (for example according to the tree-ordering of the syntactical tree of $\xi$. This is only needed to guarantee uniqueness).
\end{enumerate}
The idea behind $\compl{\phi}$ is that if for some term substitution $\rho$ and some formula $\psi$ we have
\[\phi[\rho] = \psi,\]  
then, $\compl{\phi} = \compl{\psi}$ and there are unique term substitutions $\gamma_1$, $\gamma_2$ such that:
\begin{equation*}
\compl{\phi}[\gamma_1] = \phi\textnormal{ and } \compl{\phi}[\gamma_2] = \psi.
\end{equation*}

We write $\phi\approx^{\M}\psi$ if $\M\models\compl{\phi} = \compl{\psi}$.\footnote{The idea of using such term trivializations was directly inspired by Graham Leigh's \cite{leigh}.} Obviously $\approx^{\M}$ is an equivalence relation. Moreover, $\approx^{\M}$ is a congruence with respect to the direct subformula relation $\imsubf$, i.e. the following lemma holds. For its proof consult the appendix.
\begin{lemat}[Congruence lemma] \label{lem_congruence}
	For all $\phi$, $\phi'$, $\psi'$ it holds that
	\begin{equation}\label{equat::congr_triv}\tag{\textnormal{C}}
	\bigl(\phi\imsubf \phi'\wedge \phi'\approx^{\M} \psi' \bigr)\Rightarrow \exists \psi\ \ \bigl(\psi\imsubf \psi' \wedge\psi\approx^{\M}\phi \bigr).
	\end{equation} 
\end{lemat}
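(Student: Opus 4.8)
The plan is to reduce the claim to an inspection of the outermost logical structure of $\phi'$, exploiting the fact that $\phi'\approx^{\M}\psi'$ forces $\phi'$ and $\psi'$ to share a common term trivialization. Working in $\PA$, I would set $\theta := \compl{\phi'} = \compl{\psi'}$, and, by clause (3) of the definition of term trivialization together with the uniqueness remark following it, fix term substitutions $\gamma_1,\gamma_2$, each with domain $\FV(\theta)$ and values in $\term_{\Lang_{\B}}$, such that $\theta[\gamma_1] = \phi'$ and $\theta[\gamma_2] = \psi'$. Since each $\gamma_i$ substitutes only for the free variables of $\theta$, it disturbs neither the leading connective nor any bound variable of $\theta$; hence $\phi'$ and $\psi'$ have the same outermost connective, namely that of $\theta$. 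I would then argue by cases on this connective, in each case producing the witness $\psi$ by "applying $\gamma_2$ where $\gamma_1$ was applied."

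If $\theta$ is atomic then $\phi'$ is atomic, hence has no immediate subformula, and the hypothesis $\phi\imsubf\phi'$ is vacuously impossible. If $\theta = \neg\eta$ or $\theta = \eta_0\vee\eta_1$, then the immediate subformulae of $\phi'$ are exactly the formulae $\eta_j[\gamma_1]$ (with $j$ ranging over the one or two immediate subformulae $\eta_j$ of $\theta$), and likewise those of $\psi'$ are the $\eta_j[\gamma_2]$. Given the immediate subformula $\phi = \eta_j[\gamma_1]$, I would set $\psi := \eta_j[\gamma_2]$. Since descending to a Boolean immediate subformula changes the binding status of no variable, $\eta_j$ inherits the trivialized form of $\theta$, so each $\gamma_i$ merely refills the free-variable slots of $\eta_j$ with terms, and retrivializing collapses every such slot back to a single canonical variable; thus $\compl{\eta_j[\gamma_1]}$ and $\compl{\eta_j[\gamma_2]}$ are both the canonical renaming of $\eta_j$, the indexing of clause (4) forcing them to coincide. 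Hence $\psi\imsubf\psi'$ and $\psi\approx^{\M}\phi$.

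The main obstacle is the quantifier case $\theta = \exists v\,\eta$, where the variable $v$, bound in $\phi'$, becomes \emph{free} upon descending to the subformula. Here $\phi' = \exists v\,(\eta[\gamma_1])$, its unique immediate subformula is $\phi = \eta[\gamma_1]$, and I would take $\psi := \eta[\gamma_2]\imsubf\psi'$. Now in both $\phi$ and $\psi$ the variable $v$ is free, so every maximal subterm built from $v$ and the trivialization variables of $\theta$ — left uncollapsed inside $\theta$ precisely because $v$ was bound there — is now a term all of whose variables are free, and is therefore collapsed to a single canonical variable in forming $\compl{\phi}$ and $\compl{\psi}$. The point to verify is that this extra collapsing occurs at exactly the same syntactic positions in $\eta[\gamma_1]$ and in $\eta[\gamma_2]$, and yields the same canonical variable there, since $\gamma_1$ and $\gamma_2$ differ only by substituting the same trivialization variables with (possibly different) all-free terms. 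This gives $\compl{\eta[\gamma_1]} = \compl{\eta[\gamma_2]}$, i.e. $\psi\approx^{\M}\phi$. The careful bookkeeping of this last claim — that term trivialization commutes with substituting all-free terms for trivialization variables, uniformly in the newly freed variable $v$ — is the technical heart of the argument and the part I would relegate to the appendix; everything else is a direct case inspection that $\PA$ performs on the (possibly nonstandard) codes $\phi,\phi',\psi'$.
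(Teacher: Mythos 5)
Your proof is correct and follows essentially the same route as the paper's: a case analysis on the outermost connective of the common trivialization $\compl{\phi'}=\compl{\psi'}$, with the only nontrivial case being the existential quantifier, where one observes that the maximal terms whose variables become all free upon dropping the quantifier are collapsed at the same syntactic positions in $\phi$ and $\psi$, so the two immediate subformulae share a term trivialization. The paper's appendix argument is an equally brief sketch of exactly this point (phrased as induction on complexity, collapsing the maximal all-free terms of the body $\eta$ of $\compl{\phi'}=\exists v\,\eta$ and renaming canonically).
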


By induction it follows that the congruence lemma holds for $\imsubf_a$ in place of $\imsubf$, where $\imsubf_a$ denotes the $a$-step transitive closure of $\imsubf$ (by stipulation $\imsubf_0$ is the relation of equality).

Finally, observe that for every $\phi$, $U_{\compl{\phi}}$ and $U_{\phi}$ are mutually interdefinable. Indeed, fix $\phi$ and $\gamma:\FV(\compl{\phi})\rightarrow \term_{\Lang_{\B}}$ such that $\compl{\phi}[\gamma] = \phi$. Then, having $U_{\compl{\phi}}$, we define $U_{\phi}$ with the condition:
\begin{equation}\label{equat::inter_U1}\tag{$U_{\compl{\phi}}\rightarrow U_{\phi}$}
\alpha \in U_{\phi} \iff \exists \beta \in U_{\compl{\phi}} \forall v\in \FV(\compl{\phi})\ \ \beta(v) = \gamma(v)^{\alpha}. 
\end{equation}
Similarly, having $U_{\phi}$ we define $U_{\compl{\phi}}$ with the condition:
\begin{equation}\label{equat::inter_U2}\tag{$U_{\phi}\rightarrow U_{\compl{\phi}}$}
\beta \in U_{\compl{\phi}} \iff \exists \alpha \in U_{\phi} \forall v\in \FV(\compl{\phi})\ \ \beta(v) = \gamma(v)^{\alpha}.
\end{equation}
Now, define $\compl{c}$ to be the \emph{completion} of $c$ if for all $\psi$, $\psi \in \compl{c}$
if and only if there exists $i,j\leq m$ and $\psi',\phi,\phi' \in c$ such that the following pair of conditions hold:
\begin{enumerate}
	\item $\M\models \psi\imsubf_i \psi' \wedge \phi\imsubf_j\phi'$
	\item $\phi\approx^{\M}\psi$.
\end{enumerate}
\[\xymatrix{	\psi'  & &\phi' \\
	\psi\ar[u]^{\imsubf_i}\ar[rr] & & \phi\ar[ll]^{\approx^{\M}}\ar[u]_{\imsubf_j}}		\]
Let us observe that with the current definition of $\compl{c}$ it holds that for all $\phi,\psi\in c$
\[\phi\approx^{\M}\psi \Rightarrow \rank^{\compl{c}}(\phi) = \rank^{\compl{c}}(\psi).\]
Indeed, suppose this is not the case. Then, assumming without loss of generality that 
\[\rank^{\compl{c}}(\phi) > i = \rank^{\compl{c}}(\psi)\] 
there exists $\phi'\in \compl{c}$ such that $\phi' \imsubf_{i+1} \phi$ but no formula from $\compl{c}$ is the $i+1$-st direct subformula of $\psi$. By the congruence lemma (and induction) there exists $\psi'$ such that $\psi'\imsubf_{i+1}\psi$ and $\psi'\approx^{\M}\phi'$. Since $\phi'\in \compl{c}$, then there are $\theta,\theta', \phi'' \in c$ such that for some  $j,k\leq m$ $\theta\imsubf_j \theta'$ and $\phi'\imsubf_k \phi''$ and $\phi' \approx^{\M} \theta$ (possibly $\phi'' = \phi = \theta'$ and $\theta = \phi'$---when $\phi'\in c$).
Since $\approx^{\M}$ is an equivalence relation, then $\psi' \approx^{\M}\theta$. Now, by the definition of $\compl{c}$ we obtain that $\psi'\in \compl{c}$, a contradiction.

For every $x$, let $F\restr{x}$ denote the fragment of $F$ consisting of axioms for $U_{\phi}$ predicates for $\phi$ of $\rank^{\compl{c}}$ at most $x$ and recall that if $\was{\theta_{\phi}}$ is a family of formulae with one free variable indexed with $\phi$ such that $U_{\phi}\in F\restr{x}$, then 
$$F\restr{x}[\theta_{\phi}/U_{\phi}]_{U_{\phi}\in F\restr{x}}$$
denotes the theory resulting from $F\restr{x}$ by replacing every occurrence of $U_{\phi}$ with the formula $\theta_{\phi}$. Let $\zeta(x)$ be the formula asserting the following implication:

"There exists the unique family of $\Lang_{\B}\cup \was{S}$-formulae $\was{\theta_{\phi}}_{\rank^{\compl{c}}(\phi)\leq x}$ indexed with formulae of $\rank^{\compl{c}}\leq x$ such that:
\begin{enumerate}
	\item For every $\phi$, if $\rank^{\compl{c}}(\phi) = 0$, then:
	\begin{enumerate}
		\item if $\M\models  \exists t_1, \ldots, t_a \in \term_{\Lang_{\B}} \phi = R(t_0,\ldots,t_a)$, then $\theta_{\phi}(x) = R(\valt{t_0}{x},\ldots,\valt{t_a}{x})$, and
		\item if $\phi$ is from $P$, then $\theta_{\phi}(x) = S(\phi,x)$, and
		\item if for some $\psi\in P$, $\phi \approx^{\M} \psi$, then $U_{\phi}$ is defined from $U_{\psi}$ using \eqref{equat::inter_U1} and \eqref{equat::inter_U2};
		\item otherwise put $\theta_{\phi}(x) = (x\neq x)$.
	\end{enumerate}
	\item $(\M,S,P)\models F\restr{x}[\theta_{\phi}/U_{\phi}]_{\rank^{\compl{c}}(\phi)\leq x}$."
\end{enumerate}

Now, we prove $\forall x \zeta(x)$ by induction. 
This concludes the proof of Lemma \ref{lem::AEV}.\end{proof}
Let us now complete the proof of Theorem \ref{thm::main}. We shall show how, working inside $\PA$, given an arbitrary good $\Delta_1$-theory $\B$, we can  elementarily extend an arbitrary $\Delta_l$-full model of $\B$ to a $\Delta_{l+2}$-model
of $\CT^-[\B]$. To this end, working in $\PA$, fix a good theory $\B$, $l\in\mathbb{N}$ and a $\Delta_l$-full model $\M$ of $\B$.
Next, still working in $\PA$ we shall construct an unbounded $\Delta_{l+1}$-chain
of $\Delta_{l+1}$-full models
\[(\M_0, S_0),(\M_1, S_1, M_0),\ldots,(\M_x, S_x, M_{x-1}),\ldots\]
such that:
\begin{itemize}
\item[R1] $\M\preceq\M_0$,
\end{itemize}
and for each $y$ we have:
\begin{enumerate}
        \item[R2] $\M_y\preceq \M_{y+1}$,
		\item[R3] $S_0 = \varnothing$ and $S_{y+1}$ is an $M_y$-restricted satisfaction class for $\Lang_{\B}$ and
		\item[R4] $S_y\subseteq S_{y+1}$.
\end{enumerate}

In particular each triple $(\M_x, S_x, M_{x-1})$ will have a fixed $\Delta_{l+1}$-complexity. 
Let us assume that such a chain has been constructed and $\md{M}_x(y)$ and $S_x(y)$ are formulae defining the sequences of respective $\Lang_{\B}$-full models and restricted satisfaction classes. For example it holds that $\M_x(y)$ iff $y$ is the definition of the $x$-th full model (recall that officially full models are identified with their elementary diagrams). Then (in $\PA$) we define the limit model with the formulae:
\begin{align*}
& \M_{\infty}(z) := \exists x \exists y \in \form^{1}_{\LPA}  \ \ \left(\M_x(y)\wedge \Sat_{l+1}(y,z) \right) ,\\
& S_{\infty}(z) := \exists x\exists y \in \form^{1}_{\LPA} \ \left(S_x(y)\wedge \Sat_{l+1}(y,z)\right),
\end{align*}
where $\Sat_{l+1}(x,y)$ denotes the canonical satisfaction predicate for $\Sigma_{l+1}$-formulae.\footnote{Recall that by Convention \ref{konw_sat_predicates_on_elements}, $\Sat_{l+1}(y,z)$ means $\Sat_{l+1}(y, \zeta)$, where $\zeta$ is a valuation which assigns $z$ to the only variable of $y$.}
Note that $\M_{\infty}$ is really a full $\Lang_{\B}$-model, since the chain is elementary with respect to $\Lang_{\B}$-formulae and each $\M_x$ is a full model for $\Lang_{\B}$.

The rest of the argument follows along the lines of Enayat--Visser proof: we check that $S_{\infty}$ is a full satisfaction class on $\M_{\infty}$, hence $(\M_{\infty}, S_{\infty})$ is a $\Delta_{l+2}$-model of $\CT^-$.

Let us now construct the promised chain of models: reasoning in $\PA$, we first define a sequence of increasing
theories $\left\langle \Th_{m}:m\in \mathbb{N}\right\rangle .$ Intuitively speaking,
for each $m$, $\Th_{m}$ describes a structure $\mathcal{K}_{m}=\left\langle (%
\mathcal{M}_{i},S_{i}):i\leq m\right\rangle $ and the family $\was{(\md{M}_i, S_i, M_{i-1})}_{i\leq m}$ satisfies conditions R1 --R4 for boundedly many numbers. In other words, $\was{(\md{M}_i, S_i, M_{i-1})}_{i\leq m}$ is the initial segment of our desired chain consisting of first $m+1$ models.

We now give a precise description of $\Th_{m}$. The non-logical symbols of $%
\Th_{m}$ consist of $\mathcal{L}_{\B}$, together with constant symbols for every element of $M$, unary predicate
symbols $\{\textnormal{M}_{i}:i\leq m\}$, and binary predicate symbols $\{%
\textnormal{S}_{i}:i\leq m\}.$ Let $\Lang_m$ denote this language. 
\begin{konwencja}
If $\phi$ is any formula (in the sense of $\PA$), and $\textnormal{M}(x)$ is any of $\textnormal{M}_i$'s then we write $\phi^{\textnormal{M}}$ to denote the relativisation of $\phi$ to $\textnormal{M}$. This means that we syntactically replace all quantifiers $\exists x \alpha(x)$ with $\exists x  \left(\textnormal{M}(x) \wedge  \alpha(x)\right)$, all quantifiers $\forall x \alpha(x)$ with $\forall x\left(\textnormal{M}(x) \rightarrow \alpha(x)\right)$ and adding to $\phi$ a conjunct $\bigwedge_{x_i\in \FV(\phi)} \textnormal{M}(x_i)$.
\end{konwencja}

The official translations of R1 through R4 above are as follows: 
\begin{itemize}
\item Condition R1 is translated as $\set{\phi^{\textnormal{M}_0}}{ \phi \in \ElDiag(\md{M})}$. 
\item Condition R2 is translated as
\[\set{\forall x_0\ldots \forall x_a \bigl(\phi(x_0,\ldots,x_a)^{\textnormal{M}_i}\rightarrow \phi(x_0,\ldots,x_a)^{\textnormal{M}_{i+1}}\bigr)}{i<m, \phi(x_0\ldots, x_a)\in\form_{\Lang_{\mathrm{B}}}}.\]
\item Condition R4 is expressed by the following finite set of sentences:
\begin{center}
$\left\{ \forall x\forall \alpha  (\left( \textnormal{S}_{i}(x,\alpha
)\rightarrow \textnormal{S}_{i+1}(x,\alpha )\right) :i<m\right\} .$
\end{center}
\item Condition R3 is expressed by the conjunction of the universal closures of the following finitely many axioms $1$i-$6$i, $0\leq i\leq m$, which directly correspond to the ones from Definition \ref{defi:CS-} (we stipulate that $\phi^{\M_{-1}}(x)$ is always the formula $x\neq x$) :
\begin{itemize}
\item[1i] $S_i(x,y)\rightarrow \bigl(\form_{\mathcal{L}_{\B}}^{\textnormal{M}_{i-1}}(x)\wedge \Ass^{\textnormal{M}_i}(x,y) \bigr). $
\item[2i] $\left( \TermSeq_{\Lang_{\B}}^{\textnormal{M}_{i-1}}(\bar{s}) \wedge \left(x = R(\bar{s})\right)^{\textnormal{M}_{i-1}} \wedge \Ass^{\textnormal{M}_i}(x,\alpha)\right) \rightarrow
\bigl(S_i(x,\alpha) \equiv (R(\bar{s}^{\alpha}))^{\textnormal{M}_i}\bigr).$
\item[3i] $\left( \form_{\Lang_{\B}}^{\textnormal{M}_{i-1}}(x) \wedge (x=\neg y)^{\textnormal{M}_{i-1}} \wedge \Ass^{\textnormal{M}_{i}}(x,\alpha) \right) \rightarrow \left(S_{i}(x,\alpha )\equiv \lnot S_{i}(y,\alpha )\right).$
\item[4i]  $\left( \form_{\Lang_{\B}}^{\textnormal{M}_{i-1}}(x)  \wedge  \left( x=y_{1}\vee y_{2}\right)^{\textnormal{M}_{i-1}} \wedge \Ass^{\textnormal{M}_i}(x,\alpha) \right) \rightarrow$
\begin{center}
$\textnormal{S}_{i}(x,\alpha )\equiv \bigl( \textnormal{S}_{i}\left(
y_{1},\alpha\right) \vee \textnormal{S}_{i}\left( y_{2},\alpha\right) \bigr) .$
\end{center}
\item[5i] $\left( \form_{\Lang_{\B}}^{\textnormal{M}_{i-1}}(x)\wedge  \bigl(\exists v \ \ (\vrbl (v)  \wedge x =\exists v\ y)\bigr)^{\textnormal{M}_i}\wedge  \Ass^{\textnormal{M}_i}(x,\alpha)\right)
\rightarrow $
\begin{center}
$\textnormal{S}_{i}(x,\alpha )\equiv \exists \alpha ^{\prime }\left(  (\alpha ^{\prime }\sim_v \alpha)^{\textnormal{M}_i} \wedge \textnormal{S}_{i}(y,\alpha ^{\prime})\right). $
\end{center}
\item[6i] $\left( \form_{\Lang_{\B}}^{\textnormal{M}_{i-1}}(x) \wedge \VarSeq^{\textnormal{M}_{i-1}}(\bar{v}) \wedge \TermSeq^{\textnormal{M}_{i-1}}_{\Lang_{\B}}(\bar{s}) \wedge \TermSeq^{\textnormal{M}_{i-1}}_{\Lang_{\B}}(\bar{t}) \wedge  \Ass^{\textnormal{M}_i}(x,\bar{s},\bar{t},\alpha) \right) \rightarrow $
\begin{center}
$\left( \left((y_1 = x[\bar{s}/\bar{v}])^{\textnormal{M}_i} \wedge (y_2 = x[\bar{t}/\bar{v}])^{\textnormal{M}_i}  \wedge \left(\bar{s}^{\alpha} = \bar{t}^{\alpha} \right)^{\textnormal{M}_i}\right)\rightarrow \right. $
\end{center}
\begin{center}
$\left. \bigl(S_i(y_1,\alpha) \equiv S_i(y_2,\alpha)\bigr)\right).$
\end{center}
\end{itemize}
\end{itemize}






We can now use induction on $m$\ to show that $\forall m \ \Con(\Th_{m})$:

\paragraph{Base case} Recall that $\M$ is a fixed $\Delta_l$-full
model of $\B$. Let $S_{0}=\varnothing.$ Then since $S_{0}$ is definable in $\md{M}$, the elementary diagram of $\mathcal{K}_{0}:=(\md{M},S_{0})$ is also definable. This makes it clear that $\Con(\Th_{0})$ holds.

\paragraph{Inductive step} Fix $m$ and suppose that $\Con(\Th_{m})$ holds. Then by Theorem \ref{th_act}, there is a full model $\mathcal{K}_{m}$ of $\Th_{m}$ satisfying R1 through R4 above whose elementary diagram is $\Delta_{1+l}$-definable.

Let $\Lang_{\Th_{m}}$ be the language of ${\Th_{m}}$, and let $\md{K}_{m}^{-}$ be the \emph{reduct} of the structure $\md{K}_{m}$ to the language $\{\textnormal{M}_{m},\textnormal{S}_{m}, \textnormal{M}_{m-1}, +,\cdot \}$ in which the universe of discourse is the $\md{K}_{m}$-interpretation of $\textnormal{M}_{m}.$ For example, since $\Lang_{\Th_{1}}=\{\textnormal{M}_{1},\textnormal{M}_{0},+,\cdot ,\textnormal{S}_{0},\textnormal{S}_{1}\}$, a model $\md{K}_{1}$ of $\Th_{1}$ will be a structure of the form $(K_{1},M_{1},M_{0},\oplus ,\odot ,S_{0},S_{1})$, where $M_{i}=\mathrm{M}_{i}^{\mathcal{K}}$, $S_{i}=\mathrm{S}_{i}^{\mathcal{K}}$, and $K_{1}$ is the domain of discourse of $\md{K}_{1}.$ In this case, $\mathcal{K}_{1}^{-}=(M_{1},\oplus ,\odot ,S_{1}).$ So in general $\mathcal{K}_{m}^{-}$ is of the form $(M_{m},\oplus ,\odot ,S_{m}).$\footnote{Recall the conventions from Subsection \ref{sub_arithmetised_model_theory}. Although officially full models are elementary diagrams, we refer to them as though they were usual structures, as it is routine to translate statements about complete Henkinized theories into statements about structures.} Observe that $\mathcal{K}_{m}^{-}$ is a full model. Typically, its domain is smaller than the domain of $\md{K}_m$.

Also let $\md{M}_m$ be the reduct of $\md{K}^-_m$ to $\Lang_{\B}$. Let us observe that taking reducts does not raise the complexity of (the definition of) models, so $\md{K}_m^{-}$ and $\md{M}_m$ are still $\Delta_{l+1}$-full models.
To this model apply Lemma \ref{lem::AEV} for $\M = \M_m$, $S = S_m$ and $P = M_{m-1}$. We are given $\md{N}$, a $\Delta_{l+2}$-full model for $\Lang_{\B}$,  and a $\Delta_{l+2}$-set
 $S'$ such that $S'$ is an $M_m$-restricted satisfaction class and $\md{M}_m\preceq \md{N}$. Now we "glue" this model to the end of the chain given by $\md{K}_m$. More precisely, we define a model $\md{K}_{m+1}$ for $\Lang_{m+1}$ in the following way. The universe of $\md{K}_{m+1}$ is the sum of the universes of $\md{K}_m$ and $\md{N}$ (without loss of generality, renaming the elements of $N\setminus M_m$ if necessary, we assume that $K_m \cap N = M_m$). $\textnormal{M}_{m+1}$ is interpreted as $N$, $\textnormal{S}_{m+1}$ as $S'$ and $+$ and $\cdot$ are interpreted on elements from $\textnormal{N}$ as they were in $\md{N}$. For $0\leq i \leq m$ $\textnormal{M}_i$ and $\textnormal{S}_i$ are interpreted as in $\md{K}_m$. Thus we have obtained a structure which contains an elementary chain of models of $\B$, with $\md{N}$ being the top one and possibly some extra elements in the domain of $K_m \setminus N$.  

Also note that for a structure defined in this manner we do not have an elementary diagram at our disposal, hence an argument is needed to show that $\Con(\Th_{m+1})$ holds. We argue as in the proof of Corollary \ref{wniosek:penultimate_corollary}.
Note that if $\pi$ is an alleged proof of contradiction from the axioms of $\Th_{m+1}$ which has a subformula property, then only the following types of sentences can occur in it:
\begin{itemize}
\item[A] formulae of the form $\phi^{\textnormal{M}_0}$ for $\phi\in\Lang_{\B}$;
\item[B] subformulae of sentences of the form 
\[\forall x_0\ldots \forall x_a \bigl(\phi(x_0,\ldots,x_a)^{\textnormal{M}_i}\rightarrow \phi(x_0,\ldots,x_a)^{\textnormal{M}_{i+1}}\bigr),\]
for $\phi(x_0,\ldots,x_1)\in\form_{\Lang_{\mathrm{B}}}$, $i<m+1$.
 \item[C] subformulae of sentences from (formalization) of condition R4;
\item[D] subformulae of sentences from 1i - 6i, i$\leq m+1$.
\end{itemize} 
The complexity of formulae from C and D
is bounded by a standard number.
This is not the case of formulae from A or B. However, to decide every such sentence we can use $\ElDiag(\md{K}_m)$ and $\ElDiag(\md{N})$ and this is clearly sufficient (all formulae from B are in the universal closure of boolean closure of formulae of type $\phi^{\textnormal{M}_i}$ for $i\leq m+1$). All in all, we can define a $\Sigma_n$-truth predicate for $\md{K}_{m+1}$, for sufficiently large $n$, which would work for all formulae from the proof $\pi$. It follows that $\pi$ cannot be a proof of contradiction. This ends the inductive step and we can conclude that $\forall m \  \Con(\Th_m)$ holds.

We shall now define the promised chain of models as a full model of the limit of $\Th_m$'s. Define: 
\[\Th_{\infty} := \bigcup \set{\Th_c}{c\in\mathbb{N}}.\]
Here $\mathbb{N}$ is treated internally, it simply denotes the universe. $T_{\infty}$ is a consistent theory of complexity $\Delta_{l}$ (it is computable in $\ElDiag(\md{M}))$.
It follows that it has a $\Delta_{l+1}$-full model $\mathcal{K}_{\infty}$. This model gives rise to the $\Delta_{l+1}$-chain of $\Delta_{l+1}$-full models $(\M_x, M_{x-1}, S_x)_{x\in\mathbb{N}}$, which can be defined as follows:
\begin{align*}
M_x(y) &:=  \mathcal{K}_{\infty}\models M_{\num{x}}(y),\\
\mathcal{M}_x\models \phi &:=  \mathcal{K}_{\infty}\models \phi^{\M_{x}},\\
S_x(y,z) &:= \mathcal{K}_{\infty}\models S_{x}(y,z).
\end{align*}
The construction guarantees that under such a definition, the chain $(\M_x, S_x, M_{x-1})$ satisfies the requirements R1 through R4. This finally concludes the proof of feasible reduction of $\CT^-[\PA]$ to $\PA$.$\qed$

\subsection{Feasible reduction of $\KF^-[\PA]$ to $\PA$} \label{subsection_KFminus}

In this subsection we will establish:

\begin{tw} \label{tw_KFminus_has_no_speed_up}
$\KF^-[\PA]$ is feasibly reducible to $\PA$. 
\end{tw}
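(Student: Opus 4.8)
The plan is to deduce Theorem \ref{tw_KFminus_has_no_speed_up} from Corollary \ref{wniosek:ultimate_corollary}. Since $\KF^-$ does not extend the induction scheme to $\Lang_T$, the theory $\KF^-[\PA]$ is literally $\PA$ together with the finitely many truth axioms $\KF1$--$\KF12$; conjoining them into a single sentence $\sigma$ we may write $\KF^-[\PA]=\PA+\sigma$, so $\KF^-[\PA]$ is a finite extension of $\PA$. Thus it suffices to establish, by a single $\PA$-proof, the statement required by Corollary \ref{wniosek:ultimate_corollary}: for some fixed standard $k$, every finite fragment $\B$ of $\PA$ has the property that each $\Delta_2$-full model of $\B$ admits an $\LPA$-elementary extension to a $\Delta_k$-model of $\KF^-[\B]$. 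I would obtain this by arithmetizing the Kripke--Cantini fixed point construction outlined in Subsection \ref{sub_cons_of_kf}.

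Working inside $\PA$, fix a $\Delta_2$-full model $\md{M}\models\B$. First I would pass to a recursively saturated $\LPA$-elementary extension $\md{M}'$ of $\md{M}$, produced as a $\Delta_k$-full model for a fixed $k$ by the Arithmetized Completeness Theorem \ref{th_act} applied to an auxiliary theory whose consistency is established, as in the $\CT^-$ argument, by a union-of-chain and type-realization recursion; the essential point is that this is done uniformly, with a complexity bound independent of $\B$. Over $\md{M}'$ I would then define the Kripke stages: using the relativized partial satisfaction predicates $\Sat^{\md{M}'}_n$ of Corollary \ref{wniosek:relativized_partial_feasible_truth} to evaluate the arithmetical atomic base, I define by arithmetized recursion on $c$ a formula $\Theta(c,x)$ expressing ``$x\in T_c$,'' where $T_{c+1}$ is the result of applying the monotone positive operator from Subsection \ref{sub_cons_of_kf} to $T_c$. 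Setting $T(x):=\exists c\,\Theta(c,x)$, that is $T=\bigcup_c T_c$, the crucial bookkeeping is to verify that $T$ is definable over $\md{M}'$ at a fixed arithmetical complexity, so that $(\md{M}',T)$ is a $\Delta_{k'}$-model for a standard $k'$.

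Finally I would verify $(\md{M}',T)\models\KF^-[\B]$ axiom by axiom. The equality and relational clauses $\KF1$--$\KF4$ reduce to the correctness of $\Sat^{\md{M}'}_n$ on atomic arithmetical sentences; the Boolean and transparency clauses $\KF5$--$\KF7$ and $\KF10$--$\KF12$, together with the ``$\exists$''-direction of $\KF8$, follow immediately from monotonicity of the stages and the defining clauses of the operator, since membership in $T$ is always witnessed at some stage. The delicate axiom is $\KF9$ (and the right-to-left direction of $\KF8$): from $\neg\phi(\num{a})\in T$ for every $a\in M'$ one must produce a \emph{single} stage $c$ with $\neg\phi(\num{a})\in T_c$ for all $a$, so that $\neg\exists v\,\phi\in T_{c+1}\subseteq T$. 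This is exactly where recursive saturation of $\md{M}'$ enters, via the collapse principle recorded in the footnote to Subsection \ref{sub_cons_of_kf}: because the stages are uniformly definable, a recursive type scattered over unboundedly many stages cannot be separated by the $\LPA$-definable predicate $T$. I expect this collapse step, together with the requirement that the entire construction---the recursively saturated extension, the stage recursion, and the axiom verification---be carried out at bounded complexity by one $\PA$-proof uniform in $\B$, to be the main obstacle. Once the displayed model-theoretic statement is proved in $\PA$ in this uniform way, Corollary \ref{wniosek:ultimate_corollary} yields that $\KF^-[\PA]$ is feasibly reducible to $\PA$.
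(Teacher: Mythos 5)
Your proposal is correct and follows essentially the same route as the paper: reduce to Corollary \ref{wniosek:ultimate_corollary}, pass inside $\PA$ to a recursively saturated $\Delta_3$-full elementary extension via the Arithmetized Completeness Theorem, define the Kripke--Cantini stages $\Gamma_c$ by arithmetized recursion, take $T=\bigcup_c\Gamma_c$, and use recursive saturation precisely to collapse the negated-existential clause to a single stage. The only cosmetic difference is that the paper realizes all types in one application of ACT (Lemma \ref{lem_rec_sat_modele_istnieja}) rather than by a chain, and the atomic base is defined directly from the value function rather than via $\Sat^{\md{M}'}_n$.
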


Our proof of the above theorem will demonstrate  that the assumption of  Corollary \ref{wniosek:ultimate_corollary} holds with the choice of $\Th=\KF^-[\PA]$ and $k=4$, i.e., we will prove:

\begin{lemat} \label{lem_szybkie_modele_kf}
$\PA$ proves that  for any finite fragment $\Frag$ of $\PA$, every full $\Delta_2$-model of $\B$ has an elementary extension to a $\Delta_4$-model of $\KF^-[\B]$.
\end{lemat}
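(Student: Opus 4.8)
The plan is to verify the hypothesis of Corollary \ref{wniosek:ultimate_corollary} for $\Th = \KF^-[\PA]$ with $k=4$, by arithmetizing inside $\PA$ the Kripke--Cantini fixed-point construction sketched in Subsection \ref{sub_cons_of_kf}. So, working in $\PA$, I fix a finite fragment $\B$ of $\PA$ and a $\Delta_2$-full model $\md{M}\models\B$, and I must produce an $\Lang_{\B}$-elementary extension $(\md{M}',T)$ that is a $\Delta_4$-model of $\KF^-[\B]$. The first step is to pass from $\md{M}$ to a \emph{recursively saturated} elementary extension $\md{M}'$ of $\md{M}$, still a full model of $\B$, while keeping its complexity at $\Delta_3$; this is the arithmetization of the usual construction of a recursively saturated elementary extension (an $\omega$-length dovetailing that realizes every recursive type), carried out via the Arithmetized Completeness Theorem \ref{th_act}. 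Recursive saturation of $\md{M}'$ is precisely the property that will later be responsible for the single nontrivial closure condition on the truth predicate.

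Next, using $\md{M}'$ I define the approximations $T_c$ to the Kripke truth set by the monotone jump displayed in Subsection \ref{sub_cons_of_kf}: $T_0$ collects the true atomic and negated atomic $\Lang_{\B}$-facts (decided from the $\Delta_3$ elementary diagram of $\md{M}'$), and $T_{c+1}=\Gamma(T_c)$ via the positive clauses for $\neg\neg$, $\vee$, $\neg(\phi\vee\psi)$, $\exists$ and $\neg\exists$, together with the transparency clauses for $T(t)$ and $\neg T(t)$. The point is to produce a \emph{single} arithmetical formula $\Theta(c,x)$ defining $T_c$ uniformly in $c$ (the relativized partial truth predicates $\Sat^{\md{M}'}_n$ of Corollary \ref{wniosek:relativized_partial_feasible_truth} are available to evaluate the bounded-depth fragments on which each stage acts), and then to set $T(x):=\exists c\,\Theta(c,x)$. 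Since $\Theta$ is of bounded complexity over the $\Delta_3$ model $\md{M}'$, the pair $(\md{M}',T)$ is a $\Delta_4$-model, which is exactly the bound demanded by Corollary \ref{wniosek:ultimate_corollary} for $k=4$.

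Verifying that $(\md{M}',T)$ satisfies the compositional and transparency axioms $\KF1$ through $\KF8$ and $\KF10$ through $\KF12$ is then routine: each instance follows from the defining clauses together with the monotonicity $T_c\subseteq T_{c+1}$, since any witness already appears at some finite stage. The real obstacle is the closure condition packaged in axiom $\KF9$, i.e.\ the equivalence $T(\neg\exists v\phi)\equiv\forall x\,T(\neg\phi(\num{x}))$, whose right-to-left direction asserts exactly that $T$ is a genuine fixed point of the Kripke jump. This is the one clause with infinitely many premises, and it is here --- and only here --- that recursive saturation of $\md{M}'$ is essential: one must show that whenever $\neg\phi(\num{x})\in T$ for every $x\in M'$, there is a \emph{single} stage $c$ with $\neg\phi(\num{x})\in T_c$ for all $x$, so that $\neg\exists v\phi$ enters at stage $c+1$.

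I would argue this uniform-stage fact by contradiction. If no single stage worked, then $\forall c\,\exists x\,\neg\Theta(c,\qcr{\neg\phi(\num{x})})$, and hence the arithmetically definable family $\{\,\neg\Theta(\num{n},\qcr{\neg\phi(\num{y})}) : n\in\mathbb{N}\,\}$ would be a recursive type in the variable $y$ (with parameter $\qcr{\phi}$) that, by monotonicity of the stages, is finitely satisfiable in $\md{M}'$; recursive saturation would then realize it by some $b\in M'$, yielding $\neg\phi(\num{b})\notin\bigcup_c T_c=T$, contrary to hypothesis. Carrying out this step cleanly --- together with the accompanying bookkeeping that keeps $\Theta(c,x)$ uniformly definable of the complexity required for the $\Delta_4$ bound --- is the technical heart of the proof; the remaining verifications follow the conservativity sketch of Subsection \ref{sub_cons_of_kf}, and the full details are given below.
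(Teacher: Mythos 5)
Your proposal follows essentially the same route as the paper's proof: pass to a recursively saturated $\Delta_3$-full elementary extension via the Arithmetized Completeness Theorem, build the definable Kripke--Cantini stages $\Gamma_c$, take $T$ as their union to obtain a $\Delta_4$-model, and use recursive saturation (via exactly the finitely-satisfiable recursive type you describe, refuted by contradiction) to handle the single infinitary clause $T(\neg\exists v\phi)\equiv\forall x\,T(\neg\phi(\num{x}))$. The only cosmetic difference is that the paper defines the stages as formulae internal to $\md{M}'$ evaluated through its elementary diagram rather than through a single uniform $\Theta(c,x)$ built from relativized partial satisfaction predicates, but this does not change the argument or the complexity bound.
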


Before proving Lemma \ref{lem_szybkie_modele_kf}, we will first show that $\PA$ can formalize the proof of the existence of recursively saturated models.

\begin{lemat} \label{lem_rec_sat_modele_istnieja}
	For any $k \in \mathbb{N}$, $\PA$ proves that any $\Delta_k$-full model $\md{M}$ of a finite fragment $\B$ of $\PA$, there exists a $\Delta_{k+1}$-full model $\md{M}'$ such that $\PA$ proves: 
	\begin{equation*} \tag{*} \label{eqn_rec_sat_modele_istnieja}
		\textnormal{"$\md{M}'$ is a recursively saturated model of $\B$."}
	\end{equation*}
\end{lemat}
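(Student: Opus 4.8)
The plan is to refine the Henkin construction underlying the Arithmetized Completeness Theorem (Theorem \ref{th_act}) so that the full model it produces is not merely complete and consistent but also recursively saturated, while keeping the complexity increase to a single quantifier. Everything is carried out inside $\PA$. Fix $k$ and a $\Delta_k$-full model $\md{M}\models\B$; recall that officially $\md{M}$ is (a formula defining) its elementary diagram $\ElDiag(\md{M})$, a complete Henkinized $\Delta_k$-theory. I would expand the language of $\md{M}$ with a fresh sequence of Henkin constants $\was{d_n}_{n\in\omega}$ ($\omega$ understood internally) and build by recursion in $\PA$ an increasing sequence of finite theories $T_0\subseteq T_1\subseteq\cdots$, starting from $T_0=\ElDiag(\md{M})$, whose union $T^*$ is the desired model $\md{M}':=T^*$.

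First I would organise the recursion around three kinds of dovetailed tasks. Completeness tasks decide, for the $n$-th sentence $\sigma$ of the expanded language, whether to throw $\sigma$ or $\neg\sigma$ into the theory, always choosing an option that keeps the finite theory consistent. Henkin tasks supply a witness $d$ for each accepted existential sentence. The crucial saturation tasks run through all pairs $(e,\bar{c})$, where $e$ indexes a recursive (equivalently, r.e.) type $p_e(x,\bar{y})$ and $\bar{c}$ is a tuple of constants already introduced; at the stage handling $(e,\bar{c})$, if $p_e(x,\bar{c})$ is finitely satisfiable relative to the current finite theory $T_s$, then a fresh constant $d$ is chosen and all formulae of $p_e(d,\bar{c})$ are added at once. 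What makes this definable at the right level is that, because the new constants behave like free variables ranging over the full model $\md{M}$, every consistency query ``$T_s\cup\was{\exists x\bigwedge_{j<n}\phi_{e,j}(x,\bar{c})}$ is consistent'' reduces to the membership in $\ElDiag(\md{M})$ of a single existential $\Lang_{\B}$-sentence obtained by replacing the new constants with quantified variables. Hence the finite-satisfiability test is a $\Pi$-over-$\Delta_k$ condition on the $\Delta_k$ oracle $\ElDiag(\md{M})$, and the whole recursion runs relative to this oracle.

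I would then verify, in $\PA$, the four required properties. Consistency of each $T_s$ is maintained by construction: the only nontrivial step is the saturation step, where adding an entire finitely-satisfiable recursive type to a consistent finite theory with a fresh constant preserves consistency by compactness, a fact $\PA$ can establish for the consistency statements at hand since they reduce to membership assertions about $\ElDiag(\md{M})$. Since $T^*\supseteq\ElDiag(\md{M})$, the map $a\mapsto c_a^{\md{M}'}$ gives $\md{M}\preceq\md{M}'$, so in particular $\md{M}'\models\B$. Recursive saturation follows from completeness: if a recursive type $p_e(x,\bar{a})$ over finitely many parameters $\bar{a}$ of $\md{M}'$ is finitely satisfiable in $\md{M}'$, then by completeness of $T^*$ each finite part is already consistent with every $T_s\subseteq T^*$, so at the stage handling $(e,\bar{c})$ (with $\bar{c}$ naming $\bar{a}$) the test succeeded and a witness $d$ realizing $p_e(x,\bar{a})$ was added. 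Finally, the complexity bookkeeping: the recursion is computable relative to the $\Delta_k$ predicate $\ElDiag(\md{M})$ together with its finite-satisfiability tests, so the complete consistent theory $T^*$ comes out $\Delta_{k+1}$, exactly as in the passage from a $\Delta_k$-theory to a $\Delta_{k+1}$-full model in Theorem \ref{th_act}; thus $\md{M}'$ is a $\Delta_{k+1}$-full model, recursively saturated, and an elementary extension of $\md{M}$ --- which is precisely what the conservativity sketch for $\KF^-$ in Subsection \ref{sub_cons_of_kf} requires.

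The main obstacle I anticipate is the complexity accounting rather than the model theory: one must check that interleaving the saturation tasks with the Henkin and completeness tasks does not push $T^*$ beyond $\Delta_{k+1}$, i.e.\ that the $\Pi$-over-$\Delta_k$ finite-satisfiability tests and the resulting decisions can be assembled into a single complete $\Delta_{k+1}$ theory uniformly in the data defining $\md{M}$. Two further technical points need care: formalising ``recursive type'' and ``finitely satisfiable'' inside $\PA$ via the reduction of consistency queries to $\ElDiag(\md{M})$, and verifying that the dovetailing reaches every pair $(e,\bar{c})$ only after all constants in $\bar{c}$ have been introduced, so that every recursive type over the finished model is addressed at some stage. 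The compactness and consistency arguments themselves are routine once these points are in place, and they are of the same flavour as the consistency arguments already used in the $\Th_m$-construction of Subsection \ref{Section_core_construction}.
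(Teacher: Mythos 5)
Your construction is sound in outline, but it takes a genuinely different route from the paper. The paper does not re-run the Henkin construction with saturation tasks interleaved; instead it writes down a single auxiliary theory $\ElDiag(\md{M})^*$ in the language with fresh constants $c_{i,e}$, consisting of the elementary diagram of $\md{M}$, the Henkin axioms, and, for every $N$, the sentences
\[
\bigwedge_{i \leq N}\Big(\exists x \bigl(\phi_1^*(x,\bar{y}) \wedge \ldots \wedge \phi_k^*(x,\bar{y})\bigr) \rightarrow \phi_1^*(c_{i,e},\bar{y}) \wedge \ldots \wedge \phi_k^*(c_{i,e},\bar{y})\Big)
\]
for all tuples of formulae accepted by the machine $e$ within $N$ steps and mentioning only constants $c_{j,l}$ with $j<i$. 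A single application of the Arithmetized Completeness Theorem (Theorem \ref{th_act}) then yields a $\Delta_{k+1}$-full model, and recursive saturation is automatic: the designated constant $c_{i,e}$ is forced to realize the $e$-th recursive type over any parameter tuple named by earlier constants whenever that type is finitely realized. What the paper's decomposition buys is that the $\Delta_{k+1}$ bound and the whole formalization burden are inherited from $\ACT$, which is already available; the only new work is defining the auxiliary theory and its (standard, Simpson IX.4.2-style) consistency check. Your approach must instead re-establish inside $\PA$ that the interleaved recursion is total and that the complete theory it produces is $\Delta_{k+1}$ --- in particular, after a saturation step your $T_s$ is no longer the diagram plus a finite set of sentences, so you must verify that each stage remains describable by a finite code (index of the type, witness, parameters) so that the consistency queries stay uniformly at level $\Pi_k$ rather than climbing with the stage number. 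This can be made to work, and your realization argument (finite satisfiability in $\md{M}'$ implies the stage-$s$ test succeeded) is correct, but it amounts to re-proving a saturated version of $\ACT$ from scratch, which is exactly the labor the paper's choice of $\ElDiag(\md{M})^*$ is designed to avoid.
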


Let us first make sense of the above claim. Recall that by a \df{full model} $\md{M}$ over a language $\Lang$, we mean an elementary diagram of that model, that is, a complete consistent Henkinized theory.

We say that a model is \df{recursively saturated} if for every Turing machine with code $e$, and every finite sequence of elements of $\md{M}$, $a_1, \ldots, a_b \in  M$,
if for every finite sequence $\phi_1(x,\bar{y}), \ldots, \phi_c(x,\bar{y})$ of formulae whose G\"odel numbers are accepted by the machine with index $e$, there exists an $a \in M$ such that
\begin{displaymath}
\md{M} \models \bigwedge_{i \leq c} \phi_i(a,a_1, \ldots,a_b),
\end{displaymath}
then there exists $d \in M$ such that for every $\phi \in \Lang$ which is accepted by the machine with the code $e$,
\begin{displaymath}
\md{M} \models \phi(d,a_1, \ldots, a_b).
\end{displaymath}
 
The above definition is well-known. We cite it here to ensure the reader that it really can be spelled out in $\PA$ and that the claim of recursive saturation of $\md{M}$ can be effectively produced in polynomial time given the definition of $\md{M}$.

Let us note that the lemma itself is also well known. Its formulation and proof can be found in \cite{simpson}, Lemma IX.4.2. We demonstrate it here for the convenience of the reader.

\begin{proof}[Proof of Lemma \ref{lem_rec_sat_modele_istnieja}]
	We reason in $\PA$. Let $\LPA^*$
	be the arithmetical language with constants $c_{i,j}, i,j \in \mathbb{N}$ added. Let $(\phi_i^*)$ be any polynomial time enumeration of all sentences of the language $\LPA^*$. Let $\md{M} \models \B$ be a full model and let $\ElDiag(\md{M})^*$ be the theory whose axioms are elementary diagram of $\md{M}$ (which, according to our official definition from Subsection \ref{sub_arithmetised_model_theory} is the model $\md{M}$ itself), all Henkin sentences (in the language with the new constants), and all sentences of the following shape:
	\begin{displaymath}
	\bigwedge_{i \leq N}  \Big(\big( \exists x \phi_1^*(x,\bar{y}) \wedge \ldots \wedge \phi_k^*(x,\bar{y}) \big) \rightarrow \phi_1^*(c_{i,e},\bar{y}) \wedge \ldots \wedge \phi_k^*(c_{i,e},\bar{y}) \Big),
	\end{displaymath}
	where $N \in \mathbb{N}$, and all the constants of $\LPA^* \setminus \LPA$ occurring in the formulae $\phi_1^*, \ldots, \phi_k^*$ are of the form $c_{j,l}$ for $j<i$, and the machine with the code $e$ accepts sentences $\phi_1^*, \ldots, \phi_k^*$ in less than $N$ steps. By Theorem \ref{th_act} ($\ACT$) the theory $\ElDiag(\md{M})^*$ has a $\Delta_{k+1}$-full model $\md{M}'$. This ends the proof of the claim \ref{eqn_rec_sat_modele_istnieja} in $\PA$. 
\end{proof}

Now, we proceed to the proof of Lemma \ref{lem_szybkie_modele_kf} which will end the proof of Theorem \ref{tw_KFminus_has_no_speed_up}.

\begin{proof}
	We work in $\PA$. Let $\md{N}$ be any $\Delta_2$-model of $\B$. By Lemma \ref{lem_rec_sat_modele_istnieja}, there exists a $\Delta_3$-full recursively saturated model $\md{M}$ of $\B$. 
	
	In our proof, we use a construction resembling the one given originally by Kripke in \cite{kripke}. As we have already noted in Subsection \ref{sub_cons_of_kf}, a very similar argument appeared before in \cite{Cantini} and \cite{CLW}.
	By induction, we define a sequence of arithmetical formulae $\Gamma_c, c \in M$. That is, a sequence of elements $\Gamma_c \in M$ such that $\md{M} \models \Gamma_c \in \form^{\leq 1}_{\LPA}$. Let $\Gamma_0(x)$ be a definition of the atomic diagram of $\md{M}$. More precisely, let 
    \begin{displaymath}
	\Gamma_0(x):= \exists s,t \in \ClTerm_{\LPA} \ \ x = (s=t) \wedge \val{s} = \val{t}.
    \end{displaymath}
	Having defined the formula $\Gamma_n$, we set $\Gamma_{n+1}(\phi)$ (which we also denote by $\phi \in \Gamma_{n+1}$) if and only if one of the following conditions is satisfied:
	\begin{itemize}
		\item $\bigvee_{ j \leq n} \ \phi \in \Gamma_j.$
		\item $\exists t \in \ClTerm_{\Lang_{\B}}  \ \phi = Tt \wedge \val{t} \in \Gamma_n$.
		\item $\exists t \in \ClTerm_{\Lang_{\B}} \ \phi = \neg Tt \wedge (\neg \val{t}) \in \Gamma_n$.
		\item $\exists \psi \in \Sent_{\Lang_{T}} \ \phi = (\neg \neg \psi) \wedge \psi \in \Gamma_n$.
		\item $\exists \psi, \eta \in \Sent_{\Lang_{T}} \ \phi = (\psi \vee \eta ) \wedge (\psi \in \Gamma_n \vee \eta \in \Gamma_n)$.
		\item $\exists \psi, \eta \in \Sent_{\Lang_{T}} \ \phi = \neg (\psi \vee \eta ) \wedge (\neg \psi \in \Gamma_n \wedge \neg \eta \in \Gamma_n).$
		\item $\exists v \in \vrbl \ \psi \in \form^{\leq 1}_{\Lang_{T}}  \ \phi = \exists v \psi(v) \wedge \exists x \ \ \psi(\num{x}) \in \Gamma_n$.
		\item $\exists v \in \vrbl \  \psi \in \form^{\leq 1}_{\Lang_{T}} \ \phi = \neg \exists v \psi \wedge \forall x \ \ (\neg \phi(\num{x})) \in \Gamma_n$.  
	\end{itemize}
	Now, let $T$ be the subset of the domain of $M$ defined as the sum $\bigcup_{i \in \mathbb{N}} \Gamma_i (\md{M})$. In other words,
	\begin{displaymath}
	T(x) := \exists y \ \md{M} \models \Gamma_y(x).
	\end{displaymath}
Consider the expanded model $(\md{M},T).$ Since the definition of $(\md{M},T)$ is $\Sigma_1$ in the complexity of $\md{M}$, the complexity of the resulting model is $\Delta_4$. We would like to ensure that $(\md{M},T)$ is a model $\KF^-[\B]$.  The model $(\md{M},T)$ satisfies $\B$, since $\md{M}$ does, so it is enough to check that $(\md{M},T)$ satisfies truth-theoretic axioms  $\KF1$--$\KF10$.

	This is obvious for $\KF1, \KF2$. Let us check the claim for $\KF4$. Suppose that $(\md{M},T) \models T(\phi\vee \psi)$. Since $(\md{M},T) \models T(\phi \vee \psi)$, there exists $i$ such that
	\begin{displaymath}
	\md{M} \models \Gamma_i(\phi \vee \psi).
	\end{displaymath} 
	Then by definition of $\Gamma_i$, either $\md{M} \models \phi \in \Gamma_{i-1}$ (and, consequently, $T(\phi)$ holds) or $\md{M} \models \psi \in \Gamma_{i-1}$ (and then $T(\psi)$ holds). Conversely, if $(\md{M},T) \models T \phi$ or $(\md{M},T) \models T \psi$, then for some $i$, $\md{M} \models \phi \in \Gamma_i$ or $\md{M} \models \psi \in \Gamma_i$. But then $\phi \vee \psi \in \Gamma_{i+1}$ and, consequently, $(\md{M},T) \models T(\phi \vee \psi)$. This guarantees that $(\md{M},T) \models \KF4$. The proofs for axioms $\KF3,\KF5$ are similar, as are the proofs for axioms $\KF9, \KF10$. Let us focus on axiom $\KF7$.
	
	Suppose that $(\md{M},T) \models T \neg  \exists v \ \psi$. Then there exists $i$ such that $\md{M} \models \neg \exists v \ \psi \in \Gamma_i$. This implies that for all $x$, $\md{M} \models \neg \psi(\num{x}) \in \Gamma_{i-1}$. Therefore, for all $x \in M$, $(\md{M},T) \models \neg \psi(\num{x})$ holds. 
	
	Conversely, suppose that for all $x \in M$, $(\md{M},T) \models T \neg \psi(\num{x})$. In other words, for every $x \in  M$, there exists $i$ such that $\md{M} \models \neg \psi(\num{x}) \in \Gamma_i$. We claim that there exists $k$ such that for all $x$, $\md{M} \models \neg \psi(\num{x}) \in \Gamma_k$. Suppose otherwise. Then for every $k$, the following set of arithmetical formulae is realised in $\md{M}$ by some $x$:
	\begin{displaymath}
	\neg \psi(\num{x}) \notin \Gamma_0 \wedge \ldots \wedge \neg \psi(\num{x}) \notin \Gamma_k.
	\end{displaymath} 
	 Therefore, by recursive saturation, there exists an $a \in  M'$ such that for every $k$, 
	 \begin{displaymath}
	 \neg \psi(\num{a}) \notin \Gamma_k,
	 \end{displaymath}
	 contrary to the assumption. This implies that there exists $k \in M$ such that $\md{M} \models \neg \psi(\num{x}) \in \Gamma_k$ for every $x \in  M'$, and therefore $\md{M} \models \neg \exists x \psi(x) \in \Gamma_{k+1}$. We conclude that $(\md{M},T) \models \KF 7$ holds.
	 The case of axiom $\KF 6$ is straightforward. 
 
In order to prove that $\KF8$ holds, we check by induction on $n$ (in $\PA$) that this axiom is satisfied by formulae in $\Gamma_n$. The conclusion follows immediately. This concludes the proof of the lemma.     
\end{proof}

\subsection{Feasible reduction of $\FS^-$ to $\PA$} \label{subsection_FSminus}

In this section we strengthen the conservativity proof from Subsection \ref{Subsection_Conservativity_FS}  by establishing the following result:
\begin{tw}\label{tw::polynom_speed_up_FS}
$\FS^-$ is feasibly reducible to $\PA$.
\end{tw}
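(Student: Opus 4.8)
The plan is to reduce the problem for $\FS^-$ to the corresponding problem for the ramified theories through Halbach's translation (Lemma \ref{lem_Halbach_reduction}), and then to establish feasible reducibility by verifying that $\FS^-$ is $\PA$-provably feasibly strongly reflexive, so that Observation \ref{obs_reflection_implies_reducibility} applies. The central fact I must exploit is that a $\FS^-$-derivation of length $n$ uses at most $n$ applications of NEC and CONEC, so by Lemma \ref{lem_Halbach_reduction} it only reaches level $2n+1$ of the ramified hierarchy. I therefore never reduce the full theory $\RT^-_{<\omega}$ through its length-restricted axiomatization (whose length-$n$ fragment already mentions predicates $T_k$ with exponentially large $k$, under the logarithmic coding of indices, and so cannot be modelled with polynomially many levels); I only reduce the initial segments $\RT^-_{<m}$ with $m$ linear in $n$.

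First I would establish, uniformly in the number of levels $m$, a feasible strong reflexivity for the ramified theories: a P-time function producing, for all $m,n,k$, a $\PA$-proof of
\[\forall \phi \in \Sent_{\LPA}\bigl(\dpt(\phi)\leq \num{k} \wedge \Prov_{\RT^-_{<m}\restr{\num{n}}}(\phi) \rightarrow \Tr_k(\phi)\bigr),\]
of length polynomial in $m+n+k$. This is the levels-parametrized analogue of the situation handled by Corollary \ref{wniosek:penultimate_corollary} and Observation \ref{obs_expandability_implies_reflexivity}: working in $\PA$, given a $\Delta_2$-full model of a finite fragment $\B$ of $\PA$, I iterate the arithmetized Enayat--Visser step of Lemma \ref{lem::AEV} along the $m$ levels, at stage $i$ adjoining a compositional satisfaction class for $\Lang_{<i}$ and imposing the extra clause of Subsection \ref{Subsection_Conservativity_FS} that forces axiom $\RT7$ (namely that $S_{j+1}$ makes true every $\phi\in\Sent_{\Lang_{<l}}$ with $T_l(\phi)$ already asserted). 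The delicate point is to keep the model complexity at a \emph{fixed} $\Delta_k$ independent of $m$: rather than taking $m$ successive completions (which would raise complexity by a bounded amount each time), I describe the entire double chain by a single limit theory whose finite fragments are provably consistent by a $\PA$-induction uniform in $m$, and obtain one $\Delta_k$-model by a single application of the Arithmetized Completeness Theorem \ref{th_act}, exactly as $\mathcal{K}_{\infty}$ was produced in the proof of Theorem \ref{thm::main}. Because each stage is a fixed-complexity operation and the induction is uniform, the associated $\PA$-proof has length polynomial in $m$, and the levels-parametrized form of Observation \ref{obs_expandability_implies_reflexivity} delivers the displayed reflexivity statement feasibly.

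Next I would transport this to $\FS^-$. Given $n$ and $k$, I would produce a $\PA$-proof that $\Prov_{\FS^-\restr{\num{n}}}(\phi) \wedge \dpt(\phi)\leq \num{k} \rightarrow \Tr_k(\phi)$ for arithmetical $\phi$, where $\Prov_{\FS^-}$ is the provability predicate for the calculus with the NEC and CONEC rules (proof-checking remains P-time, so Observation \ref{obs_reflection_implies_reducibility} adapts). Inside $\PA$ I formalize a feasible, uniform version of Lemma \ref{lem_Halbach_reduction}: the translations $g_j$ are given by a single $\Delta_0$-definition, and a $\FS^-\restr{n}$-derivation is transformed into an $\RT^-_{<2n+1}\restr{q(n)}$-derivation of the same conclusion by the level-indexed recursion of that lemma, where each NEC raises and each CONEC lowers the working level and the index ranges shrink as in Halbach's induction, the commutation steps using the disquotation biconditionals $T_l(\phi)\equiv\phi$. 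Since $g_{n+1}$ fixes arithmetical sentences, the conclusion $\phi$ is preserved. Composing this P-time proof-translation with the reflexivity of $\RT^-_{<2n+1}$ from the previous step (instantiating $m=2n+1$), $\PA$ concludes $\Tr_k(\phi)$. Thus $\FS^-$ is $\PA$-provably feasibly strongly reflexive, and Observation \ref{obs_reflection_implies_reducibility} yields that $\FS^-$ is feasibly reducible to $\PA$.

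The main obstacle is twofold. The genuinely new difficulty, compared with $\CT^-$ and $\KF^-$, is making the Halbach translation \emph{feasible and uniform} in the presence of the two rules: the level $2n+1$ and the shrinking index ranges needed to accommodate CONEC must be tracked by a polynomial-time function on proofs, and the repeated appeals to the biconditionals $T_l(\phi)\equiv\phi$ must be shown to contribute only polynomially to the resulting derivation. The secondary, but still essential, difficulty is the one flagged above: arranging the iterated Enayat--Visser construction so that the number of stages is tied linearly to $m$ while the complexity of the final model stays at a fixed $\Delta_k$, which forces the single-limit-theory formulation rather than a naive level-by-level iteration.
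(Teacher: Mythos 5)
Your proposal is correct and follows the same overall route as the paper: Halbach's translation sending the $n$-th fragment of $\FS^-$ into $\RT^-_{<2n+1}$, feasible strong reflexivity of the ramified theories obtained by iterating the arithmetized Enayat--Visser step (with the extra clause forcing $\RT7$, i.e.\ the content of Lemma \ref{lem::AEVplus}) and packaging the whole chain into a single limit theory to which the Arithmetized Completeness Theorem is applied once, and finally composition via the strong-reflexivity route of Observation \ref{obs_reflection_implies_reducibility}. The one organizational difference is in the ramified-truth step: you prove a levels-parametrized reflexivity statement for $\RT^-_{<m}$ with proof length polynomial in $m$, instantiated at $m=2n+1$, whereas the paper proves a single $\PA$-sentence asserting that every $\Delta_2$-full model of a finite fragment $\B$ has an $\LPA$-elementary extension to a $\Delta_4$-model of the entire \emph{internal} $\RT^-_{<\omega}[\B]$ --- all levels at once --- and then reads off reflexivity for the length-restricted fragments $\RT^-_{<\omega}\restr{n}$ via Observation \ref{obs_expandability_implies_reflexivity}. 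Consequently, your worry that the length-$n$ fragment of $\RT^-_{<\omega}$ ``cannot be modelled with polynomially many levels'' is not an obstacle the paper actually faces: it models all (internally) $\omega$-many levels with one uniform induction $\forall n\ \Con(\Th_n)$ and a single application of ACT to the limit theory, so the proof length is independent of how many predicates $T_i$ the fragment mentions. Both organizations work; yours is slightly more economical in constructing only the $2n+1$ levels the translation actually reaches, while the paper's has the advantage that the model-existence sentence is literally independent of $n$ apart from the choice of $\B$, so no levels-parametrized bookkeeping is needed.
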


The key step in our construction is to feasibly reduce the theory of $\omega$-many truth predicates, $\RT_{< \omega}^-$, defined in Subsection \ref{Subsection_Conservativity_FS}, to $\PA$. This is achieved in the following lemma.

\begin{lemat}
$\RT^-_{<\omega}$ is feasibly reducible to $\PA$.
\end{lemat}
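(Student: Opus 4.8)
The plan is to reduce the statement, exactly as in the $\CT^-$ and $\KF^-$ cases, to verifying the hypothesis of one of the model-theoretic criteria of Subsection \ref{sec_polynomial_simulations}. Since $\RT^-_{<\omega}$ is not finitely axiomatizable, Corollary \ref{wniosek:ultimate_corollary} is unavailable, so I would aim instead at Corollary \ref{wniosek:penultimate_corollary} (equivalently \ref{cor::key-corollary}) with a fixed standard $k$ (one may take $k=4$, as in Theorem \ref{thm::main}) and a suitable polynomial $p$. Concretely, I would exhibit a P-time function $f$ such that $f(n)$ is a $\PA$-proof of the sentence $\theta_n$ asserting that every $\Delta_2$-full model of $\PA\restr{\num{p(n)}}$ has an $\LPA$-elementary extension to a $\Delta_k$-model of $\RT^-_{<\omega}\restr{\num n}$. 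Here, as in the proof of Corollary \ref{wniosek:penultimate_corollary}, satisfaction of the axioms of $\RT^-_{<\omega}\restr n$ by the $\Delta_k$-model is evaluated through the relativized depth-$n$ predicate $\Sat^{\mathcal N}_n$ of Corollary \ref{wniosek:relativized_partial_feasible_truth}; this is what makes the criterion applicable even though the axiom $\RT6$ of level $l$ has depth growing with $l$, because a proof of length $\le n$ can only invoke predicates $T_l$ of index $l\le 2^{O(n)}$ (by the logarithmic coding of $T_l$), and every axiom in $\RT^-_{<\omega}\restr n$ has length, hence depth, at most $n$. The whole argument is a refined arithmetization of the Enayat--Visser conservativity proof for $\RT^-_{<\omega}$ sketched in Subsection \ref{Subsection_Conservativity_FS}.

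The engine is a ramified generalization of the arithmetized Enayat--Visser step, Lemma \ref{lem::AEV}. Given a $\Delta_l$-full model already carrying truth predicates $T_0,\ldots,T_{i-1}$ together with a $P$-restricted satisfaction class for $\Lang_{<i}$, I would produce a $\Delta_{l+1}$-full elementary extension equipped with a larger, $M$-restricted satisfaction class interpreting a fresh predicate $T_i$. The only change from the proof of Lemma \ref{lem::AEV} is the ``tiny modification'' flagged in the conservativity sketch: to secure $\RT7$ (and $\RT6$), one seeds the Enayat--Visser theory with the atomic demands $U_{T_l(s)}(\alpha)$ whenever $\val{s}=\phi$ and the lower predicate already forces $T_l(\phi)$, for $l\le i$ and $\phi\in\Sent_{\Lang_{<l}}$. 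Since these are constraints at the level of what the rank function treats as atoms, the rank-based recursion together with the compactness and congruence arguments of Lemma \ref{lem::AEV} carry over essentially verbatim, and all transformations remain feasible and uniform in $l$.

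The main obstacle, and the point distinguishing this from the finite case, is keeping the complexity of the resulting model at a fixed standard level despite the presence of $\omega$-many truth predicates: iterating the one-predicate extension naively raises the complexity by a fixed amount at each of the internally unboundedly many outer steps, which is fatal. The remedy is to mirror the union-of-chain device of Section \ref{Section_core_construction}. Rather than completing each level before passing to the next, I would describe the entire double chain $(\M_i^j)$ by a single increasing sequence of theories $\langle \Th_m : m\in\mathbb{N}\rangle$ of fixed standard complexity (the compositional, elementarity, and seeding conditions written schematically, formula by formula), prove $\forall m\,\Con(\Th_m)$ by induction in $\PA$ --- the inductive step gluing a new satisfaction class to the top of the chain via the generalized Lemma \ref{lem::AEV}, and verifying preservation of consistency by the cut-elimination plus subformula-property plus partial-satisfaction-predicate argument used in the proofs of Theorem \ref{thm::main} and Corollary \ref{wniosek:penultimate_corollary} --- and then form $\Th_\infty=\bigcup_m\Th_m$. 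A single application of the Arithmetized Completeness Theorem \ref{th_act} yields one full model $\mathcal K_\infty$ of fixed complexity, out of which the whole $\omega$-tower of truth predicates is extracted uniformly by a single partial satisfaction predicate, exactly as $(\M_\infty,S_\infty)$ was read off in Section \ref{Section_core_construction}. This delivers the required $\Delta_k$-model with $k$ fixed.

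Finally I would assemble these ingredients: the uniform, feasible $\PA$-proofs of $\theta_n$ supplied by the construction verify the ``moreover'' hypothesis of Corollary \ref{wniosek:penultimate_corollary}, whence $\RT^-_{<\omega}$ is feasibly reducible to $\PA$; equivalently, one could route the conclusion through $\PA$-provable feasible strong reflexivity and Observation \ref{obs_reflection_implies_reducibility}. Throughout, the decisive checks are that each $\Th_m$ has fixed standard complexity, that the induction establishing $\forall m\,\Con(\Th_m)$ is carried out by a single $\PA$-proof independent of $n$, and that the final extraction of the tower together with the verification of the $\RT$-axioms via $\Sat^{\mathcal N}_n$ is polynomial in $n$ and produced by a P-time function of $n$.
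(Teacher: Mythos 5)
Your overall strategy matches the paper's in its main ingredients: you target Corollary \ref{cor::key-corollary}/\ref{wniosek:penultimate_corollary} with a fixed $k$, you identify the need for a modified Enayat--Visser step that seeds the theory with demands forcing $U_\phi$ whenever a lower predicate already declares $\phi$ true (this is exactly the paper's Lemma \ref{lem::AEVplus}), and you correctly diagnose that the danger specific to $\RT^-_{<\omega}$ is complexity accumulation over internally unboundedly many outer steps. However, your proposed remedy for that accumulation --- flattening the double chain $(\M_i^j)$ into a single increasing $\omega$-sequence of theories, ``rather than completing each level before passing to the next,'' with each inductive step being one application of the generalized AEV lemma --- is where the argument breaks. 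The base model of level $i+1$ must interpret $T_i$ as a \emph{fully} compositional truth predicate for $\Lang_{<i+1}$, and that object only exists as the limit of the entire level-$i$ inner chain; no finite theory in a flattened sequence can assert it, and no single gluing step produces it. If you instead interleave the levels so that $T_i$ is still growing while level $i+1$ is being built, monotonicity of the satisfaction classes fails: once the level-$(i+1)$ class is required to be compositional on the atomic formula $T_i(s)$, the value of $T_i(\val{s})$ is frozen (since $(\neg T_i(s),\alpha)$ cannot later be removed from an increasing chain of classes), yet the interleaved construction would still be extending $T_i$. The seeding conditions needed for $\RT6$ and $\RT7$ suffer from the same premature-freezing problem.

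The paper avoids this not by flattening but by a two-level nested induction. The outer theories $\Th_k$ describe only the \emph{completed} chain $\M_0,\ldots,\M_k$ with $\M_i\models\RT^-_{<i+1}$ and $\M_i\preceq_{\Lang_{<i+1}}\M_{i+1}$; these are uniformly $\Delta_2$, so the statement $\forall k\,\Con(\Th_k)$ is a legitimate arithmetical induction. Each induction step starts from a fresh ACT-model of $\Th_k$, observes that $\RT^-_{<k+1}$ is deductively equivalent to $\CT^-[\RT^-_{<k}]$ plus the linking axiom $\forall\phi\in\Sent_{\Lang_{<k-1}}\bigl(T_{k-1}(\phi)\equiv T(\phi)\bigr)$, and re-runs the \emph{entire} Section \ref{Section_core_construction} machinery (its own inner sequence of theories, its own inner induction, its own ACT application and union) with Lemma \ref{lem::AEVplus} in place of Lemma \ref{lem::AEV}, before concluding $\Con(\Th_{k+1})$ by the cut-elimination-plus-partial-satisfaction argument. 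Complexity does not accumulate because only the $\Pi_1$ statement $\Con(\Th_k)$ is carried from one outer step to the next; the models built inside a step are discarded. Your final assembly (single application of ACT to $\Th_\infty$, uniform extraction of the tower, and the appeal to Observation \ref{obs_reflection_implies_reducibility}) is fine once $\forall k\,\Con(\Th_k)$ is in hand, but you need to replace the flattened single chain with this nested organization for the induction itself to go through.
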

\begin{proof}
We shall prove that the assumptions of Corollary \ref{cor::key-corollary} hold for $\Th = \RT_{<\omega}^-$. Fix $n$ and an arbitrary finite fragment $\B$ of $\PA$, w.l.o.g. $\B \supseteq I\Sigma_1$ and assume that $\M\models \B$ is a $\Delta_2$-full model. We shall build a $\Delta_{4}$-model of $\RT^-_{<\omega}[\B]$, which clearly suffices (note that now we are talking about $\omega$ internally). The aim is to formalize in $\PA$ the conservativity proof from Subsection \ref{Subsection_Conservativity_FS}. In order to do this we shall build a chain of uniformly definable $\Delta_3$-full models $(\M_n)_{n\in\mathbb{N}}$ such that $ \M\preceq_{\Lang_{\PA}}\M_0$ and for each $n$
\begin{enumerate}
\item $\M_n$ is a full $\Delta_3$-model of $\RT^-_{<n+1}[\B]$, and
\item $\M_k\preceq_{\Lang_{<k+1}}\M_n$ for each $k<n$.
\end{enumerate}
Clearly the limit model will be a model of $\RT^-_{\omega}$ (even a full one---this follows by elementarity). To define the respective chain we shall implement the argument from Section \ref{Section_core_construction}: the chain $\M_0,\ldots,\M_{k}$ will be described by a $\Delta_2$-theory $\Th_k$ formulated in the language $\Lang_{\Th_k}$ whose non-logical symbols are:
\begin{enumerate}
\item symbols of $\Lang_{\B}$;
\item unary predicates: $M_0,\ldots, M_k$;
\item unary predicates: $T_0,\ldots, T_{k}$.
\end{enumerate}
Similarly to the proof for $\CT^-[\B]$ in Subsection \ref{Section_core_construction}, the axioms of $\Th_k$ can be divided into three groups:
\begin{enumerate}
\item $\M_0$ is an elementary supmodel of $\M$. Formally this is expressed as an infinite set of axioms: $\set{\phi^{\textnormal{M}_0}}{\phi\in\ElDiag(\M)}$.
\item $(\M_i)_{i\leq k}$ forms an elementary chain of submodels. More precisely: for each $i$, $\M_i$ is an $\Lang_{<i+1}$ elementary submodel of $\M_{i+1}$.
Formally this is expressed analogously to the condition (R2) from the proof for $\CT^-$.
\item For every $i\leq k$, $\M_{i}$ is a model of $\RT^-_{<i+1}$. This is expressed by formally relativizing the axioms of $\RT^-_{<i+1}$ to $M_i$.
\end{enumerate}

Now, by induction on $n$ we show that $\forall n \Con(\Th_n)$. We follow the lines of the sketch of the conservativity proof given in Subsection \ref{Subsection_Conservativity_FS}. For $n=0$ we simply use the proof from Section \ref{Section_core_construction} to build an elementary supmodel of $\M$ satisfying $\CT^-[\B]$. For the induction step, note that, using the same reasoning as we did in Section \ref{Section_core_construction} to verify $\Con(\Th_{k+1})$, it is enough to build a model for $\RT^-_{<k+1}$ which would be a full model for $\Lang_{<k}$ but will possibly leave some sentences with $T_{k}$ undefined. As in the conservativity proof for $\RT^-_{<\omega}$ we use the fact that $\RT^-_{<k+1}$ is deductively equivalent to the theory I$\Th$ below \footnote{"$I$" abbreviates "Induction" as this theory is used in the induction step of our construction.}:
\begin{equation}\label{equat_ind_theory}\tag{I$\Th$}
\CT^-[\RT^-_{<k}] + \forall \phi\in\Sent_{\Lang_{<k-1}}\ \ \bigl(T_{k-1}(\phi)\equiv T(\phi)\bigr).
\end{equation}
From $\Con(\Th_k)$ we obtain a model $\mathcal{K}$ of $\RT^-_{<k}$. We build an extension satisfying \ref{equat_ind_theory} in $\omega$ many steps via the union of chain argument. The following is the analogue of Lemma \ref{lem::AEV} in our situation:

\begin{lemat}
[Arithmetized Enayat--Visser construction+]\label{lem::AEVplus}
	The sentence expressing the following implication is provable in $\PA$ for every $l\in\mathbb{N}$:
	
	
	If $(\mathcal{M}, S, P)$ is a $\Delta_l$-full model for $\Lang_{<k}\cup \was{S} \cup \was{P}$ such that:
	\begin{enumerate}
		\item $\M\models \ISigma_1$;
		\item $S$ is a $P$-restricted satisfaction class for $\Lang_{<k}$;
	\end{enumerate}  
	then there exists a $\Delta_{l+1}$-full model $\mathcal{N}$ and a $\Delta_{l+1}$-set $S'\subseteq N^2$
    such that the following conditions hold:
	\begin{enumerate}
		\item $\M\preceq \mathcal{N}$;
		\item $S'$ is an $M$-restricted satisfaction class for $\Lang_{<k}$ (we add a predicate for the universe of $\md{M}$ to the language);
		\item $S\subseteq S'$;
        \item for every $\phi\in\form_{<k-1}(\md{M})$,  $(\md{N}, S', M) \models T_{k-1}(\phi)\rightarrow \forall \alpha S'\left(\phi, \alpha\right)$.
	\end{enumerate}
\end{lemat}
\begin{proof}[Sketch of the proof]
We indicate how to modify the proof Lemma \ref{lem::AEV}. Firstly, we add the following sentences to the definition of Enayat-Visser theory of $(\M,S, P)$:
\begin{equation}\label{equat_new_axioms}\tag{$*$}
\set{\forall \alpha \ \ U_{\phi}(\alpha) }{\M\models T_{k-1}(\phi)}.
\end{equation}
Now we work with a finite fragment $F$ of the Enayat and Visser theory. 
The next step which requires a modification, is the definition of $\rank^b$ for a coded set of sentences $b$. According to the previous definition, formula $\phi$ was of $\rank^b$ zero if and only if either $\phi$ was atomic or some immediate subformula of $\phi$ was outside $b$. Now we will treat as formulae of $\rank^b$ zero all formulae from $\form_{\Lang_{<k-1}}(\M)$ as well. For such formulae $\phi$ we have an obvious candidate for the definition of $\theta_{\phi}(x)$ (i.e. the formula defining the extension for $U_{\phi}(x)$ in $\M$). We define:
\[\theta_{\phi}(\alpha) := T_{k-1}(\phi[\alpha]).\]
Note that $T_{k-1}$ satisfies generalized regularity, so it is sufficient to verify the truth of $\phi$ on numerals naming values of $\alpha$. The definition of $\rank^b\geq x$ is now as follows: there exists a sequence $y$ such that
\begin{enumerate}
	\item $\len(y)=x+1$ and $(y)_{x} = \was{\phi}$.
	\item For all $i<x+1$ $(y)_i\subseteq b$.
	\item For all $i<x$ for all $\theta$, $\theta\in (y)_{i+1}$ iff $\theta\in\form_{\Lang_{<k}}\setminus \form_{\Lang_{<k-1}}$ and for all $\psi$ such that $\md{M} \models \psi \imsubf \theta$, $\psi\in (y)_i$.\footnote{Note that if $\phi \in \form_{\Lang_{<k-1}}$, this condition implies that $y$ has length $1$.}
\end{enumerate} 
The definitions of $\rank^b = x$ and $\compl{b}$ (for an arbitrary $b$) are analogous to the ones from the original lemma. The last step which requires a modification is the definition of the  formula $\zeta(x)$. Below, as in the proof for $\CT^-$, $c$ is the set of formulae $\phi$ such that $U_{\phi}$ occurs in $F$. We define $\zeta(x)$ to be the formula expressing:

"There exists the unique family of $\Lang_{<k}\cup \was{S}$-formulae $\was{\theta_{\phi}}_{\rank^{\compl{c}}(\phi)\leq x}$ indexed with formulae of $\rank^{\compl{c}}\leq x$ such that:
\begin{enumerate}
	\item For every $\phi$, if $\rank^{\compl{c}}(\phi) = 0$, then:
	\begin{enumerate}
	\item if $\M\models \exists t_1, \ldots, t_a \in \term_{\Lang_{\B}} \phi = R(t_0,\ldots,t_a)$ for a relation symbol $R \in \Lang_{\B}$, then $\theta_{\phi}(\alpha) = R(\valt{t_0}{\alpha},\ldots,\valt{t_a}{\alpha})$, and
		\item if $\M\models \exists t \in \term_{\Lang_{\B}} (\phi = T_{k-1}(t))$, then $\theta_{\phi}(\alpha) = T_{k-1}(t^{\alpha})$, and
		\item if $\phi\in \form_{\Lang_{<k-1}}(\M)$, then $\theta_{\phi}(x) := T_{k-1}(\phi[x])$, and
		\item if $\phi$ is from $P$, then $\theta_{\phi}(x) = S(\phi,x)$, and
		\item if for some $\psi\in P$, $\phi \approx^{\M} \psi$, then $U_{\phi}$ is defined from $U_{\psi}$ using \eqref{equat::inter_U1} and \eqref{equat::inter_U2};
		\item otherwise $\theta_{\phi}(x) = (x\neq x)$.
	\end{enumerate}
	\item $(\M,S,P)\models F\restr{x}[\theta_{\phi}/U_{\phi}]_{\rank^{\compl{c}}(\phi)\leq x}$."
\end{enumerate}
Note that conditions (c) - (e) are the same as in the original definition. THe rest of the proof is as previously.
\end{proof}
Once we can prove $\forall n \Con(\Th_n)$, the construction of the chain $(\M_n)_{n\in \omega}$ and its sum is precisely the same as in Section \ref{Section_core_construction}.
\end{proof}

Now we want to finish the proof of Theorem \ref{tw::polynom_speed_up_FS}. We have just shown that $\RT^-_{<\omega}[\PA]$ satisfies the assumptions of Corollary \ref{wniosek:penultimate_corollary} (with the "moreover" part). By Observation \ref{obs_expandability_implies_reflexivity}, it follows that $\RT^-_{<\omega}[\PA]$ is $\PA$-provably feasibly strongly reflexive, i.e., there exists a P-time computable function $f$ such that for all $n,k \in \mathbb{N}$, $f(n,k)$ is a $\PA$ proof of the sentence
\begin{equation}\label{equat_ref_RT}\tag{$\textnormal{REF}_n$}
   \forall \phi \in \Sent_{\LPA} \bigl(\dpt(\phi)\leq \num{k} \wedge \Prov_{\RT^-_{< \omega}\restr{\num{n}}}(\phi)\rightarrow \Tr_k(\phi)\bigr). 
\end{equation}

Note that there exists a P-time computable function $g$ such that for any $n$, $g(n)$ is a $\PA$ proof of the sentence
\begin{equation}\label{equat_HR}\tag{$\textnormal{HR}_n$}
    \forall \phi \in \Sent_{\LPA} \left(\dpt(\phi)\leq \num{n} \wedge \Prov_{\FS^-_{\num{n}}}(\phi)\rightarrow \Prov_{\RT^-_{<\num{2n+1}}}(\phi)\right).
\end{equation}
The above is in fact an easy consequence of the proof of Halbach's reduction of $\FS^-$ to $\RT^-_{< \omega}$ from Lemma \ref{lem_Halbach_reduction}.

Finally let us observe that the relation $R(k,n,m)$ defined:
\begin{center}
    "$k$ is a $\FS_n^-$ proof of $m$"
\end{center}
is P-time, so, it is uniformly polynomially binumerable in $\IDelta_0 + \Exp$. This gives us a function $h$ such that for every proof $\pi$ of a sentence $\phi$, $h(\qcr{\pi}, n)$ is a $\PA$ proof of $\Prov_{\FS^-_n}(\num{\phi})$.
Our desired reduction can now be defined as follows: given an $\FS^-$ proof $\pi$ of a sentence $\phi$ compute $n$ and $k$ such that there are exactly $n$ applications of NEC and CONEC in $\pi$ and $\phi$ is of depth $k$. Using $h$ find the proof of $\Prov_{\FS^-_n}(\num{\phi})$. Compute $g(n)$, i.e. the proof of \eqref{equat_HR}. Compute $f(n,k)$, i.e. the proof of \eqref{equat_ref_RT}. Apply finitely many logical operations, to conclude $\Tr_k(\num{\phi})$. Finally apply Theorem \ref{tw_uniformly_feasible_truth_predicates} to compute the proof of
\[\Tr_k(\num{\phi})\equiv \phi.\]
Concatenation of the above proofs yields a $\PA$ proof of $\phi$.
\qed

\subsection{Feasible interpretability of truth theories} \label{sub_speed_up_via_FACT}

In Section \ref{sec_polynomial_simulations} we gave a terse proof of Theorem \ref{lem::main}; that proof did not directly link the notions of feasible reducibility with feasible interpretability, which is how we originally conceived of---and arrived at---our main results.  Since interpretations, especially of the feasible variety, are of foundational and philosophical interest in connection with axiomatic theories of truth, we now explain the interpretability-theoretic perspective of our work by establishing the following result:

\begin{tw}[Feasible interpretability of truth theories]\label{feasible intepretability of truth}
Let $\Th$ be any of the truth theories $\CT^-[\PA]$, $\KF^-[\PA]$, and $\FS^-[\PA]$. Then there exists a uniformly polynomially correct family
\begin{center}$\was{I_n}_{n\in\mathbb{N}}: \Th \rightarrow \PA$ 
\end{center}
of interpretations (in the sense of Definition \ref{def::interpretations_polynomial_correct}).
\end{tw}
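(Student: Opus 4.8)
The plan is to read the required interpretations off the \emph{arithmetically definable models} of bounded fragments of the truth theories that we already build inside $\PA$ in the proofs of our main results, rather than to interpret the predicate $T$ by a single partial truth predicate $\Tr_k$. The latter, naive choice $T(x)\mapsto\Tr_k(x)$ cannot succeed with the usual axiomatization: the $I_k$-image of a compositional axiom such as $\CT3$ is the \emph{universal} sentence $\forall\phi,\psi\,(\Tr_k(\phi\vee\psi)\equiv\Tr_k\phi\vee\Tr_k\psi)$, which $\PA$ refutes already for formulae of depth exceeding $k$. The key point I will use instead is that a $\Delta_m$-definable model of a theory $\Th'$ \emph{is} an interpretation of $\Th'$ in $\PA$: its domain formula and the formulae defining the denotations of the symbols (in particular the formula $\tau(x)$ defining $T$) determine the translation $(\cdot)^{I}$, and $\PA$ proves the $I$-translation of each axiom of $\Th'$ exactly because it proves that the definable structure is a model of $\Th'$. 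As these definable models genuinely satisfy the full, universally quantified truth axioms, this route sidesteps the depth mismatch above.

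Concretely, for $\Th\in\{\CT^-[\PA],\KF^-[\PA]\}$ I would let $I_n$ be the $\Delta_4$-definable model of $\Th\restr{n}$ produced, inside $\PA$, as follows. First, with $p$ the polynomial from the feasible-reduction proof, apply the Arithmetized Completeness Theorem \ref{th_act} to the theory $\PA\restr{\num{p(n)}}+\set{\psi}{\Tr_n(\psi)}$ to obtain a $\Delta_2$-full model $\M$ of $\PA\restr{p(n)}$ whose depth-$\leq n$ arithmetical theory agrees with the standard one; the consistency of this seeded theory is feasibly provable by the feasible strong reflexivity of $\PA$ (Lemma \ref{subl::1}). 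Then extend $\M$ to a $\Delta_4$-model of $\Th\restr{n}$ using the construction of Lemma \ref{lem::AEV} (for $\CT^-$) or Lemma \ref{lem_szybkie_modele_kf} (for $\KF^-$); since these extensions elementarily preserve the arithmetical reduct, the resulting structure satisfies all of $\CT1$--$\CT6$ (resp.\ $\KF1$--$\KF12$) together with every induction instance of length $\leq p(n)\geq n$. This definable model is $I_n$. Condition (a)$'$ of Definition \ref{def::interpretations_polynomial_correct} holds because $\PA$ feasibly proves that the definable model satisfies each axiom of $\Th\restr{n}$, and by Remark \ref{rem::polynom_inter_correct} interpreting $\Th\restr{n}$ suffices. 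The $\Tr_n$-seeding secures $n$-correctness: for arithmetical $\phi$ of depth $\leq n$ the translation $\phi^{I_n}$ is provably equivalent to $\M\models\phi$, and $\PA$ feasibly proves $(\M\models\phi)\rightarrow\Tr_n(\phi)$ and, by the provable $T$-biconditionals of Theorem \ref{tw_uniformly_feasible_truth_predicates}, $\Tr_n(\phi)\rightarrow\phi$. Tracing the (uniform, elementary) constructions yields the P-time functions $f,g$.

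For $\FS^-[\PA]$ the rules NEC and CONEC force the verification of condition (a) rather than (a)$'$, and here I would obtain $I_n$ by \emph{composition}. Take Halbach's family of $\omega$-interpretations $g_{j}\colon\FS^-_j\to\RT^-_{<2j+1}$ from Lemma \ref{lem_Halbach_reduction} and compose $g_{n}$ with the $\Delta_4$-definable model of $\RT^-_{<2n+1}[\PA]$ built inside $\PA$ in Subsection \ref{subsection_FSminus} via Lemma \ref{lem::AEVplus}. A single use of NEC, passing from $\phi$ to $T\phi$, is mirrored by moving one truth level upward: from a $\PA$-proof of $\phi^{I_n}$ one produces a $\PA$-proof of $T_{j}(g_{j}(\phi))$ using the feasibly provable level-disquotation \eqref{equat:disquotationRT} valid in the definable $\RT^-$-model, and dually for CONEC; the level $j$ is driven by the number of rule applications in the given $\FS^-$-proof, which is exactly why a family indexed by $n$ rather than a single interpretation is required. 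The main obstacle I anticipate is precisely this $\FS^-$ bookkeeping --- keeping the truth levels, the nesting of NEC/CONEC, and the formula depths simultaneously under polynomial control while verifying condition (a), together with the uniform feasibility of every step; arranging $n$-correctness (arithmetical soundness) of the definable models, handled above by $\Tr_n$-seeding, is the remaining delicate point but is routine given the feasible strong reflexivity of $\PA$.
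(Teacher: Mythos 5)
Your central idea --- obtain $I_n$ by relativizing to a $\PA$-definable model that has been ``seeded'' with the $\Tr_n$-true arithmetical sentences, so that $n$-correctness comes from the provable Tarski biconditionals --- is exactly the idea that drives the paper's proof, but the implementation differs and two points of your route need repair. The paper does not re-run the model-expansion constructions at all. It applies the \emph{Feasible} Arithmetized Completeness Theorem (FACT, Lemma \ref{lem::ACT}) to the single Feferman-trimmed theory $\Phi_n$, whose axioms are the $\Tr_{n+1}$-true sentences together with the axioms of $\Th$, each admitted only under the guard $\exists y\,\bigl(\len(x)\le y\wedge \Con_{\Th\restr{y}}^{\Tr_{n+1}}\bigr)$. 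The guard makes $\Con_{\Phi_n}$ feasibly provable outright, and the already-established hypothesis of Theorem \ref{lem::main} (formalized conservativity, via Lemma \ref{subl::1}) is used only to show that the guard never excludes anything, i.e.\ that $\Phi_n$ provably contains every axiom of $\Th$ and every $\Tr_{n+1}$-truth. This buys two things your plan does not deliver as written. First, complexity control: your seeded theory $\PA\restr{p(n)}+\set{\psi}{\Tr_n(\psi)}$ is defined by a formula whose quantifier complexity grows with $n$, so neither Theorem \ref{th_act} (stated for $\Delta_k$-theories with $k$ a fixed standard number) nor Lemma \ref{lem::AEV} and the chain construction of Subsection \ref{Section_core_construction} (stated for $\Delta_l$-full models with $l$ fixed) can be cited as black boxes uniformly in $n$; you need uniform-in-$n$ versions with feasibly produced proofs, which is precisely what FACT packages and what your plan omits. (Relatedly, the consistency of the seeded theory is not Lemma \ref{subl::1} itself but the relativized statement $\Con_{\PA\restr{p(n)}}^{\Tr_n}$, requiring the cut-elimination argument to be redone with the true sentences as extra axioms.) Second, scope: your $I_n$ lives over a model of $\PA\restr{p(n)}$ only, hence interprets just $\Th\restr{n}$ and meets condition (a) of item 4 of Definition \ref{def::interpretations_polynomial_correct} only in the relaxed sense of Remark \ref{rem::polynom_inter_correct}; the paper's $\M_{\Phi_n}$ is a full model of a theory containing \emph{all} axioms of $\Th$ under the guard, so each $I_n$ interprets the whole theory and the definition is met literally, as the theorem asserts.

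For $\FS^-[\PA]$ your composition of Halbach's translations $g_j$ with a definable model of $\RT^-_{<2n+1}$ is plausible but unnecessary, and it is where your anticipated bookkeeping burden lives. The paper treats $\FS^-[\PA]$ exactly like the other two theories: Subsection \ref{subsection_FSminus} has already converted the NEC/CONEC accounting into the provability implication $(\textnormal{HR}_n)$ and thence into the hypothesis of Theorem \ref{lem::main}, after which the $\Phi_n$/FACT construction is uniform across all three theories and no separate analysis of the rules is needed at the interpretation stage.
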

Note that, by Proposition \ref{prop::inter_implies_speed}, the existence of a uniformly polynomially correct family of interpretations guarantees feasible reducibility. The proof of Theorem \ref{feasible intepretability of truth} can be readily read-off the second proof of Theorem \ref{lem::main} which we give in this section. We shall demonstrate that the assumptions of Theorem \ref{lem::main} imply the existence of a uniformly polynomially correct family of interpretations. This will make it clear that Theorem \ref{feasible intepretability of truth} holds since we have already already verified in Subsections 4.1, 4.2, and 4.3 that the assumptions of Theorem \ref{lem::main} are met when $\Th$ is any of the truth theories $\CT^-[\PA]$, $\KF^-[\PA]$, and $\FS^-[\PA]$. The second proof of Theorem \ref{lem::main} is based on a feasible version of the Arithmetized Completeness Theorem, which we now turn to. In Lemma \ref{lem::ACT} below an \emph{$n$-set} should be understood as a set that is definable by a formula of depth $n$ ("definable" in the sense of the feasible $\Sat_n$ predicates).

\begin{lemat}[Feasible Arithmetized Completeness Theorem, FACT]\label{lem::ACT}
	There exists a polynomial $p(n)$  and a P-time computable function $f$ such that for every $n$, $f(n)$ is a $\PA$ proof of the sentence expressing:
	\begin{center}
		"If an $n$-theory $\Th$ in a language $\Lang$ is consistent then it has a $p(n)$-model."
	\end{center}
	 Moreover there exists a P-time computable function $f$ such that for any $l,k$, $f(l, k)$ is a $\PA$ proof of the sentence expressing:
	\begin{center}
		"If $\M$ is an $l$-model of a $k$-theory $\Th$, then $\Th$ is consistent."
	\end{center}
\end{lemat}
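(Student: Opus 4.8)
The plan is to prove the two directions separately. In each case I would formalize a textbook argument inside $\PA$, while tracking the depth of every formula that occurs and checking that each syntactic transformation and each subproof is emitted by a fixed P-time algorithm acting on the parameters. Throughout I use the convention that an $n$-theory is presented by (the code of) a depth-$\leq n$ defining formula, evaluated via the feasible satisfaction predicates $\Sat_n$ of Theorem \ref{tw_uniformly_feasible_truth_predicates}, so that one may uniformly quantify over all such theories and models.

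For the existence direction I would arithmetize the usual Henkin construction underlying the Arithmetized Completeness Theorem \ref{th_act}, but with careful complexity bookkeeping. Working in $\PA$, given a consistent theory $\Th$ defined by a depth-$\leq n$ formula $\tau$, I expand the language with Henkin constants, fix a P-time enumeration $\langle \phi_s : s\in\omega\rangle$ of its sentences, and build an increasing chain of consistency-preserving finite extensions: at stage $s$ one adds $\phi_s$ if the result is still consistent and $\neg\phi_s$ otherwise, adjoining a Henkin witness whenever an existential sentence is accepted. The completed Henkin theory $\Th^{*}$ is then defined by a formula $\tau^{*}$ asserting that its argument is accepted at its own processing stage. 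The only unbounded ingredient of $\tau^{*}$ is the consistency test $\forall\pi\,\neg(\pi\text{ is a }\Th\cup\Gamma\text{-proof of }\bot)$, whose matrix consults $\Th$ only through $\Sat_n$ applied to $\tau$ and uses the proof predicate furnished by Corollary \ref{thm::PolyProv}; hence the depth of $\tau^{*}$ is bounded by $p(n)$ for a fixed polynomial $p$, so $\Th^{*}$ is a $p(n)$-model. The $\PA$-proofs that $\Th^{*}$ is complete, Henkin, and extends $\Th$ are immediate from the construction, and the crucial consistency of $\Th^{*}$ follows from that of $\Th$ by the standard argument (a proof of $\bot$ from $\Th^{*}$ uses finitely many axioms, all present by some finite stage, contradicting stagewise preservation of consistency). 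Each of these verifications is a schematic induction on the construction whose instance for the parameter $n$ is produced by a P-time function, the only semantic input being the feasibly provable Tarski biconditionals for $\Sat_n$ and $\Sat_{p(n)}$.

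For the soundness (``moreover'') direction I would mimic the consistency argument already used in the proof of Corollary \ref{wniosek:penultimate_corollary}. Suppose $\M$ is an $l$-model of the $k$-theory $\Th$ and, towards a contradiction, that $\pi$ is a proof of $0=1$ from $\Th$. First invoke cut-elimination for first-order logic (a single $\PA$-theorem, independent of $l$ and $k$) to replace $\pi$ by a cut-free proof $\pi'$; by the subformula property every formula in $\pi'$ has depth $\leq k$. I then evaluate such formulae in $\M$ using the relativized feasible truth predicate $\Sat^{\M}_{k}$ of Corollary \ref{wniosek:relativized_partial_feasible_truth}, whose $\M$-relativized compositional conditions are feasibly provable uniformly in $\qcr{\M}$ and $k$, and whose defining formula has depth polynomial in $l$ and $k$. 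An induction on the length of $\pi'$ shows that every sequent $\Gamma\longrightarrow\Delta$ occurring in it satisfies $\forall\alpha\,\bigl((\forall x\in\Gamma\ \Sat^{\M}_{k}(x,\alpha))\rightarrow\exists x\in\Delta\ \Sat^{\M}_{k}(x,\alpha)\bigr)$; since the axioms of $\Th$ are true in $\M$ by hypothesis, the empty sequent is not derivable, yielding $\Con_{\Th}$. Uniformity in $(l,k)$ is immediate, as both the defining formula of $\Sat^{\M}_{k}$ and the soundness induction are obtained by fixed algorithms of output size polynomial in $l$ and $k$.

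The main obstacle is the existence direction, and specifically the bookkeeping needed to certify that the depth of $\tau^{*}$ grows only polynomially in $n$ while the consistency test remains correctly evaluated by $\Sat_{p(n)}$: one must arrange the Henkin construction so that the consistency predicate refers to $\Th$ exclusively through $\Sat_n$ applied to $\tau$, and then verify, feasibly and uniformly, that the resulting packaged formula provably satisfies the compositional clauses required to run the stagewise consistency induction. By contrast, the soundness direction is essentially a parametrized rerun of the argument already deployed for Corollary \ref{wniosek:penultimate_corollary}, so beyond tracking depth it presents no new difficulty.
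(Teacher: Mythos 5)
Your proposal matches the paper's proof in all essentials: the existence direction is the same arithmetized Henkin/Lindenbaum construction (the paper's ``leftmost branch'': adjoin all Henkin axioms up front, then decide $\sigma(i)$ versus $\neg\sigma(i)$ by a relativized consistency test), with the same key observation that the defining formula of the resulting full model has depth polynomial in $n$ because its only unbounded ingredient is the consistency predicate built from the relativized proof predicate. For the ``moreover'' part the paper argues slightly more directly --- a full model is a complete, consistent, deductively closed Henkin theory, so one inducts on proof length with the appropriate feasible satisfaction predicate --- but your cut-elimination argument in the style of Corollary \ref{wniosek:penultimate_corollary} establishes the same fact with the same uniformity in $l$ and $k$.
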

\begin{uwaga}[Uniformity of FACT]
	As a corollary to the proof of the above lemma we will obtain the following proposition:
\begin{stwierdzenie}
	Suppose that $\phi(x,\bar{y})$ is an $\Lang_{\PA}$-formula such that 
	\[\PA\vdash "\num{\phi(x,\bar{y})} \textnormal{ defines a theory }."\]
	($\bar{y}$ are the parameters). Then there exists a formula $\phi'(x,\bar{y})$ such that $\PA\vdash$
	\begin{equation}\tag{$\ACT_{\phi}$}\label{star}
	\textnormal{ "If $\num{\phi}$ is consistent, then $\num{\phi'}$ defines a model for $\num{\phi}$."}
	\end{equation}
	Moreover, there exists a P-time computable function $f$, such that $f(\qcr{\phi})$ is a proof of $\ACT_{\phi}$. 
\end{stwierdzenie}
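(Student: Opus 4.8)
The plan is to make explicit the uniformity already latent in the Henkin-style proof of the Arithmetized Completeness Theorem that underlies Lemma \ref{lem::ACT}, while tracking all relevant lengths. Recall that the standard argument associates to a consistent theory its Henkin completion: one adjoins a fresh stock of constants $c_0, c_1, \ldots$, fixes a polynomial-time enumeration $\sigma_0, \sigma_1, \ldots$ of the sentences of the expanded language, and builds an increasing chain $T_0 \subseteq T_1 \subseteq \cdots$ in which at stage $s$ one decides $\sigma_s$ by adjoining whichever of $\sigma_s, \neg\sigma_s$ preserves consistency, simultaneously adding a witness $\psi(c)$ for a fresh $c$ whenever an existential $\exists x\,\psi(x)$ is accepted. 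The object $T' = \bigcup_s T_s$ is formalized not as a set but through a membership formula ``$x$ is decided positively at its stage''. Crucially, the whole procedure reads off the theory defined by $\phi$ only through the relativized provability predicate of Remark \ref{rem::RelProv}: the consistency test at each stage and the decision rule are phrased using $\Prov^{\phi}$. Hence the defining formula $\phi'(x,\bar y)$ of ``$x \in T'$'' is obtained from $\phi$ by a single, explicitly describable syntactic operation built on top of $\Prov^{\phi}$. Since $\Prov^{\phi}$ is constructed uniformly in $\phi$ and has length polynomial in $|\phi|$, the formula $\phi'$ likewise has length polynomial in $|\phi|$, and $\qcr{\phi} \mapsto \qcr{\phi'}$ is computable in polynomial time.

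Next I would verify $\ACT_\phi$ inside $\PA$. Fix $\bar y$ and assume $\Con^{\phi}$. In the present framework a full model simply \emph{is} a complete consistent Henkinized theory (Definition \ref{model_theory}), so it suffices to establish that $\phi'$ defines a theory which is (a) consistent, (b) complete, and (c) witnessed; no separate term-model construction is needed. The only step that genuinely involves $\phi$ is (a): one argues by induction on $s$ that each $T_s$ is consistent, the inductive step reducing the consistency of $T_{s+1}$ to that of $T_s$ via the deduction theorem for $\Prov^{\phi}$ together with the freshness of the Henkin constant. The principles about $\Prov^{\phi}$ that this argument consumes are exactly the uniform, feasibly provable ones gathered in Remark \ref{rem::RelProv} (the axiom-provability principle RelProv1 and the monotonicity principle RelProv2), each of which comes equipped with a P-time proof-producing function. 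Parts (b) and (c) are immediate from the decision and witnessing rules and their verification does not depend on $\phi$ beyond invoking $\Prov^{\phi}$.

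Finally I would argue that the proof-producing function $f$ is P-time. The $\PA$-proof of $\ACT_\phi$ decomposes into a fixed skeleton---the induction of (a) and the bookkeeping of (b), (c), all phrased schematically in a symbol ``$\Prov^{\phi}$''---into which one plugs the concrete instances of RelProv1 and RelProv2 for the given $\phi$. Since substituting $\qcr{\phi}$ into a fixed proof template, and concatenating it with the P-time-produced proofs of the relativized provability principles, is a polynomial-time string operation, the map $\qcr{\phi} \mapsto f(\qcr{\phi})$ is polynomial-time computable. The complexity bookkeeping matches Lemma \ref{lem::ACT}: if $\phi$ has depth $n$, then $\phi'$ has depth $p(n)$ for the same polynomial $p$.

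I expect the main obstacle to be the consistency step (a). One must carry out the stage-by-stage induction with a \emph{single} $\PA$-induction whose proof length does not grow with $\phi$, so that all dependence on $\phi$ is confined to the P-time-supplied instances of RelProv1 and RelProv2. Checking that the deduction theorem and the consistency-preservation argument can indeed be formalized uniformly in the relativized predicate $\Prov^{\phi}$---rather than in the genuine, possibly high-complexity, theory $\Th+\phi$---is the delicate point, and it is precisely the reason the framework of Remark \ref{rem::RelProv}, with its insistence on uniform and feasibly provable principles for $\Prov^{\phi}$, was set up.
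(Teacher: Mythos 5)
Your proposal is correct and follows essentially the same route as the paper: the paper's proof of Lemma \ref{lem::ACT} also builds the full model as an arithmetized Henkin completion (the ``leftmost branch'' $\theta^H$, whose membership formula is assembled uniformly from consistency statements over the given theory), observes that a complete consistent Henkinized theory already \emph{is} a full model in this framework, and obtains the Proposition by noting that the whole construction and its verification are a fixed template into which $\phi$ is substituted, hence P-time in $\qcr{\phi}$. The only (inessential) divergence is that the paper adjoins all Henkin axioms up front in a single consistency-preserving step and then runs a pure completion, whereas you interleave witness-introduction with the stage-by-stage decisions; your explicit appeal to the relativized predicates of Remark \ref{rem::RelProv} for the consistency tests is, if anything, a more careful treatment of the point the paper leaves implicit.
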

\end{uwaga}

\begin{proof}[Proof of Lemma \ref{lem::ACT}]
See the Appendix.	 	
\end{proof}

Let us also mention that in addition to FACT we have also the Feasible Compactness Theorem (the proof of which is rather obvious, as we deal here with consistency in the syntactical sense)

\begin{lemat}[Arithmetized Compactness Theorem]
	There exists a P-time computable function $f$ such that for every $n \in \mathbb{N}$, $f(n)$ is a $\PA$ proof of the sentence
	\begin{center}
		"An $n$-theory $\Th$ is consistent if and only if each bounded fragment of $\Th$ is consistent."
	\end{center}
\end{lemat}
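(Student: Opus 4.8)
The plan is to reduce the statement to the elementary observation that consistency is a purely syntactic, $\Pi_1$ notion: $\Con_{\Th}$ abbreviates $\neg\exists x\,\Proof_{\Th}(x,\num{0=1})$, and any single proof mentions only finitely many axioms, each of which occurs as a formula \emph{inside} the proof and hence has binary code (and length) bounded by the code of the proof itself. Working in $\PA$, for a depth-$n$ definable theory $\Th$ (evaluated through the predicate $\Sat_n$, with $\Proof_{\Th}(x,y)$ and the restrictions $\Th\restr{m}$ set up uniformly, taking the defining formula of $\Th$ as a parameter) I would establish the biconditional $\Con_{\Th}\leftrightarrow\forall m\,\Con_{\Th\restr{m}}$.

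The left-to-right direction is immediate from monotonicity of the provability predicate in its axiom set: since every axiom of $\Th\restr{m}$ is an axiom of $\Th$, every $\Th\restr{m}$-proof is a $\Th$-proof, so $\neg\Con_{\Th\restr{m}}$ would yield $\neg\Con_{\Th}$. This uses nothing beyond the analogue of condition (RelProv2) from Remark \ref{rem::RelProv}, provable uniformly. For the right-to-left direction I would argue contrapositively: assuming $\neg\Con_{\Th}$, fix $x$ with $\Proof_{\Th}(x,\num{0=1})$ and set $m=|x|$. The crucial localization lemma is $\Proof_{\Th}(x,y)\rightarrow\Proof_{\Th\restr{|x|}}(x,y)$: every formula tagged as a $\Th$-axiom in the computation witnessing $\Proof_{\Th}(x,y)$ literally appears in $x$, so its length is at most $|x|$, whence it passes the extra length test built into the definition of $\Th\restr{m}$, while the $\Sat_n$-evaluation of ``is an axiom of $\Th$'' is otherwise unchanged. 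Thus $x$ also witnesses $\neg\Con_{\Th\restr{m}}$, contradicting $\forall m\,\Con_{\Th\restr{m}}$.

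For the feasibility clause I would observe that the entire derivation is a fixed proof template in which the only dependence on $n$ is through the length of the formulae $\Proof_{\Th}$, $\Con_{\Th}$ and the parametrized restriction $\Th\restr{x}$; by the construction of these predicates (Corollary \ref{thm::PolyProv} and Remark \ref{rem::RelProv}) their length is polynomial in $n$, so substituting them into the fixed template yields a $\PA$-proof of length polynomial in $n$, generated by a P-time computable function $f$. The only point demanding genuine care is the formal verification of the localization lemma inside $\PA$ --- making precise that the axiom-bound check $|\cdot|\leq|x|$ is passed by exactly the axioms used in $x$, and that restricting the axiom set by a length bound does not otherwise perturb the proof-checking computation. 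Since this is a bounded, syntactic property of the uniform definition of $\Proof$, it is provable by a short $\IDelta_0+\Exp$ argument independent of $n$, so it does not affect the polynomial bound.
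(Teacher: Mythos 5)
Your proposal is correct and is exactly the argument the paper has in mind: the paper gives no proof of this lemma, dismissing it as ``rather obvious, as we deal here with consistency in the syntactical sense,'' and your reduction of both directions to the monotonicity of $\Proof_{\Th}$ in its axiom set (condition (RelProv2)) together with the localization fact $\Proof_{\Th}(x,y)\rightarrow\Proof_{\Th\restr{|x|}}(x,y)$ is precisely the content of that remark. The same localization step is used silently elsewhere in the paper (e.g.\ the passage from $\Th\vdash^{n}\phi$ to $\Th\restr{n}\vdash\phi$ in the proof of Theorem \ref{lem::main}), and your observation that the whole derivation is a fixed template into which formulae of length polynomial in $n$ are substituted correctly yields the P-time function $f$.
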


	We are now ready to present the second proof of Theorem \ref{lem::main}. We include the statement here for the benefit of the reader.
	
	\begin{tw}[Theorem \ref{lem::main} redux]

Let $\Th$ be a theory extending $\PA$ with an NP-set of axioms. If there is a polynomial $p(n)$ such that for every $n\in\mathbb{N}$, 
 	\begin{equation}\label{equat::Fedor_condition}\tag{$*$}
 	    \PA\vdash^{p(n)}\forall \phi \in \Sent_{\LPA} \left(\dpt(\phi)\leq \num{n} \wedge \Prov_{\Th\restr{\num{n}}}(\phi)\rightarrow \Prov_{\PA\restr{\num{p(n)}}}(\phi)\right).
 	\end{equation}
 	 Then $\PA$ polynomially simulates $\Th$. Moreover, if $\Th$ admits a P-time computable set of axioms and there exists a P-time computable function $f$ such that for all $n\in\mathbb{N}$, $f(n)$ is a $\PA$ proof of
 	 \[\forall \phi \in \Sent_{\LPA} \left(\dpt(\phi)\leq \num{n} \wedge \Prov_{\Th\restr{\num{n}}}(\phi)\rightarrow \Prov_{\PA\restr{\num{p(n)}}}(\phi)\right),\]
 	 then $\Th$ is feasibly reducible to $\PA$.

\end{tw}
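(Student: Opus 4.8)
The plan is to derive, from the hypothesis \eqref{equat::Fedor_condition}, the existence of a uniformly polynomially correct family of interpretations $\was{I_n}_{n\in\mathbb{N}}\colon\Th\to\PA$ in the sense of Definition \ref{def::interpretations_polynomial_correct}, and then invoke Proposition \ref{prop::inter_implies_speed} to conclude both the polynomial simulation and (under the moreover clause) the feasible reducibility. The engine of the construction is the Feasible Arithmetized Completeness Theorem (Lemma \ref{lem::ACT}) together with its uniform refinement \eqref{star}: once we can feasibly produce, for each $n$, a $\PA$-proof that a suitable definable theory is consistent, \eqref{star} hands us a feasibly definable full model of that theory, and this definable model is exactly what serves as the interpretation $I_n$.

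First I would extract consistency from the hypothesis. Instantiating \eqref{equat::Fedor_condition} at $\phi=\qcr{0=1}$ and combining it with the feasible strong reflexivity of $\PA$ (Lemma \ref{subl::1}, which feasibly yields $\PA\vdash\Con(\PA\restr{\num{p(n)}})$), one obtains a P-time function producing $\PA$-proofs of $\Con(\Th\restr{\num{n}})$. To guarantee that the resulting interpretation is $n$-correct, the interpreting model must $\LPA$-elementarily extend the ground universe up to depth $n$; accordingly I would apply FACT not to $\Th\restr{\num{n}}$ alone, but to the theory $V_n:=\Th\restr{\num{n}}\cup\set{\psi\in\Sent_{\LPA}}{\dpt(\psi)\leq\num{n}\wedge\Tr_n(\psi)}$, whose arithmetical part is the depth-$\leq n$ diagram of the universe expressed through the partial truth predicate $\Tr_n$. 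Feeding a $\PA$-proof of $\Con(V_n)$ into \eqref{star} produces a definable full model $\md{N}_n\models V_n$, and I take $I_n$ to be the interpretation read off $\md{N}_n$. Since $\md{N}_n$ decides every closed arithmetical sentence of depth $\leq n$ in agreement with $\Tr_n$, and the feasibly provable $\Tr_n$-biconditionals (Theorem \ref{tw_uniformly_feasible_truth_predicates}) identify $\Tr_n(\psi)$ with $\psi$, one obtains feasibly the equivalences $\phi^{I_n}\equiv\phi$ for arithmetical $\phi$ of depth $\leq n$, i.e.\ the $n$-correctness demanded by clause (b) of uniform polynomial correctness. The translation-of-proofs function of clause (a) is then obtained by composing the P-time proof transformations supplied by \eqref{star} with the feasible formalization of provability from Corollary \ref{thm::PolyProv}, using that each axiom of $\Th\restr{\num{n}}$ holds under $I_n$ with a $\PA$-proof of size polynomial in $n$ (here Remark \ref{rem::polynom_inter_correct} lets us work with $\Th\restr{\num{n}}$ rather than all of $\Th$).

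The delicate point, and the main obstacle, is the feasible proof of $\Con(V_n)$. One cannot argue by compactness and then apply \eqref{equat::Fedor_condition} to the negation of a finite conjunction of true axioms, because that conjunction has depth growing with the length of the hypothetical refutation, pushing us outside the fixed window $n$ on which \eqref{equat::Fedor_condition} and the predicates $\Tr_n$ operate, and there is no uniform truth predicate to appeal to. Instead I would run the cut-elimination argument already used in the proof of Corollary \ref{wniosek:penultimate_corollary}: pass to a cut-free refutation with the subformula property, so that every formula occurring in it has depth bounded by a fixed polynomial $q(n)$, and then define a single satisfaction predicate whose arithmetical component is $\Tr_{q(n)}$ and whose $T$-component is furnished by the relativized partial truth predicate of the model obtained from $\Con(\Th\restr{\num{n}})$; an induction on the proof then shows every sequent is satisfied, so the empty sequent cannot be derived. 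Coordinating these depth bounds with the non-uniformity of the $\Sat_n$ hierarchy (Observation \ref{Trn are monotone}) and checking that every transformation is performed by a single P-time function uniformly in $n$ is precisely where the technical labour lies; granting it, Proposition \ref{prop::inter_implies_speed} yields the theorem.
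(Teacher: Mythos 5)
Your overall architecture --- produce a uniformly polynomially correct family of interpretations $\was{I_n}_{n\in\mathbb{N}}$ by applying the feasible arithmetized completeness theorem to a definable theory that joins $\Th\restr{\num{n}}$ with the true arithmetical sentences of bounded depth, then invoke Proposition \ref{prop::inter_implies_speed} --- is exactly the paper's second proof of Theorem \ref{lem::main}. The divergence, and the gap, lies in how consistency of the interpreted theory is secured. The paper does not interpret your unguarded theory $V_n$ but rather $\Phi_n$, whose very definition carries a Feferman-style consistency guard: $x\in\Phi_n$ requires, besides $\Tr_{n+1}(x)\vee x\in\Th$, the clause $\exists y\,\bigl(\len(x)\leq y\wedge\Con_{\Th\restr{y}}^{\Tr_{n+1}}\bigr)$. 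Because of this guard, $\PA\vdash^{p_1(n)}\Con_{\Phi_n}$ holds feasibly and \emph{unconditionally} (the standard self-limiting argument, cf.\ Theorem 2.37 of H\'ajek--Pudl\'ak); the hypothesis \eqref{equat::Fedor_condition} is used only afterwards, to certify that the guard is vacuous for the particular sentences one cares about (the true arithmetical sentences of depth at most $n+1$ and the standard-length axioms of $\Th$), so that these provably belong to $\Phi_n$ and hence hold in $\M_{\Phi_n}$. Your $V_n$ has no guard, so you must prove $\Con(V_n)$ outright, and everything hinges on that step.

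Your proposed proof of $\Con(V_n)$ fails as described. After cut-elimination you want a single satisfaction predicate verifying every axiom occurring in the hypothetical refutation, with arithmetical component $\Tr_{q(n)}$ and non-arithmetical component read off an ACT-model $\md{N}$ of $\Th\restr{\num{n}}$ obtained merely from $\Con(\Th\restr{\num{n}})$. But such an $\md{N}$ is only guaranteed to exist; nothing forces it to be arithmetically correct, i.e.\ nothing gives $\M\preceq_{\LPA}\md{N}$ for the ground universe $\M$. Hence the two components of your hybrid predicate may disagree on arithmetical formulas, the predicate is not compositional (consider a disjunction with one arithmetical and one $T$-containing disjunct: evaluated as a whole in $\md{N}$, it need not match the value computed from $\Tr_{q(n)}$ on the arithmetical disjunct), and the induction on the refutation breaks at the very first mixed inference. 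This is precisely why the analogous hybrid-predicate argument in the proof of Corollary \ref{wniosek:penultimate_corollary} \emph{assumes} that $\md{N}$ is an $\LPA$-elementary extension of $\M$ --- an assumption which, for the truth theories treated in this paper, is the hard content of Subsections 4.1--4.3 and is certainly not a feasible consequence of \eqref{equat::Fedor_condition} for an arbitrary $\Th$. To repair your argument you would either have to import the paper's guard into the definition of $V_n$, or supply a genuinely different feasible derivation of $\Con_{\Th\restr{\num{n}}}^{\Tr_{n+1}}$ from the hypothesis; the naive routes (compactness, or iterated deduction-theorem) founder on the depth blow-up you yourself identified.
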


 \begin{proof}[Second proof of Theorem \ref{lem::main}, Sketch]
 We will construct a uniformly polynomially correct family of interpretations $\was{I_n}_{n\in\mathbb{N}}: \Th \rightarrow \PA$. Let us define the theory $\Phi_n$:
\begin{equation}
    x \in \Phi_n := (\Tr_{n+1}(x) \vee x\in \Th) \wedge \exists y \bigl( \len(x)\leq y \wedge \Con_{\Th_{\restr{y}}}^{\Tr_{n+1}}\bigr),
\end{equation}
where $\Con_{\Th_{\restr{y}}}^{\Tr_n}$ says that there is no proof of contradiction using as axioms sentences in $\Th\restr{y}$ or true sentences of depth $n$ (see Remark \ref{rem::RelProv} for an explanation). Observe that the length of (the formula defining)
$\Phi_n$ is polynomial in $n$ and its shape depends uniformly on $n$. Then for every $n$, $\PA\vdash \Con_{\Phi_n}$
and the proof is uniform in $n$, so in fact there exists a polynomial $p_1(n)$ such that for every $n$,
\[\PA\vdash^{p_1(n)}\Con_{\Phi_n}.\]
(for the precise argument see \cite{HajekPudlak}, Theorem 2.37). By FACT we know that there exists a formula $\M_{\Phi_n}$ and a polynomial $p_2(n)$ such that
\begin{equation}\label{equat::model}\tag{$**$}
\PA+ \Con_{\Phi_n}\vdash^{p_2(n)} \M_{\Phi_n} \textnormal{ is a full model for }\Phi_n.
\end{equation}
Now $I_n$ is defined as a relativization to $\M_{\Phi_n}$ i.e. a function defined on formulae of the language of $\Th$ which preserves boolean operations such that  for every relational symbol $R$ and all terms $s_1,\ldots,s_n$
\[(R(s_1,\ldots, s_n))^{I_n} = \qcr{R(s_1,\ldots, s_n)} \in \M_{\Phi_n}\]
(recall that full models are coded as elementary diagrams) and for every existential formula $\exists x \phi$,
\[(\exists x \phi)^{I_n} = \exists x \in M_{\Phi_n} \ \ \phi^{I_n}.\]
We check by contraposition that $I_n$ is $n$-correct. Work in $\PA$.  Assume $\neg\phi$, where $\phi$ is of length at most $n$. We will derive $\phi^{I_n}$.
Surely, $\neg \phi$ is of depth at most $n+1$. Let $q(n)$ be as in Theorem \ref{thm::Pudlak}. Then, by provable Tarski biconditionals, with a proof of length $q(n)$ we conclude
\[\Tr_{n+1}(\neg \phi).\]
We show that this implies $\Phi_n(\neg\phi)$. By our main assumption \eqref{equat::Fedor_condition} and Lemma \ref{subl::1} we obtain a polynomial $p_3(n)$ such that 
\begin{displaymath}
\PA\vdash^{O(p_3(n,n))}\Con_{\Th_{\restr{n+1}}}^{\Tr_{n+1}},
\end{displaymath}
which directly implies that $\neg \phi$
belongs to $\Phi_n$.  Consequently
\[\PA\vdash\M_{\Phi_n}\models \neg \phi,\]
and the proof of it has length $O(p_1(n)+p_2(n)+q(n) + p_3(n,n))$. Now using at most $|\phi|$ many steps involving formulae of length polynomial in $n$ we obtain
\[\neg(\phi)^{I_k}.\]

What is left to show is that for some polynomial $p(n,k)$ and each $k\in\mathbb{N}$,
\[\PA\vdash^{p(n,k)}\psi^{I_k},\]
for every sentence $\psi$ in $\Th\restr{n}$.
  We use polynomial binumerability of $\Th\restr{n}$ to guarantee that there is a polynomial $p_4$ such that for all  $n$ and $\psi \in  \Th\restr{n}$, $$\PA\vdash^{p_4(n)} \psi \in \Th\restr{\num{n}}.$$
Now, as previously, we have
$$\PA\vdash^{O(p_3(n,k))} \Con_{\Th\restr{n}}^{\Tr_{k}}.$$
Hence, adding a few more steps, we also have
\[\PA\vdash^{O(p_4(n))}\psi \in \Phi_k.\]
Then, as previously we check that $\psi^{I_k}$ is satisfied.

The "moreover" part holds, since if $\Th$ is P-time computable, then we can feasibly find a $\PA$ proof witnessing that
\[\PA\vdash \phi \in \Th\restr{n}.\]
The rest of steps are fully analogous.
 \end{proof}

\section{Open Questions}

The proofs of our main results in Section 4 suggest that the answers to the following questions are both in the positive; we pose them here as questions since definitive positive answers to them requires a number of technical verifications that are yet to be carried out.


\noindent \textbf{Question A.}  Is the conservativity of $\CT^-[\PA]$, $\KF^-[\PA]$, and $\FS^-[\PA]$ over $\PA$ provable in Buss's system $\mathsf{S}_{2}^{1}?$  


\noindent \textbf{Question B.} Suppose $\B$ is a sequential theory that is inductive; i.e., the scheme of induction over the natural numbers of $\B$ is provable in $\B$. 
Are $\CT^-[\B]$, $\KF^-[\B]$, and $\FS^-[\B]$ 
feasibly reducible to $\B$?

\section{Appendix}
The bulk of the work in the following paper is inherently technical, especially since we are dealing with nuanced arithmetizations and sizes of proofs, and therefore the arguments need to be checked in a very careful manner. In order to minimally distract the reader from the main flow of the argument, we decided to relegate some of the checking to this Appendix.

\subsection{Feasible reflexivity}\label{Feasible reflexivity}

In Section \ref{sec_polynomial_simulations}, we proved Theorem \ref{lem::main} which is the technical core of our paper and which provides us with a uniform way of obtaining polynomial simulations and feasible reductions. We presented two proofs of that theorem. The first of them used the following Lemma (originally Lemma \ref{subl::1}):
\begin{lemat}
There exists a P-time computable function $f$ such that for all $n,k\in\mathbb{N}$ $f(n,k)$ is a $\PA$ proof of the:
		\[\forall \phi\bigl(\dpt(\phi)\leq \num{k} \wedge \Prov_{\PA\restr{\num{n}}}(\phi)\rightarrow \Tr_k(\phi)\bigr).\]
\end{lemat}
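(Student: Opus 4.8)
The plan is to follow the outline given immediately after the statement of Lemma~\ref{subl::1}, but to organise it so that every transformation is visibly uniform in the pair $(n,k)$ and hence assembles into a single P-time computable function $f$. First I would eliminate the dependence on the relative sizes of $n$ and $k$. Setting $m := \max(n,k)$, it suffices to produce, feasibly in $(n,k)$, a $\PA$-proof of
\[
\forall \phi \bigl(\dpt(\phi)\leq \num{k} \wedge \Prov_{\PA\restr{\num{n}}}(\phi)\rightarrow \Tr_m(\phi)\bigr),
\]
because appending the proof supplied by Observation~\ref{Trn are monotone} (which, since $k \leq m$, converts $\Tr_m(\phi)$ into $\Tr_k(\phi)$ on formulae of depth $\leq k$) then yields the desired statement, and the concatenation is clearly feasible. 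The point of working with the index $m$ is that every formula occurring in the argument --- both the goal $\phi$, of depth $\leq k \leq m$, and the axioms used, of length $\leq n \leq m$ and hence of depth $\leq m$ --- is decided compositionally by $\Sat_m$, so the compositional Tarski conditions furnished by Theorem~\ref{tw_uniformly_feasible_truth_predicates} are available throughout.

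Working inside $\PA$, I would first invoke a formalized cut-elimination theorem for the sequent calculus of first-order logic; this is a single sentence, proved once and for all independently of $n,k$. From $\Prov_{\PA\restr{\num{n}}}(\phi)$ I thus obtain a cut-free derivation of a sequent $\Gamma \longrightarrow \phi$ in which $\Gamma$ consists of axioms of $\PA\restr{\num{n}}$, and by the subformula property every formula appearing has depth bounded by $m$. Next I would establish that all axioms used are $\Tr_m$-true: the finitely many axioms of $\Q$ are handled by the feasibly provable Tarski biconditionals of Theorem~\ref{tw_uniformly_feasible_truth_predicates} via a fixed manipulation, the induction instances of length $\leq n$ by Proposition~\ref{stw_prawda_indukcji}, and the logical axioms by Proposition~\ref{stw_pudlak_prawda_logiki}.

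The soundness core is an internal induction on the number of lines of the cut-free derivation, showing that each sequent $\Gamma' \longrightarrow \Delta'$ occurring in it satisfies
\[
\forall \alpha \bigl(\alpha \in \Ass(\Gamma'\cup\Delta') \wedge \forall x\in\Gamma'\ \Sat_m(x,\alpha) \rightarrow \exists y\in\Delta'\ \Sat_m(y,\alpha)\bigr).
\]
Each of the finitely many logical rules is discharged by a fixed rearrangement of the compositional clauses for $\Sat_m$, and the depth bound $\leq m$ is exactly what guarantees that those clauses apply at every node. Combining the invariant at the terminal sequent $\Gamma \longrightarrow \phi$ with the already-established truth of the members of $\Gamma$ yields the $\Sat_m$-truth of $\phi$, that is, $\Tr_m(\phi)$. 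Since the cut-elimination sentence and the soundness induction are each a single $\PA$-proof schema whose only dependence on $(n,k)$ enters through the numerals $\num{n}, \num{m}$ and through a fixed list of rule-cases, the total proof length remains polynomial and its construction is manifestly P-time.

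I expect the main obstacle to lie in the truth-of-induction-axioms step (Proposition~\ref{stw_prawda_indukcji}): there are infinitely many induction instances of length $\leq n$, so they cannot be enumerated and verified one by one within a proof of bounded length. The resolution is to prove the single universally quantified statement ``every induction axiom of depth $\leq m$ is $\Tr_m$-true'' by exploiting the compositional behaviour of $\Sat_m$ directly --- reducing the $\Sat_m$-truth of $\psi(\num{0})\wedge\forall x(\psi(\num{x})\rightarrow\psi(S\num{x}))\rightarrow\forall x\,\psi(\num{x})$ to the genuine $\PA$-induction instance for the fixed-complexity formula $x \mapsto \Sat_m(\psi,x)$, rather than through unrestricted disquotation, which would make the proof length grow with the complexity of the instance. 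Verifying that this goes through uniformly and feasibly, and that the analogous care is exercised for the logical axioms, is the delicate part of the argument.
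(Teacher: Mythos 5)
Your proposal is correct and follows essentially the same route as the paper's own proof: cut-elimination, feasible truth of the $\Q$, induction, and logical axioms via Propositions \ref{stw_prawda_indukcji} and \ref{stw_pudlak_prawda_logiki}, an internal induction on proof lines establishing the $\Sat$-soundness invariant for sequents, and Observation \ref{Trn are monotone} to adjust the truth-predicate index. Your only departure is cosmetic — working uniformly with $m=\max(n,k)$ instead of the paper's explicit case split on $n\leq k$ versus $k\leq n$ — and your closing remarks on the induction axioms correctly identify the content of the appendix proof of Proposition \ref{stw_prawda_indukcji}.
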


It states that we can uniformly find $\PA$ proofs of the uniform reflection for bounded fragments of $\PA$. In the proof, we assumed that the statement holds for the axioms in these fragments, and that arithmetical satisfaction predicates enjoy certain regularity properties. Below, we formulate these results in a precise manner and prove them. 

\begin{definicja}
\begin{enumerate}

\item If $\alpha$ is a valuation and $v$ is in the domain of $\alpha$, then by $\alpha[v \mapsto x]$ we mean a valuation $\alpha'$ which is the same as $\alpha$ except for the variable $v$ whose value is $x$.
\item If $y$ is a formula, then $\Ind(y,v)$ denotes the instantiation of the induction scheme (with parameters) with formula $y$ w.r.t. $v$, i.e. the following formula
	\[\left(y[0/v]\wedge \forall v \left(y\rightarrow y[S(v)/v]\right)\rightarrow \forall v y\right).\]
\end{enumerate}
\end{definicja}

For the sake of simplicity we assume that $\PA$ is axiomatized by induction scheme with free variables treated as parameters. This is in order to avoid taking universal closures of axioms. Let us observe that, living inside $\PA$, we know that every object can be named by a closed term. The Proposition below says that for every formula $\phi$ being satisfied by a sequence $y$ is equivalent to the truth of the sentence $\phi[y]$.\footnote{For the notation $\phi[y]$, recall Definition \ref{defi_substitutions}.} 
\begin{konwencja}
For the sake of simplicity let us agree that saying that for every $n$ the family $\was{\phi_n}_{n\in\mathbb{N}}$ is \emph{uniformly feasible in $n$} means that there exists a P-time computable function $f$ such that for each $n$, $f(n)$ is $\PA$ proof of $\phi_n$.
\end{konwencja}

\begin{stwierdzenie}\label{stw: regularity}
	The following regularity properties for $\Sat_n$ predicates are uniformly feasible in $n$:
	\begin{enumerate}
		\item $\left[\left(\dpt(y) =n\wedge \alpha'=\alpha[v\mapsto S(\alpha(v))]\right)\rightarrow \Sat_n(y,\alpha')\equiv \Sat_n(y[S(v)/v],\alpha)\right]$.
		\item $\left[\left(\dpt(y) = n\wedge \alpha'=\alpha[v\mapsto z]\right)\rightarrow \Sat_n(y,\alpha')\equiv \Sat_n(y[\num{z}/v],\alpha)\right]$.
	\end{enumerate}
\end{stwierdzenie}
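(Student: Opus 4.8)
The plan is to treat both items (1) and (2) as special cases of a single \emph{substitution lemma} relating a modification of an assignment to a syntactic substitution, and to prove that lemma by induction on the logical depth of the formula, carried out \emph{inside} $\PA$ using the compositional Tarski conditions for $\Sat_n$ supplied by Theorem \ref{tw_uniformly_feasible_truth_predicates}. Concretely, for a fixed $n$ I would prove in $\PA$ the statement
\[
\forall y\, \forall v \in \vrbl\, \forall t \in \term_{\LPA}\, \forall \alpha\ \Bigl(\dpt(y) \le \num{n} \rightarrow \bigl(\Sat_n(y[t/v],\alpha) \equiv \Sat_n(y,\alpha[v \mapsto \valt{t}{\alpha}])\bigr)\Bigr).
\]
Item (2) is then the instance $t = \num{z}$, using $\valt{\num{z}}{\alpha} = z$ (provable in $\PA$), and item (1) is the instance $t = S(v)$, using $\valt{S(v)}{\alpha} = S(\alpha(v))$. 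The crucial preliminary observation is that substituting a term for a variable only rewrites the leaves of the syntactic tree and hence leaves the logical depth unchanged, so that $\dpt(y[t/v]) = \dpt(y) \le \num{n}$ and the \emph{same} predicate $\Sat_n$ legitimately governs both sides of the biconditional.

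First I would record the term-level version: by induction on the build-up of an arbitrary (possibly nonstandard) term $r$, $\PA$ proves $\valt{r[t/v]}{\alpha} = \valt{r}{\alpha[v \mapsto \valt{t}{\alpha}]}$, the only nontrivial clause being $r = v$, where both sides evaluate to $\valt{t}{\alpha}$; this is a single sentence independent of $n$. With it in hand, the atomic case of the main induction is immediate from the clause of the compositional conditions governing atomic formulae. For the Boolean cases one combines the $\neg$- and $\vee$-clauses of the compositional conditions with the induction hypothesis applied to the immediate subformulae (whose depth is strictly smaller, hence still $\le \num{n}$), using that substitution commutes with the connectives. The quantifier case $y = \exists w\,\phi$ uses the $\exists$-clause together with the relation $\sim_w$: if $w = v$ the substitution does not reach $\phi$ and the two assignments agree on everything the value of $\exists v\,\phi$ can depend on, while if $w \ne v$ one pushes the modification through the existential witness and appeals to the induction hypothesis, the point to check being that no variable of $t$ is captured --- automatic for $t = \num{z}$, and harmless for $t = S(v)$ since the only variable of $S(v)$ is $v$ itself, which is never re-bound at a free occurrence.

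Finally I would verify uniformity in $n$, which is really the entire content of the claim. The induction above is a \emph{single} application of the $\PA$ induction scheme to a formula $\Psi(d)$ whose only $n$-dependent ingredient is the fixed-shape predicate $\Sat_n$, whose definition has length polynomial in $n$ by Theorem \ref{tw_uniformly_feasible_truth_predicates}; the term-level lemma contributes a proof of size independent of $n$; and every appeal to a compositional condition is discharged by the P-time function of Theorem \ref{tw_uniformly_feasible_truth_predicates}, each such subproof being of size polynomial in $n$. Since the overall derivation is built from these blocks by a fixed template of logical manipulations whose number does not depend on $n$, the resulting $\PA$-proof has size polynomial in $n$ and can be emitted by a P-time function $f(n)$, as required. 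I expect the main obstacle to be precisely this bookkeeping: arranging the internal induction and the invocations of the compositional conditions so that the resulting proof figure is genuinely of polynomial size and is produced by one uniform algorithm (rather than merely existing for each $n$), and in particular formulating the quantifier clause and the capture-avoidance argument uniformly in the nonstandard formula $y$.
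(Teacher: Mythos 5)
The paper announces in Subsection \ref{Feasible reflexivity} that it will prove this proposition, but in fact no proof is supplied for it there (only Proposition \ref{stw_prawda_indukcji} receives one), so your argument is filling a gap rather than duplicating the authors' reasoning. What you propose is correct and is the natural way to do it: a single substitution lemma $\Sat_n(y[t/v],\alpha)\equiv\Sat_n(y,\alpha[v\mapsto \valt{t}{\alpha}])$ proved by one internal $\PA$-induction on depth, specialised to $t=\num{z}$ and $t=S(v)$, with feasibility coming from the facts that the induction instance, the compositional-condition subproofs from Theorem \ref{tw_uniformly_feasible_truth_predicates}, and the fixed template of logical steps are each of size polynomial in $n$ and emitted uniformly. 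Your preliminary observations are exactly the ones that matter: depth is insensitive to substitution because Definition \ref{defi_syntactic_depth} treats whole terms as leaves (so the same $\Sat_n$ governs both sides), and the term-evaluation identity $\valt{r[t/v]}{\alpha}=\valt{r}{\alpha[v\mapsto\valt{t}{\alpha}]}$ is a single $n$-independent $\PA$ theorem. Two small points of care, both of which you already flag: the lemma as displayed for arbitrary $t\in\term_{\LPA}$ is false without a capture-avoidance proviso in the $\exists w$ clause, so it should either carry the hypothesis that no variable of $t$ is bound in $y$ or be stated only for the two instances used (where it holds because $\num{z}$ is closed and the sole variable of $S(v)$ is $v$ itself, which cannot be captured at a free occurrence of $v$); and the $w=v$ quantifier case needs the observation that $\{\beta:\beta\sim_v\alpha\}=\{\beta:\beta\sim_v\alpha'\}$ when $\alpha'$ differs from $\alpha$ only at $v$, which your phrasing covers. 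With those noted, the argument is complete.
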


\begin{stwierdzenie}[Essentially Pudl\'ak, \cite{pudlak_lc}]\label{stw_pudlak_prawda_logiki}
	The following formulae are uniformly feasible in $n$:
	\begin{enumerate}
		\item $\dpt(x) =n\wedge "x \textnormal{ is a logical axiom }"\wedge \alpha \in \Ass(x)\rightarrow \Sat_n(x,\alpha)$
		\item $\dpt(y) = n \wedge "y\textnormal{ is of the form } x\rightarrow z" \wedge \alpha \in \Ass(y) \wedge \Sat_n(y,\alpha)\wedge \Sat_n(x,\alpha)\rightarrow \Sat_n(z,\alpha)$
	\end{enumerate}
\end{stwierdzenie}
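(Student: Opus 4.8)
The plan is to reduce everything to the uniformly feasible compositional Tarski conditions for $\Sat_n$ supplied by Theorem \ref{tw_uniformly_feasible_truth_predicates}, together with the regularity principles of Proposition \ref{stw: regularity}. Since a standard proof system has only finitely many logical axiom schemes and a fixed set of rules, it suffices to produce, for each scheme separately, a $\PA$-proof whose length is polynomial in $n$, and to check that these proofs are generated by a single P-time function of $n$; concatenating the finitely many resulting families (and composing with the P-time functions witnessing Theorem \ref{tw_uniformly_feasible_truth_predicates} and Proposition \ref{stw: regularity}) then yields the desired $f$. Throughout, the only $n$-dependent inputs are the poly($n$)-length proofs of the compositional conditions for $\Sat_n$ and of the regularity properties; every other step uses boundedly many instantiations of these, so the total proof length stays polynomial in $n$.

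Part (2) and the propositional schemes are the easy half. For (2), I would write $y=(x\rightarrow z)$ as $\neg x\vee z$ and invoke the compositional clauses for $\neg$ and $\vee$: since $y$ has depth $n$, all of $\neg x$ and $z$ have depth $\leq n$, so provably $\Sat_n(y,\alpha)\equiv \neg\Sat_n(x,\alpha)\vee\Sat_n(z,\alpha)$, and together with $\Sat_n(x,\alpha)$ this yields $\Sat_n(z,\alpha)$ by one step of propositional logic. For a fixed propositional tautology scheme one fixes an arbitrary instance, peels off the (boundedly many) connectives of the template using the compositional clauses, and is left with a propositional tautology in the atoms $\Sat_n(\phi_i,\alpha)$ for the schematic components $\phi_i$; this tautology is then discharged by bounded propositional reasoning, independently of the truth values of the $\Sat_n(\phi_i,\alpha)$.

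The substantive half is Part (1) for the quantifier (and equality) axioms, say $\forall v\,\phi\rightarrow\phi[t/v]$. Here I would first establish the substitution lemma asserting, for formulae of depth $\leq n$, that $\Sat_n(\phi[t/v],\alpha)\equiv \Sat_n(\phi,\alpha[v\mapsto\valt{t}{\alpha}])$; granting it, one instantiates the quantifier clause $\Sat_n(\forall v\,\phi,\alpha)\equiv \forall\beta\sim_v\alpha\,\Sat_n(\phi,\beta)$ at $\beta=\alpha[v\mapsto\valt{t}{\alpha}]$ and concludes at once. The substitution lemma itself I would obtain by an internal induction on the depth of $\phi$: the atomic base case uses the compositional clause for relation symbols together with the fact that substitution commutes with evaluation, $\valt{s[t/v]}{\alpha}=\valt{s}{\alpha[v\mapsto\valt{t}{\alpha}]}$; the boolean cases are immediate from the corresponding clauses and the induction hypothesis; and the quantifier case uses Proposition \ref{stw: regularity} (passing through the numeral $\num{\valt{t}{\alpha}}$) after the usual renaming to avoid variable capture. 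The equality axioms are handled analogously, directly from the atomic clause and the term-value extensionality just mentioned.

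\textbf{The main obstacle} will be carrying out this substitution lemma \emph{uniformly and feasibly}: although the induction on $\phi$ is routine in the metatheory, one must verify that for each fixed $n$ it can be packaged as a single $\PA$-proof of length polynomial in $n$, produced by a P-time function of $n$. The delicate points are (a) maintaining the depth bookkeeping so that every appeal to a compositional clause stays within depth $\leq n$ --- here Observation \ref{Trn are monotone} lets one normalise the index of the satisfaction predicate when a subformula has smaller depth; (b) the variable-capture management in the quantifier step, which must be effected by a fixed, $n$-independent syntactic manipulation so as not to inflate the proof; and (c) checking that the induction is an induction available \emph{inside} $\PA$ (on the length of the syntactic tree of $\phi$, bounded by $n$) rather than an external meta-induction, so that the whole argument collapses to one proof per $n$. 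Once these are in place, the poly($n$)-length proofs of the compositional conditions and of the regularity properties feed in as black boxes, and the required P-time function $f$ is assembled by composition.
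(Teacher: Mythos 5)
The paper offers no proof of this proposition: it is attributed to Pudl\'ak \cite{pudlak_lc} and stated without argument (of the three auxiliary facts used in the proof of Lemma \ref{subl::1}, only Proposition \ref{stw_prawda_indukcji} on the induction axioms receives a proof in the Appendix). So there is no paper-internal argument to compare against; what can be said is that your reconstruction is correct and is essentially the standard Pudl\'ak-style argument. Part (2) and the propositional schemes are indeed immediate from the feasibly provable compositional clauses, since an axiom scheme template has a fixed, $n$-independent number of connectives to peel off. You also correctly isolate the substitution lemma $\Sat_n(\phi[t/v],\alpha)\equiv \Sat_n(\phi,\alpha[v\mapsto\valt{t}{\alpha}])$ as the only substantive ingredient for the quantifier and equality axioms, and your three listed obstacles are exactly the points needing care; in particular (c) is essential, since the axiom $x$ ranges over possibly nonstandard coded formulae, so the lemma must be a single internally quantified statement proved by one poly($n$)-length instance of induction on a formula built from $\Sat_n$, not a meta-induction. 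Two small remarks that make your life easier: first, Definition \ref{defi_syntactic_depth} treats terms as leaves of the syntactic tree, so $\phi[t/v]$ has the same depth as $\phi$ regardless of the depth of $t$, which disposes of most of the bookkeeping in your obstacle (a); second, your substitution lemma strictly generalizes clause 2 of Proposition \ref{stw: regularity} (the case $t=\num{z}$), so once you prove it by internal induction you need not route the quantifier step through numerals at all.
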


Now we prove that for every $n$ the truth of all $\PA$ axioms of induction of depth $n$ can be feasibly established in $\PA$.

\begin{stwierdzenie}\label{stw_prawda_indukcji}
	The following sentences are uniformly feasible in $n$:
	\[\forall y \in \form^{1}_{\LPA} \forall v \in \vrbl \left(v \in \FV(y) \wedge \dpt(\Ind(y,v)) =n \wedge \alpha \in \Ass(y)\rightarrow \Sat_{n}(\Ind(y),\alpha)\right).\]
\end{stwierdzenie}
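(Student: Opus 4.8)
The plan is to reason inside $\PA$, fixing (internally) a formula $y \in \form^1_{\LPA}$, a variable $v \in \FV(y)$, and an assignment $\alpha \in \Ass(y)$ subject to $\dpt(\Ind(y,v)) = n$, and to manufacture a $\PA$-proof of $\Sat_n(\Ind(y,v),\alpha)$ by a recipe that is uniform and polynomial-size in $n$. The decisive preliminary observation is that every subformula of $\Ind(y,v)$ has depth $< n$ (substitution of terms does not increase syntactic depth, as terms are counted as leaves), so the uniformly feasible compositional Tarski conditions for $\Sat_n$ supplied by Theorem \ref{tw_uniformly_feasible_truth_predicates} let $\Sat_n$ decompose the whole instance. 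Applying the clauses for $\wedge$ and $\rightarrow$, proving $\Sat_n(\Ind(y,v),\alpha)$ is reduced to the single implication: assuming the base hypothesis $\Sat_n(y[0/v],\alpha)$ and the step hypothesis $\Sat_n(\forall v(y \rightarrow y[S(v)/v]),\alpha)$, derive $\Sat_n(\forall v\, y, \alpha)$.

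The heart of the argument is a genuine application of $\PA$-induction to the \emph{arithmetical} formula $\psi(x) := \Sat_n(y, \alpha[v \mapsto x])$, which carries $y, v, \alpha$ as parameters and has length polynomial in $n$. By the quantifier clause, $\Sat_n(\forall v\, y, \alpha)$ is equivalent to $\forall x\, \psi(x)$, while the step hypothesis unfolds to $\forall x\,(\psi(x) \rightarrow \Sat_n(y[S(v)/v], \alpha[v\mapsto x]))$. For the base case I would invoke the regularity equivalence Proposition \ref{stw: regularity}(2) with $z = 0$ (using $\num{0} = 0$) to rewrite $\psi(0)$ as $\Sat_n(y[0/v],\alpha)$, which is the base hypothesis. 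For the induction step I would combine the step hypothesis, yielding $\psi(x) \rightarrow \Sat_n(y[S(v)/v], \alpha[v\mapsto x])$, with the regularity equivalence Proposition \ref{stw: regularity}(1), which identifies $\Sat_n(y[S(v)/v], \alpha[v\mapsto x])$ with $\Sat_n(y, \alpha[v\mapsto S(x)]) = \psi(S(x))$; hence $\psi(x) \rightarrow \psi(S(x))$. Here one must match the index of the truth predicate to the depth of the subformula at hand, passing between $\Sat_n$ and $\Sat_m$ for $m = \dpt(y) < n$ by means of the monotonicity Observation \ref{Trn are monotone} where needed. Feeding $\psi(0)$ and $\forall x(\psi(x)\rightarrow\psi(S(x)))$ into the single $\PA$-induction instance $\Ind(\Sat_n(y,\alpha[v\mapsto x]), x)$ yields $\forall x\, \psi(x)$, i.e. $\Sat_n(\forall v\, y,\alpha)$, and reassembling through the compositional conditions for $\wedge$ and $\rightarrow$ produces $\Sat_n(\Ind(y,v),\alpha)$.

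For the feasibility claim I would verify that the whole proof is concatenated from blocks each uniformly feasible in $n$: the compositional conditions come from Theorem \ref{tw_uniformly_feasible_truth_predicates}, the regularity equivalences from Proposition \ref{stw: regularity}, the depth-matching from Observation \ref{Trn are monotone}, and the single induction instance $\Ind(\Sat_n(y,\alpha[v\mapsto x]),x)$ is a $\PA$-axiom of length polynomial in $n$ (since $|\Sat_n|$ is polynomial in $n$). The connecting propositional and quantifier reasoning, together with Proposition \ref{stw_pudlak_prawda_logiki} for the logical glue, follows a fixed schema independent of $y, v, \alpha$ and of size polynomial in $n$. Assembling these blocks in the order above exhibits the required P-time computable $f$ with $f(n)$ a $\PA$-proof of the displayed sentence.

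The point demanding the greatest care---and the conceptual crux---is the separation between the \emph{object-level} induction axiom $\Ind(y,v)$ whose $\Sat_n$-value we are computing and the \emph{metalevel} $\PA$-induction we actually invoke: the reasoning is non-circular exactly because $\psi(x) = \Sat_n(y,\alpha[v\mapsto x])$ is a bona fide arithmetical formula to which $\PA$ applies its own induction, regardless of the status of the coded axiom. The remaining technical hazards I would watch for are the depth bookkeeping, ensuring each application of the Tarski and regularity conditions is made at a depth the relevant truth predicate genuinely controls, and the precise handling of the substitution $y[S(v)/v]$ together with the valuation shift from $\alpha[v\mapsto x]$ to $\alpha[v\mapsto S(x)]$, so that the modified variable in Proposition \ref{stw: regularity} matches correctly.
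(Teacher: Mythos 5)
Your proposal is correct and follows essentially the same route as the paper's proof: both apply a single $\PA$-induction instance to the arithmetical formula $\Sat_n(y,\alpha[v\mapsto x])$ (your $\psi(x)$ is the paper's $\phi_2(y,v,\alpha,x)$), use the regularity properties of Proposition \ref{stw: regularity} to handle the $0/v$ and $S(v)/v$ substitutions, and reassemble via the compositional conditions for $\Sat_n$.
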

\begin{proof}[Proof of Proposition \ref{stw_prawda_indukcji}]
	For the purposes of this proof, we say that $y$ is \emph{small} if $\dpt(\Ind(y,v)) \leq n$. Let $\phi_1(y,v,\alpha)$ abbreviate the following formula: 
	
		\begin{center}
			"$y$ is a small formula such that $v$ is a free variable of $y$ and $\alpha$ is an assignment for $y$"
		\end{center}

	Moreover let $\phi_2(y,v,\alpha,x)$ abbreviate
		\begin{center}
		$ \exists \alpha' \ \ \left(\alpha' = \alpha[v\mapsto x]\wedge \Sat_n(y,\alpha')\right)$,
	\end{center} 
	Let $\phi(x,v,y,\alpha) = \phi_1(y,v,\alpha)\wedge \phi_2(y,v,\alpha,x)$. The idea is that $\alpha$ encodes a sequence of parameters used in the induction and $x$ is the varying value assigned to the variable $v$ while proving $\forall v y$ via induction. We work in $\PA$. We start with $\Ind(\phi(x,v,y,\alpha),x)$ which is an axiom  of length polynomial in $n$
	(since $\phi(x,v,y,\alpha)$ is). Using a few transformations (their number is independent of $n$) we obtain
	\[\forall x,v,y,\alpha\ \ \left(\phi_1(y,v,\alpha) \rightarrow \Ind(\phi_2(y,v,\alpha,x),x)\right)\] 
	Let us look at $\Ind(\phi_2(y,v,\alpha,x),x)$. Observe that by Proposition \ref{stw: regularity}
	\begin{enumerate}
		\item $\phi_2(y,v,\alpha,0)$ is equivalent to $\Sat_n(y[\num{0}/v],\alpha)$ and
		\item $\phi_2(y,v,\alpha,S(x))$ is equivalent to $\Sat_n(y[S(v)/v], \alpha)$.
	\end{enumerate}
	Hence $\Ind(\phi_2(y,v,\alpha,x),x)$ implies
	\begin{multline}\nonumber
	\Sat_n(y[\num{0}/v],\alpha)\wedge \forall x \left(\Sat_n(y,\alpha[v\mapsto x])\rightarrow \Sat_n(y[S(v)/v],\alpha[v\mapsto x])\right)\longrightarrow\\ \longrightarrow \forall x \ \ \Sat_n(y,\alpha[v\mapsto x]).
	\end{multline}
	Now by compositional axioms for $\Sat_n$ the above is equivalent to 
	\[\Sat_n(\Ind(y,v)).\]
\end{proof}

\subsection{Congruence lemma}

We sketch the proof of the following lemma from Section \ref{Section_core_construction}.
\begin{lemat}[Congruence lemma] 
	For all $\phi$, $\phi'$, $\psi'$ it holds that
	\begin{equation} \tag{\textnormal{C}}
	\bigl(\phi\imsubf \phi'\wedge \phi'\approx^{\M} \psi' \bigr)\Rightarrow \exists \psi\ \ \bigl(\psi\imsubf \psi' \wedge\psi\approx^{\M}\phi \bigr).
	\end{equation} 
\end{lemat}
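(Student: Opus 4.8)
The plan is to exploit two elementary properties of the term trivialization $\compl{\cdot}$, both immediate from its definition and from the remark that precedes the lemma. First, by clause (3) of the definition of trivialization, every formula is a term-substitution instance of its own trivialization: there is a term substitution $\rho$ with $\compl{\phi}[\rho]=\phi$. Second, by the remark, term-substitution instances share their trivialization: if $\psi=\theta[\rho]$ for some term substitution $\rho$, then $\compl{\theta}=\compl{\psi}$. I would first isolate these as the two driving facts, noting that the second is exactly the statement $\compl{\theta[\rho]}=\compl{\theta}$, which holds because trivialization collapses every maximal all-free term to a canonically indexed fresh variable, and substituting all-free terms for free variables alters neither the positions of these maximal terms nor the bound-variable skeleton.

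With these in hand, suppose $\phi\imsubf\phi'$ and $\phi'\approx^{\M}\psi'$, so that $\compl{\phi'}=\compl{\psi'}=:\xi'$ (everything computed inside $\M$). By the instance property, $\phi'=\xi'[\rho_1]$ and $\psi'=\xi'[\rho_2]$ for term substitutions $\rho_1,\rho_2$ on the free variables of $\xi'$. Since term substitution does not change the connective/quantifier skeleton, $\phi'$, $\psi'$ and $\xi'$ all share the same outermost logical form. As $\phi$ is an immediate subformula of $\phi'$ (so $\phi'$ is non-atomic), $\xi'$ has a corresponding immediate subformula $\eta$ — the unique one when the outer form is $\neg$ or $\exists$, and the left or right disjunct matching $\phi$ when it is $\vee$ — such that $\phi=\eta[\rho_1]$. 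I would then set $\psi:=\eta[\rho_2]$; since $\psi'=\xi'[\rho_2]$ has $\eta[\rho_2]$ as its immediate subformula in the matching position, we get $\psi\imsubf\psi'$.

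Finally, both $\phi=\eta[\rho_1]$ and $\psi=\eta[\rho_2]$ are term-substitution instances of the single formula $\eta$, so the second fact yields $\compl{\phi}=\compl{\eta}=\compl{\psi}$, i.e. $\phi\approx^{\M}\psi$, which is the desired conclusion. I expect the one genuinely delicate point to be the quantifier case: when $\phi'=\exists v\,\chi$ and $\xi'=\exists v^*\eta$, descending to the immediate subformula turns the quantified variable free, so I must check that $\rho_1$ (defined on the free variables of $\xi'$, which exclude $v^*$) restricts to a legitimate term substitution on $\eta$ with $\eta[\rho_1]=\chi$, and that the canonical indexing built into $\compl{\cdot}$ forces $v^*$ to coincide for $\phi'$ and $\psi'$, so that the two matching subformulas really are instances of literally the same $\eta$. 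This is a routine but careful inspection of the definition of $\compl{\cdot}$, and it is the only place where the change of binding status of a variable interacts with the collapse of all-free terms.
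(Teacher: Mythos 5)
Your proposal is correct and follows essentially the same route as the paper's own argument: both descend from the common trivialization $\compl{\phi'}=\compl{\psi'}$ to its matching immediate subformula $\eta$, observe that $\phi$ and $\psi$ are term-substitution instances of $\eta$, and conclude $\compl{\phi}=\compl{\psi}$ from the invariance of trivialization under term substitution. You also correctly isolate the only delicate step --- the existential case, where the quantified variable becomes free and previously non-all-free maximal terms must be collapsed afresh --- which is precisely the point the paper's proof addresses by re-trivializing $\eta$ before comparing.
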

\begin{proof}[Sketch of the proof]
	We prove the lemma by induction on the complexity of $\phi$ (carried out in $\md{M}$ which we assumed to satisfy $\ISigma_1$).
	The only non-trivial step is the one for $\exists$. Assume $\phi' = \exists v \phi$. Then $\psi' = \exists v \psi$. Take $\compl{\phi'}(=\compl{\psi'})$, which, by definition, is of the form $\exists v \eta$. In $\eta$ replace all the occurrences of maximal terms in $\eta$ (i.e. the ones which do not occur within a term) which contain only free variables (in $\eta$) with fresh variables, without using the same variable twice. Then rename the free variables of the resulting formula according to the procedure adopted in condition $4.$ of the definition of the term trivialization. In this way we obtain the term trivialization of both $\psi$ and $\phi$.
\end{proof}

\subsection{FACT} \label{sub_fact}
In Subsection \ref{sub_speed_up_via_FACT}, an alternative proof has been provided of Theorem \ref{lem::main} which says that if $\PA$ proves reflection over fragments of another theory $\Th$, then $\Th$ is feasibly reducible to $\PA$. The second proof of that theorem used the fact that arithmetized completeness theorem can proved with a proof of size polynomial in the size of the formula defining $\Th$. This was stated as Lemma \ref{lem::ACT}. In this subsection, we prove this result.

\begin{lemat}[Feasible Arithmetized Completeness Theorem, FACT]
	There exists a polynomial $p(n)$  and a P-time computable function $f$ such that for every $n$, $f(n)$ is a $\PA$ proof of the sentence
	\begin{center}
		"If an $n$-theory $\Th$ in a language $\Lang$ is consistent then it has a $p(n)$ full model"
	\end{center}
	 Moreover, there exists a P-time computable function $f$ such that for any $l,k$, $f(l, k)$ is a $\PA$ proof of the sentence
	\begin{center}
		"If $\M$ is a $l$-full model of a $k$-theory $\Th$, then $\Th$ is consistent."
	\end{center}
\end{lemat}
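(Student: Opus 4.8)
The plan is to give a fully uniform, depth-tracking arithmetization of the Henkin construction, deliberately avoiding the low basis theorem. Throughout, the theory $\Th$ is accessed inside $\PA$ via the code of its depth-$n$ defining formula, which I treat as a bound variable; the whole point is that every formula and every proof I manufacture depends on this code only through a fixed, uniform recipe. First I would carry out \emph{Henkinization}: working in $\PA$, enrich $\Lang$ with constants $c_\psi$ indexed by $\Lang^H$-formulas $\psi$ with one free variable, and set $\Th^H := \Th + \{\exists v\,\psi \rightarrow \psi(c_\psi)\}$. The defining formula $\theta^H$ is obtained from the defining formula of $\Th$ by adjoining a fixed (depth $O(1)$) disjunct recognising Henkin axioms, so $\dpt(\theta^H) \le n + c_1$; moreover the standard syntactic argument that a derivation of $\bot$ from $\Th^H$ can be pushed down to one from $\Th$ (eliminating witnessing constants one at a time) is a single $\PA$-proof schema whose instance for depth $n$ is produced in polynomial time, yielding $\PA \vdash \Con_\Th \rightarrow \Con_{\Th^H}$ uniformly.

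The core step is the \emph{completion}. I would define $\bar\Th$ as the leftmost consistent branch: fixing a P-time enumeration $\sigma_0, \sigma_1, \ldots$ of $\Lang^H$-sentences, put $\sigma_i \in \bar\Th$ iff there is a decision sequence $\langle d_0, \ldots, d_i\rangle$ with $d_i = \sigma_i$ and, for each $j \le i$, $d_j = \sigma_j$ exactly when $\Th^H \cup \{d_0, \ldots, d_{j-1}, \sigma_j\}$ is consistent and $d_j = \neg\sigma_j$ otherwise. The only unbounded material is a single existential quantifier over the decision sequence together with the provability quantifier concealed in ``is consistent'', so the defining formula has depth $n + c_2$, which furnishes the linear polynomial $p(n)$ demanded by the statement. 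I would then verify, with $\PA$-proofs generated uniformly in $n$, that (i) consistency is preserved along the branch (Lindenbaum, using the arithmetized compactness theorem to pass to the limit), (ii) $\bar\Th$ is complete, and (iii) $\bar\Th$ inherits the Henkin property, so that $\bar\Th$ is a full model of $\Th$ in the sense of Definition \ref{model_theory}. Each of (i)--(iii) is a fixed argument parametrised by $n$, and the function $f$ merely concatenates the Henkinization proof, these three verifications, and the bookkeeping that $\theta^H$ and the completion formula have the advertised depths. The same construction, taking $\qcr{\phi}$ as explicit input in place of the depth bound $n$, yields the Proposition on uniformity of FACT.

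For the \emph{moreover} (soundness) direction the argument is easier and runs in the opposite direction. Given $l$ and $k$, suppose $\M$ is an $l$-full model of the $k$-theory $\Th$, i.e.\ a complete consistent Henkinised theory with $\Th \subseteq \ElDiag(\M)$. Reasoning in $\PA$, I would assume toward a contradiction a proof $\pi$ of $\bot$ from $\Th$ and show, by induction on the length of $\pi$ and using the compositional clauses and the soundness of the logical rules for $\Sat_l$ established in Proposition \ref{stw_pudlak_prawda_logiki}, that every line of $\pi$ is satisfied in $\M$; since every $\Th$-axiom occurring in $\pi$ lies in $\ElDiag(\M)$ and is therefore satisfied, the final line $\bot$ would be satisfied, contradicting the consistency built into the notion of a full model. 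This single induction is parametrised by $l$ and $k$, so the proof is produced in time polynomial in $(l,k)$.

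I expect the principal obstacle to be the depth accounting in the completion step: one must confirm that the relativized provability predicate of Remark \ref{rem::RelProv}, the bounded quantifier over proof lines, and the existential over decision sequences together raise the depth only by an additive constant rather than by a quantity growing with $n$, and that the accompanying correctness proofs -- especially the limit-consistency argument via arithmetized compactness -- have size polynomial in $n$. Verifying the Henkin property and completeness for the leftmost branch is routine once the definitions are pinned down, but arranging that all of these verifications are simultaneously uniform in the code of $\Th$ and of polynomially bounded size is where the bulk of the careful work lies.
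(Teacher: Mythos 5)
Your proposal is correct and follows essentially the same route as the paper: Henkinization by adjoining a fixed disjunct for the witnessing axioms, completion via the leftmost consistent branch encoded by a decision sequence whose defining formula has depth $n+O(1)$, and, for the converse direction, an induction on proof length using the feasible satisfaction predicates to show that a full model cannot prove a contradiction. The only differences are presentational (the paper encodes the branch by a binary string $\tau$ rather than the sequence of chosen sentences, and does not explicitly invoke arithmetized compactness), so nothing further is needed.
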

\begin{proof}

To prove the second part we show by induction on the lengths of proofs that any $l$-full model (which, recall, is the same as a complete consistent Henkinized theory) is closed under reasoning in first order logic. This argument is carried out uniformly with the only difference that we use different feasible satisfaction predicates depending on the complexity of the model.
	
	To prove the first part we follow the "leftmost branch" strategy. The proof is routine but we present it to be on the safe side.  Assume that an $\Lang$-theory $\theta(x)$ of depth $n$ is consistent. (Note that $\theta(x)$ and $\Lang$ might contain arbitrary parameters.) Note that we need not care about the rise in $\Sigma_k$-complexity of the formula defining a model for $\theta(x)$ as long as the construction of the relevant formula is uniformly feasible in $n$. Let $\form^H_{\Lang}$ be the set of formulae of $\Lang$ enriched with Henkin constants (we denote the Henkin constant for the formula $\phi$ with $c_{\phi}$ and assume that the function $\phi\mapsto c_{\phi}$ is $\Delta_1$).

	\textit{Step $1.$: finding a complete, consistent Henkin extension} Let $\theta'(x)$ be defined as
	\[\theta(x) \vee \exists \phi, v \left(v \in \vrbl \wedge \phi \in \form^H_{\Lang} \wedge  x = (\exists v \phi \rightarrow \phi[c_{\phi}/v])\right).\]
	Here $\theta'(x)$ is of depth polynomial in $n$ (we may assume that the length, hence also the depth, of the formula defining $\Lang$ is polynomial in $n$) and of length polynomial in the length of $\theta$. We check that $\Con_{\theta'(x)}$ holds:   
    each proof of $\exists x (x\neq x)$ from the axioms of $\theta'(x)$ can be transformed into $\theta(x)$ proof of 
	\[\neg \bigwedge_{j\leq a} (\exists v_{i_j} \phi_j \rightarrow \phi_j[c_{\phi}/v_{i_j}]).\] 
	Then we check that the above is equivalent to 
	\[\neg\bigwedge_{j\leq a} (\exists v_{i_j} \phi_j\wedge \forall v_{i_j}\neg\phi_{i_j}).\]
	which contradicts the consistency of $\theta$. Note that the above argument is uniform in $\theta$.
	
	Let $\sigma(x)= y$ be any enumeration of $\form_{\Lang}^H$. For any binary sequence $\tau$ of length $y$ let $\enum(\sigma,\tau,y)$ be the theory:
	\[x\in \enum(\sigma,\tau,y) := \exists i< y \left(\left(\tau(i) = 0\wedge x =\sigma(i)\right) \vee \left(\tau(i) = 1\wedge x = \neg\sigma(i)\right)\right).\] 
	$\enum(\sigma,\tau,y)$ enumerates first $y$ elements of $\sigma$, adding negation at the front of the $i$-th element if $\tau(i)= 1$.
	Let $\theta^H(x)$ be the sentence saying:
	
	\noindent "There exist the unique $y,\tau$ such that
	\begin{enumerate}
	\item $\tau\textnormal{ is a binary sequence of length } y $ and
	\item $\forall i< y  \left(\tau(i) = 0\iff \Con_{\theta' + \enum(\sigma,\tau,i)+\sigma(i)}\right)$ and
	\item $\left(\Con_{\theta' + \enum(\sigma,\tau,y)+\sigma(y)}\wedge x=\sigma(y)\right)\vee \left(\neg\Con_{\theta' + \enum(\sigma,\tau,y)+\sigma(y)}\wedge x=\neg\sigma(y)\right).$"
	\end{enumerate}
	Once again $\theta^H(x)$ is of length polynomial in the length of $\theta$ and this polynomial does not depend on the initial choice of $\theta$. We show that $\theta^H$ is a complete and consistent theory with Henkin sentences (which, according to our definitions is the same as a full model). The whole argument was carried out uniformly in $\theta$ and can be produced by a P-time function $f$.

\end{proof}

\subsection{A glossary of technical notions}
\label{subsection_glossary}

This paper contains a fairly large number of technical definitions; here we enclose a glossary of such terms in order to assist the reader.

\begin{itemize}

\item $x \in \Ass(y)$ means that  $x$ is an assignment for a formula or a term $y$ (or for a set of terms or formulae $y$), i.e. $x$ is a function whose domain includes the free variables of $y$ (or whose domain includes free variables of all elements of $y$). See Definition \ref{konw_technical definitions} and Convention \ref{konwencja_slopp_syntax}.
 \item $x \in \ClTerm_{\Lang}$ means that $x$ is a closed term of a language $\Lang$, see Definition \ref{konw_technical definitions} and Convention \ref{konwencja_slopp_syntax}.
 \item $x \in \ClTermSeq_{\Lang}$ means that $x$ is a sequence of closed terms of a language $\Lang$, see Definition \ref{konw_technical definitions} and Convention \ref{konwencja_slopp_syntax}.
 
\item $\Con_{\Th}$ is an arithmetized consistency statement for $\Th$. See Corollary \ref{thm::PolyProv}.
    \item $\CT^-$ is the compositional theory of truth over $\PA$, see Definition \ref{defi_ctminus}.
    \item $\CT^-[\B]$ is the compositional theory of truth over a theory $\B$ extending $\IDelta_0 + \Exp$, see Definition \ref{defi_ctminus}.
    \item $\dpt(\phi)$ is the syntactic depth of a formula $\phi$, see Definition \ref{defi_syntactic_depth}. 
    \item $\ElDiag(\md{M})$ (elementary diagram of $\md{M}$) is the same as a full model $\md{M}$, this notation is used when $\md{M}$ is viewed as a complete Henkinized theory, rather than a structure.
\item $x \in \form_{\Lang}$ means that $x$ is a formula of the language $\Lang$, see Definition \ref{konw_technical definitions} and Convention \ref{konwencja_slopp_syntax}.
\item $x \in \form^{\leq 1}_{\Lang}$ means that $x$ is a formula of the language $\Lang$ with at most one free variable, see Definition \ref{konw_technical definitions} and Convention \ref{konwencja_slopp_syntax}. 
\item $x \in \form^{1}_{\Lang}(x)$  means that $x$ is a formula of the language $\Lang$ with exactly one free variable, see Definition \ref{konw_technical definitions} and Convention \ref{konwencja_slopp_syntax}.
    \item $\FS^-$ is the Friedman--Sheard self-referential theory of truth over $\PA$ without induction, see Definition \ref{defi::FS}.
    \item $\FS^-[\B]$ is  the Friedman--Sheard self-referential theory of truth over a theory $\B$ extending $\IDelta_0 + \Exp$ without induction, see Definition \ref{defi::FS}.
    
\item $x \in \FV(y)$ means that $x$ is a free variable of $y$, see Definition \ref{konw_technical definitions} and Convention \ref{konwencja_slopp_syntax}.
\item $x \in \FVSeq(y)$ means that $y$ is a coded sequence whose elements are (some) free variables of $x$, see Definition \ref{konw_technical definitions} and Convention \ref{konwencja_slopp_syntax}.
    \item $\KF^-$ is the Kripke--Feferman self-referential theory of truth over $\PA$ without induction, see Definition \ref{defi::KF}.
    \item $\KF^-[\B]$ is the Kripke--Feferman self-referential theory of truth over a theory $\B$ extending $\IDelta_0 + \Exp$ without induction, see Definition \ref{defi::KF}.
    \item $\len(s)$ is the length of a sequence $s$, see Definition \ref{konw_technical definitions} and Convention \ref{konwencja_slopp_syntax}.
    \item $n$-model (full model) is a  (full) model defined with a formula of depth $n$.
    \item $\Prov_{\Th}(y)$ means that there exists a proof of $y$ in the theory $\Th$. See Corollary \ref{thm::PolyProv}.
    \item  $\Proof_{\Th}(m,n)$ means that $m$ is a proof of $n$ from the theory $\Th$. See Corollary \ref{thm::PolyProv}.
    \item $x \in \Sent_{\Lang}$ means that $x$ is a sentence of the language $\Lang$, see Definition \ref{konw_technical definitions} and Convention \ref{konwencja_slopp_syntax}.
    \item $x \in \term_{\Lang}$ means that $x$ is a term of the language $\Lang$, see Definition \ref{konw_technical definitions} and Convention \ref{konwencja_slopp_syntax}. 
   
\item $x \in \TermSeq_{\Lang}$ means that $x$ is a sequence of terms of the language $\Lang$, see Definition \ref{konw_technical definitions} and Convention \ref{konwencja_slopp_syntax}. 
\item $n$-theory is a theory defined with a formula of depth $n$.
\item $\vrbl(x)$ means that $x$ is (an arithmetized) variable, see \ref{konw_technical definitions}.

\item $\beta \sim_v \alpha$ means $\alpha$ and $\beta$ are functions, $v$ is a variable and $\alpha(w) = \beta(w)$ for all variables $w$, possibly except for $v$ which also possibly belongs only to the domain of $\beta$, see Definition \ref{konw_technical definitions}.
\item $\phi[\alpha]$ is a formula $\phi$ with the numeral $\num{\alpha(v)}$ substitued for every every occurrence of $v$ for every free variable $v$ of $\phi$, see Definition \ref{defi_substitutions}.
\item $\phi[s/v]$ denotes the formula $\phi$ with the term $s$ substituted for the variable $v$, see Definition \ref{defi_substitutions}.
\item $\norm{\phi}{\Th}$ is the length of the shortest proof of $\phi$ in $\Th$, see Definition \ref{def_lengths_of_proofs}.
\item $\phi \imsubf \psi$ means that $\phi$ is an immediate subformula of $\psi$, see the proof of Lemma \ref{lem::AEV}.
\item $\compl{\phi}$ is the term trivialization of $\phi$, see remarks preceding Lemma \ref{lem_congruence}.
\item $\md{M} \preceq_{\Lang} \md{N}$ means that $\md{M}' \preceq \md{N}'$, where $\md{M}', \md{N}'$ are reducts of $\md{M}, \md{N}$ to the language $\Lang$, see Definition \ref{def_L_elementary_extension} or  \ref{def_elementary_extension} and the subsequent remarks.
\item $t^{\alpha}$ is the value of term $t$ in which every free variable $v$ has been evaluated to $\alpha(v)$, see Definition \ref{defi_substitutions}.

\item $\Th \vdash^{n} \phi$ means that the length of the shortest proof of $\phi$ in $\Th$ is not greater than $n$, see Definition \ref{def_lengths_of_proofs}.
\item $\Th \restr n$ for a theory $\Th$ means the set of axioms of $\Th$ of length at most $n$, see Definition \ref{def_restriction_of_a_theory}.
\item $\num{x}$ is a numeral denoting $x$, see Definition \ref{defin:arithm} and Convention \ref{konwencja_slopp_syntax}.
\item $\val{x}=y$ means that $y$ is the value of the term $x$, see Definition \ref{konw_technical definitions} and Convention \ref{konwencja_slopp_syntax}.

\end{itemize}

\bibliographystyle{plain}
\bibliography{Speed_up} 

\end{document}